\newtheorem{theorem}{Theorem}[section]
\newtheorem{lemma}[theorem]{Lemma}
\newtheorem{corollary}[theorem]{Corollary}
\newtheorem{proposition}[theorem]{Proposition}
\theoremstyle{definition}
\newtheorem{definition}[theorem]{Definition}
\newtheorem{notation}[theorem]{Notation}
\newtheorem{example}[theorem]{Example}
\theoremstyle{remark}
\newtheorem{remark}[theorem]{Remark}
\numberwithin{equation}{section}
\newcommand*{\into}{\hookrightarrow}
\newcommand*{\onto}{\twoheadrightarrow}
\newcommand{\env}{\operatorname{env}}
\newcommand{\eps}{\varepsilon}
\newcommand{\C}{\mathbb{C}}
\newcommand{\N}{\mathbb{N}}
\newcommand{\Aut}{\operatorname{Aut}}
\newcommand{\Ad}{\operatorname{Ad}}
\newcommand{\id}{\operatorname{id}}
\newcommand{\Ind}{\operatorname{Ind}}
\newcommand{\op}{\operatorname{op}}
\newcommand{\one}{\boldsymbol{1}}
\newcommand{\cspn}{\overline{\operatorname{span}}}
\newcommand{\Op}{{\operatorname{Op}}}
\newcommand{\diag}{{\operatorname{diag}}}
\newcommand{\B}{{\mathcal B}}
\newcommand{\K}{{\mathcal K}}
\begin{document}

\title{$C^*$-operator systems and crossed products}


\author{Massoud Amini}
\address{Department of Mathematics, Faculty of Mathematical Sciences, Tarbiat Modares University,
Tehran 14115-134, Iran}
\email{mamini@modares.ac.ir}

\author{Siegfried Echterhoff}
\address{Mathematisches Institut\\
 Westf\"alische Wilhelms-Universit\"at M\"un\-ster\\
 Einsteinstr.\ 62\\
 48149 M\"unster\\
 Germany}
\email{echters@uni-muenster.de}

\author{Hamed Nikpey}
\address{Department of Basic Sciences\\ Shahid Rajaee Teacher Training University\\
P. O. Box 16783-163, Tehran\\ Iran}
 \email{hamednikpey@gmail.com}
\thanks{Funded by the Deutsche Forschungsgemeinschaft (DFG, German Research Foundation) under Germany's Excellence Strategy EXC 2044 –390685587, Mathematics Münster: Dynamics–Geometry–Structure,  and SFB 878, Groups, Geometry \& Actions. Also funded by the German Academic Exchange Service DAAD}
\subjclass[2000]{Primary 46L07; Secondary 47L65}
\keywords{Operator space, operator system, $C^*$-algebra, dynamical system, crossed product}
\date{}

\dedicatory{}

\commby{}

\begin{abstract} 
The purpose of this paper is to introduce  a consistent notion of universal and reduced 
crossed products by actions and coactions of  groups on operator systems and operator spaces.
In particular we shall put emphasis to reveal the full power of the universal properties of  the 
the universal crossed products. It turns out that to make things  consistent, it seems 
useful to perform our constructions on some bigger categories which allow the right 
framework for studying the universal properties and which are stable under the construction 
of crossed products even for non-discrete groups. In the case of operator systems, this 
larger category is what we call a $C^*$-operator system, i.e., a selfadjoint subspace $X$
of some $\mathcal B(H)$ which contains a $C^*$-algebra $A$ such that 
$AX=X=XA$. In the case of operator spaces, the larger category is given by what 
we call $C^*$-operator bimodules. After we introduced the respective crossed products 
we show that the classical Imai-Takai and Katayama duality theorems for crossed products by group (co-)actions 
on $C^*$-algebras extend one-to-one to our notion of crossed products by
$C^*$-operator systems and $C^*$-operator bimodules.

\end{abstract}

\maketitle

\section{Introduction}
In the world of $C^*$-algebras, the construction of crossed-products $A\rtimes_\alpha G$  for an action of a  locally compact group $G$ 
an a $C^*$-algebra $A$ is one of the most fundamental tools in the theory--not only to construct interesting examples of $C^*$-algebras, but also in the application of $C^*$-algebra theory in  Harmonic Analysis, Non-commutative Geometry, Topology, and other areas of mathematics.
Having this in mind it is very surprising that a serious study of a similar construction did not appear in the world of non-selfadoint operator algebras, 
operator systems, or operator spaces until the recent works of Katsoulis and Ramsay \cite{KR} in the setting of operator algebras 
and the even more recent work \cite{HK} of Harris and Kim in which they give a construction of crossed products of operator systems by actions of {\em discrete} groups, following some of the ideas developed in \cite{KR} for the construction. But we should also mention the earlier preprint
\cite{Ng} by Chi-Keung Ng, where he introduces reduced  (i.e., spatially defined) crossed products for coactions of quantum groups on operator spaces.

Just a few days before  the first version of \cite{KR}
appeared on the arXiv, the authors of this paper posted a preprint describing a crossed product construction for group actions on operator spaces (see \cite{AEN}). Although this paper contained many ideas which were quite similar to ideas used in \cite{KR}, the proof of a central theorem 
(\cite[Theorem 4.3]{AEN}) turned out to be wrong, and since we didn't see a way for a quick repair, we decided to withdraw the paper from the arXiv. 
We are very grateful to Elias Katsoulis for  having pointed out this error to us! 
The problem was that in \cite{AEN} we defined the full and reduced crossed products $V\rtimes_\alpha^uG$ and $V\rtimes_\alpha^rG$ as 
the completions of $C_c(G,V)$ inside the full and reduced crossed products by a canonical action of $G$ on the enveloping $C^*$-algebra 
$C_e^*(X(V))$, where $X(V)$ denotes the Paulsen system of $V$. Indeed, there is no problem with this 
in case of the reduced crossed products, but the universal crossed product $V\rtimes_\alpha^uG$ should enjoy a universal property for 
suitable covariant representations of the system $(V,G,\alpha)$ (which was the content of the unfortunate \cite[Theorem 4.3]{AEN}).
However, \cite[Theorem 5.6]{HK} indicates that this cannot be true in general.  The way out is to define the universal crossed product 
$V\rtimes_\alpha^uG$ as the closure of $C_c(G,V)$ inside the universal crossed product $C_u^*(X(V))\rtimes_{\alpha,u}G$ where
$C_u^*(X(V))$ denotes the universal $C^*$-hull of $X(V)$ as introduced by Kirchberg and Wassermann in \cite{KW}. 
This was the approach of  \cite{KR} in case of  operator algebras and of \cite{HK} in case of operator systems. 

If we want to exploit the full power of universal properties for the universal crossed products, however, we would want to 
have a  one-to-one correspondence between completely bounded covariant maps $(\varphi, u)$ of the system $(V,G,\alpha)$ and 
the completely bounded maps $\Phi$ of $V\rtimes_\alpha^uG$ via a canonically defined  integrated form $\varphi\rtimes u$.
But it turns out  that  in order to obtain such a correspondence we need to remember more information of the 
ambient crossed product $C_u^*(X(V))\rtimes_\alpha^uG$. Indeed, taking the completion of 
$$C_c(G, X(V))=\left(\begin{matrix} C_c(G) & C_c(G, V)\\ C_c(G, V^*)& C_c(G)\end{matrix}\right)\subseteq C_u^*(X(V))\rtimes_{\alpha,u}G$$
and considering the convolution products between the upper diagonal entries gives $V\rtimes_\alpha^uG$ the structure of an
operator $C_u^*(G)$-bimodule, and it is this structure which one needs to take into account for a good  description of the universal properties.

Other problems appear if we consider crossed products by operator systems instead of operator spaces. Since in the above procedure we 
defined crossed products by $V$ via a crossed-product construction with the Paulsen system $X(V)$, it appears to be useful to 
consider  at first the case of crossed products by general operator systems $X$. As mentioned above, such crossed products have been introduced by Harris and Kim  in \cite{HK} for discrete groups. The reason for the restriction to the discrete case was the simple fact that the analogous construction 
for  non-discrete groups $G$ would result in a non-unital  (but selfadjoint) operator  space, hence it does not land in the right category. 
The other draw back is similar as the one described above for operator spaces crossed-products: we need to  keep more structure 
than simply the completion of $C_c(G,X)$ inside $C_u^*(X)\rtimes_uG$ in order to get the full power of the universal properties.

Our way out is to extend the category of operator systems to what we call $C^*$-operator systems: a concrete $C^*$-operator system
$(A,X)$ is a pair of subsets $A\subseteq X\subseteq \mathcal B(H)$ for some Hilbert space $H$, such that $X=X^*$, 
$A$ is a non-degenerate $C^*$-subalgebra of $\mathcal B(H)$, and $AX=X=XA$. A morphism  from $(A,X)$ to the $C^*$-operator system $(B,Y)$
is then a ccp map $\varphi_X: X\to Y$ such that the restriction $\varphi_A:=\varphi_X|_A$ is a $*$-homomorphisms from  $A$ to $B$
and such that $\varphi_X(ax)=\varphi_A(a)\varphi_X(x)$ and $\varphi_X(xa)=\varphi_X(x)\varphi_A(a)$ for all $a\in A$ and $x\in X$.
Of course, if $X\subseteq \mathcal B(H)$ is a classical operator system, 
then $(\C1, X)$ is $C^*$-operator system in this sense, and every ucp map between operator systems $X$ and $Y$ extends 
to a morphism in the above sense from $(\C1, X)$  to $(\C1, Y)$. Thus we get an inclusion of the category of operator systems into the
category of $C^*$-operator systems. After some preliminaries given in Section \ref{sec-prel} we introduce $C^*$-operator systems in Section 
\ref{sec-C*opsys}, where we also introduce a corresponding notion of multiplier $C^*$-operator systems which play an analogous role as the
multiplier algebra for a $C^*$-algebra. In particular, for a $C^*$-operator system $(A,X)$ the multiplier system $(M(A), M(X))$ 
can be considered as the largest unitization of $(A,X)$ and it always contains the unitization $(\tilde{A},\tilde{X})$ 
if $(A,X)$ has not unit (which means that $A$ has no unit). An important feature of the multiplier system is that every 
 non-degennerate morphism from $(A,X) $ to $(B,Y)$ extends uniquely to a morphism from $(MA), M(X))$ to $(M(B), M(Y))$. 
 
In Section \ref{sec-univ-env} we study $C^*$-hulls of $C^*$-operator systems, i.e., $C^*$-algebras $C$ together with
completely isometric representations $(j_A,j_X):(A,X)\to C$ such that $C$ is generated by the image $j_X(X)$ of $X$.
We show that there always exists a largest (the universal) $C^*$-hull $C_u^*(A,X)$ and a smallest (the enveloping)  $C^*$-hull 
$C_e^*(A,X)$, using  well-known ideas of Kirchberg, Wassermann and Hamana and Ruan. 
As a first hint that the category of $C^*$-operator systems is useful, we give in Section \ref{sec-tensor} a brief discusion of 
some tensor product constructions with $C^*$-operator systems. In particular, the spatial tensor products $X\otimes A$ 
of an operator $X$ with a $C^*$-algebra $A$  naturally carries the structure of a $C^*$-operator system $(1_X\otimes A, X\otimes  A)$
and if $A$ is not unital, this system is not unital as well!

In Section \ref{crossedproducts} we define  the universal crossed products 
$(A,X)\rtimes_\alpha^uG$ by a continuous action $\alpha$ of a locally compact group $G$ on a $C^*$-operator system $(A,X)$ 
as the pair of completion $(A\rtimes_\alpha^uG, X\rtimes_\alpha^uG)$ of $(C_c(G,A), C_c(G,X))$ inside 
the universal crossed product $C_u^*(A,X)\rtimes_{{\alpha}, u}G$ for the canonical action of $G$ on the universal $C^*$-hull $C_u^*(A,X)$.
Note that the $C^*$-part $A\rtimes_{\alpha}^uG$ is not always isomorphic to the universal $C^*$-algebra crossed product $A\rtimes_{\alpha,u}G$
(e.g., see part (b) of Remark \ref{rem-crossed}). We show that in this setting we get a very satisfying picture of the universal property: every covariant morphism $(\varphi_X, u)$ of the system $(A,X, G,\alpha)$ ``integrates'' to a morphism $\varphi_X\rtimes u$ of $(A,X)\rtimes_{\alpha}^uG$
and every (non-degenerate) morphism of  $(A,X)\rtimes_{\alpha}^uG$ appears as such integrated form.
In Section \ref{sec-reduced-crossed} we add a brief discussion of the spatially defined reduced crossed product $(A,X)\rtimes_{\alpha}^rG$.

In Section \ref{sec-coaction} we study coactions of groups on $C^*$-operator systems and their crossed products. 
Since locally compact groups are  always co-amenable, it is not surprising that the full and reduced crossed products coincide
in the sense that the spatially defined crossed product already enjoys the universal properties for covariant representations.
In Section \ref{sec-dual} we prove versions of the Imai-Takai and Katayama duality theorems for actions and coactions 
of groups on $C^*$-operator systems: starting with an action $\alpha:G\to \Aut(A,X)$ there are canonical dual coactions  
$\widehat\alpha_u$ and $\widehat\alpha_r$ on the universal and reduced crossed products, respectively, such that we get 
canonical $\widehat{\widehat\alpha}=\alpha\otimes\Ad\rho$ equivariant isomorphisms  (where $\rho$ denotes the right regular representation of $G$)
$$\big(A\rtimes_{\alpha}^uG\rtimes_{\widehat{\alpha}}\widehat{G}, X\rtimes_{\alpha}^uG\rtimes_{\widehat{\alpha^u}}\widehat{G}\big)
\cong \big(A\otimes \K(L^2(G)), X\otimes \K(L^2(G))\big)$$
and 
$$\big(A\rtimes_{\alpha}^rG\rtimes_{\widehat{\alpha^r}}\widehat{G}, X\rtimes_{\alpha}^rG\rtimes_{\widehat{\alpha^r}}\widehat{G}\big)
\cong \big(A\otimes \K(L^2(G)), X\otimes \K(L^2(G))\big).$$
The converse direction, when starting with a coaction $\delta$, known as Katayama's theorem in case of $C^*$-algebra crossed products,
is  a  bit more  involved, and the full analogue 
of the Imai-Takai theorem  only works under some additional assumptions, like when $G$ is amenable or if everything in sight 
was defined spatially (i.e., we would consider reduced group algebras and reduced crossed products  only).
Note that in  \cite{Ng}, Chi-Keung Ng  proves duality theorems for spatially defined 
crossed products in the more general case of (co-)actions by more general quantum groups.

In Section \ref{sec-bimodules} we come back to group actions on operator spaces $V$. As indicated above, also in this case it is  useful to 
find a suitable extension of  the category of operator spaces since the natural candidate for the crossed product has a canonical structure 
of a $C^*$-algebra bimodule via a left and right action of $C_u^*(G)$ on $V\rtimes_\alpha^uG$. So we found  that  the right category would be 
the category of (concrete) $C^*$-operator bimodules $(A,V,B)$ which consist of a concrete operator space $V\subseteq \mathcal B(K,H)$  for some Hilbert  spaces $H$ and $K$ together with $C^*$-subalgebras $A\subseteq \mathcal B(H)$ and $B\subseteq \mathcal B(K)$ such that
$$AV=V=VB. $$
Again, we can identify operator  spaces $V\subseteq \mathcal B(K,H)$ with the $C^*$-operator bimodule
$(\C1_H,V,  \C1_K)$. In this way the category of $C^*$-operator bimodules extends the category of operator spaces.
If a $C^*$-operator bimodule $(A,V,B)$ is given, we get a corresponding {\em Paulsen $C^*$-operator system} $\big(A\oplus B, X(A,V,B)\big)$ with 
$$X(A,V,B)=\left(\begin{matrix} A&V\\ V^*& B\end{matrix}\right)$$
and a one-to-one correspondence between morphism of $(A,V,B)$ and morphisms of $(A\oplus B, X(A,V,B)\big)$.
Thus it is fairly straightforward to apply the above described crossed-product constructions for $C^*$-operator systems to
the Paulsen systems $\big(A\oplus B, X(A,V,B)\big)$ to obtain complete analogues of the above  described results 
in this setting. In particular we get complete analogues of the Imai-Takai and Katayama duality theorems.

The authors are grateful to Elias Katsoulis and David Blecher for valuable discussions and comments concerning this project and in particular to the content of the preprint \cite{AEN}.

\section{Preliminaries}\label{sec-prel}
If $H$ is a Hilbert space we denote by $\mathcal B(H)$ the algebra of bounded operators on $H$
equipped with the operator norm and the canonical involution. 
A concrete operator space is a closed linear subspace $X\subseteq \mathcal B(H)$ for some Hilbert space $H$.
If $X\subseteq \mathcal B(H)$ and $Y\subseteq \mathcal B(K)$ are two operator spaces, then for each $n\in \N$ 
we have the matrix operator spaces 
$M_n(X)\subseteq \mathcal B(H^n)$ and $M_n(Y)\subseteq \mathcal B(K^n)$.
If  $\varphi:X\to Y$ is a linear map define
$\varphi_n:M_n(X)\to M_n(Y)$  by    
$\varphi_n\big((x_{ij})_{1\leq i,j\leq n}\big)=(\varphi(x_{ij}))_{1\leq i,j\leq n}$.
Then 
$\varphi:X\to Y$ is called {\em completely bounded} (or a {\em cb map}), if there exist a constant $C\geq 0$ such that
$$\|\varphi_n(x)\|_{\op}\leq C\|x\|_{\op}$$ for all $n\in \N$ and all
$x\in M_n(X)$.
If $C$ can be chosen to be less or equal to one, we say that $\varphi:X\to Y$ is {\em completely contractive}
and if  $\|\varphi_n(x)\|_{\op}= \|x\|_{\op}$ for all $n\in \N$ and $x\in M_n(X)$,  we say that $\varphi$ is
completely isometric.

Suppose now that $X=X^*$ and $Y=Y^*$ are symmetric closed subspaces of $\mathcal B(H)$ and $\mathcal B(K)$, respectively, not necessarily containing
the units. We then call a linear map $\varphi:X\to Y$ a {\em ccp map}   (completely contractive and  positive)
if it satisfies the following conditions:
\begin{enumerate}
\item $\varphi:X\to Y$ is completely contractive;
\item $\varphi(x^*)=\varphi(x)^*$ for all $x\in X$;
\item $\varphi_n(x)\geq 0$ for every positive $x\in M_n(X)$;
\end{enumerate}
where positivity of an element $x\in M_n(X)$ (resp. $y\in M_n(Y)$) means that $x$ (resp. $y$) is 
a positive element in $\mathcal B(H^n)$ (resp. $\mathcal B(K^n)$). If, in addition, $\varphi$ is completely isometric, we 
call it an {\em icp map}. 

Of course it is well known that if  $X$ and $Y$ are operator systems (i.e., they contain the units $1_H$ and $1_K$, respectively), then 
every unital linear map which satisfies (3) automatically satisfies (1) and (2), hence is a ccp map. As usual, we then say that 
$\varphi:X\to Y$ is a {\em ucp map}.

\section{$C^*$-operator systems and multiplier systems}\label{sec-C*opsys}
In this section we introduce a category of  possibly non-unital operator systems which 
include  $C^*$-algebras and classical operator systems as subcategories. This category will
play an important r\^ole in our construction of crossed products.

\begin{definition}\label{def-cstaros}
A  (concrete) {\em  $C^*$-operator system} $(A,X)$ on the Hilbert space $H$ is a 
pair of norm-closed self-adjoint subspaces $A\subseteq X\subseteq \mathcal B(H)$ such that
\begin{enumerate}
\item $A$ is a non-degenerate $C^*$-subalgebra of $\mathcal B(H)$, i.e., $A H=H$.
\item $\cspn\{a\cdot x: a\in A, x\in X\}=X$ (which by an application of Cohen's factorisation 
theorem is equivalent to $X=AX=\{ax: a\in A, x\in X\}$).
\end{enumerate}
A {\em morphism}  between two $C^*$-operator systems $(A,X)$ and $(B,Y)$ 
on Hilbert spaces $H$ and $K$, respectively, 
consists of a ccp map 
$\psi: X\to Y$ such that 
\begin{enumerate}
\item $\psi(A)\subseteq B$, and
\item for all $a\in A$ and $x\in X$ we have $\psi(a x) =\psi(a)\psi(x)$.
\end{enumerate}
A morphism $\psi:X\to Y$ is called {\em non-degenerate} if $\psi(A)Y=Y$.
We say that the $C^*$-operator system $(A,X)$ is unital, if $A$ is unital.
\end{definition}

\begin{example}\label{excstaros}
\begin{enumerate}
\item Clearly every classical operator system $X\subseteq \mathcal B(H)$ can be regarded 
as a unital $C^*$-operator system with respect to the $C^*$-subalgebra $A=\C 1\subseteq X$.
In that case a nonegenerate morphism from $(\C1, X)$ to $(\C1, Y)$ is just 
a ucp map.

\item If $(A,X)$ is a unital $C^*$-operator system on the Hilbert space $H$, then the unit of $A$
coincides with the identity operator on $H$ since $A$ acts non-degenerately on $H$.
Hence $X$ is also a classical operator system.

\item Every non-degenerate $C^*$-subalgebra $A\subseteq \mathcal B(H)$
gives rise to the $C^*$-operator system $(A,A)$ on $H$.
%
\item Suppose that $(A,X)$ and $(B,Y)$ are $C^*$-operator systems on the Hilbert spaces
$H$ and $K$, respectively. Then the norm-closed tensor product $X\otimes Y\subseteq \mathcal B(H\otimes K)$
contains the minimal tensor product $A\otimes B$ as a sub-$C^*$-algebra such that 
$(A\otimes B, X\otimes Y)$ becomes a $C^*$-operator system on $H\otimes K$.
In particular, if $X\subseteq \mathcal B(H)$ is a classical unital operator system and $B\subseteq \mathcal B(K)$ is a 
$C^*$-algebra, then the  minimal tensor product $X\otimes B$ has the structure of  a $C^*$-operator system 
with $C^*$-subalgebra $B\cong \C1\otimes B\subseteq X\otimes B$. This example shows that 
$C^*$-operator systems do appear quite naturally!
\end{enumerate}
\end{example}

\begin{definition}\label{def-rep}
Let $(A,X)$ be a $C^*$-operator system on the Hilbert space $H$.
A {\em representation} of $(A,X)$ on a Hilbert space $K$ is a ccp map 
$\pi: X\to \mathcal B(K)$ such that $\pi(ax)=\pi(a)\pi(x)$ for all $a\in A$ and $x\in X$ (in particular, $\pi|_A$ is 
a $*$-representation of $A$).
The representation $\pi$ is called non-degenerate, if $\pi|_A:A\to \mathcal B(K)$ is non-degenerate.
\end{definition}

%
%
\begin{definition}\label{def-nuos}
Suppose that $X\subseteq \mathcal B(H)$ is a self-adjoint norm-closed subset of $\mathcal B(H)$. A norm bounded 
net $(u_i)_{i\in I}$ of self-adjoint elements in $X$ 
is called an {\em approximate unit} for $X$ if 
 for all $x\in X$, $i\in I$ we have $u_ix, xu_i\in X$ and $u_ix, xu_i\to x$ in the norm of $\mathcal B(H)$.
\end{definition}

\begin{lemma}\label{lem-approx}
Suppose that $(A,X)$ is a $C^*$-operator system on $H$. Then every bounded 
self-adjoint approximate unit $(u_i)_{i\in I}$ of $A$ is an approximate unit for $X$ 
in the sense of the above definition. Moreover, $u_i\to 1_H$ $*$-strongly in $\mathcal B(H)$.
\end{lemma}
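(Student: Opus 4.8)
The plan is to deduce everything from the standard theory of approximate units in $C^*$-algebras together with the factorisation property $X = AX$ from Definition~\ref{def-cstaros}(2). First I would fix a bounded self-adjoint approximate unit $(u_i)_{i\in I}$ of $A$, so $\|u_i\|\le 1$ may be assumed and $u_i a \to a$, $a u_i \to a$ for all $a\in A$. The first claim to check is that $u_i x, x u_i \in X$ for every $x\in X$: since $u_i\in A$ and $AX = X = XA$ by Definition~\ref{def-cstaros}(2) (the Cohen factorisation form), this is immediate, as $A$ is an algebra so $u_i x\in AX = X$ and similarly $x u_i\in XA = X$.

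The next and main step is norm convergence $u_i x \to x$ for all $x\in X$. Here I would use Cohen factorisation in the form stated in the excerpt: write $x = a y$ with $a\in A$, $y\in X$. Then
$$\|u_i x - x\| = \|u_i a y - a y\| = \|(u_i a - a) y\| \le \|u_i a - a\|\,\|y\| \to 0,$$
using that $u_i a \to a$ in norm in $A$. The right-hand version $x u_i \to x$ is symmetric: factorise $x = y' a'$ using $X = XA$ (or apply the left-hand case to $X^* = X$, noting $(u_i)$ is self-adjoint, since $x u_i - x = (u_i x^* - x^*)^*$ and $x^*\in X$). This establishes that $(u_i)$ is an approximate unit for $X$ in the sense of Definition~\ref{def-nuos}.

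For the last assertion, $u_i \to 1_H$ $*$-strongly, I would argue on vectors $\xi\in H$. Since $(A,X)$ is a $C^*$-operator system, $A$ acts non-degenerately on $H$ by Definition~\ref{def-cstaros}(1), so $AH$ is dense in $H$; hence it suffices to check strong convergence on vectors of the form $\xi = a\eta$ with $a\in A$, $\eta\in H$, because $\sup_i\|u_i\|\le 1$ gives an $\varepsilon/3$-type uniform bound allowing one to pass from a dense set to all of $H$. For such $\xi$ we have $\|u_i\xi - \xi\| = \|u_i a\eta - a\eta\| \le \|u_i a - a\|\,\|\eta\| \to 0$, so $u_i \to 1_H$ strongly; applying the same to $u_i^* = u_i$ gives $*$-strong convergence. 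The only mild subtlety — the point I would flag as the part needing the most care — is the density/uniform-boundedness bookkeeping in this last step and the correct invocation of Cohen factorisation (the excerpt helpfully records that (2) is equivalent to $X = \{ax : a\in A, x\in X\}$, so no separate proof of factorisation is needed here); everything else is a routine $\varepsilon$-estimate.
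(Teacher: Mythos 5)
Your proof is correct and follows exactly the route the paper intends: the paper's own proof is a two-line sketch ("the first assertion follows from $AX=X$, the second from $H=AH$"), and your argument via Cohen factorisation $x=ay$ plus the density/uniform-boundedness argument on $AH=H$ is precisely the intended filling-in of those details.
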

\begin{proof} The first assertion follows immediately from the requirement $AX=X$ for a 
$C^*$-operator system. The second assertion follows from $H=AH$.
\end{proof}

\begin{definition}\label{def-unitization}
Suppose that $(A,X)$ is a $C^*$-operator system on some Hilbert space $H$.
By a unitization of $(A,X)$ we understand a unital $C^*$-operator system $(\tilde{A},\tilde{X})$ on 
$H$ which contains $(A,X)$ such that the following are satisfied
\begin{enumerate}
\item $A$ is an ideal in $\tilde{A}$ and $A\tilde{X}\subseteq X$.
\item If $x\in \tilde{X}$ such that $ax=0$ for all $a\in A$, then $ x=0$.
\end{enumerate}
\end{definition}

It is well known that for a $C^*$-algebra $A$, the multiplier algebra $M(A)$ 
is the largest unitization of $A$. We shall now introduce an analogous construction
for $C^*$-operator systems:

\begin{lemma}\label{lem-multipliers}
Suppose that $(A,X)$ is a $C^*$-operator system on the Hilbert space $H$.
Let $M(A)=\{m\in \mathcal B(H): mA\cup Am\subseteq A\}$ be the realisation of the multiplier algebra of 
$A$ in $\mathcal B(H)$ and let 
$$M(X)=\{k\in \mathcal B(H): kA\cup Ak\subseteq X\}\subseteq \mathcal B(H).$$ 
Then $(M(A), M(X))$ is a unitization of $(A,X)$ in $\mathcal B(H)$.

Moreover,  if $\pi:X\to \mathcal B(K)$ is any non-degenerate ccp representation of $(A,X)$ on some Hilbert space $K$,
then there exists a unique unital extension $\bar\pi:M(X)\to \mathcal B(K)$ of $\pi$ as a ccp representation of 
 $(M(A), M(X))$ on $K$. Moreover,  $\bar\pi$ is completely isometric iff $\pi$ is completely isometric.
\end{lemma}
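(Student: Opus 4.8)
The plan is to verify the two parts separately: first that $(M(A),M(X))$ is a unitization in the sense of Definition~\ref{def-unitization}, and then that non-degenerate ccp representations extend uniquely. For the first part I would begin by checking that $M(X)$ is a norm-closed self-adjoint subspace of $\mathcal B(H)$ containing $X$: it is closed because the conditions $kA\subseteq X$, $Ak\subseteq X$ are preserved under norm limits (using that $X$ is norm-closed), it is self-adjoint because $X=X^*$ and $A=A^*$, and it contains $X$ because $XA\subseteq X$ and $AX=X\subseteq X$ by the axioms of a $C^*$-operator system. Next, $1_H\in M(X)$ since $1_H A=A\subseteq X$. To see that $(M(A),M(X))$ is a $C^*$-operator system one needs $M(A)M(X)=M(X)$; the inclusion $\subseteq$ is clear, and for $\supseteq$ one writes, for $k\in M(X)$ and an approximate unit $(u_i)$ of $A$ (which by Lemma~\ref{lem-approx} converges $*$-strongly to $1_H$), $k=\lim u_i k$ with $u_i\in A\subseteq M(A)$ and $u_ik\in M(X)$ (indeed $u_i k\in X\subseteq M(X)$), and then appeal to Cohen factorisation as in axiom~(2) of Definition~\ref{def-cstaros} — alternatively one simply notes $M(A)$ is a unital $C^*$-algebra containing $1_H$ so $1_H\cdot k=k$ suffices. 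Then the two conditions of Definition~\ref{def-unitization} are immediate: $A$ is an ideal in $M(A)$ (standard), $AM(X)\subseteq X$ by the very definition of $M(X)$, and if $x\in M(X)$ satisfies $Ax=0$ then $x=\lim u_i x=0$ by the $*$-strong convergence $u_i\to 1_H$.

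For the extension statement, I would mimic the classical argument for extending non-degenerate representations to multiplier algebras. Given a non-degenerate ccp representation $\pi:X\to\mathcal B(K)$, its restriction $\pi|_A$ is a non-degenerate $*$-representation of $A$ on $K$, so it extends uniquely to a unital $*$-representation $\bar\pi|_{M(A)}:M(A)\to\mathcal B(K)$, characterised by $\bar\pi(m)\pi(a)\xi=\pi(ma)\xi$. Now define $\bar\pi$ on $M(X)$: for $k\in M(X)$, $a\in A$, $\xi\in K$, set $\bar\pi(k)\bigl(\pi(a)\xi\bigr):=\pi(ka)\xi$ and extend by linearity and continuity to the dense subspace $\pi(A)K=K$. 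One must check this is well-defined and bounded; this follows from a Cauchy--Schwarz / Stinespring-type estimate. The cleanest route is to pass to the Paulsen-system picture or, more directly, to dilate $\pi$: by Stinespring/Arveson there is a Hilbert space $\tilde K\supseteq K$, a unital $*$-representation $\rho$ of the $C^*$-algebra generated by $X$ (inside some $C^*$-hull, which exists by the discussion announced for Section~\ref{sec-univ-env}, or one works directly inside $\mathcal B(H)$ and dilates) with $\pi=P_K\rho(\cdot)|_K$; then one extends $\rho$ to multipliers in the obvious way and compresses back. With $\bar\pi$ defined, the module identity $\bar\pi(k)\bar\pi(a)=\bar\pi(ka)$ and $\bar\pi(mk)=\bar\pi(m)\bar\pi(k)$ for $m\in M(A)$ follow by evaluating on vectors $\pi(b)\xi$ and using $\pi(ax)=\pi(a)\pi(x)$, hence $\bar\pi$ is a ccp representation of $(M(A),M(X))$; it is unital since $\bar\pi(1_H)\pi(a)\xi=\pi(a)\xi$. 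Uniqueness is forced because any such extension must agree with $\pi$ on $X$ and satisfy $\bar\pi(k)\pi(a)\xi=\bar\pi(k)\bar\pi(a)\xi=\bar\pi(ka)\xi=\pi(ka)\xi$, and $\pi(A)K$ is dense.

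Finally, for the equivalence ``$\bar\pi$ completely isometric $\iff$ $\pi$ completely isometric'': the direction ($\Rightarrow$) is trivial since $\pi=\bar\pi|_X$. For ($\Leftarrow$), suppose $\pi$ is completely isometric; I would argue that $\bar\pi$ is isometric on $M_n(M(X))$ by approximating: for $k=(k_{ij})\in M_n(M(X))$ and an approximate unit $(u_i)$ of $A$, one has $u_i^{(n)} k\, u_i^{(n)}\in M_n(X)$ (where $u_i^{(n)}=\diag(u_i,\dots,u_i)$), this converges $*$-strongly to $k$, $\bar\pi_n(u_i^{(n)} k\, u_i^{(n)})=\pi(u_i)^{(n)}\bar\pi_n(k)\pi(u_i)^{(n)}$ converges $*$-strongly to $\bar\pi_n(k)$ because $\pi|_A$ is non-degenerate so $\pi(u_i)\to 1_K$ $*$-strongly, and norm is lower semicontinuous under $*$-strong limits while the compressed elements have norm $\le\|k\|$; combining $\|\bar\pi_n(k)\|\le\liminf\|\pi_n(u_i^{(n)}ku_i^{(n)})\|=\liminf\|u_i^{(n)}ku_i^{(n)}\|\le\|k\|$ with the reverse inequality (ccp maps are contractive) and with a matching lower bound obtained the same way gives equality. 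I expect the main obstacle to be the well-definedness and boundedness of $\bar\pi$ on $M(X)$ — i.e., making the estimate $\bigl\|\sum_j\pi(k a_j)\xi_j\bigr\|\le\|k\|\,\bigl\|\sum_j\pi(a_j)\xi_j\bigr\|$ rigorous — which is exactly the point where the completely positive (Stinespring) structure of $\pi$, rather than mere complete contractivity, is essential; the cleanest fix is to dilate $\pi$ to a $*$-representation first and only afterwards compress.
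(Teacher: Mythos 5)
The skeleton of your argument (the unitization checks, the defining formula $\bar\pi(k)\pi(a)\xi:=\pi(ka)\xi$, uniqueness by density of $\pi(A)K$) matches the paper, and your approximate-unit argument for complete isometry --- compressing by $u_i^{(n)}=u_i\otimes 1_n$, using that $u_i^{(n)}ku_i^{(n)}\in M_n(X)$ converges weakly to $k$ and that the norm is lower semicontinuous under weak limits --- is a valid alternative to the paper's route (the paper instead extends $\pi^{-1}$ over $M(\pi(X))$ and invokes uniqueness to see the two extensions are mutually inverse). But the central technical step is left open: you explicitly defer well-definedness, boundedness, and (implicitly) complete positivity of $\bar\pi$ to a Stinespring/Arveson dilation that you do not carry out, and you flag it yourself as ``the main obstacle.'' That route is not a routine citation here: $X$ is non-unital, so you would first need to extend $\pi$ to a ucp map on $X^1$ and then to a $C^*$-hull (material the paper only develops in Section \ref{sec-univ-env}, partly \emph{using} this lemma via Corollary \ref{cor-isometry}), then extend the dilation to multipliers and check that the compression back to $K$ still satisfies $\bar\pi(ka)=\bar\pi(k)\pi(a)$ --- which requires a multiplicative-domain argument since $K$ is not invariant under the dilated representation. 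As written, the proof is incomplete at exactly the point where the content lies.

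Moreover, your diagnosis of where the difficulty sits is off. The estimate $\bigl\|\sum_j\pi(ka_j)\xi_j\bigr\|\le\|k\|\bigl\|\sum_j\pi(a_j)\xi_j\bigr\|$ does \emph{not} need Stinespring: since $ku_i\in X$ for an approximate unit $(u_i)$ of $A$, one has $\sum_j\pi(ka_j)\xi_j=\lim_i\pi(ku_i)\bigl(\sum_j\pi(a_j)\xi_j\bigr)$ with $\|\pi(ku_i)\|\le\|ku_i\|\le\|k\|$, using only the module identity $\pi(xa)=\pi(x)\pi(a)$ and contractivity of $\pi$ on $X$; this is exactly how the paper gets well-definedness and boundedness. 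What genuinely requires an argument --- and is missing from your proposal --- is that $\bar\pi$ is completely \emph{positive} on $M(X)$: the module identities alone do not give this. The paper proves it by observing that for positive $m\in M_n(M(X))$ the elements $\pi_n(v_imv_i)=\pi(u_i)^{(n)}\bar\pi_n(m)\pi(u_i)^{(n)}$ are positive (they lie in $\pi_n(M_n(X))$) and converge weakly to $\bar\pi_n(m)$, and weak limits of positive operators are positive. You should either supply that argument or fully execute the dilation; as it stands the claim ``hence $\bar\pi$ is a ccp representation'' is unsupported.
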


\begin{notation}\label{not-multiplier}
We call $(M(A), M(X))$ the {\em multiplier $C^*$-operator system} of $(A,X)$.

Notice that the space $M(X)$ very much depends on the $C^*$-subalgebra 
$A\subseteq X$, so that a  better notation would probably  be to write $M_A(X)$ instead 
of $M(X)$. However, it will always be clear from the context with respect to which $C^*$-subalgebra 
$A\subseteq X$  the set $M(X)$ is defined, so in order to keep notation simple 
we stick to $M(X)$.
\end{notation}

\begin{proof}[Proof of Lemma \ref{lem-multipliers}]
It is trivial to check that $(M(A), M(X))$ fulfils all properties of a unital
$C^*$-operator system. Note that $M(A)X\subseteq X$ since $X=AX$ and hence 
$M(A)X=M(A)(AX)=(M(A)A)X=AX=X$ and similarly $XM(A)=X$.
This easily implies that $M(A)M(X)\subseteq M(X)$.
Moreover, if $k\in M(X)$ such that $ak=0$ for all $a\in A$, then 
we also have $k^*a=0$ for all $a\in A$, hence 
$k^*(AH)=k^*H=\{0\}$ which then implies that $k^*=0$. But then $k=0$.
Thus it follows that $(M(A), M(X))$ is a unitization of $(A,X)$.

Suppose now that  $\pi:X\to \mathcal B(K)$ is a non-degenerate ccp representation of $(A,X)$.
Then  $\pi(A)K=K$ and we define the extension
$\bar\pi:M(X)\to \mathcal B(K)$ by
$$\bar\pi(k)(\pi(a)\xi):=\pi(ka)\xi.$$
To see that this is well defined, let 
let $(u_i)_{i\in I}$ be an approximate unit of $A$ consisting of 
positive elements of norm $\leq 1$.
Then, if $\pi(a_1)\xi_1=\pi(a_2)\xi_2$ for some elements $a_1,a_2\in A$ and $\xi_1, \xi_2\in K$,
we get
\begin{align*}
\pi(ka_1)\xi_1&=\lim_i\pi(ku_ia_1)\xi_1=\lim_i\pi(ku_i)\pi(a_1)\xi_1\\
&=\lim_i\pi(ku_i)\pi(a_2)\xi_2=\lim_i\pi(ku_ia_2)\xi_2=\pi(ka_2)\xi_2.
\end{align*}
This shows that $\bar\pi$ is well defined.

We now need to check that $\bar\pi(mk)=\bar\pi(m)\bar\pi(k)$ for all $m\in M(A)$ and $k\in M(X)$.
We first show that $\pi(ka)=\bar\pi(k)\pi(a)$ for all $k\in M(X), a\in A$.
To see this let $\eta\in H$. Then $\eta=\pi(b)\xi$ for some $b\in A, \xi\in K$. 
Then 
$$\pi(ka)\eta=\pi(kab)\xi=\bar\pi\pi(ab)\xi=\bar\pi\pi(a)\pi(b)\xi=\bar\pi(k)\pi(a)\eta.$$
Suppose now that $m\in M(A)$ and $k\in M(X)$. Then, for $\xi, \eta\in H$ and $a,b\in A$
we get
\begin{align*}
\langle \bar\pi(mk)\pi(a)\xi, \pi(b)\eta\rangle&= \langle \pi(b^*)\bar\pi(mk)\pi(a)\xi, \eta\rangle\\
&= \langle \pi(b^*)\pi(mka)\xi, \eta\rangle\\
&= \langle \pi(b^*mka)\xi, \eta\rangle\\
&= \langle \pi(b^*m)\pi(ka)\xi, \eta\rangle\\
&= \langle \pi(b^*)\bar\pi(m)\bar\pi(k)\pi(a)\xi, \eta\rangle\\
&= \langle \bar\pi(m)\bar\pi(k)\pi(a)\xi, \pi(b)\eta\rangle
\end{align*}
which then implies that $\bar\pi(mk)=\bar\pi(m)\bar\pi(k)$.

We need to show that $\bar\pi:M(X)\to \mathcal B(K)$ is completely positive. 
Since it is unital, this will also imply that it is completely contractive.
If $(A,X)$ is unital, then $M(X)=X$ and nothing has to be done.

If $(A,X)$ is not unital, let $(u_i)_{i\in I}$ be an approximate unit of $A$ consisting of 
positive elements of norm $\leq 1$. Since $\pi(A)K=K$ it follows that 
 $\pi(u_i)\to 1_K$ $*$-strongly in $\mathcal B(K)$. Then, if $m\in M(X)$, it follows that 
 $\pi(u_imu_i)=\pi(u_i)\bar\pi(m)\pi(u_i)$ weakly to $\bar\pi(m)$ in $\mathcal B(K)$.
Now let  $m\in M_n(M(X))\subseteq B(K^n)$ be any positive element. Let $v_i:=u_i \otimes I_n\in M_n(X)$.
Then $v_imv_i$ is a positive element of $M_n(X)$ such that $\pi_n(v_imv_i)$ converges weakly
to $\bar\pi_n(m)$ in $\mathcal B(K^n)$. Since weak limits of positive elements 
are positive, it follows that $\bar\pi_n(m)$ is positive.

Finally assume that $\pi:X\to \mathcal B(K)$ is completely isometric and let  $(\widetilde{A},\widetilde{X}):=(\pi(A),\pi(X))$ denote the image
of $(A,X)$ in $\mathcal B(K)$. Then by the first part of this proof applied to the system  $(\widetilde{A},\widetilde{X})$
 the inverse $\pi^{-1}:\widetilde{X}\to \mathcal B(H)$ extends 
uniquely to a ccp representation $\bar{\pi}^{-1}:M(\widetilde X)\to \B(H)$. Since $\bar\pi^{-1}\circ \bar\pi|_X:X\to \mathcal B(H)$ coincides with the 
identity on $X$, it follows from the uniqueness assertion for the extension to $M(X)$ that 
$\bar\pi^{-1}\circ \bar\pi:M(X)\to \mathcal B(H)$ is the identity on $M(X)$. Similarly, $\bar\pi\circ \bar{\pi}^{-1}$ is the identity on $M(\widetilde{X})$.
In particular, $\bar\pi:M(X)\to \mathcal B(K)$ is completely isometric.

\end{proof}

The following lemma shows that $(M(A), M(X))$ is the largest unitization of 
$(A,X)$. 

\begin{lemma}\label{lem-largest-unit}
Let $(A,X)$ be a $C^*$-operator system in $\mathcal B(H)$ and 
suppose that $(\tilde{A},\tilde{X})$ is a unitization of $(A,X)$ in $\mathcal B(H)$.
Then $\tilde{A}\subseteq M(A)$ and $\tilde{X}\subseteq M(X)$.

As a consequence, if $\pi:X\to \mathcal B(K)$ is a non-degenerate ccp representation of 
$(A,X)$ on a Hilbert space $K$, there exists a unique ccp representation 
$\tilde\pi:\tilde{X}\to \mathcal B(K)$ of $(\tilde{A},\tilde{X})$ which extends $\pi$.
\end{lemma}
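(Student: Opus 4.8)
The plan is to prove the containment $\tilde A\subseteq M(A)$ and $\tilde X\subseteq M(X)$ first, and then derive the extension statement as a consequence of Lemma \ref{lem-multipliers}. For the first part, recall that a unitization $(\tilde A,\tilde X)$ is a unital $C^*$-operator system on the same Hilbert space $H$ containing $(A,X)$, with $A$ an ideal in $\tilde A$, with $A\tilde X\subseteq X$, and satisfying the faithfulness condition (2) of Definition \ref{def-unitization}. Since $A$ is an ideal in $\tilde A$, for every $\tilde a\in\tilde A$ we have $\tilde a A\cup A\tilde a\subseteq A$, which is exactly the defining condition for $\tilde a\in M(A)$; hence $\tilde A\subseteq M(A)$. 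For $\tilde x\in\tilde X$ we use $A\tilde X\subseteq X$ to get $A\tilde x\subseteq X$, and taking adjoints (both $X$ and $\tilde X$ are self-adjoint, and $A=A^*$) gives $\tilde x^* A=(A\tilde x)^*\subseteq X^*=X$, so $\tilde x A=(A\tilde x^*)^*\subseteq X$ as well; thus $A\tilde x\cup\tilde x A\subseteq X$, which is precisely the condition defining $M(X)$. Therefore $\tilde X\subseteq M(X)$.

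For the consequence, let $\pi:X\to\mathcal B(K)$ be a non-degenerate ccp representation of $(A,X)$. By Lemma \ref{lem-multipliers} there is a unique ccp representation $\bar\pi:M(X)\to\mathcal B(K)$ of $(M(A),M(X))$ extending $\pi$. Since $\tilde X\subseteq M(X)$ and $\tilde A\subseteq M(A)$, the restriction $\tilde\pi:=\bar\pi|_{\tilde X}$ is a candidate extension; one must check it is a ccp representation of $(\tilde A,\tilde X)$, i.e.\ that $\tilde\pi$ is ccp, that $\tilde\pi(\tilde A)\subseteq\mathcal B(K)$ is a $*$-representation, and that $\tilde\pi(\tilde a\,\tilde x)=\tilde\pi(\tilde a)\tilde\pi(\tilde x)$ for $\tilde a\in\tilde A$, $\tilde x\in\tilde X$. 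Complete positivity and complete contractivity are inherited from $\bar\pi$ by restriction; the multiplicativity relation for $(\tilde A,\tilde X)$ is a special case of the relation $\bar\pi(mk)=\bar\pi(m)\bar\pi(k)$ for $m\in M(A)$, $k\in M(X)$ already established for $\bar\pi$; and similarly $\tilde\pi|_{\tilde A}$ being a $*$-homomorphism follows because $\bar\pi|_{M(A)}$ is (it is the canonical unital extension of the non-degenerate $*$-representation $\pi|_A$ to $M(A)$).

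For uniqueness of $\tilde\pi$, suppose $\sigma:\tilde X\to\mathcal B(K)$ is another ccp representation of $(\tilde A,\tilde X)$ extending $\pi$. Since $A$ acts non-degenerately and $\pi(A)K=K$, for any $\tilde x\in\tilde X$ and $a\in A$, $\xi\in K$ we have $\sigma(\tilde x)\pi(a)\xi=\sigma(\tilde x)\sigma(a)\xi=\sigma(\tilde x a)\xi=\pi(\tilde x a)\xi$, using $\tilde x a\in A\tilde X\subseteq X$ (after an adjoint, or directly $\tilde x a=(a^*\tilde x^*)^*\in X$) and $\sigma|_X=\pi$. The right-hand side does not involve $\sigma$, so $\sigma(\tilde x)$ is determined on the dense subspace $\pi(A)K$, hence everywhere; this forces $\sigma=\tilde\pi$. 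I expect the only mild subtlety to be bookkeeping of adjoints to land products like $\tilde x a$ inside $X$ rather than merely $\tilde X$, and making sure the non-degeneracy of $\pi|_A$ is invoked correctly so that $\pi(A)K$ is genuinely dense; these are routine once the inclusions $\tilde A\subseteq M(A)$, $\tilde X\subseteq M(X)$ are in place. The main work has really already been done in Lemma \ref{lem-multipliers}, so there is no serious obstacle here beyond assembling the pieces.
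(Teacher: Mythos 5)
Your proposal is correct and follows essentially the same route as the paper: the ideal property gives $\tilde A\subseteq M(A)$, the condition $A\tilde X\subseteq X$ together with self-adjointness gives $\tilde X\subseteq M(X)$, and the extension is obtained by restricting the unique extension $\bar\pi$ from Lemma \ref{lem-multipliers}. You additionally spell out the adjoint bookkeeping and the uniqueness argument on the dense subspace $\pi(A)K$, which the paper leaves implicit; both are handled correctly.
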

\begin{proof}
Clearly, if $(\tilde{A},\tilde{X})$ is a unitization of $(A,X)$ in $\mathcal B(H)$,
then every $m\in \tilde{X}$ multiplies $A$ into $X$. Hence $\tilde{X}\subseteq M(X)$.
Since $A$ is an ideal in $\tilde{A}$ we also have $\tilde{A}\subseteq M(A)$. By Lemma \ref{lem-multipliers}
we know that $\pi$ extends uniquely to a representation $\bar\pi$ of $M(X)$.
We then put $\tilde\pi:=\bar\pi|_{\tilde{X}}$.
\end{proof}

\begin{definition}\label{def-nondeg-morphism}
Suppose that $(A,X)$ and $(B,Y)$ are $C^*$-operator systems. 
We say that $\varphi:X\to M(Y)$ is a (non-degenerate) {\em generalized morphism}
from $(A,X)$ to $(B,Y)$ if the following holds:
\begin{enumerate}
\item $\varphi:X\to M(Y)$ is a morphism from $(A,X)$ to $(M(B), M(Y))$, and
\item $\varphi(A)B=B$.
\end{enumerate}
\end{definition}

Note that since $BY=Y$, the  condition (2) also implies that $\varphi(A)Y=Y$.

\begin{lemma}\label{lem-non-deg-morphism}
Suppose that $(A,X)$ and $(B,Y)$ are $C^*$-operator systems. 
If $\varphi:X\to M(Y)$ is a non-degenerate generalized morphism from 
$(A,X)$ to $(B,Y)$, then there exists a unique extension 
$\bar\varphi:M(X)\to M(Y)$ of $\varphi$ as a morphism from
$(M(A), M(X))$ to $(M(B), M(Y))$.
In particular, if $\varphi:(A,X)\to (B,Y)$ is a completely positive
and completely isometric isomorphism of $C^*$-operator systems, the 
same holds for the extension $\bar\varphi: M(X)\to M(Y)$.
\end{lemma}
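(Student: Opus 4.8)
The plan is to reduce everything to the multiplier extension result of Lemma \ref{lem-multipliers} by composing $\varphi$ with a faithful non-degenerate representation of $(B,Y)$. Concretely, choose any non-degenerate completely isometric representation $\iota:Y\to\mathcal B(K)$ of $(B,Y)$; such a representation exists (e.g. the inclusion $Y\subseteq\mathcal B(K)$ of the concrete system, or one built from the enveloping $C^*$-hull). By Lemma \ref{lem-multipliers}, $\iota$ extends uniquely to a unital completely isometric ccp representation $\bar\iota:M(Y)\to\mathcal B(K)$ of $(M(B),M(Y))$. Now consider the composite $\bar\iota\circ\varphi:X\to\mathcal B(K)$. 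This is a ccp map, it is multiplicative in the sense $\bar\iota\varphi(ax)=\bar\iota(\varphi(a)\varphi(x))=\bar\iota\varphi(a)\,\bar\iota\varphi(x)$ for $a\in A$, $x\in X$, and its restriction to $A$ is $\bar\iota\circ\varphi|_A$, a $*$-homomorphism $A\to\mathcal B(K)$. So $\bar\iota\circ\varphi$ is a representation of $(A,X)$ on $K$; moreover it is non-degenerate because $\varphi(A)Y=Y$ (by the remark after Definition \ref{def-nondeg-morphism}) and $\iota$ is non-degenerate, so $(\bar\iota\varphi)(A)K\supseteq\iota(\varphi(A)Y)K=\iota(Y)K$, which is dense in $K$.

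The next step is to apply Lemma \ref{lem-multipliers} again, this time to the representation $\bar\iota\circ\varphi$ of $(A,X)$: it extends uniquely to a unital ccp representation $\widetilde{\bar\iota\varphi}:M(X)\to\mathcal B(K)$ of $(M(A),M(X))$. I then need to show that the image of this extension lands inside $\bar\iota(M(Y))$, so that I can define $\bar\varphi:=\bar\iota^{-1}\circ\widetilde{\bar\iota\varphi}:M(X)\to M(Y)$, using that $\bar\iota$ is a completely isometric isomorphism onto its image by the last sentence of Lemma \ref{lem-multipliers}. To verify the image condition, take $k\in M(X)$ and $a\in A$: then $ka\in X$ and $ak\in X$, so $\widetilde{\bar\iota\varphi}(k)\,\bar\iota\varphi(a)=\bar\iota\varphi(ka)\in\bar\iota(\varphi(X))\subseteq\bar\iota(M(Y))$, and likewise $\bar\iota\varphi(a)\,\widetilde{\bar\iota\varphi}(k)\in\bar\iota(M(Y))$. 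Since $\bar\iota\varphi(a)$ ranges over $(\bar\iota\varphi)(A)$ and $(\bar\iota\varphi)(A)$ acts non-degenerately, a standard multiplier argument — approximating $\widetilde{\bar\iota\varphi}(k)$ in the strong-$*$ topology by $\widetilde{\bar\iota\varphi}(k)\bar\iota\varphi(u_i)=\bar\iota\varphi(ku_i)$ for an approximate unit $(u_i)$ of $A$, each term lying in $\bar\iota(\varphi(X))$ — shows $\widetilde{\bar\iota\varphi}(k)$ multiplies $\bar\iota(B)=\iota(B)$ into $\bar\iota(Y)=\iota(Y)$ from both sides, hence lies in $\bar\iota(M(Y))$ by the concrete description of $M(Y)$ from Lemma \ref{lem-multipliers}. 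Setting $\bar\varphi:=\bar\iota^{-1}\circ\widetilde{\bar\iota\varphi}$ gives a ccp map $M(X)\to M(Y)$ extending $\varphi$, and one checks directly that $\bar\varphi$ is a morphism from $(M(A),M(X))$ to $(M(B),M(Y))$: it is multiplicative on $M(A)\cdot M(X)$ because $\widetilde{\bar\iota\varphi}$ is (being a representation of $(M(A),M(X))$), and $\bar\varphi(M(A))\subseteq M(B)$ since $\bar\iota^{-1}$ carries $\bar\iota(M(B))$ to $M(B)$ and $\widetilde{\bar\iota\varphi}|_{M(A)}$ is the unital $*$-homomorphic extension of the non-degenerate $*$-homomorphism $\bar\iota\varphi|_A:A\to\bar\iota(B)$.

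For uniqueness: any morphism $\bar\varphi':M(X)\to M(Y)$ extending $\varphi$ satisfies, for $k\in M(X)$ and $a\in A$, the identity $\bar\varphi'(k)\varphi(a)=\bar\varphi'(ka)=\varphi(ka)$, which is forced; since $\varphi(A)$ (inside $M(B)$) acts non-degenerately on $Y$ — equivalently $\varphi(A)Y=Y$ and hence $\varphi(A)B=B$ — the element $\bar\varphi'(k)\in M(Y)$ is uniquely determined by its products $\bar\varphi'(k)\varphi(a)$, $a\in A$, so $\bar\varphi'=\bar\varphi$. The final assertion, that $\bar\varphi$ is a completely positive completely isometric isomorphism whenever $\varphi$ is, follows by applying the existence-and-uniqueness statement to $\varphi^{-1}:(B,Y)\to(A,X)$ — note $\varphi^{-1}$ is automatically a non-degenerate generalized morphism since $\varphi(A)=B$ forces $\varphi^{-1}(B)=A$ — obtaining $\overline{\varphi^{-1}}:M(Y)\to M(X)$, and then using uniqueness of extensions to conclude that $\overline{\varphi^{-1}}\circ\bar\varphi=\id_{M(X)}$ and $\bar\varphi\circ\overline{\varphi^{-1}}=\id_{M(Y)}$, exactly as in the last paragraph of the proof of Lemma \ref{lem-multipliers}.

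The main obstacle I anticipate is the image-containment step, i.e., proving that $\widetilde{\bar\iota\varphi}(M(X))\subseteq\bar\iota(M(Y))$; everything else is a formal bootstrap from Lemma \ref{lem-multipliers}. The delicate point there is that one only knows $\bar\iota$ is completely isometric onto its range, not that its range contains all of $\mathcal B(K)$, so one genuinely has to identify $\widetilde{\bar\iota\varphi}(k)$ as an honest multiplier of the concrete pair $(\iota(B),\iota(Y))$ rather than merely a bounded operator, and this is where the non-degeneracy hypothesis $\varphi(A)B=B$ (not just $\varphi(A)Y=Y$) is essential — it guarantees $\iota(\varphi(A))$ has the same closed span of action as $\iota(B)$, so that the approximate-unit argument really lands in $\iota(M(Y))$ as described in Lemma \ref{lem-multipliers}.
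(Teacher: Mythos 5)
Your proof is correct and follows essentially the same route as the paper's: represent $(B,Y)$ concretely on a Hilbert space, apply Lemma \ref{lem-multipliers} to extend the composite (non-degenerate, since $\varphi(A)B=B$) representation of $(A,X)$ to $M(X)$, check that the image lands in the concrete copy of $M(Y)$, and obtain the isomorphism statement by applying uniqueness to $\varphi^{-1}$. One minor point on the image-containment step: a strong-$*$ limit of elements of the norm-closed set $\iota(Y)$ need not lie in $\iota(Y)$, so rather than approximating with $\bar\iota\varphi(ku_i)$ it is cleaner to use Cohen factorization $b=\varphi(a)b'$ and compute $\widetilde{\bar\iota\varphi}(k)\iota(b)=\bar\iota(\varphi(ka))\iota(b')\in\iota(M(Y)B)\subseteq\iota(Y)$ outright, which is exactly the computation in the paper's proof.
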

\begin{proof}
Assume that $(B,Y)$ and hence $(M(B), M(Y))$ are $C^*$-operator systems on 
the Hilbert space $K$. It follows then from the condition that $\varphi(A)B=B$
that $\varphi(A)K=\varphi(A)(BK)=BK=K$, so $\varphi:A\to \mathcal B(K)$ is a 
non-degenerate representation of $(A,X)$ on $\mathcal B(K)$. 
By Lemma \ref{lem-multipliers} we know that there is a unique ccp extension 
$\bar\varphi:M(X)\to \mathcal B(K)$. 
We then get 
$$\bar\varphi(M(X))B=\bar\varphi(M(X))\varphi(A)B=\varphi(M(X)A)B
\subseteq \varphi(X)B\subseteq M(Y)B\subseteq Y,$$
hence $\bar\varphi(M(X))\subseteq M(Y)$. A similar argument shows 
that $\bar\pi(M(A))\subseteq M(B)$. 

For the final statement assume that $\varphi:X\to Y$ is a completely 
isometric isomorphism of the $C^*$-operator systems $(A,X)$ and $(B,Y)$.
Let $\bar\varphi:M(X)\to M(Y)$ and ${\bar\varphi}^{-1}:M(Y)\to M(X)$ 
denote the unique extensions of $\varphi$ and $\varphi^{-1}$ to $M(X)$ and $M(Y)$, respectively.
Then ${\bar\varphi}^{-1}\circ \bar{\varphi}:M(X)\to M(X)$ extends the 
identity on $X$, and hence, by the uniqueness of the extension, must 
be equal to the identity mal on $M(X)$. Similarly, 
$\bar{\varphi}\circ {\bar\varphi}^{-1}$ is the identity on $M(Y)$.
\end{proof}

\begin{corollary}\label{cor-isometry}
Let $(A,X)$ be a $C^*$-operator system on $H$ and suppose
 that $\pi:X\to \mathcal B(K)$ is a completely isometric c.p representation of 
 $(A,X)$ on $K$. Then the unique extension $\bar\pi:M(X)\to \mathcal B(K)$
 is completely isomeric as well. The same holds for the extension
 $\tilde\pi:\tilde{X}\to \mathcal B(K)$ for any unitization $(\tilde{A},\tilde{X})$
 of $(A,X)$.
 \end{corollary}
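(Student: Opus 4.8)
The plan is to observe that this corollary merely repackages results already in hand, so essentially no new argument is required. For the first assertion, note first that the very existence of $\bar\pi$ presupposes that $\pi$ is non-degenerate (this is the hypothesis of Lemma \ref{lem-multipliers}), so I take non-degeneracy as a standing assumption. Then I would simply reproduce the last paragraph of the proof of Lemma \ref{lem-multipliers}: apply the existence part of that lemma to the image $C^*$-operator system $(\pi(A),\pi(X))$ on $K$ to obtain the unique ccp extension $\overline{\pi^{-1}}:M(\pi(X))\to\mathcal B(H)$ of the inverse $\pi^{-1}:\pi(X)\to X\subseteq\mathcal B(H)$. Since $\overline{\pi^{-1}}\circ\bar\pi|_X=\id_X$ and $\bar\pi\circ\overline{\pi^{-1}}|_{\pi(X)}=\id_{\pi(X)}$, the uniqueness clause of Lemma \ref{lem-multipliers} forces $\overline{\pi^{-1}}\circ\bar\pi=\id_{M(X)}$ and $\bar\pi\circ\overline{\pi^{-1}}=\id_{M(\pi(X))}$. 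Hence $\bar\pi$ is a completely positive, completely isometric isomorphism of $M(X)$ onto $M(\pi(X))\subseteq\mathcal B(K)$, which is exactly the claim.

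For the statement about an arbitrary unitization $(\tilde{A},\tilde{X})$, I would invoke Lemma \ref{lem-largest-unit}, which gives $\tilde{A}\subseteq M(A)$, $\tilde{X}\subseteq M(X)$, and identifies the extension $\tilde\pi$ with the restriction $\bar\pi|_{\tilde{X}}$. Since at each matrix level $M_n(\tilde{X})$ is a subspace of $M_n(M(X))$ and $\tilde\pi_n=\bar\pi_n|_{M_n(\tilde{X})}$, the complete isometry of $\bar\pi$ just established immediately restricts to a complete isometry of $\tilde\pi$. This finishes the proof.

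I do not anticipate any genuine obstacle: the content has already been proved in Lemma \ref{lem-multipliers} (for $M(X)$) and Lemma \ref{lem-largest-unit} (for $\tilde{X}$), and the only two points deserving a word of care are (i) the implicit non-degeneracy hypothesis needed for $\bar\pi$ to exist at all, and (ii) the trivial remark that a completely isometric map stays completely isometric upon restriction to a self-adjoint norm-closed subspace. If one prefers, this corollary could equally well be absorbed into a remark immediately following Lemma \ref{lem-largest-unit}.
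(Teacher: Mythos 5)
Your proof is correct and follows essentially the same route as the paper: the paper's own proof also passes to the image system $(\pi(A),\pi(X))$ and invokes the ``extend the inverse and use uniqueness'' argument, merely citing Lemma \ref{lem-non-deg-morphism} (whose proof is exactly the composition trick you write out, and which also supplies the small verification that $\bar\pi(M(X))\subseteq M(\pi(X))$ so the composite is defined). Your handling of the unitization case via Lemma \ref{lem-largest-unit} and restriction is the intended argument as well.
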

 \begin{proof} If $\pi:X\to \mathcal B(K)$ is completely isometric, then 
 $(\pi(A),\pi(X))$ is a $C^*$-operator system in $\mathcal B(K)$ and 
 $\pi:X\to \pi(X)$ is a completely isometric isomorphism of 
 $C^*$-operator systems. Thus the unique extension 
 $\bar\pi:M(X)\to M(\pi(X))\subseteq \mathcal B(K)$ is completely 
 isometric by Lemma \ref{lem-non-deg-morphism}.
 \end{proof}

Of course there is also a smallest unitization of $(A,X)$:

\begin{definition}\label{def-adjoining-unit}
Suppose that $(A,X)$ is a $C^*$-operator system on a Hilbert space $H$. 
Let $X^1=X+\C1_H$ and $A^1=A+\C1_H\subseteq X^1$.
Then $(A^1, X^1)$ is the smallest unitization of $(A,X)$ in $\mathcal B(H)$.
We call it the {\em minimal} unitization of $(A,X)$.
\end{definition}

\begin{remark}\label{rem-extension}
Of course, if $\pi:X\to \mathcal B(K)$ is any ccp representation of 
$(A,X)$ on a Hilbert space $K$, then the unique extension 
$\pi^1:X^1\to \mathcal B(K)$ is given by $\pi^1(x+\lambda 1_H)=\pi(x)+\lambda 1_K$.
By Corollary \ref{cor-isometry},  if $\pi$ is 
completely isomertric, then $\pi^1$  is completely isometric as well.
\end{remark}

\section{$C^*$-hulls of
 $C^*$-operator systems}\label{sec-univ-env}

If $(A,X)$ is a $C^*$-operator system, $C$ is a $C^*$-algebra,
 and  $j:X\to C$ is a completely positive complete isometry
such that $j(ax)=j(a)j(x)$ for all $a\in A, x\in X$ and such 
$X$ generates $C$ as a $C^*$-algebra, then the pair $(C, j)$ 
is called a $C^*$-hull of $(A,X)$. Two $C^*$-hulls $(C,j)$ and $(C', j')$ of $(A,X)$ are 
called equivalent, if there exists a $*$-isomorphism 
$\varphi: C\to C'$ such that $\varphi\circ j=j'$. 
In what follows below we want to show that for any $C^*$-operator system $(A,X)$ there 
exist $C^*$-hulls $(C_u^*(A,X), j_u)$ and $(C_{\env}^*(A,X), j_{\env})$ such that for any
given $C^*$-hull $(C,j)$ of $(A,X)$ there exist unique surjective $*$-homomorphisms
$$C_u^*(A,X)\stackrel{\varphi_u}{\onto} C\stackrel{\varphi_{\env}}{\onto} C_{\env}^*(A,X)$$
such that $\varphi_u\circ j_u=j$ and $\varphi_{\env}\circ j=j_{\env}$.
It follows  directly from  these universal properties of $(C_u^*(A,X), j_u)$ and $(C_{\env}^*(A,X), j_{\env})$
that they are unique up to equivalence (if they exist). We call $(C_u^*(A,X), j_u)$
the universal $C^*$-hull of $(A,X)$ and we call $(C_{\env}^*(A,X), j_{\env})$ the 
enveloping $C^*$-algebra of $(A,X)$. 
%
%
%
%
Of course, the above notion of the universal $C^*$-hull of a $C^*$-operator 
system extends the notion of the universal $C^*$-hull of a classical 
operator system $X$ as introduced by Kirchberg and Wassermann in \cite{KW}
and the notion of the $C^*$-envelope extends the well-known notion of a $C^*$-envelope 
of an operator system due to Hamana \cite{Hamana1}.
Using the ideas of Kirchberg and Wassermann, we  now construct the universal $C^*$-hull  $(C_u^*(A,X), j_u)$.
We need:

\begin{definition}\label{def-Agenerated}
Suppose that $(A,X)$ is a $C^*$-operator system. A  representation 
$\pi:X\to \mathcal B(K)$ is called {\em finitely $A$-generated}, if there exists a 
finite subset $\{\xi_1,\ldots, \xi_l\}$ of $K$ such that $K=\cspn\{\pi(A)\xi_1,\ldots, \pi(A)\xi_l\}$.
\end{definition}

If $\kappa$ is the cardinality of a dense subset of $A$, then every finitely $A$-generated 
representation of $(A,X)$ can be regarded, up to unitary equivalence, as a representation
on a closed subspace of $\ell^2(I_\kappa)$, where $I_\kappa$ is a fixed set with cardinality $\kappa$.

\begin{theorem}\label{thm-universal}
For every $C^*$-operator system $(A,X)$ there exists a
universal hull $(C_u^*(A,X), j_u)$ for $(A,X)$.
\end{theorem}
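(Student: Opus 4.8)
The plan is to construct $C_u^*(A,X)$ as a completion of the universal enveloping object built from \emph{all} finitely $A$-generated representations, following the Kirchberg--Wassermann template. First I would fix, for each cardinal $\kappa$ bounding the density character of $A$, the Hilbert space $\mathcal H_\kappa = \ell^2(I_\kappa)$, and let $\mathcal R$ denote the set of all ccp representations $\pi \colon X \to \mathcal B(K)$ that are finitely $A$-generated, realised (up to unitary equivalence) on closed subspaces of $\mathcal H_\kappa$ as noted in the remark after Definition \ref{def-Agenerated}; this makes $\mathcal R$ a genuine set, not a proper class. Form the direct sum representation $\pi_u := \bigoplus_{\pi \in \mathcal R} \pi$ on $K_u := \bigoplus_{\pi \in \mathcal R} K_\pi$. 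Each summand restricts to a $*$-representation of $A$, so $\pi_u|_A$ is a $*$-representation, and $\pi_u(ax) = \pi_u(a)\pi_u(x)$ holds since it holds summandwise. Then define $j_u := \pi_u \colon X \to \mathcal B(K_u)$ and $C_u^*(A,X) := C^*(\pi_u(X)) \subseteq \mathcal B(K_u)$, the $C^*$-algebra generated by the image. By construction $j_u$ is a ccp map with $j_u(ax) = j_u(a)j_u(x)$, and $X$ generates $C_u^*(A,X)$, so $(C_u^*(A,X), j_u)$ is a $C^*$-hull \emph{provided} $j_u$ is completely isometric.

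The complete isometry of $j_u$ is the first real point requiring care. For any $n$ and any $x \in M_n(X)$ with $\|x\|_{\op} = 1$, I need a single finitely $A$-generated representation $\pi$ with $\|\pi_n(x)\|$ close to $1$. Starting from the defining inclusion $X \subseteq \mathcal B(H)$, the identity representation $\id_X$ achieves $\|(\id_X)_n(x)\| = 1$, but it need not be finitely $A$-generated. The fix is a standard cutdown: choose $\eta \in H^n$ a near-norming vector for $x$, write $\eta = (\eta_1,\dots,\eta_n)$, and since $A H = H$ (non-degeneracy) approximate each $\eta_k$ by vectors in $\operatorname{span}\{A\zeta : \zeta \text{ in a finite set}\}$; compress $\id_X$ to the cyclic-type subspace $\cspn\{\id_X(A)\zeta_1,\dots,\id_X(A)\zeta_l\}$. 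Because $X = AX$, this subspace is invariant under $X$ acting through $A$-multiplication in the appropriate sense, and the compression is again a ccp representation of $(A,X)$ — here one uses that compressing an $A$-module map to an $A$-invariant subspace stays a morphism, together with the $AX = X$ factorisation. The compressed representation is finitely $A$-generated and still nearly norms $x$ at level $n$, giving $\|j_u{}_n(x)\| = \|x\|_{\op}$.

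Next I would establish the universal property: given any $C^*$-hull $(C,j)$ of $(A,X)$, produce a surjective $*$-homomorphism $\varphi_u \colon C_u^*(A,X) \onto C$ with $\varphi_u \circ j_u = j$. Realise $C \subseteq \mathcal B(K)$ faithfully and non-degenerately; then $j \colon X \to \mathcal B(K)$ is a ccp representation of $(A,X)$. It need not be finitely $A$-generated, but it is a direct sum (or at least a direct integral / inductive limit) of finitely $A$-generated subrepresentations — or, more cleanly, one shows directly that every ccp representation of $(A,X)$ is \emph{weakly contained} in $\pi_u$ in the sense that the induced map on generated $C^*$-algebras factors through $C_u^*(A,X)$. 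Concretely: decompose $K$ into $A$-cyclic pieces $K = \bigoplus K_\lambda$ with each $K_\lambda$ generated by finitely (indeed singly) many vectors under $j(A)$; each $j|_{K_\lambda}$ is unitarily equivalent to one of the $\pi \in \mathcal R$, hence appears as a summand of $\pi_u$. The projection $\pi_u \to \bigoplus_\lambda \pi_{(\lambda)} \cong j$ restricts to a surjective $*$-homomorphism $C_u^*(A,X) = C^*(\pi_u(X)) \onto C^*(j(X)) = C$ intertwining $j_u$ and $j$. Uniqueness of $\varphi_u$ is automatic since $j_u(X)$ generates.

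The main obstacle I anticipate is the cutdown argument for complete isometry: verifying that the compression of a ccp $A$-module representation to a finitely-$A$-generated invariant subspace is \emph{still} a ccp morphism of $C^*$-operator systems (not merely completely contractive) — one must check the module identity $\psi(ax) = \psi(a)\psi(x)$ survives compression, which works precisely because the subspace is $A$-invariant and $X = AX$, so every element of $X$ acts on the subspace as $a \cdot x'$ with the $a$-action preserving the subspace. A secondary subtlety is the set-theoretic bookkeeping: pinning down the cardinal $\kappa$ so that $\mathcal R$ is a set and every finitely $A$-generated representation is unitarily equivalent to one on a subspace of $\mathcal H_\kappa$; this is handled exactly by the remark following Definition \ref{def-Agenerated}, counting $\operatorname{span}_{\mathbb Q[i]}$ of finitely many $A$-orbits of basis-type vectors. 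Everything else — that $j_u$ is ccp, that $X$ generates, that $\varphi_u$ is unique — is routine.
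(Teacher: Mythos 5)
Your construction of $C_u^*(A,X)$ and your proof that $j_u=\pi_u$ is completely isometric follow the paper's argument: the cut-down of the identity representation to the $A$-invariant subspace generated by finitely many vectors is again a ccp representation of $(A,X)$ precisely because that subspace (and hence its orthogonal complement) is $A$-invariant, so the compressing projection commutes with $A$. This part, and the set-theoretic bookkeeping via Definition \ref{def-Agenerated}, are in order.

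The gap is in your proof of the universal property. You propose to decompose $K$ into $A$-cyclic subspaces $K_\lambda=\cspn\{j(A)\xi_\lambda\}$ and to identify $j$ with $\bigoplus_\lambda j|_{K_\lambda}$. But $K_\lambda$ is invariant under $j(A)$ only, not under $j(X)$: for $x\in X$ and $a\in A$ one has $j(x)j(a)\xi_\lambda=j(xa)\xi_\lambda$ with $xa\in X$, which need not lie in $\cspn\{j(A)\xi_\lambda\}$. Hence $j(x)$ has nonzero off-diagonal blocks with respect to $K=\bigoplus_\lambda K_\lambda$, the direct sum of the compressions $P_\lambda j(\cdot)P_\lambda$ is \emph{not} unitarily equivalent to $j$, and the coordinate projection of $C_u^*(A,X)$ onto those summands neither intertwines $j_u$ with $j$ nor maps onto $C=C^*(j(X))$. (Decomposing instead into $C$-cyclic subspaces makes the pieces $j(X)$-invariant but destroys finite $A$-generation.) Your fallback, that every ccp representation is ``weakly contained'' in $\pi_u$, is exactly the statement to be proved. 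The paper closes this gap by a genuinely different device: it applies the finite-rank cut-down construction to every matrix over the unitization $C^1=C+\C 1_K$ of the given hull, producing a \emph{unital completely isometric} completely positive map $\rho_{S'}:C^1\to\prod_{\rho\in S'}\mathcal B(H_\rho)$ whose restriction to $j(X)$ consists of finitely $A$-generated representations of $(A,X)$; the theorem of Choi and Effros \cite[Theorem 4.1]{CE2} then yields a $*$-homomorphism $C^*(\rho_{S'}(C^1))\to C^1$ left-inverse to $\rho_{S'}$, and composing with the coordinate-restriction map $C_u^*(A,X)\to C^*(\rho_{S'}(j(X)))$ gives the desired $\varphi_u$ with $\varphi_u\circ j_u=j$. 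Some such appeal to a rigidity result for unital completely isometric ucp maps on $C^*$-algebras is needed at this point, and your argument does not supply a substitute.
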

\begin{proof}  Let $\kappa$ denote the cardinality of a dense subset of $A$ and let 
$S$ denote the set of all non-degenerate finitely $A$-generated ccp
representations $\pi: X\to \mathcal B(H_\pi)$ where $H_\pi$ is a closed subspace of  $\ell^2(I_\kappa)$.
Write $H_S=\bigoplus_{\pi\in S} H_\pi$ and $\pi_S=\bigoplus_{\pi\in S}\pi$. 

We claim that  $\pi_S: X\to \mathcal B(H_S)$ is a completely isometric representation of $(A,X)$.
For this  let 
us assume that $(A,X)$ is represented as a concrete $C^*$-operator system on the Hilbert space 
$H$.  Then  for each fixed $n\in \N$, $x\in M_n(X)$, and $\eps>0$ we  choose a finite rank projection 
$p\in \mathcal B(H)$ such that 
$$\|(p\otimes 1_n) x (p\otimes 1_n)\|\geq \|x\|-\eps.$$
Let $H_{x,\eps}:=\cspn\{apH:a\in A\}$ and let $q:H\to H_{x,\eps}$ denote the orthogonal projection.
Define
$$\pi_{x,\eps}: X\to \mathcal B(H_{x,\eps}); \pi_{x,\eps}(y):=qy q$$
for all $y\in X$. Since  $H_{x,\eps}$ is an $A$-invariant subspace of $H$, we 
see that $q$ commutes with the elements of $A$, hence 
$$\pi_{x,\eps}(ay)=qa y q=qaqyq=\pi_{x,\eps}(a)\pi_{x,\eps}(y)$$
for all $a\in A$, $y\in X$, so $\pi_{x,\eps}$ is a ccp 
representation of $(A,X)$ on $H_{x,\eps}$. By construction, $\pi_{x,\eps}$ is finitely $A$-generated and 
$\|\pi_{x,\eps,n}(x)\|\geq \|x\|-\eps$.  By choosing an isometric embedding of $H_{x,\eps}$ into $\ell^2(I_{\kappa})$ we 
may assume that $\pi_{x,\eps}\in S$. Since $\eps$ is arbitrary, it follows now that $\pi_S$ is completely
isometric.

We now define $C_u^*(A,X)$ as the $C^*$-subalgebra of $\mathcal B(H_S)$ generated by $\pi_S(X)$
and $j_u=\pi_S: X\to C_u^*(A,X)\subseteq \mathcal B(H_S)$. We then have
$$C_u^*(A,X)\subseteq \prod_{\pi\in S} \mathcal B(H_\pi)\subseteq \mathcal B(H_S).$$

Suppose now that $(C, j)$ is an arbitrary $C^*$-hull of $(A,X)$. We may assume that 
$C$ is realised as a non-degenerate subalgebra $C\subseteq \mathcal B(K)$
for some Hilbert space $K$. Let $C^1=C+\C1_K\subseteq \mathcal B(K)$ be the unitization of $C$.
Applying the above construction to each element $c\in M_n(C^1)$ yields a family of completely positive
maps $\rho_{c,\eps}:C^1\to \mathcal B(H_{\rho_{c,\eps}})$ such that $\rho_{c,\eps}\circ j:X\to \mathcal B(H_{\rho_{c,\eps}})$ is a finitely $A$-generated 
representation of $(A,X)$ and such that $\|\rho_{c,\eps,n}(c)\|\geq \|c\|-\eps$.
Let $S'$ denote the set of all such maps $\rho_{c,\eps}$. 
Then  $\rho_{S'}=\bigoplus_{\rho\in S'}\rho:C^1\to \mathcal B(\bigoplus_{\rho\in S'} H_\rho)$ is a
unital completely isometric map from $C^1$ into $\prod_{\rho\in S'} \mathcal B(H_\rho)$.
By \cite[Theorem 4.1]{CE2} there exists a unique unital $*$-homomorphism 
$\varphi: C^*(\rho_{S'}(C^1))\to C^1$ such that $\varphi\circ \rho_{S'}=\id_{C^1}$.
Since for each $\rho\in S'$, $\rho\circ j:X\to \mathcal B(H_\rho)$ is a finitely $A$-generated representation of $(A,X)$, 
we may identify $\rho\circ j$ 
with an element of $S$ via a suitable embedding of $H_\rho\hookrightarrow \ell^2(I_\kappa)$.
We then obtain a map $S'\to S; \rho\mapsto \rho\circ j$ and a 
$*$-homomorphism $\Phi: \prod_{\pi\in S} B(H_\pi)\to \prod_{\rho\in S'}\mathcal B(H_\rho)$ by
 sending a tupel $(T_\pi)_{\pi\in S}$ to $(T_{\rho\circ j})_{\rho\in S'}$.
 The restriction of $\Phi$ to $C_u^*(A,X)\subseteq \prod_{\pi\in S}\mathcal B(H_\pi)$ sends 
 $C_u^*(A,X)$ to $C^*(\rho_{S'}(j(X)))\subseteq C^*(\rho_{S'}(C^1))$. Thus we get a 
composition of  $*$-homomorphism 
 $$
 \begin{CD}
C_u^*(A,X) @>\Phi>> C^*(\rho_{S'}(j(X))) @>\varphi>> C
  \end{CD}
  $$ 
  such that for  $\varphi_u:=\varphi\circ \Phi$ we get $\varphi_u\circ j_u=j$. 
\end{proof}

\begin{lemma}\label{lem-universal}
Suppose that $\pi:X\to \mathcal B(K)$ is a ccp representation of the $C^*$-operator system $(A,X)$.
Then there exists a unique $*$-homomorphism $\tilde\pi:C_u^*(A,X)\onto C^*(\pi(X))\subseteq \mathcal B(K)$
such that $\tilde\pi\circ j_u=\pi$,
where $C^*(\pi(X))$ denotes the closed $C^*$-subalgebra of $\mathcal B(K)$ generated by $\pi(X)$.
\end{lemma}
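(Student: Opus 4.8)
\emph{Proof strategy.} The plan is to deduce the statement from the universal property of $C_u^*(A,X)$ recorded in Theorem \ref{thm-universal}, by first enlarging $\pi$ to an honest $C^*$-hull and then projecting back down.

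First I would represent $C_u^*(A,X)$ faithfully on a Hilbert space $\mathcal H_0$, so that $j_u:X\to \mathcal B(\mathcal H_0)$ becomes a completely isometric ccp representation of $(A,X)$, and then form the direct sum $\rho:=j_u\oplus\pi:X\to \mathcal B(\mathcal H_0\oplus K)$. Since both $j_u$ and $\pi$ are ccp, are $*$-preserving, and satisfy $\psi(ax)=\psi(a)\psi(x)$, the same holds for $\rho$; moreover $\rho$ is completely isometric because $j_u$ already is (and $\pi$ is completely contractive). Hence $(C^*(\rho(X)),\rho)$ is a $C^*$-hull of $(A,X)$ in the sense of Section \ref{sec-univ-env}, and Theorem \ref{thm-universal} provides a unique surjective $*$-homomorphism $\varphi_u:C_u^*(A,X)\onto C^*(\rho(X))$ with $\varphi_u\circ j_u=\rho$.

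Next I would observe that $\rho(x)$ leaves each of the two summands $\mathcal H_0\oplus 0$ and $0\oplus K$ invariant, so — using that $\rho(X)=\rho(X)^*$ — both are reducing subspaces for $C^*(\rho(X))$. Thus every element of $C^*(\rho(X))$ is block-diagonal for this decomposition, and the compression $P:C^*(\rho(X))\to \mathcal B(K)$, $T_1\oplus T_2\mapsto T_2$, is a $*$-homomorphism (not merely a completely positive map). Since $P(\rho(x))=\pi(x)$ and a $*$-homomorphism of $C^*$-algebras has closed range, $P$ maps $C^*(\rho(X))$ onto $C^*(\pi(X))$. Setting $\tilde\pi:=P\circ\varphi_u$ then gives a surjective $*$-homomorphism $\tilde\pi:C_u^*(A,X)\onto C^*(\pi(X))$ with $\tilde\pi\circ j_u=P\circ\rho=\pi$.

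Uniqueness is then immediate, since $j_u(X)$ generates $C_u^*(A,X)$ as a $C^*$-algebra and hence any $*$-homomorphism out of $C_u^*(A,X)$ is determined by its restriction to $j_u(X)$. The one point requiring genuine care is the reducing-subspace step: one must check that self-adjointness of the generating set $\rho(X)$ really forces both coordinate subspaces to reduce $C^*(\rho(X))$, so that the projection $P$ is honestly multiplicative rather than merely ccp. Everything else is a formal consequence of Theorem \ref{thm-universal}, where the actual work has already been carried out.
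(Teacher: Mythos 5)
Your proof is correct and follows essentially the same route as the paper: form the direct sum of $\pi$ with a completely isometric representation of $(A,X)$ (the paper uses the concrete inclusion $\iota:X\hookrightarrow\mathcal B(H)$ where you use a faithful realization of $j_u$), invoke the universal property to get a $*$-homomorphism onto the hull generated by the direct sum, and then compress to the second summand, noting that the compression is multiplicative because the generated $C^*$-algebra lies in the block-diagonal algebra $\mathcal B(H)\oplus\mathcal B(K)$. Your explicit justification of the reducing-subspace step via self-adjointness of $\rho(X)$ is exactly the point the paper leaves implicit.
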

\begin{proof}
Suppose that $(A,X)$ is a concrete $C^*$-operator system on the Hilbert space $H$ and 
assume that $\iota:X\into \mathcal B(H)$ is the inclusion map. Then $\iota\bigoplus \pi: X\to B(H\oplus K)$
is a completely isometric representation and therefore there exists a unique $*$-homomorphism
$\widetilde{\iota\oplus\pi}:C_u^*(A,X)\to C^*(\iota\oplus\pi(X))\subseteq B(H\otimes K)$. 
As $\iota\oplus\pi(X)\subseteq X\oplus \pi(X)\subseteq \mathcal B(H)\oplus \mathcal B(K)$, we obtain 
a well defined $*$-homomorphism $C^*(\iota\oplus \pi(X))\to C^*(\pi(X))$ given by
$T\mapsto P_KTP_K$, where $P_K:H\oplus K\to K$ denotes the orthogonal projection.
Thus $\tilde\pi=P_K \widetilde{\iota\oplus\pi}(\cdot)P_K$ will do the job. The uniqueness 
follows from the fact that $C_u^*(A,X)$ is generated by $j_u(X)$.
\end{proof}

At this point it is convenient to consider representations of $C^*$-operator systems on multiplier algebras:

\begin{definition}\label{def-rep-multiplier}
Suppose that $(A,X)$ is a $C^*$-operator system and let $D$ be a $C^*$-algebra. 
A  representation of $(A,X)$ into the multiplier algebra $M(D)$ 
is a ccp map $\Phi: X\to M(D)$ such that $\Phi(ax)=\Phi(a)\Phi(x)$ for all $a\in A, x\in X$.
We then say that $\Phi$ is {\em non-degenerate} if the restriction of $\Phi$ to $A$ is 
a non-degenerate $*$-homomorphism, i.e., if $\Phi(A)D=D$.
\footnote{Note that by Cohen's factorization theorem to have $\Phi(A)D=D$ it suffices to have
that $\cspn\{\Phi(A)D\}=D$.}
\end{definition}

\begin{remark}\label{rem-rep-multiplier}
Note that every (non-degenerate) representation of a $C^*$-operator system $(A,X)$ on a Hilbert space $H$
an be regarded as a (non-degenerate) representation into $M(\mathcal K(H))=\mathcal B(H)$. 
Conversely, if $\Phi:X\to M(D)$ is a representation of $(A,X)$ in $M(D)$ and 
if $D$ (and hence $M(D)$) is represented faithfully on the Hilbert space $H$, then 
$\Phi$ can also be regarded as a representation of $(A,X)$ on $H$ which is non-degenerate
if and only if $\Phi:X\to M(D)$ is non-degenerate.  But it is often more convenient 
to work with representations into $M(D)$.
\end{remark}

We now get

\begin{proposition}\label{prop-universal}
Let $(A,X)$ be a $C^*$-operator system and let $D$ be a $C^*$-algebra. Then there is a one-to-one correspondence 
between
\begin{enumerate}
\item The non-degenerate representations of $(A,X)$ into $M(D)$.
\item The non-degenerate $*$-homomorphisms of $C_u^*(A,X)$ into $M(D)$.
\end{enumerate}
If $\Phi: C_u^*(A,X)\to M(D)$ is as in (2), then the restriction $\Phi_X=\Phi\circ j_u:X\to M(D)$ 
gives the corresponding representation of $(A,X)$ as in (1).
\end{proposition}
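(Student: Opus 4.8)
I would prove Proposition \ref{prop-universal} by constructing the two directions of the correspondence and checking they are mutually inverse. The forward direction, from (2) to (1), is already described in the statement: given a non-degenerate $*$-homomorphism $\Phi: C_u^*(A,X)\to M(D)$, set $\Phi_X=\Phi\circ j_u$. Since $j_u$ is a ccp representation of $(A,X)$ into $C_u^*(A,X)\subseteq M(C_u^*(A,X))$ and $\Phi$ is a ccp (indeed $*$-homomorphic) map, the composition $\Phi_X$ is ccp and multiplicative in the sense $\Phi_X(ax)=\Phi_X(a)\Phi_X(x)$; non-degeneracy of $\Phi$ together with the fact that $j_u(A)$ generates no more than a non-degenerate subalgebra (indeed $j_u|_A$ is non-degenerate since $\pi_S|_A$ is) gives $\Phi_X(A)D=\Phi(j_u(A))D=D$, so $\Phi_X$ is a non-degenerate representation of $(A,X)$ into $M(D)$.

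For the reverse direction I would start with a non-degenerate representation $\Phi_X: X\to M(D)$. The idea is to realise $M(D)$ faithfully and non-degenerately on a Hilbert space $H$ (so that $D$ acts non-degenerately on $H$), view $\Phi_X$ as a ccp representation of $(A,X)$ on $H$ in the sense of Remark \ref{rem-rep-multiplier}, and apply Lemma \ref{lem-universal} to obtain a $*$-homomorphism $\widetilde{\Phi_X}: C_u^*(A,X)\onto C^*(\Phi_X(X))\subseteq \mathcal B(H)$ with $\widetilde{\Phi_X}\circ j_u=\Phi_X$. The point requiring care is that the image $C^*(\Phi_X(X))$ actually lands inside $M(D)$, not just $\mathcal B(H)$: since $\Phi_X(A)D=D$ and $\Phi_X(X)D = \Phi_X(AX)D = \Phi_X(A)\Phi_X(X)D \subseteq \Phi_X(A)M(D)D \subseteq M(D)D = D$ (and similarly on the other side, using $\Phi_X(x^*)=\Phi_X(x)^*$), the set $\Phi_X(X)$ multiplies $D$ into $D$ from both sides, hence so does the $C^*$-algebra it generates; thus $C^*(\Phi_X(X))\subseteq M(D)$. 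Non-degeneracy of $\widetilde{\Phi_X}$ as a map into $M(D)$ follows from $\widetilde{\Phi_X}(C_u^*(A,X))D \supseteq \Phi_X(X)D = D$ (using $\Phi_X(X)D = D$, which holds since $\Phi_X(A)D = D$ and $\Phi_X(A) \subseteq \overline{\Phi_X(X)D}$ via an approximate unit of $A$).

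It then remains to check that the two assignments are inverse to each other. Starting from $\Phi$ as in (2), forming $\Phi_X = \Phi\circ j_u$, and then forming $\widetilde{\Phi_X}$, both $\widetilde{\Phi_X}$ and $\Phi$ are $*$-homomorphisms out of $C_u^*(A,X)$ agreeing on $j_u(X)$, which generates $C_u^*(A,X)$; hence $\widetilde{\Phi_X}=\Phi$. Conversely, starting from a non-degenerate representation $\Phi_X$, the $*$-homomorphism $\widetilde{\Phi_X}$ satisfies $\widetilde{\Phi_X}\circ j_u = \Phi_X$ by construction, so the representation associated to it is again $\Phi_X$. Uniqueness in Lemma \ref{lem-universal} guarantees $\widetilde{\Phi_X}$ is well-defined independent of the chosen faithful representation of $M(D)$.

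**Main obstacle.** The only genuinely delicate point is verifying that $C^*(\Phi_X(X))$ is contained in $M(D)$ rather than merely in $\mathcal B(H)$ — i.e. that the Hilbert-space construction of Lemma \ref{lem-universal} is compatible with the multiplier picture. This is exactly the content of Remark \ref{rem-rep-multiplier} combined with the factorisation $X=AX$ and the non-degeneracy hypothesis $\Phi_X(A)D=D$; once one is careful that $\Phi_X(X)$ multiplies $D$ into $D$ on both sides (which uses selfadjointness of $\Phi_X$ and of $X$), everything else is formal.
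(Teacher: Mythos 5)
Your proof is correct and follows essentially the same route as the paper's (much terser) argument: reduce to Lemma \ref{lem-universal} by realising everything on a Hilbert space, and use that $j_u(X)$ generates $C_u^*(A,X)$ to get uniqueness and the inverse correspondence. The only remark is that your ``main obstacle'' is in fact automatic: since $\Phi_X$ is by hypothesis a map into $M(D)$ and $M(D)$ is a $C^*$-subalgebra of $\mathcal B(H)$, the generated algebra $C^*(\Phi_X(X))$ lies in $M(D)$ without any further computation.
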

\begin{proof} It clearly suffices to show that every non-degenerate representation of 
$(A,X)$ on $M(D)$ extends to a representation of $C_u^*(A,X)$. But representing 
$C_u^*(A,X)$ faithfully on a Hilbert space $H_u$, say, this follows easily from
Lemma \ref{lem-universal}.
\end{proof}

We now proceed with a discussion of the enveloping $C^*$-hull for $(A,X)$.
For this recall that an operator space  $V$  is {\em injective} if,  given  operator spaces $W_{1}\subseteq W_{2}$,
 any completely bounded linear map
$\varphi_{1}:W_{1}\rightarrow V$ can be extended to a completely bounded linear map
$\varphi_{2}:W_{2}\rightarrow V$ with
$\|\varphi_{2}\|_{cb}=\|\varphi_{1}\|_{cb}.$
The algebra $\mathcal B(H)$ is known to be an injective operator space \cite{Wittstock}.
  Hamana in \cite{Hamana1, Hamana2} and Ruan  in \cite{Ruan}
 independently  showed that
 for any operator space $V$ in $\mathcal B(H)$,  there is a unique minimal injective operator subspace $I(V)$ of $\mathcal B(H)$ containing $V$.
  It is called the injective envelope of $V$ and enjoys the following fundamental property, which we shall use
    heavily throughout this paper (e.g., see \cite[\S5]{Ruan}):

  \begin{proposition}\label{prop-unique}
  Let $V\subseteq \mathcal B(H)$ be an operator space. Then every completely contractive map $\psi:I(V)\to I(V)$
  which restricts to the identity on $V$ is the identity on $I(V)$.
  \end{proposition}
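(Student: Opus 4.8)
The plan is to use the standard Hamana--Ruan rigidity argument, which rests on the observation that a completely contractive idempotent on an injective operator space produces a smaller injective operator space via a ``corner'' construction. Concretely, I would first reduce to the case of a completely contractive \emph{idempotent}: given the completely contractive map $\psi: I(V)\to I(V)$ restricting to $\id_V$, consider the sequence of Cesàro-type averages or, more simply, appeal to a point-norm cluster point of the iterates $\psi^n$. One shows that any point-norm limit point $\psi_0$ of the net of iterates is again a completely contractive map restricting to the identity on $V$, and moreover satisfies $\psi_0\circ\psi=\psi_0=\psi\circ\psi_0$; in particular one extracts from this construction an idempotent completely contractive map $\Phi:I(V)\to I(V)$ with $\Phi|_V=\id_V$ whose range $\Phi(I(V))$ contains $V$.

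The key step is then to show that $E:=\Phi(I(V))$ is itself an injective operator space. This is where the idempotent structure is essential: given operator spaces $W_1\subseteq W_2$ and a complete contraction $\varphi_1: W_1\to E\subseteq I(V)$, injectivity of $I(V)$ provides an extension $\widetilde\varphi_2: W_2\to I(V)$ with $\|\widetilde\varphi_2\|_{cb}\le 1$, and then $\varphi_2:=\Phi\circ\widetilde\varphi_2: W_2\to E$ is a complete contraction extending $\varphi_1$ (using that $\Phi$ fixes $E$ pointwise, since $\Phi$ is idempotent with range $E$). Hence $E$ is injective. Since $V\subseteq E\subseteq I(V)$ and $I(V)$ is by definition the \emph{minimal} injective operator subspace of $\mathcal B(H)$ containing $V$, minimality forces $E=I(V)$, i.e., $\Phi=\id_{I(V)}$.

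It remains to deduce that the original $\psi$ is the identity. From $\Phi=\id_{I(V)}$ and the relations $\Phi\circ\psi=\Phi$ (which hold because $\Phi$ was built as a limit of iterates of $\psi$, or more directly because $\Phi\psi$ is again a complete contraction fixing $V$ and one runs the same minimality argument on its range), one concludes $\psi=\id_{I(V)}$. The main obstacle I anticipate is the first paragraph: one must argue carefully that a suitable limit of the iterates $\psi^n$ exists and is well-behaved. The cleanest route is to note that the closed convex hull of $\{\psi^n : n\ge 1\}$ inside the completely contractive maps $I(V)\to I(V)$ is point-norm compact (as a closed bounded convex subset in a suitable topology, using injectivity/compactness of unit balls in the relevant dual picture), and to pick a minimal such map in an appropriate ordering, or to use the semigroup structure to produce an idempotent directly; one then verifies the idempotent fixes $V$ pointwise, which is immediate since every $\psi^n$ does. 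Once the idempotent $\Phi$ is in hand, the rest is the short minimality argument above.
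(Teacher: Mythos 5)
The paper does not actually prove this proposition; it quotes it from Ruan (and Hamana), so your attempt has to be measured against the standard rigidity argument. The back end of your proof is fine: the corner trick does show that the range of a completely contractive \emph{idempotent} on an injective space is injective, minimality of $I(V)$ among injective subspaces of $\mathcal B(H)$ containing $V$ then forces that range to be all of $I(V)$, an idempotent with full range is the identity, and $\Phi\circ\psi=\Phi=\id$ gives $\psi=\id$. The genuine gap is the step you yourself flag: producing the idempotent. The set of completely contractive maps $I(V)\to I(V)$ is \emph{not} point-norm compact, and the compactness that is actually available --- point-weak$^*$ (BW) compactness of the ball of completely contractive maps into the dual space $\mathcal B(H)$ --- yields a cluster point $\theta$ of the Ces\`aro averages whose range need not lie in $I(V)$: the injective envelope is only norm-closed in $\mathcal B(H)$, not weak$^*$-closed, and injective operator spaces need not be dual spaces (e.g.\ $I(C[0,1])$ is monotone complete but not a von Neumann algebra), so there is no ``relevant dual picture'' intrinsic to $I(V)$. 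Moreover, composition is only BW-continuous in the left variable, so $\theta\circ\psi=\theta$ survives the limit but $\psi\circ\theta=\theta$ and idempotency of $\theta$ do not; Ellis-type semigroup arguments produce an idempotent but again without control of its range, and compressing $\theta$ back into $I(V)$ by a projection destroys idempotency. At that point you hold a completely contractive self-map of $I(V)$ fixing $V$ that you cannot yet identify with the identity --- which is the problem you started with.

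The standard proof (Hamana, Ruan, or Paulsen's book, Ch.~15) circumvents the range problem by using only the \emph{seminorm} of the cluster point. One first constructs $I(V)$ as the range of a \emph{minimal} $V$-projection $\varphi:\mathcal B(H)\to\mathcal B(H)$, minimal for the ordering of the seminorms $p_\varphi(x)=\|\varphi(x)\|$ (existence by Zorn plus BW-compactness; minimality forces $\varphi^2=\varphi$). Given $\psi$ as in the proposition, set $\mu=\psi\circ\varphi$ and let $\theta$ be a BW-cluster point of $\frac1n\sum_{k\le n}\mu^k$. Then $p_\theta\le p_\mu\le p_\varphi$, so $p_\theta=p_\varphi$ by minimality, while $\theta\circ\mu=\theta$ gives $0=\|\theta(\mu(x)-x)\|=p_\varphi(\mu(x)-x)=\|\varphi(\psi(\varphi(x)))-\varphi(x)\|$. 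Since $\varphi$ fixes its range $I(V)$ pointwise, this reads $\psi\circ\varphi=\varphi$, i.e.\ $\psi=\id$ on $I(V)$. Note that where the limit map $\theta$ sends things is irrelevant; only $p_\theta$ is used. If you want to keep your idempotent-plus-minimality architecture, you must first import this minimal-$V$-projection machinery; as written, the first paragraph of your argument does not close.
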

%

 We need the following result of Choi-Effros \cite{CE} (see \S6 in \cite{Effros} and particularly \cite[Theorem 6.1.3]{Effros}).

 \begin{theorem}\label{thm-product} If  $I \subseteq \mathcal B(H)$  is an injective operator system, then there is a unique
multiplication
$\circ :I\times I\to I$
making $I$ a unital $C^*$-algebra with its given $*$-operation and norm and identity $\one_H$. The 
multiplication is 
given by 
$$x \cdot_{\varphi} y = \varphi(xy),$$
where $\varphi:\mathcal B(H)\to I$ is a fixed ccp onto projection.
   \end{theorem}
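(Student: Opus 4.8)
The plan is to fix a ccp projection $\varphi:\mathcal B(H)\to I$ (which exists since $I$ is injective and the identity map $I\to I$ extends to $\mathcal B(H)$), and to define the candidate product by $x\cdot_\varphi y:=\varphi(xy)$ for $x,y\in I$, where $xy$ on the right is the product in $\mathcal B(H)$. The first task is to show this is genuinely a product making $I$ a unital $C^*$-algebra; the second is to establish uniqueness. For the algebraic structure, bilinearity, the unit law $x\cdot_\varphi 1_H=\varphi(x1_H)=x$, and compatibility with the involution $(x\cdot_\varphi y)^*=\varphi(xy)^*=\varphi(y^*x^*)=y^*\cdot_\varphi x^*$ are all immediate. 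The two substantial points are associativity and the $C^*$-identity for the operator norm that $I$ already carries.

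For associativity, the standard device is the \emph{multiplicative domain} / rigidity argument: I would first show that for $x\in I$ the map $y\mapsto \varphi(xy)$ and the map $y\mapsto \varphi(yx)$, composed appropriately, interact with $\varphi$ via the bimodule property $\varphi(x\varphi(z))=\varphi(xz)$ and $\varphi(\varphi(z)x)=\varphi(zx)$ for $x\in I$, $z\in\mathcal B(H)$. This bimodule identity is itself proved using Proposition~\ref{prop-unique}: since $I$ is the injective envelope of $I$ (it is its own injective envelope, being injective), any completely contractive self-map of $I$ restricting to the identity on $I$ is the identity. One applies this to maps like $y\mapsto \varphi(x^*\varphi(xy))$ versus the "expected" value, using a Schwarz-type inequality for the ccp map $\varphi$ to control the difference; this is where the rigidity of the injective envelope does the real work. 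Once the bimodule property $\varphi(xz)=\varphi(x\varphi(z))$ holds for $x\in I$, associativity follows formally: $(x\cdot_\varphi y)\cdot_\varphi z=\varphi(\varphi(xy)z)=\varphi((xy)z)=\varphi(x(yz))=\varphi(x\varphi(yz))=x\cdot_\varphi(y\cdot_\varphi z)$.

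For the norm, one shows $\|x\cdot_\varphi y\|\le\|x\|\,\|y\|$ from complete contractivity of $\varphi$, that $\|x^*\cdot_\varphi x\|=\|x\|^2$ (the $C^*$-identity) again via a Kadison--Schwarz inequality $\varphi(x^*x)\ge\varphi(x^*)\varphi(x)$ together with the reverse estimate coming from $\varphi$ fixing $I$, and positivity of $x^*\cdot_\varphi x$. The upshot is that $(I,\cdot_\varphi,{}^*,\|\cdot\|)$ satisfies the $C^*$-axioms with unit $1_H$. Finally, for uniqueness: if $\circ$ is any $C^*$-multiplication on $I$ with the given norm, involution, and unit $1_H$, then $I$ with $\circ$ is a unital $C^*$-algebra, hence by the Russo--Dye / Kadison theorem its unit ball is the closed convex hull of its unitaries, and a completely contractive unital self-map of a unital operator system that is the identity on $I$ must already be a complete order isomorphism; combining this with Proposition~\ref{prop-unique} applied to the map $x\mapsto \varphi(x)$ (or rather to a comparison map built from the two products) forces $x\circ y=\varphi(xy)$. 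The main obstacle I anticipate is the associativity step: getting the bimodule identity $\varphi(x\varphi(z))=\varphi(xz)$ for $x\in I$ cleanly requires the correct Schwarz-inequality bookkeeping and a careful invocation of the rigidity Proposition~\ref{prop-unique}, and this is the technical heart of the Choi--Effros theorem. Since this is a cited classical result (\cite{CE}, \cite[Theorem 6.1.3]{Effros}), I would in the actual paper simply refer to those sources rather than reproduce the argument in full.
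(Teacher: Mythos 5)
The paper offers no proof of this statement at all: it is quoted as the Choi--Effros theorem with the proof deferred to \cite{CE} and \cite[Theorem 6.1.3]{Effros}, so your closing decision to cite those sources is exactly what the paper does, and your sketch is essentially the standard argument found there. One attribution in your sketch is off, however, and would derail you if you tried to execute it literally: the bimodule identity $\varphi(a\varphi(z))=\varphi(az)$ for $a\in I$, $z\in\mathcal B(H)$ is \emph{not} a consequence of the rigidity statement of Proposition~\ref{prop-unique} --- applied to $I=I(I)$ that proposition only says a completely contractive self-map fixing $I$ pointwise is the identity, which gives no information here --- but follows purely from the Schwarz inequality together with idempotence of $\varphi$. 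Concretely, for $e=z-\varphi(z)$ one has $\varphi(e)=0$, and applying $\varphi(\varphi(c)^*\varphi(c))\le\varphi(c^*c)$ to $c=a+te$ for real $t$ yields $0\le t\,\varphi(a^*e+e^*a)+t^2\varphi(e^*e)$, forcing $\varphi(a^*e+e^*a)=0$; replacing $a$ by $ia$ then gives $\varphi(a^*e)=0$, i.e.\ the bimodule identity. With that in hand your derivation of associativity, the $C^*$-identity and positivity is fine, and your uniqueness argument is the standard fact that a unital complete order isomorphism between unital $C^*$-algebras is a $*$-isomorphism. None of this affects the paper, which correctly treats the theorem as a cited black box.
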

   
Using these results we now show

\begin{proposition}\label{prop-envelope}
Suppose that $(A,X)$ is a $C^*$-operator system. Then there exists an enveloping $C^*$-hull
$(C^*_{\env}(A,X), j_{\env})$ of $(A,X)$.
\end{proposition}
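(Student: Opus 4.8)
The plan is to realize $(A,X)$ concretely on a Hilbert space $H$ and work inside the injective envelope $I(X) \subseteq \mathcal B(H)$ of the operator space $X$. Since $X = X^*$, one first checks that $I(X)$ can be chosen self-adjoint (this is standard: $I(X)^*$ is again an injective envelope of $X^* = X$, so by uniqueness there is a completely isometric map $I(X) \to I(X)^*$ fixing $X$, and a rigidity argument using Proposition \ref{prop-unique} shows $I(X)$ is self-adjoint, in fact an operator system once we note $1_H \in I(X)$ because $I(X)$ contains the injective envelope of the operator system $X^1$, or more directly because the $C^*$-structure below forces a unit). By Theorem \ref{thm-product}, $I(X)$ carries a unique unital $C^*$-algebra structure compatible with its given norm and involution, coming from a ccp projection $\varphi: \mathcal B(H) \to I(X)$ via $x \cdot_\varphi y = \varphi(xy)$.

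Next I would define $C^*_{\env}(A,X)$ to be the $C^*$-subalgebra of $I(X)$ (with the Choi--Effros product) generated by $X$, and let $j_{\env}: X \hookrightarrow C^*_{\env}(A,X)$ be the inclusion. The first thing to verify is that this is genuinely a $C^*$-hull: $j_{\env}$ is completely positive and completely isometric by construction, $X$ generates $C^*_{\env}(A,X)$ by definition, and one must check the multiplicativity condition $j_{\env}(ax) = j_{\env}(a) j_{\env}(x)$ for $a \in A$, $x \in X$, i.e.\ that the Choi--Effros product restricted to $A \times X$ agrees with the ambient product in $\mathcal B(H)$. This follows from a rigidity argument: the map $I(X) \to I(X)$, $y \mapsto \varphi(ay)$ for fixed $a$ in a bounded approximate unit of $A$ passes to the limit and, combined with Proposition \ref{prop-unique}, forces $\varphi$ to act as a bimodule map over $A$; more precisely, since $A \subseteq I(X)$ is a $C^*$-subalgebra for which the Choi--Effros and ambient multiplications coincide (because $A$ is a $C^*$-algebra and the projection restricted to an already-multiplicative subsystem is the identity there), and since $AX = X$, one gets $a \cdot_\varphi x = \varphi(ax) = ax$ for $a \in A$, $x \in X$. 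Hence $j_{\env}$ is a morphism of $C^*$-operator systems in the required sense and $(C^*_{\env}(A,X), j_{\env})$ is a $C^*$-hull.

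Finally I would establish the universal property: given any $C^*$-hull $(C, j)$ of $(A,X)$, produce a surjective $*$-homomorphism $\varphi_{\env}: C \onto C^*_{\env}(A,X)$ with $\varphi_{\env} \circ j = j_{\env}$. Realize $C$ faithfully and non-degenerately on a Hilbert space $K$. Since $I(X)$ is injective, the complete isometry $j^{-1}: j(X) \to X \subseteq I(X)$ extends to a complete contraction $\psi: \mathcal B(K) \to I(X)$ (using that $j(X) \subseteq C \subseteq \mathcal B(K)$); restrict $\psi$ to $C$. The restriction $\psi|_C: C \to I(X)$ is a complete contraction into a $C^*$-algebra agreeing with $j^{-1}$ on $j(X)$; I claim it is a $*$-homomorphism onto $C^*_{\env}(A,X)$. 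This is the main obstacle, and it is handled exactly as in Hamana's proof for $C^*$-envelopes of operator systems: one shows $\psi|_C$ is multiplicative by a rigidity/Schwarz-inequality argument — the Choi--Effros product on $I(X)$ is characterized by rigidity, the map $c \mapsto \psi(c^*c) - \psi(c^*)\psi(c)$ is completely positive and vanishes on the dense $*$-subalgebra generated by $j(X)$ (because there it is computed inside $C$ where $\psi$ restricts to the isometric inverse of the generating embedding), hence vanishes everywhere. Since $\psi|_C$ fixes $X$ and $X$ generates $C^*_{\env}(A,X)$, the image is all of $C^*_{\env}(A,X)$, giving the desired surjection $\varphi_{\env}$; the identity $\varphi_{\env} \circ j = j_{\env}$ holds on $j(X)$ hence everywhere, and uniqueness follows since $j(X)$ generates $C$. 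The compatibility with the universal $C^*$-hull (the composite surjection $C_u^*(A,X) \onto C \onto C^*_{\env}(A,X)$) then comes for free from Lemma \ref{lem-universal} applied to the representation $j_{\env}$.
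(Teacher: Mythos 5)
Your overall architecture (Choi--Effros product on an injective envelope, take the $C^*$-subalgebra generated by $X$, Hamana-style argument for the universal property) is the same as the paper's, but two steps as you have written them do not go through. First, you apply Theorem \ref{thm-product} to $I(X)$, the injective envelope of the possibly non-unital space $X$, and for this you must know that $I(X)$ is a unital operator \emph{system} inside $\mathcal B(H)$. Your two justifications do not establish this: the containment between $I(X)$ and $I(X^1)$ goes the wrong way a priori (minimality gives $I(X)\subseteq I(X^1)$, not the reverse), and ``the $C^*$-structure below forces a unit'' is circular since that structure is exactly what Theorem \ref{thm-product} is supposed to provide. A minimal $X$-projection $\varphi$ fixes the approximate unit $(u_i)$ of $A$ but need not be weak-* continuous, so one cannot conclude $\varphi(1_H)=1_H$. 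The paper sidesteps this entirely by passing to the minimal unitization $X^1=X+\C 1_H$ and working in $I(X^1)$ from the start; there the key identity $a\cdot_\varphi x=\varphi(ax)=ax$ is immediate because $ax\in X\subseteq I(X^1)$ and $\varphi$ is a projection onto $I(X^1)$ (your own bimodule detour is unnecessary once this is observed).

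Second, your multiplicativity argument for $\psi|_C$ is not a proof as stated. The map $c\mapsto \psi(c^*c)-\psi(c^*)\psi(c)$ is not linear, so ``completely positive'' does not apply to it, and it does not ``vanish on the dense $*$-subalgebra generated by $j(X)$ because there it is computed inside $C$'': for $x\in X$ the element $j(x)^*j(x)$ lies outside $j(X)$, so $\psi(j(x)^*j(x))$ is not determined by the fact that $\psi$ restricts to $j^{-1}$ on $j(X)$ --- equality of $\psi(j(x)^*j(x))$ with $\psi(j(x))^*\cdot_\varphi\psi(j(x))$ is precisely the point at issue. The correct (Hamana/Paulsen) argument first produces a ucp map $\gamma: I(X^1)\to \mathcal B(K)$ extending $j^1$, uses rigidity (Proposition \ref{prop-unique}) to get $\psi\circ\gamma=\id_{I(X^1)}$, and then applies the Schwarz inequality twice to conclude that $\gamma(I(X^1))\supseteq j^1(X^1)$ lies in the multiplicative domain of $\psi$, whence $\psi$ is a $*$-homomorphism on $C^*(j(X))$. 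The paper avoids reproducing this by unitizing ($j^1:X^1\to C^1$) and invoking the universal property of Hamana's $C^*_{\env}(X^1)$ as a black box; if you insist on reproving it, you need the rigidity-plus-Schwarz argument above, carried out in the unital setting where ucp extensions are available.
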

\begin{proof}
Suppose that $(A,X)$ is a $C^*$-operator system on $H$. Let $(A^1, X^1)$ be the unitization of $(A,X)$ as in 
Definition \ref{def-adjoining-unit}. By Theorem \ref{thm-product} the injective envelope $I(X^1)$ 
of the unital operator system $X^1$ is a unital $C^*$-algebra with multiplication
$x\cdot_{\varphi} y=\varphi(xy)$ for some fixed ccp onto projection $\varphi:\mathcal B(H)\to I(X^1)$. 
Now, for each $a\in A$ and $x\in X$ we have $ax\in X$ and therefore $a\cdot_\varphi x=\varphi(ax)=ax$.
 Therefore the inclusion map $X\into \mathcal B(H)$ induces a completely isometric embedding $j:X\to I(X^1)$ 
such that $j(ax)=j(a)j(x)$ for all $a\in A, x\in X$. Define $C_{\env}^*(A,X)$ to be the  $C^*$-subalgebra 
of $I(X^1)$ generated by $j(X)$ and we let $j_{\env}=j:X\into C_{\env}^*(A,X)$ denote the inclusion map.
Note that by construction, the unitization  $C_{\env}^*(A,X)^1$ is just the enveloping $C^*$-algebra $C^*_{\env}(X^1)$ of the unital operator system
$X^1$ in the sense of Hamana \cite{Hamana1}.

To see that $(C_{\env}^*(A,X), j_{\env})$ satisfies the universal property let 
$(C,j)$ be any given $C^*$-hull of $(A,X)$. Choose a non-degenerate embedding $C\into \mathcal B(K)$ for some 
Hilbert space $K$ and let $C^1=C+\C1_K\subseteq \mathcal B(K)$. Then $j^1:X^1\to C^1$ is a completely isometric
embedding of the operator system $X^1$. It  follows therefore from the universal property of the 
enveloping $C^*$-algebra $C^*_{\env}(X^1)$ (see \cite{Hamana1}) 
that there exists a $*$-homomorphism $\varphi:C^1\to C^*_{\env}(X^1)=C_{\env}^*(A,X)^1$ 
which intertwines the inclusions of $X^1$ into these algebras. Restricting $\varphi$ to $C\subseteq C^1$ 
then gives the desired $*$-homomorphism $\varphi_{\env}:C\to C_{\env}^*(A,X)$.
\end{proof}

We close this section with the following useful result:

%

\begin{lemma}\label{lem mult univ}
Let $(C, j)$ be any $C^*$-hull of the $C^*$-operator system $(A,X)$. Then the inclusion map $j: (A,X)\to C$ extends to 
a completely isometric inclusion $$\bar{j}: \big( M(A), M(X)\big)\to M(C).$$
Moreover, we have $\bar{j}(M(A))\cap C=j(A)$.
\end{lemma}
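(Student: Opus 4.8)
The plan is to construct $\bar{j}$ by recognizing $(C,j)$ as living inside a concrete representation in which both $M(X)$ and $M(C)$ can be read off spatially, and then to argue that $\bar{j}$ is forced to be completely isometric. First I would fix a faithful nondegenerate representation of $C$ on a Hilbert space $K$, so that $M(C)$ is realized concretely as $\{m\in\mathcal B(K): mC\cup Cm\subseteq C\}$ in the usual way. Since $j:X\to C\subseteq\mathcal B(K)$ is a completely isometric c.p.\ representation of $(A,X)$ with $j(A)C=C$ (recall $C$ is generated by $j(X)$ and $j(A)$ acts nondegenerately because $C$ does), Lemma \ref{lem-multipliers} already hands me a unique unital completely isometric extension $\bar{j}:M(X)\to\mathcal B(K)$ which is a ccp representation of $(M(A),M(X))$. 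So the only real content is to show that $\bar{j}$ actually takes values in $M(C)$, i.e.\ that $\bar{j}(M(X))C\cup C\bar{j}(M(X))\subseteq C$ and likewise $\bar{j}(M(A))\subseteq M(C)$.

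The key step is the computation already used in the proof of Lemma \ref{lem-non-deg-morphism}: for $k\in M(X)$ and $a\in A$ one has $\bar{j}(k)j(a)=j(ka)$, because on a vector $j(b)\xi$ (with $b\in A$) we get $\bar{j}(k)j(a)j(b)\xi=\bar{j}(k)j(ab)\xi=j(kab)\xi=j(ka)j(b)\xi$, and such vectors are dense by nondegeneracy. Hence
\[
\bar{j}(M(X))\,C=\bar{j}(M(X))\,j(A)C=j\big(M(X)A\big)C\subseteq j(X)C\subseteq C,
\]
using $M(X)A\subseteq X$ from the multiplier system structure; the symmetric computation gives $C\bar{j}(M(X))\subseteq C$, so $\bar{j}(M(X))\subseteq M(C)$. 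Applying the same argument to $M(A)$ in place of $M(X)$ (now $M(A)A\subseteq A$) yields $\bar{j}(M(A))\subseteq M(C)$, and since $\bar{j}|_A=j|_A$ is a $*$-homomorphism onto $j(A)$, the restriction $\bar{j}:M(A)\to M(C)$ is the usual canonical extension of a nondegenerate $*$-homomorphism to multiplier algebras. Complete isometry of $\bar{j}$ is inherited from Lemma \ref{lem-multipliers}, since the extension constructed there is completely isometric precisely because $j$ is.

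For the final claim $\bar{j}(M(A))\cap C=j(A)$, the inclusion $\supseteq$ is clear. For $\subseteq$, suppose $m\in M(A)$ with $\bar{j}(m)\in C$. Then for every $a\in A$ we have $\bar{j}(m)j(a)=j(ma)$ and also $j(a)\bar{j}(m)=j(am)$; but since $C$ is generated by $j(X)=j(AX)=j(A)j(X)$ and $j(A)$ acts nondegenerately, $\bar{j}(m)\in C$ is determined by how it multiplies $j(A)$ on either side, i.e.\ by the pair of maps $a\mapsto ma$ and $a\mapsto am$, which is exactly the data of an element of $M(A)$. Concretely, choose an approximate unit $(u_i)$ of $A$; then $j(mu_i)=\bar{j}(m)j(u_i)\to\bar{j}(m)$ strictly (equivalently $*$-strongly on $K$), and $mu_i\in A$, so $\bar{j}(m)$ lies in the strict closure of $j(A)$ inside $M(C)$, which is $j(A)$ itself when $A$ is the base $C^*$-algebra. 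More precisely, $\bar{j}(m)\in C$ together with $\bar{j}(m)j(A)\subseteq j(A)$ and $j(A)\bar{j}(m)\subseteq j(A)$ forces $\bar{j}(m)\in j(M(A))\cap C$, and since $j$ restricted to $A$ extends to the injective $*$-homomorphism $\bar{j}:M(A)\to M(C)$, injectivity gives $m$ back in $A$, i.e.\ $\bar{j}(m)\in j(A)$.

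I expect the main obstacle to be the last point: pinning down that an element of $M(A)$ whose image lands in $C$ must already come from $A$. The clean way is to invoke the standard fact that for a nondegenerate embedding $j(A)\subseteq C\subseteq M(C)$ one has $j(M(A))\cap C = \{m\in M(C): m\cdot j(A)\subseteq j(A)\text{ and }j(A)\cdot m\subseteq j(A), \ m\in C\}$, and an element of $C$ multiplying $j(A)$ into $j(A)$ from both sides — where $j(A)$ is an ideal in $j(M(A))$ but merely a subalgebra of $C$ — need not be in $j(A)$ in general; so one genuinely uses that $j(A)$ is nondegenerate in $M(C)$ and that $\bar{j}$ is injective on $M(A)$. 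Care is needed to separate ``$\bar{j}(m)\in C$'' from ``$\bar{j}(m)\in j(A)$,'' and the approximate-unit/strict-topology argument above is what bridges them.
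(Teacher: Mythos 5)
Your proof of the first assertion is correct and is essentially the paper's argument: the paper observes that $j(A)$ contains an approximate identity of $C$ (so $j$ is a non-degenerate generalized morphism from $(A,X)$ to the $C^*$-operator system $(C,C)$) and then invokes Lemma \ref{lem-non-deg-morphism}; you unwind that lemma into its ingredients, namely the extension from Lemma \ref{lem-multipliers} together with the identity $\bar{j}(k)j(a)=j(ka)$ and the inclusion $M(X)A\subseteq X$ to land in $M(C)$. Same mathematics, just inlined.

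The second assertion is where your argument breaks. You correctly write $j(mu_i)=\bar{j}(m)j(u_i)$ with $mu_i\in A$, but you then conclude only that $\bar{j}(m)j(u_i)\to\bar{j}(m)$ \emph{strictly} and appeal to the claim that ``the strict closure of $j(A)$ inside $M(C)$ is $j(A)$.'' That claim is false whenever $A$ is non-unital: since $j(A)$ contains an approximate identity of $C$, its strict closure already contains $1_{M(C)}$, so it is strictly larger than $j(A)$. The subsequent ``more precisely'' sentence does not repair this --- it merely restates the hypothesis ($m\in M(A)$ with $\bar{j}(m)\in C$) and then asserts that injectivity of $\bar{j}$ on $M(A)$ ``gives $m$ back in $A$,'' which is a non sequitur: injectivity says nothing about which multipliers have image in $C$. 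The missing observation, which is exactly what the paper uses, is that $(j(u_i))$ is an approximate unit for the whole $C^*$-algebra $C$ (because $j(A)C=C$), and the element $c:=\bar{j}(m)$ lies in $C$ by hypothesis; therefore $cj(u_i)\to c$ in \emph{norm}, each $cj(u_i)=j(mu_i)$ lies in the norm-closed set $j(A)$, and hence $c\in j(A)$. With that one correction your approximate-unit argument closes the proof; as written, it does not.
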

\begin{proof} Since $A$ contains an approximate identity of $X$, and since $C$ is generated by 
$j(X)$ as a $C^*$-algebra, it follows that $j(A)$ contains an approximate identity of $C$.
It follows that $j:(A,X)\to C\subseteq M(C)$ is a completely isometric non-degenerate
(generalized) morphism, where we identify  $C$ with the $C^*$-operator system $(C,C)$.
The first assertion  then follows from Lemma \ref{lem-non-deg-morphism}.

To see the second assertion let $c\in C$ such that $ j(A) c\subseteq j(A)\subseteq M(C)$.
Let $(a_i)_{i\in I}$ be an approximate unit in $A$. Then $(a_i)_{i\in I}$ is also an approximate 
unit in $X$, and $(j(a_i))_{i\in I}$ is an approximate unit in $C$. 
But then it follows that $c=\lim_i j(a_i) c\in j(A)$.

\end{proof}

\section{Tensor products}\label{sec-tensor}
In this section we want to give a brief discussion on certain tensor product constructions of $C^*$-operator systems.
In particular we want to discuss analogues of the commutative maximal tensor product $\mathcal S\otimes_c\mathcal T$ of 
two operator systems $\mathcal S$ and $\mathcal T$  as introduced in \cite{KPTT} and of the minimal (or spacial) tensor product.

\begin{definition}\label{def-universal-tensor}
Suppose that $(A,X)$ and $(B,Y)$ are $C^*$-operator systems. 
Let $A\otimes_c B$ and $X\otimes_cY$ denote the closures of the algebraic tensor products
$A\odot B$ and $X\odot Y$ 
inside the maximal $C^*$-tensor product $C_u^*(A,X)\otimes_{\max}C_u^*(B,Y)$, respectively.
Then  $(A\otimes_c B,  X\otimes_c Y)$ is a $C^*$-operator system which we call
the {\em commuting universal tensor product} of $(A,X)$ with $(B,Y)$.
\end{definition}

\begin{lemma}\label{lem-factors}
There are unique completely isometric generalized morphisms  $i_X:X\to M(X\otimes_cY)$ 
and $i_Y:Y\to M(X\otimes_cY)$ such that $i_X(x)i_X(y)=x\otimes y\in X\otimes_cY$ for all $x\in X, y\in Y$.
\end{lemma}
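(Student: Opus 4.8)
The plan is to use the universal $C^*$-hull $C_u^*(A,X)$ together with the maximal tensor product to produce the two embeddings, and then to verify that they are completely isometric generalized morphisms with the stated factorization property. First I would recall that by Lemma \ref{lem mult univ} the canonical inclusion $j_u\colon X\to C_u^*(A,X)$ extends to a completely isometric inclusion $\overline{j_u}\colon(M(A),M(X))\to M(C_u^*(A,X))$, and similarly for $(B,Y)$. Now, the maximal tensor product $C:=C_u^*(A,X)\otimes_{\max}C_u^*(B,Y)$ carries the two canonical non-degenerate $*$-homomorphisms $C_u^*(A,X)\to M(C)$ and $C_u^*(B,Y)\to M(C)$ with commuting ranges, each extending to the multiplier algebras. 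Composing the first of these with $j_u\colon X\to C_u^*(A,X)$ gives a non-degenerate representation $X\to M(C)$, which by construction maps into $M(X\otimes_c Y)\subseteq M(C)$ once we check that it multiplies $X\otimes_c Y$ back into itself (using $X\otimes_c Y=\overline{X\odot Y}$ and $C_u^*(A,X)\cdot X\subseteq$ the closed span of $X\odot(\text{stuff})$); this defines $i_X$, and symmetrically $i_Y$.

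The factorization identity $i_X(x)i_Y(y)=x\otimes y$ in $X\otimes_c Y$ is then immediate from the multiplicativity of the two canonical maps into $M(C)$ and the definition of the $\max$-tensor product: the image of $x\in X$ under $i_X$ is $j_u(x)\otimes 1$ and the image of $y$ under $i_Y$ is $1\otimes j_u(y)$ (interpreted in $M(C)$), whose product is $j_u(x)\otimes j_u(y)$, which under the identification $X\otimes_c Y=\overline{\operatorname{span}}\,j_u(X)\odot j_u(Y)$ is exactly $x\otimes y$. To see that $i_X$ is completely isometric, I would restrict attention to the corner: since $B$ acts non-degenerately and $Y=BY$, one recovers $j_u(x)\otimes 1$ from its products with elements $1\otimes j_u(b)$, and using a bounded approximate unit $(b_i)$ of $B$ together with the fact that $x\otimes b_i\to$ (a strict limit) in $M(X\otimes_c Y)$, the norm of $x$ (and more generally of a matrix over $X$) is recovered from norms inside $X\otimes_c Y$; alternatively one can argue that the slice map at a state of $C_u^*(B,Y)$ composed with $i_X$ is the identity on $X$, so $i_X$ is a complete isometry onto its image because $j_u$ is. Non-degeneracy ($i_X(A)\cdot(X\otimes_c Y)=X\otimes_c Y$) follows from $AX=X$ and $j_u(A)C_u^*(A,X)=C_u^*(A,X)$ combined with $BY=Y$.

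Uniqueness is the easy part: any two generalized morphisms $i_X,i_X'\colon X\to M(X\otimes_c Y)$ satisfying $i_X(x)i_Y(y)=x\otimes y=i_X'(x)i_Y(y)$ for all $x,y$ must agree after multiplying on the right by the dense set $\{i_Y(y):y\in Y\}\cdot(X\otimes_c Y)$, and since $i_Y(B)\cdot(X\otimes_c Y)=X\otimes_c Y$ is all of $X\otimes_c Y$, which acts non-degenerately in its multiplier algebra, this forces $i_X=i_X'$; and likewise for $i_Y$. The main obstacle I anticipate is the verification that $i_X$ is completely \emph{isometric} rather than merely completely contractive: one must genuinely exploit that the construction lives inside $C_u^*(A,X)\otimes_{\max}C_u^*(B,Y)$ (where $j_u$ is completely isometric by the definition of the universal hull and where $\max$ guarantees that the two tensor legs embed faithfully) together with the non-degeneracy of $B$ on the second leg, rather than something formal — this is exactly where the choice of the universal hull and the maximal tensor product, as opposed to arbitrary $C^*$-hulls, is used.
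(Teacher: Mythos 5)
Your proposal follows essentially the same route as the paper: both restrict the canonical commuting embeddings of $C_u^*(A,X)$ and $C_u^*(B,Y)$ into $M\big(C_u^*(A,X)\otimes_{\max}C_u^*(B,Y)\big)$ to $X$ and $Y$, verify the multiplier condition via the computation $i_X(x)(a\otimes b)=i_X(xa)i_Y(b)=xa\otimes b\in X\otimes_cY$ (note the condition is that $i_X(x)$ multiplies $A\otimes_cB$ into $X\otimes_cY$, not $X\otimes_cY$ into itself), and derive complete isometry from the faithfulness of the tensor legs in the maximal tensor product. The paper's isometry step is a bit more direct --- the canonical maps $c\mapsto c\otimes 1$ are already isometric $*$-homomorphisms and $j_u$ is completely isometric --- but your slice-map argument accomplishes the same thing.
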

\begin{proof} Write $D:=C_u^*(A,X)\otimes_{\max}C_u^*(B,Y)$ and assume that $D$ is represented 
faithfully and non-degenerately on a Hilbert space $K$, say.
By the properties of the maximal tensor product of $C^*$-algebras, there are 
isometric $*$-homomorphisms $i_{C_u^*(A,X)}:C_u^*(A,X)\to M(D)$
and $i_{C_u^*(B,Y)}:C_u^*(B,Y)\to M(D)$
such that $i_{C_u^*(A,X)}(c)i_{C_u^*(B,Y)}(d)=c\otimes d$ for all $c\in C_u^*(A,X), d\in C_u^*(B,Y)$,
respectively.  Let $i_X$ and $i_Y$ denote the restrictions  of 
$i_{C_u^*(A,X)}$ and $i_{C_u^*(B,Y)}$ to $X$ and $Y$, respectively.
Then $i_X$ and $i_Y$ are completely isometric representation of $(A,X)$ and $(B,Y)$ 
into $M(D)\subseteq \mathcal B(K)$ such that $i_X(x)i_Y(y)=x\otimes y$ for all $x\in X, y\in Y$,
if we regard the algebraic tensor product $X\odot Y$ as a subspace of $X\otimes_cY$.
So all we need to check is that $i_X$ and $i_Y$ have image in $M(X\otimes_cY)$, which
follows easily from $i_X(x)(a\otimes b)=i_X(x)i_X(a)i_Y(b)=i_X(xa)i_Y(b) =xa\otimes b\in X\otimes_cY$  
hence $i_X(X)(A\otimes_cB)\subseteq X\otimes_cB$
and, similarly, $i_Y(Y)(A\otimes_cB)\subseteq A\otimes_cY$, where $A\otimes_cB, A\otimes_cY, X\otimes_cB$ are defined 
as  the closures of the respective algebraic tensor products in $X\otimes_cY$.
\end{proof}

\begin{lemma}\label{lem-tensor}
The tensor product $(A\otimes_cB, X\otimes_c Y)$ has the following universal property:
whenever $(\varphi_X, \varphi_Y)$ is a pair of non-degenerate ccp representations $\varphi_X:X\to M(D), \varphi_Y:Y\to M(D)$ 
of $(A,X)$ and $(B,Y)$ into the multiplier algebra $M(D)$ for some $C^*$-algebra $D$
such that $\varphi_X(x)\varphi_Y(y)=\varphi_Y(y)\varphi_X(x)$ for all $x\in X$ and $y\in Y$, then 
there exists a unique ccp representation $\varphi=\varphi_X\rtimes\varphi_Y:X\otimes_cY\to M(D)$ 
of  $(A\otimes_cB, X\otimes_c Y)$such that 
$$\varphi(x\otimes y)=\varphi_X(x)\varphi_Y(y)$$
for all $x\in X, y\in Y$.
\end{lemma}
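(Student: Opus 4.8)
The plan is to use the universal properties established earlier, passing through $C^*$-algebra level constructions. First I would invoke Proposition \ref{prop-universal}: the non-degenerate representation $\varphi_X:X\to M(D)$ corresponds to a non-degenerate $*$-homomorphism $\Phi_X:C_u^*(A,X)\to M(D)$ with $\Phi_X\circ j_u=\varphi_X$, and similarly $\varphi_Y$ corresponds to $\Phi_Y:C_u^*(B,Y)\to M(D)$. The commutation hypothesis $\varphi_X(x)\varphi_Y(y)=\varphi_Y(y)\varphi_X(x)$ for $x\in X$, $y\in Y$ needs to be upgraded to commutation of the full images $\Phi_X(C_u^*(A,X))$ and $\Phi_Y(C_u^*(B,Y))$; this follows because $j_u(X)$ generates $C_u^*(A,X)$ as a $C^*$-algebra (and likewise for $Y$), so the commutant of $\Phi_Y(C_u^*(B,Y))$ in $M(D)$ is a $C^*$-subalgebra containing $\Phi_X(j_u(X))=\varphi_X(X)$, hence contains $\Phi_X(C_u^*(A,X))$.

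Next, by the universal property of the maximal tensor product of $C^*$-algebras, the pair of commuting non-degenerate $*$-homomorphisms $(\Phi_X,\Phi_Y)$ integrates to a non-degenerate $*$-homomorphism
$$\Phi_X\otimes_{\max}\Phi_Y: C_u^*(A,X)\otimes_{\max}C_u^*(B,Y)\to M(D)$$
determined by $(\Phi_X\otimes_{\max}\Phi_Y)(c\otimes d)=\Phi_X(c)\Phi_Y(d)$. (Strictly one should note that if $D$ is faithfully and non-degenerately represented on a Hilbert space, the two commuting non-degenerate $*$-homs into $M(D)\subseteq\mathcal B(H)$ give commuting representations on $H$, to which the standard maximal-tensor universal property applies, yielding a map into $M(D)$ by non-degeneracy.) Then I define $\varphi:=(\Phi_X\otimes_{\max}\Phi_Y)|_{X\otimes_cY}$, the restriction to the norm-closed subspace $X\otimes_cY\subseteq C_u^*(A,X)\otimes_{\max}C_u^*(B,Y)$ of Definition \ref{def-universal-tensor}. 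This is automatically ccp as a restriction of a $*$-homomorphism, it satisfies $\varphi(x\otimes y)=\Phi_X(j_u(x))\Phi_Y(j_u(y))=\varphi_X(x)\varphi_Y(y)$ for $x\in X,y\in Y$, and one checks $\varphi((a\otimes b)(x\otimes y))=\varphi(a\otimes b)\varphi(x\otimes y)$ on elementary tensors and extends by linearity, continuity, and Cohen factorisation to conclude $\varphi$ is a representation of $(A\otimes_cB,X\otimes_cY)$ into $M(D)$. Uniqueness is immediate since $\varphi$ is prescribed on the set $\{x\otimes y:x\in X,y\in Y\}$ whose closed linear span is $X\otimes_cY$.

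The main obstacle I expect is the careful bookkeeping around multiplier algebras: ensuring that the various $*$-homomorphisms genuinely land in $M(D)$ rather than merely in $\mathcal B(H)$ for an ambient faithful representation, and that non-degeneracy is preserved at each step so that the maximal-tensor-product universal property and its multiplier version (as used in Lemma \ref{lem-factors}) actually apply. A secondary point requiring a brief argument is the promotion of commutation from $\varphi_X(X)$, $\varphi_Y(Y)$ to the generated $C^*$-algebras, though this is routine given that $X$ generates $C_u^*(A,X)$; and one should remark that the representation property $\varphi(\xi\eta)=\varphi(\xi)\varphi(\eta)$ for $\xi\in A\otimes_cB$, $\eta\in X\otimes_cY$ is inherited directly from $\Phi_X\otimes_{\max}\Phi_Y$ being a $*$-homomorphism, since $A\otimes_cB$ and $X\otimes_cY$ sit inside a single $C^*$-algebra.
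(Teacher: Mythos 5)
Your proposal is correct and follows essentially the same route as the paper's proof: extend $\varphi_X,\varphi_Y$ to $*$-homomorphisms of the universal hulls via Proposition \ref{prop-universal}, upgrade commutation from the generating copies of $X$ and $Y$ to the full images, apply the universal property of the maximal tensor product, and restrict to $X\otimes_cY$. Your extra care about the maps landing in $M(D)$ rather than just $\mathcal B(H)$, and the explicit commutant argument, are welcome refinements of steps the paper leaves implicit.
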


\begin{remark}\label{rem-rep}
If $H$ is a Hilbert space and $D=\mathcal K(H)$, we obtain a version 
of the above lemma for non-degenerate ccp representations on Hilbert space.
\end{remark}

\begin{proof}[Proof of Lemma \ref{lem-tensor}]
It follows from Proposition \ref{prop-universal} that there exist unique $*$-homomorphisms 
$\tilde\varphi_X:C_u^*(A,X)\to \mathcal B(H)$ and $\tilde\varphi_Y:C_u^*(B,Y)\to \mathcal B(H)$ 
such that $\tilde\varphi_X\circ j_u=\varphi_X$ and $\tilde\varphi_Y\circ j_u=\varphi_Y$.
Since $\varphi_X(x)$ commutes with $\varphi_Y(y)$ for all $x\in X, y\in Y$,
 and since $C_u^*(A,X)$ and $C_u^*(B,Y)$ are generated by $j_u(X)$ and $j_u(Y)$, respectively,
it follows that the ranges of $\tilde\varphi_X$ and $\tilde\varphi_Y$ commute as well.
Therefore, by the universal properties of the maximal tensor product, there 
exists a (unique) $*$-homomorphism $\tilde\varphi:C_u^*(A,X)\otimes_{\max}C_u^*(B,Y)\to \mathcal B(H)$ 
such that $\tilde\varphi(c\otimes d)=\tilde\varphi_X(c)\tilde\varphi_Y(d)$ for all 
$c\in C_u^*(A,X)$ and $d\in C_u^*(B,Y)$. It is then easily checked on elementary 
tensors that $\varphi(cz)=\varphi(c)\varphi(z)$ for all $c\in A\otimes_cB$ and $z\in X\otimes_cY$.
\end{proof}

\begin{remark}\label{rem-tensor}
It is an interesting question, whether  there exists a converse of the above lemma, 
i.e., whether every non-degenerate ccp representation $\pi:X\otimes_cY\to \mathcal B(K)$
can be realised as $\pi=\pi_X\rtimes\pi_Y$ for a pair of representations 
$(\pi_X,\pi_Y)$ as in the lemma. Indeed, this is only true if the representation 
$\pi$ preserves some more of the multiplicativity structure of $X\otimes_cY$, which 
is not directly part of the structure of $(A\otimes_cB, X\otimes_cY)$ as a $C^*$-operator system.
Realised as a subspace of   $D:=C_u^*(A,X)\otimes_{\max}C_u^*(B,Y)$, we see 
that an elementary tensor $x\otimes y$ can be written as a product 
$i_X(x)i_Y(y)=i_Y(y)i_X(x)$, where $i_X, i_Y$ are the canonical inclusions 
of $X$ and $Y$ into $M(X\otimes_cY)$ as in Lemma \ref{lem-factors}.
The representations constructed in Lemma \ref{lem-tensor} are precisely 
those whose extension $\bar\pi$ to $M(X\otimes_cY)$ preserves these relations:
If it does, then $\pi_X=\bar\pi\circ i_X$ and $\pi_Y=\bar\pi\circ i_Y$, satisfy the 
conditions of the lemma such that $\pi=\pi_X\rtimes\pi_Y$. 

But we believe that a general ccp representation $\pi:X\otimes_cY\to \mathcal B(K)$
does not need to satisfy these relations. But, as we see below, it does if 
$Y=B$ is a $C^*$-algebra.
\end{remark}

\begin{lemma}\label{lem-tensor-$C^*$-algebra}
Suppose that $(A,X)$ is a $C^*$-operator system and $B$ is a $C^*$-algebra
(viewed as the $C^*$-operator system $(B,B)$). 
Let $\varphi: X\otimes_cB\to M(D)$ be any non-degenerate ccp representation
of $(A\otimes_cB, X\otimes_cB)$. Then there is a unique  ccp representation
$\varphi_X:X\to M(B)$ and a $*$-representation $\varphi_B:B\to M(B)$ such that
$\varphi=\varphi_X\rtimes \varphi_B$.
A similar statement holds for the tensor product $(B\otimes_cA, B\otimes_cX)$.
\end{lemma}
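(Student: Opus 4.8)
\textbf{Proof proposal for Lemma \ref{lem-tensor-$C^*$-algebra}.} The plan is to use the converse discussion in Remark \ref{rem-tensor}: realise $X\otimes_c B$ as a subspace of $D_0:=C_u^*(A,X)\otimes_{\max}B$, and show that an arbitrary non-degenerate ccp representation $\varphi:X\otimes_cB\to M(D)$, extended to $M(X\otimes_c B)$, automatically preserves the commuting factorisation $x\otimes b=i_X(x)i_B(b)=i_B(b)i_X(x)$. Once that is established, setting $\varphi_X:=\bar\varphi\circ i_X$ and $\varphi_B:=\bar\varphi\circ i_B$ produces the desired commuting pair with $\varphi=\varphi_X\rtimes\varphi_B$ by Lemma \ref{lem-tensor}, and uniqueness is immediate since $X\otimes_cB$ is generated as a $C^*$-operator system by the images of $X$ and $B$.

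First I would apply Proposition \ref{prop-universal} to lift $\varphi$ to a non-degenerate $*$-homomorphism $\tilde\varphi:C_u^*(A\otimes_cB,X\otimes_cB)\to M(D)$. The key point is to identify $C_u^*(A\otimes_cB,X\otimes_cB)$: I claim it is canonically $C_u^*(A,X)\otimes_{\max}B$. One direction is the universal property of the maximal tensor product together with Lemma \ref{lem-tensor}; for the other, one checks that the inclusion $X\otimes_cB\hookrightarrow C_u^*(A,X)\otimes_{\max}B$ is a $C^*$-hull, since $X\otimes_cB$ generates $C_u^*(A,X)\otimes_{\max}B$ (as $X$ generates $C_u^*(A,X)$ and $X\otimes B\subseteq X\otimes_cB$), and any $C^*$-hull receives a surjection from the universal one. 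Granting this identification, $\tilde\varphi$ restricts to a $*$-homomorphism on the copy of $C_u^*(A,X)$ and on the copy of $B$ sitting inside $M(C_u^*(A,X)\otimes_{\max}B)$; composing with $j_u$ on the first factor and with the canonical inclusion on the second gives $\varphi_X$ and $\varphi_B$, which have commuting ranges because the two factors commute inside the maximal tensor product and $\tilde\varphi$ is multiplicative.

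The main obstacle I anticipate is precisely the identification $C_u^*(A\otimes_cB,X\otimes_cB)\cong C_u^*(A,X)\otimes_{\max}B$, i.e., showing that every non-degenerate ccp representation of $(A\otimes_cB,X\otimes_cB)$ \emph{does} respect the product structure $i_X(x)i_B(b)$ — which Remark \ref{rem-tensor} warns may fail for a general second factor $Y$, and succeeds here only because $B$ is a $C^*$-algebra. The $C^*$-algebra structure of $B$ is what forces this: $\bar\varphi|_B$ is a genuine $*$-homomorphism (a ccp representation of $(B,B)$ is automatically multiplicative), and one exploits that elements of $X\otimes_cB$ can be approximated by finite sums $\sum x_i\otimes b_i$ together with an approximate unit $(e_j)$ of $B$, writing $x\otimes b=\lim_j (x\otimes e_j)(1\otimes b)$ inside $M(X\otimes_cB)$. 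Applying $\bar\varphi$ and using that $\bar\varphi$ is a ccp $M(B)$-bimodule-type map over the $C^*$-subalgebra $B\otimes_cB\subseteq X\otimes_cB$ — more precisely, using the bimodule relation $\varphi(c z)=\varphi(c)\varphi(z)$ for $c\in A\otimes_cB$ and its multiplier version — one deduces $\bar\varphi(x\otimes b)=\bar\varphi(x\otimes e_j)\,\bar\varphi(1\otimes b)$ in the limit, i.e. $\bar\varphi\circ i_X$ and $\bar\varphi\circ i_B$ multiply correctly. The analogous statement for $(B\otimes_cA,B\otimes_cX)$ follows by the same argument with the roles of the two tensor factors reversed, or by the flip isomorphism.
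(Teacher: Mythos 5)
Your first (and primary) route has a circularity. You correctly observe that $X\otimes_cB$ generates $C_u^*(A,X)\otimes_{\max}B$, but that only makes $C_u^*(A,X)\otimes_{\max}B$ \emph{a} $C^*$-hull of $(A\otimes_cB,X\otimes_cB)$, and hence yields a surjection $C_u^*(A\otimes_cB,X\otimes_cB)\onto C_u^*(A,X)\otimes_{\max}B$ --- not an isomorphism. To factor your lifted $*$-homomorphism $\tilde\varphi$ through $C_u^*(A,X)\otimes_{\max}B$ you need injectivity of that surjection, which amounts to showing that \emph{every} ccp representation of $(A\otimes_cB,X\otimes_cB)$ factors through $C_u^*(A,X)\otimes_{\max}B$; that is precisely the content of the lemma you are trying to prove. (The identification is true, but in the paper it is a \emph{consequence} of statements like this one, in the same way Corollary \ref{cor-universal} is deduced only after the covariant-representation correspondence is established.)

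Your final paragraph, however, recovers the correct mechanism, and it is essentially the paper's proof: because $B$ is a $C^*$-algebra, $i_B$ lands in $M(A\otimes_cB)$ (since $(a\otimes b)i_B(c)=a\otimes bc\in A\otimes_cB$), i.e.\ in the multiplier algebra of the $C^*$-\emph{algebra} part of the system, and by Lemma \ref{lem-multipliers} the extension $\bar\varphi$ of a non-degenerate ccp representation to the multiplier system satisfies $\bar\varphi(mk)=\bar\varphi(m)\bar\varphi(k)$ for $m\in M(A\otimes_cB)$, $k\in M(X\otimes_cB)$. This gives $\bar\varphi(x\otimes b)=\bar\varphi\bigl(i_X(x)i_B(b)\bigr)=\bar\varphi(i_X(x))\bar\varphi(i_B(b))$ directly, with no need for the approximate-unit limit $x\otimes b=\lim_j(x\otimes e_j)(1\otimes b)$; your version of this step is workable but requires an extra argument that $\varphi(x\otimes e_j)\to\varphi_X(x)$ strictly, which the direct multiplier computation avoids. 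This is exactly where the hypothesis that the second factor is a $C^*$-algebra enters, and it is the reason the obstruction of Remark \ref{rem-tensor} disappears. So: drop the hull identification as a proof step (or derive it afterwards as a corollary), and promote the last paragraph --- in its multiplier-bimodule form rather than the approximate-unit form --- to the actual proof.
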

\begin{proof}
Let $\varphi_X=\bar\varphi\circ i_X$ and $\varphi_B=\bar\varphi\circ i_B$ as in the above remark.
Note that $i_X$ maps $X$ into $M(X\otimes_cY)$ but  $i_B$ maps $B$ into 
$M(A\otimes_cB)\subseteq M(X\otimes_cB)$, since $(a\otimes b) i_B(c)=a\otimes bc\in A\otimes_cB$
for all $a\otimes b\in A\odot B$. 
Since the extension $\bar\varphi:M(X\otimes_cB)\to \mathcal B(K)$ is a unital c.p representation
of the $C^*$-operator system $(M(A\otimes_cB), M(X\otimes_cB))$, 
we get 
$$\varphi_X(x)\varphi_B(b)=\bar\varphi(i_X(x))\bar\varphi(i_B(b))=\bar\varphi(i_X(x)i_B(b))=\bar\varphi(x\otimes b)$$
and similarly $\varphi_B(b)\varphi_X(x)=\bar\varphi(x\otimes b)$. 
It is then clear that $\varphi=\varphi_X\rtimes\varphi_B$ as in Lemma \ref{lem-tensor}.
\end{proof}

\begin{definition}\label{def-spacial-tensor}
Suppose that $(A,X)$ and $(B, Y)$ are concrete $C^*$-operator systems on the Hilbert spaces 
$H$ and $K$, respectively. Then we define the spacial tensor product $(A\check\otimes B, X\check\otimes Y)$ 
via the closures of the algebraic tensor products $A\odot B$ and $X\odot Y$ in $B(H\otimes K)$.
\end{definition}

It is well known that the spatial tensor product does not depend, up to isomorphism,
on the particular embeddings of $X$ in $\mathcal B(H)$ and $Y$ in $\mathcal B(K)$. 
Let $i_X^s:X\to B(H\otimes K), i_X^s(x)=x\otimes 1_K$ and $i_Y^s:Y\to B(H\otimes K); i_Y^s(y)=1_H\otimes y$
denote the canonical embeddings of $X$ and $Y$ into $B(H\otimes K)$. It then follows from 
Lemma \ref{lem-tensor} that there exists a canonical surjective  morphism
$$\Phi:=i_X\times i_Y: X\otimes_cY\to X\check\otimes Y $$
from $(A\otimes_cB, X\otimes_cY)$ onto $(A\check\otimes B, X\check\otimes Y)$ 
.
The following proposition is now an easy consequence of our constructions:

\begin{proposition}\label{prop-nuclear}
Suppose that $(A,X)$ is a $C^*$-operator system. Then for any nuclear $C^*$-algebra $B$,
the canonical morphism $(A\otimes_cB, X\otimes_cB)$ onto $(A\check\otimes B, X\check\otimes B)$  
is an isomorphism (and similarly for $(B\otimes_cA, B\otimes_cX)$).
\end{proposition}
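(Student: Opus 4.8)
The plan is to reduce the statement to the corresponding fact for $C^*$-algebras, namely that $C_u^*(A,X)\otimes_{\max}B\cong C_u^*(A,X)\otimes B$ whenever $B$ is nuclear. Recall that $(A\otimes_cB, X\otimes_cB)$ is by definition the pair of closures of $A\odot B$ and $X\odot B$ inside $D_{\max}:=C_u^*(A,X)\otimes_{\max}B$, while the spatial tensor product $(A\check\otimes B, X\check\otimes B)$ sits inside $B(H\otimes K)$ after choosing faithful nondegenerate representations $A\subseteq \mathcal B(H)$ and $B\subseteq\mathcal B(K)$; up to isomorphism this agrees with the closures of $A\odot B$ and $X\odot B$ inside $C_u^*(A,X)\otimes B=:D_{\min}$ (the spatial $C^*$-tensor product), since the embedding $j_u\colon X\to C_u^*(A,X)$ is completely isometric and the spatial tensor product of operator spaces is independent of the ambient representation.

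First I would note that the canonical quotient map $q\colon C_u^*(A,X)\otimes_{\max}B \onto C_u^*(A,X)\otimes B$ is a $*$-isomorphism because $B$ is nuclear. Restricting $q$ to the closure of $X\odot B$ inside $D_{\max}$ gives a complete isometry onto the closure of $X\odot B$ inside $D_{\min}$, carrying $x\otimes b$ to $x\otimes b$ and restricting on the algebra part to a $*$-isomorphism $A\otimes_cB\to A\check\otimes B$. This restricted map is precisely the canonical morphism $\Phi=i_X^s\times i_Y^s$ described just before the proposition: indeed $\Phi$ is characterized by $\Phi(x\otimes b)=x\otimes b$ on elementary tensors (which follows from the defining property $i_X^s(x)i_Y^s(b)=x\otimes b$ together with the universal property in Lemma \ref{lem-tensor}), and so does $q|_{X\otimes_cB}$. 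Since $q$ is a $*$-isomorphism, its restriction is completely isometric; and it is surjective onto $X\check\otimes B$ because it hits every elementary tensor and is isometric hence has closed range. Therefore $\Phi$ is a completely isometric, completely positive isomorphism of $C^*$-operator systems, which is what we had to show. The argument for $(B\otimes_cA, B\otimes_cX)$ is identical after swapping the two tensor factors and using that the $\max$- and spatial tensor products are symmetric.

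The one point that needs a little care — and is really the only obstacle — is the identification of the spatial $C^*$-operator system $(A\check\otimes B, X\check\otimes B)$ of Definition \ref{def-spacial-tensor} with the closures of $A\odot B$ and $X\odot B$ inside $C_u^*(A,X)\otimes B$. For this one represents $C_u^*(A,X)$ faithfully and nondegenerately on some Hilbert space $H_u$ and $B$ faithfully and nondegenerately on $K$; then $C_u^*(A,X)\otimes B\subseteq \mathcal B(H_u\otimes K)$ is the spatial tensor product, and since $j_u\colon X\to C_u^*(A,X)\subseteq\mathcal B(H_u)$ is a completely isometric representation of $(A,X)$, the closure of $j_u(X)\odot B$ in $\mathcal B(H_u\otimes K)$ is completely isometrically isomorphic to $X\check\otimes B$ by the independence of the spatial operator-space tensor product from the chosen representations. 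This takes care of the remaining bookkeeping, and the proof is complete.
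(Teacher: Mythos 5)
Your proposal is correct and follows essentially the same route as the paper: the paper's proof likewise invokes nuclearity to identify $C_u^*(A,X)\otimes_{\max}B$ with $C_u^*(A,X)\check\otimes B$ and then represents $C_u^*(A,X)$ faithfully on a Hilbert space, relying on the representation-independence of the spatial tensor product. Your version simply spells out the bookkeeping (the restriction of the quotient map, its agreement with $\Phi$ on elementary tensors, and the identification of the two realizations of $X\check\otimes B$) that the paper leaves implicit.
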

\begin{proof}
If $B$ is nuclear, then 
$C_u^*(A,X)\otimes_{\max}B=C_u^*(A,X)\check\otimes B$.
The result then follows from representing $C_u^*(A,X)$ (and hence $X$)
faithfully on a Hilbert space $H$.
\end{proof}

For later use we  also need to consider morphisms into the multiplier $C^*$-operator systems
of tensor products of $C^*$-operator systems with $C^*$-algebras. This 
is the special case of the above constructions if one of the factors is a pair $(C,C)$ 
for a  $C^*$-algebra $C$. Note that in this case we also have $C_u^*(C,C)=C$.

\begin{lemma}\label{lem-tensormor}
Suppose that $(A,X)$ and $(B,Y)$ are $C^*$-operator systems and let $C$ and $D$ be $C^*$-algebras.
Let $\varphi_X:X\to  M(Y)$ be a non-degenerate generalized homomorphism from $(A,X)$ into $(M(B), M(Y))$ 
and let $\varphi_C:C\to M(D)$ be a non-degenerate generalized homomorphism from $C$ to $D$.
Then there exists a unique non-degenerate generalized homomorphism
$$\varphi\otimes_c\psi: X\otimes_cC\to M(Y\otimes_cD)$$
(resp. $\varphi\check\otimes\psi: X\check\otimes C\to M(Y\check\otimes D)$) such that
$\varphi\otimes_c\psi(x\otimes c)=\varphi(x)\otimes\psi(c)$ (resp. $\varphi\check\otimes\psi(x\otimes c)=\varphi(x)\otimes\psi(c)$)
for all elementary tensors $x\otimes c\in X\odot C$.
\end{lemma}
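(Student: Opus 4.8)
\textbf{Proof proposal for Lemma \ref{lem-tensormor}.}

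The plan is to reduce everything to the universal property of the commuting universal tensor product (Lemma \ref{lem-tensor}) together with the extension results for multiplier systems. First I would assemble, from the two given generalized morphisms, a pair of commuting representations into a single multiplier algebra. Realise $Y\otimes_cD$ faithfully and non-degenerately on a Hilbert space, so that $M(Y\otimes_cD)$ sits inside $\mathcal B$ of that space; by Lemma \ref{lem-factors} we have canonical completely isometric generalized morphisms $i_Y:Y\to M(Y\otimes_cD)$ and $i_D:D\to M(Y\otimes_cD)$ whose ranges commute in the sense that $i_Y(y)i_D(d)=y\otimes d=i_D(d)i_Y(y)$. Composing, set $\Psi_X:=\overline{i_Y}\circ\varphi_X:X\to M(Y\otimes_cD)$ and $\Psi_C:=\overline{i_D}\circ\varphi_C:C\to M(Y\otimes_cD)$, where the bars denote the unique extensions to multiplier systems supplied by Lemma \ref{lem-non-deg-morphism} (applicable because $\varphi_X$ and $\varphi_C$ are non-degenerate generalized morphisms, and $i_Y,i_D$ are non-degenerate into $M(Y\otimes_cD)$). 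Each of $\Psi_X,\Psi_C$ is then a non-degenerate ccp representation into $M(Y\otimes_cD)$, and since $\varphi_X(X)$ generates a copy of (a quotient of) $C_u^*(A,X)$ whose image under $\overline{i_Y}$ commutes with the image of $C$ under $\overline{i_D}$ — this commutation being inherited from the commutation of $i_Y(Y)$ with $i_D(D)$ and the fact that these sets generate — we get $\Psi_X(x)\Psi_C(c)=\Psi_C(c)\Psi_X(x)$ for all $x\in X,c\in C$.

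Next I would invoke Lemma \ref{lem-tensor} with $D$ there replaced by $Y\otimes_cD$: the commuting pair $(\Psi_X,\Psi_C)$ integrates to a unique ccp representation
$$\varphi_X\otimes_c\varphi_C:=\Psi_X\rtimes\Psi_C:\; X\otimes_cC\to M(Y\otimes_cD),\qquad (x\otimes c)\mapsto \Psi_X(x)\Psi_C(c)=\varphi_X(x)\otimes\varphi_C(c).$$
This is a morphism from $(A\otimes_cC, X\otimes_cC)$ into $(M(B\otimes_cD), M(Y\otimes_cD))$. To promote it to a \emph{generalized} morphism, I must check non-degeneracy, i.e.\ that its restriction to $A\otimes_cC$ satisfies $(\varphi_X\otimes_c\varphi_C)(A\otimes_cC)\cdot(B\otimes_cD)=B\otimes_cD$. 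On elementary tensors $(\varphi_X(a)\otimes\varphi_C(c))(b\otimes d)=\varphi_X(a)b\otimes\varphi_C(c)d$, and using $\varphi_X(A)B=B$ and $\varphi_C(C)D=D$ (which hold since both given maps are non-degenerate generalized morphisms), together with Cohen factorisation, the closed span of such products is all of $B\otimes_cD$; density then finishes it. Uniqueness of $\varphi_X\otimes_c\varphi_C$ is immediate from the uniqueness clause in Lemma \ref{lem-tensor} (the value on elementary tensors is forced). The spatial case $\varphi_X\check\otimes\varphi_C:X\check\otimes C\to M(Y\check\otimes D)$ follows by composing $\varphi_X\otimes_c\varphi_C$ with the canonical surjection $X\otimes_cC\onto X\check\otimes C$ of $C^*$-operator systems and observing that, after realising $Y\check\otimes D$ spatially on $H_Y\otimes H_D$, the images of $X$ and $D$ literally act on the two tensor legs and hence commute, so the induced map descends to the spatial quotient and has the claimed form on elementary tensors.

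The main obstacle I anticipate is not any single computation but rather the careful bookkeeping of the extension-to-multipliers steps: one must be sure that each of $\varphi_X$, $\varphi_C$, $i_Y$, $i_D$ is genuinely \emph{non-degenerate} in the precise sense of Definition \ref{def-nondeg-morphism} / Definition \ref{def-rep-multiplier} before applying Lemma \ref{lem-non-deg-morphism} or Proposition \ref{prop-universal}, and that the composites $\overline{i_Y}\circ\varphi_X$ etc.\ remain non-degenerate (this uses $\varphi_X(A)B=B$ composed with $i_Y(B)$ being non-degenerate in $M(Y\otimes_cD)$, the latter coming from $B\otimes_cD$ being an ideal-like subspace whose closed span with $Y\otimes_cD$ is everything). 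The commutation of the two extended representations should be checked on the generating sets $X$ and $C$ and then propagated to the generated $C^*$-algebras, exactly as in the proof of Lemma \ref{lem-tensor}; everything else is a routine density argument.
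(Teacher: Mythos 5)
Your architecture is the same as the paper's: realise $Y\otimes_cD$ on a Hilbert space, pull the two given maps into a single multiplier algebra via the canonical commuting inclusions of the factors, check commutation, and integrate with Lemma \ref{lem-tensor}; the only cosmetic difference is that the paper starts from a faithful representation $\pi$ of $Y\otimes_cD$ and decomposes it as $\pi=\pi_Y\rtimes\pi_D$ using Lemma \ref{lem-tensor-$C^*$-algebra}, whereas you work directly with $i_Y$ and $i_D$ from Lemma \ref{lem-factors} — these amount to the same thing. The non-degeneracy check and the verification that the integrated map lands in $M(Y\otimes_cD)$ are done exactly as in the paper, on elementary tensors.

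There is, however, one step where your justification as written does not work: the commutation $\Psi_X(x)\Psi_C(c)=\Psi_C(c)\Psi_X(x)$. Since $\varphi_X$ and $\varphi_C$ take values in $M(Y)$ and $M(D)$, what you actually need is that the \emph{extensions} $\bar\imath_Y:M(Y)\to M(Y\otimes_cD)$ and $\bar\imath_D:M(D)\to M(Y\otimes_cD)$ still have commuting ranges. You attribute this to ``the fact that these sets generate,'' but $M(Y)$ is not generated by $Y$ as a $C^*$-algebra (compare $M(C_0(\R))=C_b(\R)$), so commutation on $i_Y(Y)$ and $i_D(D)$ does not propagate to the multiplier extensions by any generation argument. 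The correct mechanism — and the one the paper spends a displayed computation on — is a density argument on the representation space: for $m\in M(Y)$, $n\in M(D)$ one evaluates $\bar\pi_Y(m)\bar\pi_D(n)$ and $\bar\pi_D(n)\bar\pi_Y(m)$ on vectors of the form $\pi(b\otimes d)\xi$, which span a dense subspace by non-degeneracy, and checks agreement there using the defining property $\bar\pi_Y(m)\pi_Y(b)=\pi_Y(mb)$ of the extension. This is a short computation, but it is a genuinely needed one and your proposal replaces it with reasoning that is false as stated; with that computation inserted, the proof is complete.
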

\begin{proof}
Let $\pi:Y\otimes_cD\to \mathcal B(H)$ be a non-degenerate completely isometric representation of $(B\otimes_cD, Y\otimes_cD)$   on the Hilbert space $H$,
By Lemma \ref{lem-tensor-$C^*$-algebra} there are non-degenerate representations $\pi_Y:Y\to \mathcal B(H)$ and $\pi_D:D\to \mathcal B(H)$ such that 
$\pi=\pi_Y\otimes_c\pi_D$. Let $\psi_X:=\bar\pi_Y\circ \varphi_X$ and $\psi_C:=\bar\pi_D\circ \varphi_A$, where 
$\bar\pi_Y$ and $\bar\pi_D$ denote the unique extensions of $\pi_Y, \pi_D$ to $M(Y)$ and $M(D)$ as in 
Lemma \ref{lem-non-deg-morphism}. Note that for all $m\in M(Y), n\in M(D)$ we have
$$\bar\pi_Y(m)\bar\pi_D(n)= \bar\pi_D(n)\bar\pi_Y(m).$$
Indeed, this follows from the fact that $\pi(B\odot D)H$ is dense in $H$ (since $\pi$ is non-degenerate) and 
 for all $b\in B$ and $d\in D$, we have 
\begin{align*}
\bar\pi_Y(m)\bar\pi_D(n)\pi(b\otimes d)&= \bar\pi_Y(m)\bar\pi_D(n)\pi_Y(b)\pi_D(d)=\bar\pi_Y(m)\bar\pi_D(n)\pi_D(d)\pi_Y(b)\\
&=\bar\pi_Y(m)\pi_D(nd)\pi_Y(b)=\bar\pi_Y(m)\pi_Y(b)\pi_D(nd)\\
&=\pi_Y(mb)\pi_D(nd)
=\pi_D(nd)\pi_Y(mb)\\
&=\bar\pi_D(n)\pi_D(d)\pi_Y(mb)=\bar\pi_D(n)\pi_Y(mb)\pi_D(d)\\
&=\bar\pi_D(n)\bar\pi_Y(m)\pi_Y(b)\pi_D(d)=\bar\pi_D(n)\bar\pi_Y(m)\pi(b\otimes d).
\end{align*}
It follows from this that $\psi_X(x)\psi_C(c)=\psi_C(c)\psi_X(x)$ for all $x\in X, c\in C$. Thus, it follows from 
Lemma \ref{lem-tensor} that there exists a unique non-degenerate ccp representation 
$\psi:=\psi_X\rtimes\psi_C: X\otimes_cC\to \mathcal B(H)$ given on elementary tensors by 
$\psi(x\otimes c)=\psi_X(x)\psi_C(c)$. 

Now, by the construction of the multiplier system as in Lemma \ref{lem-multiplier} we may identify $M(Y\otimes_cD)$ 
with its image $\bar\pi(M(Y\otimes_cY)) \subseteq \mathcal B(H)$. Using this identification, we want to check that 
$\psi$ takes values in 
$$M(Y\otimes_cY)\cong\{m\in \mathcal B(H): m \pi(B\otimes_cD), \pi(B\otimes_cD) m\subseteq \pi(Y\otimes_{c}D).$$
For this  let $x\otimes c\in X\odot C$ be any elementary tensor and let $b\otimes d\in B\odot D$.
Then 
\begin{align*}
\psi(x\otimes c)\pi(b\otimes d)&=\bar\pi_Y(\varphi_X(x))\bar\pi_D(\varphi_C(c))\pi_Y(b)\pi_D(d)\\
&=\bar\pi_Y(\varphi_X(x))\pi_D(\varphi_C(c)d)\pi_Y(b)=\pi_Y(\varphi_X(x)b)\pi_D(\varphi_C(c)d)\\
&=\pi(\varphi_X(x)b\otimes \varphi_C(c)d)\in \pi(Y\otimes_cD).
\end{align*}
Hence $\psi(X\otimes_cC)\pi(B\otimes_cD)\subseteq \pi(Y\otimes_cD)$ and the inclusion 
 $\pi(B\otimes_cD)\psi(X\otimes_cC)\subseteq \pi(Y\otimes_cD)$ follows similarly.
 \end{proof}

\section{Universal crossed products by group actionns}\label{crossedproducts}
For a $C^*$-operator system $(A,X)$ let  $\Aut(A,X)$ denote the group of 
all invertible morphisms  $\alpha:(A,X)\to (A,X)$.
A strongly continuous action of the locally compact group $G$
on the $C^*$-operator system $(A,X)$ is a homomorphism $\alpha:G\to \Aut(A,X); g\mapsto\alpha_g$ such that
$g\mapsto \alpha_g(x)$ is continuous for all $x\in X$. 

It follows directly from the universal property of $(C^*_u(A,X), j_u)$  
that every automorphism of $(A,X)$ extends to a unique
automorphisms $\alpha^u$ 
of $C^*_u(A,X)$.
Since $C^*_u(A,X)$   is generated by a copy of $X$, any strongly 
continuous action $\alpha:G\to \Aut(A,X)$ extends to unique strongly continuous action
$\alpha^u:G\to \Aut(C_u^*(A,X))$.
This leads  to the 
following definition of the universal  crossed product
by an action $\alpha$ of $G$ on $(A,X)$.

\begin{definition}\label{def-crossed-product}
Let $\alpha:G\to \Aut(A,X)$ be an action as above. We define the {\em universal (or full)
crossed product} $(A,X)\rtimes_{\alpha}^uG$ for the action $\alpha$ as the 
pair $(A\rtimes_{\alpha}^uG, X\rtimes_{\alpha}^uG)$, where 
$A\rtimes_{\alpha}^uG$ and $X\rtimes_{\alpha}^uG$ are the closures of
$C_c(G,A)$ and $C_c(G,X)$ inside the universal $C^*$-algebra crossed product 
$C_u^*(A,X)\rtimes_{\alpha,u}G$.
%
%
%
\end{definition}

\begin{remark}\label{rem-crossed}
{\bf (a)} If $\alpha:G\to\Aut(A)$ is an action of $G$ on the $C^*$-algebra $A$, and if we consider the
corresponding $C^*$-operator system $(A,A)$, then $C_u^*(A,A)=A$, and therefore
the crossed product $(A,A)\rtimes_\alpha^uG$ is 
given by the pair $(A\rtimes_{\alpha,u}G, A\rtimes_{\alpha,u}G)$. 
 Thus, the universal crossed product construction for $C^*$-operator systems extends the 
well-known  universal crossed product constructions for $C^*$-algebras.

{\bf (b)} In general it is not true that that in the crossed product  
$(A,X)\rtimes_{\alpha}^uG=(A\rtimes_{\alpha}^uG, X\rtimes_{\alpha}^uG)$
the $C^*$-algebra $A\rtimes_{\alpha}^uG$ coincides with the universal $C^*$-algebra crossed 
product $A\rtimes_{\alpha, u} G$.
To see 
this let $G$ be any (second countable) non-amenable exact group. 
Then it follows from \cite{BCL} that there exists an amenable compact $G$-space $\Omega$, 
which implies that the full and reduced crossed 
products of $G$ by $C(\Omega)$ coincide. 
Now choose a faithful and non-degenerate representation of $C(\Omega)$ into $ \mathcal B(H)$ 
for some Hilbert space $H$ and consider $X:=C(\Omega)\subseteq  \mathcal B(H)$ as an operator system
(forgetting the multiplicative structure). As in Example \ref{excstaros} we regard this as the 
$C^*$-operator system $(\C, X)$.
Let $C_u^*(X)$ denote the enveloping $C^*$-algebra of $X$. We then get 
completely isometric embeddings $\C\into X\into C_u^*(X)$ which give rise 
to ccp maps between the full crossed products (in the $C^*$-algebra sense)
\begin{equation}\label{eq-comp}
C^*(G)=\C\rtimes_u G\to C(\Omega)\rtimes_u G\to C_u^*(X)\rtimes_u G.
\end{equation}
By definition of the crossed product $(\C, C(X))\rtimes_{\alpha^u}G=(\C\rtimes_\alpha^uG, X\rtimes_\alpha^uG)$,
$X\rtimes_{\alpha}^uG$ is identical to the image of $C(\Omega)\rtimes_u G$ under the second map 
and $\C\rtimes_\alpha^uG$ coincides with the image of $C^*(G)$
under the composition in (\ref{eq-comp}). But since
$C(\Omega)\rtimes_u G=C(\Omega)\rtimes_rG$ by amenability of the action of $G$ on $\Omega$
 the first map in (\ref{eq-comp}) factors through
the reduced group algebra $C_r^*(G)$. We therefore also have
 $\C\rtimes_\alpha^uG\cong C_r^*(G)\neq C^*(G)=\C\rtimes_u G$.
%
%
%
%
%
%
%
%

\end{remark}

In what follows we want to show that non-degenerate representations of  the full crossed product are
in one-to-one relation to the non-degenerate covariant representations of the system $(A,X, G,\alpha)$ as in 

\begin{definition}\label{def-covariant}
Suppose that $\alpha:G\to \Aut(A,X)$ is an action of $G$ on the $C^*$-operator system $(A,X)$.
A covariant representation of $(A,X,G,\alpha)$ is a pair $(\pi, u)$, where 
$\pi:X\to \mathcal B(H_\pi)$ is a ccp representation of $(A,X)$ on $\mathcal B(H_\pi)$ and 
$u:G\to U(H_\pi)$ is a strongly continuous unitary representation of $G$ such that
$$\pi(\alpha_g(x))=U_g\pi(x)U_g^*\quad \forall x\in X, g\in G.$$
\end{definition}

\begin{remark}\label{rem-regular-rep}
Suppose that $\rho:X\to \mathcal B(K)$ is any ccp representation of $(A,X)$ on $\mathcal B(K)$.
Then we can construct a covariant 
representation $\Ind\rho=(\tilde{\rho}, 1_K\otimes \lambda)$ on $\mathcal B(K\otimes L^2(G))$
by the usual formula
$$\big(\tilde{\rho}(x)\xi\big)(g)=\rho(\alpha_{g^{-1}}(x))\xi(g)\quad\text{and}\quad (1\otimes\lambda)_h\xi(g)=\xi(h^{-1}g)$$
for $\xi\in L^2(G, K)\cong K\otimes L^2(G)$, $x\in X$, and $g,h\in G$. 
Observe that if $\rho$ is completely isometric, then so is $\tilde\rho$. Hence there exist covariant representations 
$(\pi,u)$ of $(A,X, G,\alpha)$ 
in which the representation $\pi$ is completely isometric.
\end{remark}

It is actually useful to extend the notion of a covariant representation to allow representations into multiplier systems as in

\begin{definition}\label{def-covariant-mult}
Suppose that $\alpha:G\to \Aut(A,X)$ is an action of $G$ on the $C^*$-operator system $(A,X)$
and suppose that $(B,Y)$ is a $C^*$-operator system. By a non-degenerate covariant homomorphism 
of $(A,X,G,\alpha)$ into the multiplier system $(M(B), M(Y))$ of $(B,Y)$ we understand a pair of maps $(\varphi, u)$,
where $\varphi:X\to M(Y)$ is a non-degenerate generalized morphism from $(A,X)$ to $(B,Y)$
and $u:G\to UM(B)$ is a strictly continuous homomorphism such that
$$\varphi(\alpha_g(x))=u_g \varphi(x) u_g^*$$
for all $x\in X$ and $g\in G$.
\end{definition}

\begin{remark}
Note that if  $(B,Y)$ is represented completely isometrically and non-degenerately on a Hilbert  space 
$K$, then a non-degenerate covariant homomorphism of $(A,X,G,\alpha)$ into  $(M(B), M(Y))$
 turns into a non-degenerate
covariant representation 
of  $(A,X,G,\alpha)$  on $K$. But being a representation into $(M(B), M(Y))$ requires 
some additional structure of how the image interacts with $(B,Y)$.
\end{remark}

\begin{example}\label{ex-regular}
If $\alpha:G\to \Aut(A,X)$ is an action of $G$ on the $C^*$-operator system $(A,X)$ we can define a canonical
non-degenerate covariant homomorphism $(\Lambda_X, \Lambda_G)$ of $(A,X,G,\alpha)$ into 
 $M(X\check\otimes \mathcal K(L^2(G)))$ as follows:
We first define a non-degenerate representation $\Lambda_X:X\to M(X\otimes \mathcal K(L^2(G)))$ of $(A,X)$
by the composition of maps
$$X\stackrel{\tilde\alpha}{\longrightarrow} M(X\check\otimes C_0(G)) \stackrel{\id_X\otimes M}{\longrightarrow} M(X\check\otimes \mathcal K(L^2(G))),$$
where $\tilde\alpha: X\to C_b(G,X)\subseteq M(X\check\otimes C_0(G))$ sends the element $x\in X$ to the 
bounded continuous function $g\mapsto\alpha_{g^{-1}}(x)$ and where $M:C_0(G)\to \mathcal B(L^2(G))$ is the representation 
of $C_0(G)$ by multiplication operators. 
Moreover, we define $\Lambda_G:G\to M(X\check\otimes \mathcal K(L^2(G)))$ 
by $\Lambda_G(g)=1\otimes \lambda_g$, where $\lambda:G\to U(L^2(G))$ is the regular representation of $G$.
We leave it to the reader to check that $(\Lambda_X, \Lambda_G)$ satisfies the covariance condition.
We call $(\Lambda_X, \Lambda_G)$ the {\em regular representation} of $(A,X,G,\alpha)$.

Note that this construction directly extends the construction of the regular representation $(\Lambda_B, \Lambda_G)$
of a $C^*$-dynamical  system $(B,G,\beta)$ into $M(B\otimes\K(L^2(G)))$. In particular,  the restriction $(\Lambda_X|_A. \Lambda_G)$
of $(\Lambda_X, \Lambda_G)$ to $(A,G,\alpha)$ coincides with the regular representation of $(A,G,\alpha)$.

Note also that if $\rho:X\to \mathcal B(K)$ is any ccp representation of $(A,X)$, we recover the representation 
$\Ind\rho=(\tilde\rho, 1\otimes \lambda)$ of Remark \ref{rem-regular-rep} as the composition 
$(\rho\otimes \id_{\mathcal K(L^2(G)})\circ (\Lambda_X, \Lambda_G)$.
\end{example}

\begin{proposition}\label{prop-covariant}
For each non-degenerate covariant representation $(\varphi, u)$ of $(A,X,G,\alpha)$ into $(M(B), M(Y))$
there exists a unique generalized homomorphism 
$\varphi\rtimes u:X\rtimes_\alpha^uG\to M(Y)$ from  $(A\rtimes_{\alpha}^uG, X\rtimes_\alpha^uG)$ to $(M(B),M(Y))$  
 given on $f\in C_c(G,X)$ by
$$\varphi\rtimes u(f)=\int_G \varphi(f(g))u_g\, dg.$$
\end{proposition}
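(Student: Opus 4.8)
The plan is to reduce the statement to the corresponding universal property of the $C^*$-algebra crossed product $C_u^*(A,X)\rtimes_{\alpha,u}G$ together with Proposition~\ref{prop-universal}, and then to check that the resulting $*$-homomorphism restricts to a generalized morphism on the operator-system level. First I would observe that, by Proposition~\ref{prop-universal}, the non-degenerate covariant representation $(\varphi,u)$ of $(A,X,G,\alpha)$ into $(M(B),M(Y))$ extends: the generalized morphism $\varphi:X\to M(Y)\subseteq M(C_u^*(B,Y))$ is in particular a non-degenerate representation of $(A,X)$ into the multiplier algebra of the $C^*$-algebra $D:=C_u^*(B,Y)$, so it integrates to a non-degenerate $*$-homomorphism $\tilde\varphi:C_u^*(A,X)\to M(D)$ with $\tilde\varphi\circ j_u=\varphi$. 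Since $\alpha^u$ is the unique extension of $\alpha$ to $C_u^*(A,X)$ and $j_u$ is $\alpha$-$\alpha^u$ equivariant, the covariance identity $\varphi(\alpha_g(x))=u_g\varphi(x)u_g^*$ on the generating set $j_u(X)$ propagates to $\tilde\varphi(\alpha^u_g(c))=u_g\tilde\varphi(c)u_g^*$ for all $c\in C_u^*(A,X)$, so $(\tilde\varphi,u)$ is a genuine non-degenerate covariant homomorphism of the $C^*$-dynamical system $(C_u^*(A,X),G,\alpha^u)$ into $M(D)$. By the classical universal property of the full crossed product of $C^*$-algebras there is then a unique non-degenerate $*$-homomorphism $\Psi:=\tilde\varphi\rtimes u:C_u^*(A,X)\rtimes_{\alpha,u}G\to M(D)$, given on $f\in C_c(G,C_u^*(A,X))$ by $\Psi(f)=\int_G\tilde\varphi(f(g))u_g\,dg$.

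Next I would restrict $\Psi$ to the closed subspace $X\rtimes_\alpha^uG\subseteq C_u^*(A,X)\rtimes_{\alpha,u}G$ and call the resulting map $\varphi\rtimes u$. For $f\in C_c(G,X)$ we have $f(g)=j_u(f(g))$ (under the identification $X\subseteq C_u^*(A,X)$), so $\Psi(f)=\int_G\tilde\varphi(j_u(f(g)))u_g\,dg=\int_G\varphi(f(g))u_g\,dg$, which is the stated formula; since $C_c(G,X)$ is dense in $X\rtimes_\alpha^uG$ and $C_c(G,A)$ is dense in $A\rtimes_\alpha^uG$, the continuity of $\Psi$ shows that $\varphi\rtimes u$ maps $X\rtimes_\alpha^uG$ into $\overline{\Psi(X\rtimes_\alpha^uG)}$ and $A\rtimes_\alpha^uG$ into $\overline{\Psi(A\rtimes_\alpha^uG)}=\overline{\tilde\varphi(A)\rtimes u(\cdots)}$. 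The content that remains is to verify three things: that $\varphi\rtimes u$ is a ccp map (clear, as the restriction of a $*$-homomorphism), that it is multiplicative over the $C^*$-part in the sense $\varphi\rtimes u(a\cdot\xi)=(\varphi\rtimes u)(a)\,(\varphi\rtimes u)(\xi)$ for $a\in A\rtimes_\alpha^uG,\ \xi\in X\rtimes_\alpha^uG$ (again automatic since $\Psi$ is a $*$-homomorphism and $A\rtimes_\alpha^uG\cdot X\rtimes_\alpha^uG\subseteq X\rtimes_\alpha^uG$), and — the one genuine verification — that the images land in the correct multiplier spaces, i.e.\ that $\varphi\rtimes u$ is a \emph{generalized} morphism: $(\varphi\rtimes u)(X\rtimes_\alpha^uG)\subseteq M(Y)$, $(\varphi\rtimes u)(A\rtimes_\alpha^uG)\subseteq M(B)$, and $(\varphi\rtimes u)(A\rtimes_\alpha^uG)\cdot B=B$.

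For the multiplier statement I would argue directly from the definitions of $M(Y)$ and $M(B)$. Given $f\in C_c(G,X)$ and $b\in B$, one computes $\Psi(f)b=\bigl(\int_G\varphi(f(g))u_g\,dg\bigr)b$; since $\varphi$ is a generalized morphism from $(A,X)$ to $(B,Y)$ we have $\varphi(x)B\subseteq Y$ for $x\in X$, and since $u_g\in UM(B)$ we have $u_gb'\in B$ for $b'\in B$ — using non-degeneracy of $\varphi|_A$ to write $b$ as a limit of sums $\varphi(a_i)b_i'$, push the $u_g$ past using the covariance relation $u_g\varphi(a)=\varphi(\alpha_g(a))u_g$, and conclude $\Psi(f)b\in Y$ after integrating (the integral of a compactly supported continuous $Y$-valued function lies in $Y$ by closedness). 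The analogous computation with $a\in A$ in place of $b$ and using $\varphi(A)\subseteq B$ gives $\Psi(f)a\in B$ hence $\Psi(f)\in M(B)$ when $f\in C_c(G,A)$; and the non-degeneracy condition $(\varphi\rtimes u)(A\rtimes_\alpha^uG)\cdot B=B$ follows from $\varphi(A)B=B$ together with an approximate-identity argument (choose $f_i\in C_c(G,A)$ that form an approximate unit in $A\rtimes_\alpha^uG$, analogous to the standard $C^*$-crossed-product computation). Finally, uniqueness is immediate: any generalized homomorphism $X\rtimes_\alpha^uG\to M(Y)$ given by the stated formula on the dense subspace $C_c(G,X)$ is determined by $(\varphi,u)$, so the formula fixes $\varphi\rtimes u$ uniquely. \emph{The main obstacle} is the bookkeeping in this last paragraph — keeping track of which products land in $Y$ versus $B$ versus $M(Y)$ and justifying the interchange of the integral with the multiplier conditions — but none of it is conceptually hard once the reduction to the $C^*$-algebra crossed product via Proposition~\ref{prop-universal} is in place.
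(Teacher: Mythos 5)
Your proposal is correct and follows essentially the same route as the paper: extend $\varphi$ to a $*$-homomorphism of $C_u^*(A,X)$ via its universal property, transport the covariance relation to $(C_u^*(A,X),G,\alpha^u)$ by uniqueness of that extension, integrate using the classical universal property of $C_u^*(A,X)\rtimes_{\alpha^u,u}G$, restrict to $X\rtimes_\alpha^uG$, and check the multiplier condition by computing $\bigl(\varphi\rtimes u(f)\bigr)b=\int_G\varphi(f(g))u_gb\,dg\in Y$ from $u_gb\in B$ and $M(Y)B\subseteq Y$. The only cosmetic difference is that the paper represents $(B,Y)$ on a Hilbert space and invokes Lemma \ref{lem-universal} directly, while you work inside $M(C_u^*(B,Y))$; your extra detour through an approximate-identity decomposition of $b$ in the multiplier check is unnecessary (the containment $M(Y)B\subseteq Y$ is immediate from the definition of $M(Y)$) but harmless.
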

\begin{proof} 
Let $(\varphi,u)$ be given and let $(B,Y)$ be represented completely isometrically on a Hilbert space $K$.
By Lemma \ref{lem-universal} there exists a unique $*$-representation
$\tilde\varphi:C_u^*(A,X)\to C^*(\varphi(X))\subseteq \mathcal B(K)$ which extends $\varphi$. Applying this fact to the 
representations $\varphi\circ \alpha_g=\Ad u_g\circ \varphi$ shows that 
$(\tilde\varphi, u)$ is a covariant representation of the $C^*$-dynamical system
$(C_u^*(A,X), G, \alpha^u)$ into $\mathcal B(K)$. It therefore integrates to a $*$-representation
$\tilde\varphi\rtimes u: C_u^*(A,X)\rtimes_{\alpha^u,u}G\to \mathcal B(K)$  given on $f\in C_c(G, C_u^*(A,X))$ by 
the integral formula in the lemma.
The restriction of $\tilde\varphi\rtimes u$ to $X\rtimes_{\alpha}^uG$ is then 
the desired representation $\varphi\rtimes u$. To see that it maps into $M(Y)$, we only need to check 
that $\big(\varphi\rtimes u(f)\big)b, b\big(\varphi\rtimes u(f)\big)\in Y$ for all $b\in B$ and $f\in C_c(G,X)$.
But the integration formula gives
\begin{equation}\label{eq-integral}
\big(\varphi\rtimes u(f)\big)b=\int_G f(g)u_g b\,dg
\end{equation}
Since $u_g\in M(B)$, we have $u_gb\in B$ and hence $f(g)u_gb\in Y$ for all $g\in G$.
Thus the integral  (\ref{eq-integral})  gives an element in $Y$.
A similar argument shows that to $b\big(\varphi\rtimes u(f)\big)\in Y$.
\end{proof}

%

\begin{lemma}\label{lem-multiplier}
Suppose that $\alpha:G\to \Aut(A,X)$ is an action. 
Then there is a canonical covariant morphism 
$$(i_X, i_G): (A,X, G, \alpha)\to \big(M(A\rtimes_\alpha^u G), M(X\rtimes_\alpha^u G)\big)$$
 such that for each 
$x\in X$,  $f_1\in C_c(G,A)$, $f_2\in C_c(G,X)$ and $g,h\in G$, we have
$$(i_X(x)f_1)(g)=xf_1(g)\quad\text{}\quad (f_1 i_X(x))(g)= f_1(g)\alpha_g(x)$$
and 
 $$(i_G(h)f_2)(g)=\alpha_h(f_2(h^{-1}g))\quad\text{}\quad (f_2i_G(h))(g)=f_2(gh^{-1})\Delta(h^{-1}).$$
Moreover, the integrated form $i_X\rtimes i_G: X\rtimes_\alpha^u G\to M(X\rtimes_\alpha^uG)$
is the identity map on $X\rtimes_\alpha^uG$.
\end{lemma}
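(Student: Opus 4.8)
The plan is to build the covariant morphism $(i_X,i_G)$ by restricting the canonical covariant homomorphism of the $C^*$-dynamical system $(C_u^*(A,X),G,\alpha^u)$ into the multiplier algebra of its crossed product, and then to check that the restriction lands in the right place and has the claimed formulas. Concretely, recall that for the $C^*$-algebra $C:=C_u^*(A,X)$ one has the standard covariant pair $(i_C,i_G)$ of $(C,G,\alpha^u)$ into $M(C\rtimes_{\alpha^u,u}G)$, given on $f\in C_c(G,C)$ by the familiar formulas $(i_C(c)f)(g)=cf(g)$, $(f i_C(c))(g)=f(g)\alpha^u_g(c)$, $(i_G(h)f)(g)=\alpha^u_h(f(h^{-1}g))$ and $(f i_G(h))(g)=f(gh^{-1})\Delta(h^{-1})$. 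First I would set $i_X:=i_C\circ j_u:X\to M(C\rtimes_{\alpha^u,u}G)$; since $j_u$ is a ccp $A$-multiplicative completely isometric representation of $(A,X)$ and $i_C$ is a non-degenerate $*$-homomorphism, $i_X$ is a non-degenerate ccp representation of $(A,X)$, and $(i_X,i_G)$ inherits the covariance relation $i_X(\alpha_g(x))=i_G(g)i_X(x)i_G(g)^*$ from that of $(i_C,i_G)$ together with $\alpha^u_g\circ j_u=j_u\circ\alpha_g$.

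Next I would verify that $(i_X,i_G)$ actually takes values in the multiplier system $(M(A\rtimes_\alpha^uG),M(X\rtimes_\alpha^uG))$ rather than merely in $M(C\rtimes_{\alpha^u,u}G)$. For this it suffices to check, on the dense subspaces $C_c(G,A)\subseteq A\rtimes_\alpha^uG$ and $C_c(G,X)\subseteq X\rtimes_\alpha^uG$, that multiplication by $i_X(x)$ on either side maps $C_c(G,A)$ into $C_c(G,X)$ and that multiplication by $i_G(h)$ maps $C_c(G,X)$ into itself — and dually — using exactly the convolution formulas displayed in the statement. These formulas are obtained by plugging $c=j_u(x)$ and the definitions of $i_C$, $i_G$ into the $C^*$-algebra formulas above and using $j_u(ax)=j_u(a)j_u(x)$; the resulting functions $g\mapsto xf_1(g)$, $g\mapsto f_1(g)\alpha_g(x)$, $g\mapsto\alpha_h(f_2(h^{-1}g))$, $g\mapsto f_2(gh^{-1})\Delta(h^{-1})$ are clearly continuous with compact support and valued in $A$ or $X$ as appropriate (here one uses $AX=X=XA$ and that $\alpha_g$, $\alpha_h$ preserve $A$ and $X$). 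Strict continuity of $i_G$ and nondegeneracy ($i_X(A)\cdot(A\rtimes_\alpha^uG)=A\rtimes_\alpha^uG$) follow from the $C^*$-level statements by restriction, since an approximate unit of $A$ maps under $i_X$ to one that is strictly convergent to $1$.

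Finally, for the last assertion I would compute the integrated form. By construction $i_X\rtimes i_G$ is the restriction to $X\rtimes_\alpha^uG$ of the integrated form $i_C\rtimes i_G:C\rtimes_{\alpha^u,u}G\to M(C\rtimes_{\alpha^u,u}G)$, and it is a standard fact for $C^*$-crossed products that $i_C\rtimes i_G$ is (the canonical nondegenerate extension of) the identity inclusion $C\rtimes_{\alpha^u,u}G\hookrightarrow M(C\rtimes_{\alpha^u,u}G)$; one sees this on $f\in C_c(G,C)$ via $\int_G i_C(f(g))i_G(g)\,dg=f$, a direct unwinding of the two convolution formulas. Restricting to $C_c(G,X)\subseteq X\rtimes_\alpha^uG$ gives $i_X\rtimes i_G(f)=f$, and by density and continuity $i_X\rtimes i_G$ is the identity on $X\rtimes_\alpha^uG$. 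The one point that needs a little care — and which I expect to be the main (though still routine) obstacle — is the bookkeeping showing that each of the four displayed convolution products genuinely stays inside $C_c(G,A)$ or $C_c(G,X)$, i.e. that $(i_X,i_G)$ really does corestrict to the multiplier system of the \emph{operator-system} crossed product; everything else is an immediate transfer of well-known $C^*$-algebra facts through $j_u$ and Proposition \ref{prop-covariant}.
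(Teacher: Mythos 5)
Your proposal is correct and follows essentially the same route as the paper: both restrict the canonical covariant pair $(i_{C_u^*(A,X)},i_G)$ of $(C_u^*(A,X),G,\alpha^u)$ along $j_u$, verify membership in $\big(M(A\rtimes_\alpha^uG),M(X\rtimes_\alpha^uG)\big)$ by checking the four convolution formulas on $C_c(G,A)$ and $C_c(G,X)$, and identify the integrated form with the identity via density. The only cosmetic difference is that the paper phrases the multiplier check through the integrated form $i_X\rtimes i_G$ applied to compactly supported functions, which is exactly the bookkeeping you flagged as the one point needing care.
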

\begin{proof} Suppose that $C_u^*(A,X)\rtimes_{\alpha^u,u}G$ is represented faithfully and nonde\-gene\-rately 
on a Hilbert-space $H_u$. Then the restriction to $X\rtimes_\alpha^uG$ gives a completely isometric 
representation of $(A,X)\rtimes_{\alpha}^uG$ on $H_u$ as well. 
By the universal properties of the maximal crossed product  $C_u^*(A,X)\rtimes_{\alpha^u,u}G$ there
exists a unique covariant homomorphism $(i_{C_u^*(A,X)}, i_G)$ of $(C_u^*(A,X), G, \alpha^u)$ 
into $M(C_u^*(A,X)\rtimes_{\alpha^u,u}G)\subseteq B(H_u)$ which is given for $b\in C_u^*(A,X)$, $g,h\in G$ and 
$f\in C_c(G, C_u^*(A,X))$ by the formulas
$$(i_{C_u^*(A,X)}(b)f)(g)=bf (g)\quad\quad (f i_{C_u^*(A,X)}(b))(g)= f(g)\alpha_g(b)$$
and 
$$(i_G(h)f)(g)=\alpha_h(f(h^{-1}g))\quad\quad  (fi_g(h))(g)=f(gh^{-1})\Delta(h^{-1}),$$
and such that the integrated form $i_{C_u^*(A,X)}\rtimes i_G$ coincides with the original 
representation of the crossed product on $H_u$. Let $i_X=i_{C_u^*(A,X)}\circ j_u$, where 
$j_u:X\to C_u^*(A,X)$ denotes the embedding.  Then for each $f\in C_c(G,X)$ the 
integral $i_X\rtimes i_G(f)=\int_G i_X(f(g))i_G(g)\,dg$ coincides with the inclusion of 
$f\in C_c(G,X)\into  X\rtimes_\alpha^uG\subseteq C_u^*(A,X)\rtimes_{\alpha^u,u}G$, and therefore 
extends to the identity on $X\rtimes_\alpha^uG$. 

Thus we only need to check that $(i_X,i_G)$ is a non-degenerate covariant morphism into 
$\big(M(A\rtimes_\alpha^uG), M(X\rtimes_\alpha^uG)\big)$.
First, if $f\in C_c(G,A)$ and $h\in G$, then $g\mapsto (i_G(h)f)(g)=\alpha_h(f(h^{-1}g))$
lies in $C_c(G,A)$ as well, hence 
$i_G(h)\big(i_X\rtimes i_G(f)\big)=i_X\rtimes i_G([g\mapsto \alpha_h(f(h^{-1}g))])\in A\rtimes_\alpha^uG$,
which shows that $i_G$ takes its values in $M(A\rtimes_\alpha^uG)$. 
Moreover, if $f\in C_c(G,A)$ and $x\in X$, then $[g\mapsto (i_X(x)f)(g)=xf(g)]\in C_c(G,X)$, 
and hence 
$i_X(x)(i_X\rtimes i_G(f))=i_X\rtimes i_G([g\mapsto xf(g)])\in X\rtimes_\alpha^uG$, 
which implies that $i_X(x)\in M(X\rtimes_\alpha^uG)$. 
This completes the proof.
\end{proof}

We are now ready for the converse of Proposition \ref{prop-covariant}.

\begin{proposition}\label{prop-covariant1}
Suppose that $\alpha:G\to \Aut(A,X)$ is an action. Then for each 
non-degenerate generalized morphism  $\Phi:X\rtimes_\alpha^uG\to M(Y)$
from $(A\rtimes_\alpha^uG, X\rtimes_\alpha^uG)$ to $(B,Y)$ there exists a unique 
non-degenerate covariant morphism $(\varphi, u)$ of $(A,X,G,\alpha)$
to $(M(B), M(Y))$ such that $\Phi=\varphi\rtimes u$. 

More precisely, if $(i_X, i_G)$ is the covariant morphism of $(A,X,G,\alpha)$
into $(M(A\rtimes_\alpha^uG), M(X\rtimes_\alpha^uG))$ as in Lemma \ref{lem-multiplier} and if
$\bar\Phi:M(X\rtimes_\alpha^uG)\to M(Y)$ denotes the unique extension of $\Phi$
as in Lemma \ref{lem-non-deg-morphism}, then 
$$\varphi=\bar\Phi\circ i_X\quad\text{and}\quad u=\bar\Phi\circ i_G.$$
\end{proposition}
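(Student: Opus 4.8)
The plan is to combine the two directions already available: Proposition \ref{prop-covariant} produces $\varphi \rtimes u$ from a covariant morphism, and Lemma \ref{lem-multiplier} gives the canonical covariant morphism $(i_X, i_G)$ into the multipliers of the crossed product together with the crucial fact that $i_X \rtimes i_G = \id$. So given $\Phi$, I would \emph{define} $\varphi := \bar\Phi \circ i_X$ and $u := \bar\Phi \circ i_G$, where $\bar\Phi : M(X\rtimes_\alpha^u G) \to M(Y)$ is the extension of $\Phi$ afforded by Lemma \ref{lem-non-deg-morphism} (applicable since $\Phi$ is a non-degenerate generalized morphism). First I would check that $(\varphi, u)$ is a non-degenerate covariant morphism of $(A,X,G,\alpha)$ into $(M(B), M(Y))$: that $\varphi$ is a non-degenerate generalized morphism follows because $\bar\Phi$ is a morphism of multiplier systems and $i_X$ is a non-degenerate generalized morphism, and non-degeneracy of $\varphi$ uses $\varphi(A) B = \bar\Phi(i_X(A)) B$ together with the fact that $i_X(A)$ generates enough of $M(A\rtimes_\alpha^u G)$ acting on $A\rtimes_\alpha^u G$, while $\Phi$ non-degenerate transports this to $B$. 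That $u : G \to UM(B)$ is a strictly continuous unitary-valued homomorphism follows since $i_G$ has these properties and $\bar\Phi$ is a (strictly continuous, on bounded sets) unital $*$-homomorphism on the $C^*$-subalgebra $M(A\rtimes_\alpha^u G)$. The covariance relation $\varphi(\alpha_g(x)) = u_g \varphi(x) u_g^*$ is obtained by applying $\bar\Phi$ to the covariance relation $i_X(\alpha_g(x)) = i_G(g) i_X(x) i_G(g)^*$ from Lemma \ref{lem-multiplier}.

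Next I would verify $\Phi = \varphi \rtimes u$. By Proposition \ref{prop-covariant}, $\varphi \rtimes u$ is the unique generalized morphism with $(\varphi \rtimes u)(f) = \int_G \varphi(f(g)) u_g\, dg$ for $f \in C_c(G,X)$. Plugging in the definitions, $\int_G \bar\Phi(i_X(f(g)))\,\bar\Phi(i_G(g))\, dg = \bar\Phi\big(\int_G i_X(f(g)) i_G(g)\, dg\big) = \bar\Phi(i_X \rtimes i_G(f)) = \bar\Phi(f) = \Phi(f)$, where the second equality is the identity-map statement in Lemma \ref{lem-multiplier} and the last uses that $\bar\Phi$ restricts to $\Phi$ on $X\rtimes_\alpha^u G$. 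Here one should be a little careful that $\bar\Phi$ commutes with the integral: this is justified because the integral converges strictly (in $M(X\rtimes_\alpha^u G)$) and $\bar\Phi$ is strictly continuous on bounded sets, or more elementarily by checking the identity after pairing against vectors in a faithful non-degenerate representation. Since $\Phi$ and $\varphi \rtimes u$ are generalized morphisms that agree on the dense subspace $C_c(G,X) \subseteq X\rtimes_\alpha^u G$, they agree everywhere.

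Finally, for uniqueness of $(\varphi, u)$: suppose $(\varphi', u')$ is another non-degenerate covariant morphism with $\varphi' \rtimes u' = \Phi$. Composing with the extension $\bar\Phi = \overline{\varphi' \rtimes u'}$ and using that $(\varphi' \rtimes u')$ applied to the appropriate multiplier-level elements recovers $\varphi'$ and $u'$ via the formulas $(\varphi' \rtimes u')(i_X(x) f) = \varphi'(x) (\varphi' \rtimes u')(f)$ and $(\varphi' \rtimes u')(i_G(h) f) = u'_h (\varphi' \rtimes u')(f)$ — which follow by direct computation from the integral formula and the explicit formulas for $i_X(x) f$ and $i_G(h) f$ in Lemma \ref{lem-multiplier} — one gets $\bar\Phi \circ i_X = \varphi'$ and $\bar\Phi \circ i_G = u'$, i.e.\ $\varphi' = \varphi$ and $u' = u$. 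The step I expect to be the main obstacle is the careful bookkeeping around extending $\Phi$ to $\bar\Phi$ and justifying that $\bar\Phi$ passes through the $C_c(G,-)$ integral and intertwines the multiplier-level module actions correctly; the algebra is routine once the right objects are in play, but keeping the non-degeneracy hypotheses and the domains ($M(A\rtimes_\alpha^u G)$ versus $M(X\rtimes_\alpha^u G)$) straight is where care is needed.
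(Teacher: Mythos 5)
Your proposal is correct and follows exactly the route the paper takes: define $\varphi=\bar\Phi\circ i_X$ and $u=\bar\Phi\circ i_G$, verify covariance by applying $\bar\Phi$ to the covariance relation for $(i_X,i_G)$, and conclude via $\varphi\rtimes u=\bar\Phi\circ(i_X\rtimes i_G)=\bar\Phi\circ\id=\Phi$. The paper compresses all of this into a one-line "straightforward to check"; your write-up supplies the details (non-degeneracy, passing $\bar\Phi$ through the integral, and the uniqueness argument) correctly.
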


\begin{proof} It is straightforward to check on functions in $C_c(G,X)$ 
that $(\varphi, u)$ is a covariant morphism of $(A,X,G,\alpha)$
into $(M(B), M(Y))$ such that 
$$\varphi\rtimes u=\bar\Phi\circ (i_X\rtimes i_G)=\bar\Phi\circ \id_{X\rtimes^uG}=\Phi.$$
\end{proof}

Note that we may regard covariant morphism of $(A,X, G,\alpha)$ 
into $M(B)$ for any $C^*$-algebra $B$ as the special case of 
covariant morphisms into the multiplier system of the 
$C^*$-operator system $(B,B)$. Similarly, we may regard
covariant representations on a Hilbert space $H$ as the special 
case in which $B=\mathcal K(H)$. Thus Proposition \ref{prop-covariant}
will give us

\begin{corollary}\label{cor-covariant}
Suppose that $(\varphi, u)$ is a covariant morphism of $(A,X,G,\alpha)$
into $M(D)$ (resp. covariant represention into $\mathcal B(K)$ for a Hilbert space $K$). 
Then there exist an integrated form 
$$\varphi\rtimes u: X\rtimes_\alpha^uG\to M(D)\quad \text{(resp. $\mathcal B(K)$)}$$
given for $f\in C_c(G,X)$ by $\varphi\rtimes u(f)=\int_G \varphi(f(g))u_g\,dg$.

Conversely, if $\Phi: X\rtimes_\alpha^uG\to M(D)$ (resp. $\mathcal B(K)$) is any non-degenerate
homomorphism (resp. representation) of $(A\rtimes_\alpha^uG, X\rtimes_\alpha^uG)$, then 
there exists a unique non-degenerate covariant homomorphism (resp. representation)
of $(A,X,G,\alpha)$ into $M(D)$ (resp. $\mathcal B(K)$) such that $\Phi=\varphi\rtimes u$.
Indeed, if $(i_X, i_G)$ are as in Lemma \ref{lem-multiplier} and $\bar\Phi$ 
denotes the unique extension of $\Phi$ to $M(X\rtimes^uG)$, then 
$$\varphi= \bar\Phi\circ i_X\quad\text{and}\quad u=\bar\Phi\circ i_G.$$
\end{corollary}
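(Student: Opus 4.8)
The plan is to derive Corollary \ref{cor-covariant} essentially by specialising the two preceding propositions. The key observation is that a covariant morphism of $(A,X,G,\alpha)$ into $M(D)$ for a $C^*$-algebra $D$ is, by definition, the same thing as a non-degenerate covariant morphism into the multiplier system $(M(D), M(D))$ of the $C^*$-operator system $(D,D)$, since $M((D,D)) = (M(D), M(D))$ and a generalized morphism into $(D,D)$ is just a representation of $(A,X)$ into $M(D)$ in the sense of Definition \ref{def-rep-multiplier}. Likewise, a covariant representation on a Hilbert space $K$ is the special case $D = \mathcal K(K)$, using Remark \ref{rem-rep-multiplier} together with $M(\mathcal K(K)) = \mathcal B(K)$. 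So the whole statement is just Propositions \ref{prop-covariant} and \ref{prop-covariant1} read with $(B,Y) = (D,D)$ (respectively $(B,Y) = (\mathcal K(K), \mathcal K(K))$).

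Concretely, for the first (``integration'') half: given $(\varphi, u)$ covariant into $M(D)$, view $\varphi:X\to M(D)$ as a non-degenerate generalized morphism from $(A,X)$ to $(D,D)$, and $u:G\to UM(D)$ as the required strictly continuous homomorphism; Proposition \ref{prop-covariant} then produces the generalized homomorphism $\varphi\rtimes u:X\rtimes_\alpha^uG\to M(D)$ with the stated integral formula on $C_c(G,X)$. For the converse half: given a non-degenerate generalized morphism $\Phi:X\rtimes_\alpha^uG\to M(D)$, Proposition \ref{prop-covariant1} applied to $(B,Y)=(D,D)$ yields the unique non-degenerate covariant morphism $(\varphi,u)$ with $\Phi=\varphi\rtimes u$, and moreover gives the explicit formulas $\varphi = \bar\Phi\circ i_X$ and $u = \bar\Phi\circ i_G$, where $(i_X,i_G)$ is the canonical covariant morphism of Lemma \ref{lem-multiplier} and $\bar\Phi$ is the unique extension of $\Phi$ to $M(X\rtimes_\alpha^uG)$ from Lemma \ref{lem-non-deg-morphism}. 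Since these are precisely the formulas in the statement of the corollary, there is nothing more to prove.

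The only point requiring a word of care — and what I expect to be the main (minor) obstacle — is checking that the bookkeeping about non-degeneracy and multiplier algebras is consistent when one collapses $(B,Y)$ to $(D,D)$ or to $(\mathcal K(K), \mathcal K(K))$: one must confirm that ``non-degenerate covariant morphism into $(M(D),M(D))$'' unwinds exactly to ``$\varphi(A)D = D$ and $u:G\to UM(D)$ strictly continuous'', which is immediate from Definition \ref{def-nondeg-morphism} since $DD = D$ automatically, and that in the Hilbert space case $\Phi$ non-degenerate as a morphism of $C^*$-operator systems agrees with $\Phi$ non-degenerate as a representation on $K$, which is Remark \ref{rem-rep-multiplier}. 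Once these identifications are in place, the proof is a two-line appeal to Propositions \ref{prop-covariant} and \ref{prop-covariant1}, so I would simply write: ``This is the special case of Propositions \ref{prop-covariant} and \ref{prop-covariant1} in which $(B,Y)$ is the $C^*$-operator system $(D,D)$ (resp.\ $(\mathcal K(K),\mathcal K(K))$), using Remark \ref{rem-rep-multiplier} to pass between representations into $M(\mathcal K(K))$ and representations on $K$.''
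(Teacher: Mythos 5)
Your proposal matches the paper's own justification exactly: the corollary is stated immediately after the remark that covariant morphisms into $M(D)$ (resp.\ representations on $K$) are the special cases of covariant morphisms into the multiplier system of $(D,D)$ (resp.\ $(\mathcal K(K),\mathcal K(K))$), so that Propositions \ref{prop-covariant} and \ref{prop-covariant1} apply verbatim. Your extra care about unwinding the non-degeneracy conditions and invoking Remark \ref{rem-rep-multiplier} is correct and only makes explicit what the paper leaves implicit.
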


\begin{corollary}\label{cor-universal}
Let $(A,X, G,\alpha)$ be a $G$-$C^*$-operator system. Then 
$$C_u^*(A,X)\rtimes_{\alpha,u} G= C_u^*(A\rtimes_\alpha^uG, X\rtimes_\alpha^uG).$$
\end{corollary}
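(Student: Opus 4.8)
The plan is to show that the pair $\big(C_u^*(A,X)\rtimes_{\alpha,u}G,\ \iota\big)$, where $\iota\colon X\rtimes_\alpha^uG \hookrightarrow C_u^*(A,X)\rtimes_{\alpha,u}G$ is the canonical inclusion coming from Definition \ref{def-crossed-product}, is a $C^*$-hull of $(A\rtimes_\alpha^uG, X\rtimes_\alpha^uG)$ satisfying the required universal property. First I would record that $\iota$ is a completely positive complete isometry with $\iota(ab) = \iota(a)\iota(b)$ for $a\in A\rtimes_\alpha^uG$, $b \in X\rtimes_\alpha^uG$ — this is immediate since $\iota$ is just the inclusion of a subspace of the ambient $C^*$-algebra and the multiplicativity relation already holds there. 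Moreover $C_u^*(A,X)\rtimes_{\alpha,u}G$ is generated as a $C^*$-algebra by $X\rtimes_\alpha^uG$: indeed $C_c(G,X)$ generates a dense subspace of $C_u^*(A,X)\rtimes_{\alpha,u}G$ because $X$ generates $C_u^*(A,X)$ as a $C^*$-algebra and convolution products of functions in $C_c(G,X)$ with values obtained from products $x_1x_2\cdots$ are dense. So $\big(C_u^*(A,X)\rtimes_{\alpha,u}G, \iota\big)$ is a $C^*$-hull.

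The heart of the argument is the universal property, and here I would invoke the correspondence established in this section. By Proposition \ref{prop-universal} (applied to the $C^*$-operator system $(A\rtimes_\alpha^uG, X\rtimes_\alpha^uG)$), a $C^*$-algebra $C$ together with a generating completely isometric c.p.\ representation is the universal $C^*$-hull precisely when every non-degenerate representation of $(A\rtimes_\alpha^uG, X\rtimes_\alpha^uG)$ into $M(D)$ extends to a non-degenerate $*$-homomorphism of $C$ into $M(D)$. So let $\Phi\colon X\rtimes_\alpha^uG \to M(D)$ be a non-degenerate (generalized) representation. By Corollary \ref{cor-covariant} there is a unique non-degenerate covariant homomorphism $(\varphi,u)$ of $(A,X,G,\alpha)$ into $M(D)$ with $\Phi = \varphi\rtimes u$. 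Now $\varphi$ extends (Lemma \ref{lem-universal}) to a non-degenerate $*$-homomorphism $\tilde\varphi\colon C_u^*(A,X)\to M(D)$, and $(\tilde\varphi,u)$ is a covariant homomorphism of the $C^*$-dynamical system $(C_u^*(A,X),G,\alpha^u)$, hence integrates to a non-degenerate $*$-homomorphism $\tilde\varphi\rtimes u\colon C_u^*(A,X)\rtimes_{\alpha,u}G \to M(D)$ by the universal property of the $C^*$-algebra crossed product. Its restriction to $X\rtimes_\alpha^uG$ is, by the integration formulas, exactly $\varphi\rtimes u = \Phi$. This produces the required extension of $\Phi$; uniqueness of the extending $*$-homomorphism follows since $X\rtimes_\alpha^uG$ generates $C_u^*(A,X)\rtimes_{\alpha,u}G$.

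The main obstacle I anticipate is bookkeeping rather than a genuine difficulty: one must make sure that ``non-degenerate generalized morphism of the $C^*$-operator system'' on the crossed-product side corresponds exactly to ``non-degenerate covariant homomorphism'' on the dynamical side in both directions, and that the integrated-form maps are mutually inverse constructions. But this has essentially already been packaged into Proposition \ref{prop-covariant}, Proposition \ref{prop-covariant1}, and Corollary \ref{cor-covariant}, so once those are invoked the proof is short. I would close by remarking that by the uniqueness of the universal $C^*$-hull up to canonical equivalence (the universal property), this identifies $C_u^*(A\rtimes_\alpha^uG, X\rtimes_\alpha^uG)$ with $C_u^*(A,X)\rtimes_{\alpha,u}G$ compatibly with the canonical embeddings of $X\rtimes_\alpha^uG$.
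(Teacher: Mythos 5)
Your proposal is correct and follows essentially the same route as the paper: both verify that $C_u^*(A,X)\rtimes_{\alpha,u}G$ is a $C^*$-hull generated by $X\rtimes_\alpha^uG$, then establish universality by converting a representation of the crossed-product system into a covariant pair $(\varphi,u)$ via Corollary \ref{cor-covariant}, extending $\varphi$ to $C_u^*(A,X)$ by Lemma \ref{lem-universal}, and integrating the resulting covariant representation of $(C_u^*(A,X),G,\alpha^u)$. The only cosmetic difference is that you route the final step through the characterization in Proposition \ref{prop-universal} while the paper argues directly with isometric representations; the substance is identical.
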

\begin{proof} Since $C_u^*(A,X)$ is generated by $X$, it is fairly straightforward to check that 
$C_u^*(A,X)\rtimes_{\alpha_u,u}G$ 
is generated by $X\rtimes_\alpha^uG\subseteq C_u^*(A,X)\rtimes_{\alpha_u,u}G$. 
So all we need to show is that  any isometric representation
$\psi:X\rtimes_{\alpha}^uG\into \mathcal B(H)$ of $(A\rtimes_\alpha^uG, X\rtimes_\alpha^uG)$ 
extends to a $*$-homomorphism $\Psi: C_u^*(A,X)\rtimes_{\alpha^u,u}G\to \mathcal B(H)$.
Passing to the closed subspace $\varphi(X)H=\varphi(A)H$ if necessary, we may assume without loss of generality 
that $\varphi$ is non-degenerate. It follows then from Corollary  \ref{cor-covariant} that there exists 
a covariant representation $(\varphi, u)$ of $(A,X, G,\alpha)$ such that $\psi=\varphi\rtimes u$. 
Then, as in the proof of Proposition \ref{prop-covariant}, we see that $(\varphi, u)$ extends uniquely  to a covariant
representation $(\bar\varphi, u)$ of $(C_u^*(A,X), G, \alpha^u)$ such that $\psi=\varphi\rtimes u: X\rtimes_\alpha^uG\to \mathcal B(H)$
coincides with the restriction to $X\rtimes_\alpha^uG$  of the integrated form $\bar\varphi\rtimes u: C_u^*(A,X)\rtimes_{\alpha^u,u}G\to \mathcal B(H)$.
This finishes the proof.
\end{proof}

\section{Reduced crossed products}\label{sec-reduced-crossed}
We now turn our attention to the construction of the 
reduced crossed product $(A\rtimes_\alpha^rG, X\rtimes_\alpha^rG)$ 
by an action $\alpha:G\to \Aut(A,X)$ of a group on a $C^*$-operator system $(A,X)$.
Indeed, we define the reduced crossed product  via the regular representation 
$(\Lambda_A, \Lambda_X)$ of $(A,X,G,\alpha)$ into $M(A\otimes\K(L^2(G), X\otimes \K(L^2(G))$ as 
constructed  in Example \ref{ex-regular}:

\begin{definition}\label{defn educed crossed product}
Let $\alpha:G\to \Aut(A,X)$ be an action of $G$ on the $C^*$-operator system $(A,X)$.
Then we define the reduced crossed product as the image 
$$(A,X)\rtimes_\alpha^rG=
(A\rtimes_\alpha^rG, X\rtimes_\alpha^rG):=\big(\Lambda_A(A\rtimes_\alpha^uG), \Lambda_X(X\rtimes_\alpha^uG)\big)$$
 of the universal crossed product by the regular representation 
$(\Lambda_A, \Lambda_X)$ inside $\big((M(A\otimes \K(L^2(G)), M(X\otimes \K(L^2(G))\big)$.
\end{definition}

For the following proposition recall from Remark \ref{rem-regular-rep} the construction 
of the 
representation $\Ind\rho:=(\tilde{\rho}, 1_K\otimes \lambda)$ on $\mathcal B(K\otimes L^2(G))$
given by the  formula
$$\big(\tilde{\rho}(x)\xi\big)(g)=\rho(\alpha_{g^{-1}}(x))\xi(g)\quad\text{and}\quad (1\otimes\lambda)_h\xi(g)=\xi(h^{-1}g),$$ 
where $\rho:X\to \mathcal B(K)$ is any given representation of $(A,X)$ on some Hilbert space $K$.
We call $\Ind\rho$ the covariant representation of $(A,X,G,\alpha)$ induced by $\rho$.
As an easy consequence of our definition of reduced crossed product 
and the discussion at the end of Example \ref{ex-regular} we obtain

\begin{proposition}\label{prop-reduced}
Let $\alpha:G\to \Aut(A,X)$ be an action of $G$ on the $C^*$-operator system $(A,X)$.
Then for every non-degenerate representation $\rho:X\to \mathcal B(K)$ of $(A,X)$
 the  integrated form $\tilde\rho\rtimes (1\otimes \lambda): X\rtimes_\alpha^u G\to B(K\otimes L^2(G))$
of the induced representation 
$\Ind\rho=(\tilde\rho, 1\otimes \lambda)$ 
(which we shall also simply denote by $\Ind\rho$) factors through the reduced crossed product 
$X\rtimes_\alpha^rG$ to give a representation of $(A,X)\rtimes_\alpha^rG$ into $B(K\otimes L^2(G))$. Moreover, if 
$\rho$ is completely isometric, then so is $\Ind\rho$.
\end{proposition}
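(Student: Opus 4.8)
\textbf{Proof plan for Proposition \ref{prop-reduced}.}
The plan is to deduce everything from the definition of the reduced crossed product together with the last paragraph of Example \ref{ex-regular}. Recall that by definition $(A,X)\rtimes_\alpha^rG$ is the image $\big(\Lambda_A(A\rtimes_\alpha^uG), \Lambda_X(X\rtimes_\alpha^uG)\big)$ of the universal crossed product under the integrated form of the regular representation $(\Lambda_X,\Lambda_G)$. The first step is to identify $\Ind\rho$ with a composition factoring through $(\Lambda_X,\Lambda_G)$: as noted at the end of Example \ref{ex-regular}, the covariant representation $\Ind\rho=(\tilde\rho, 1\otimes\lambda)$ equals $(\rho\otimes\id_{\mathcal K(L^2(G))})\circ(\Lambda_X,\Lambda_G)$. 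By functoriality of the integrated form (Proposition \ref{prop-covariant}, applied in the multiplier-algebra setting), this passes to integrated forms: the integrated form $\Ind\rho\colon X\rtimes_\alpha^uG\to \mathcal B(K\otimes L^2(G))$ equals $(\rho\otimes\id)\circ(\Lambda_X\rtimes\Lambda_G)$, where here $\rho\otimes\id$ denotes the non-degenerate morphism $M(X\check\otimes\mathcal K(L^2(G)))\to \mathcal B(K\otimes L^2(G))$ induced by $\rho$ (extended via Lemma \ref{lem-multipliers} or Lemma \ref{lem-non-deg-morphism}).

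The second step is the factorization claim. Since $\Lambda_X\rtimes\Lambda_G\colon X\rtimes_\alpha^uG\to M(X\check\otimes\mathcal K(L^2(G)))$ has image, by definition, exactly the copy of $X\rtimes_\alpha^rG$ inside $M(X\check\otimes\mathcal K(L^2(G)))$, and since $\Ind\rho=(\rho\otimes\id)\circ(\Lambda_X\rtimes\Lambda_G)$, the map $\Ind\rho$ visibly factors through $\Lambda_X(X\rtimes_\alpha^uG)=X\rtimes_\alpha^rG$. The induced map $X\rtimes_\alpha^rG\to\mathcal B(K\otimes L^2(G))$ is then simply the restriction of $\rho\otimes\id$ to the sub-$C^*$-operator system $(A,X)\rtimes_\alpha^rG\subseteq \big(M(A\check\otimes\mathcal K(L^2(G))), M(X\check\otimes\mathcal K(L^2(G)))\big)$, and it is a ccp representation of $(A,X)\rtimes_\alpha^rG$ because restrictions of ccp representations are ccp representations. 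One should check along the way that $\rho\otimes\id$ is well-defined on the relevant multiplier system and is non-degenerate when $\rho$ is non-degenerate; this is where one invokes Lemma \ref{lem-tensormor} (with $C=D=\mathcal K(L^2(G))$ and the identity morphism on it) to produce the morphism $\rho\check\otimes\id$ at the level of spatial tensor products and then extend to multipliers.

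The third step is the isometry statement. If $\rho$ is completely isometric, then by Remark \ref{rem-regular-rep} the induced representation $\tilde\rho$ is completely isometric, and hence so is $\Ind\rho=(\tilde\rho,1\otimes\lambda)$ on $X\rtimes_\alpha^uG$ — more carefully, one argues that $\rho\check\otimes\id_{\mathcal K(L^2(G))}$ is completely isometric when $\rho$ is (spatial tensoring with the identity on $\mathcal K(L^2(G))$ preserves complete isometry since the spatial tensor norm does not depend on the embeddings), and then its extension to multipliers is completely isometric by Corollary \ref{cor-isometry}. Composing a completely isometric map with the quotient $X\rtimes_\alpha^uG\to X\rtimes_\alpha^rG$ shows the factored map on $X\rtimes_\alpha^rG$ is completely isometric. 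I expect the only mild obstacle to be bookkeeping: making sure the identifications of $\Ind\rho$ as $(\rho\otimes\id)\circ(\Lambda_X,\Lambda_G)$ are compatible with integrated forms and with passage to multiplier systems, i.e., that all the ``obvious'' diagrams commute; but all the needed tools (Proposition \ref{prop-covariant}, Lemma \ref{lem-tensormor}, Corollary \ref{cor-isometry}) are already in place, so no genuinely new idea is required.
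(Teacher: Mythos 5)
Your proposal is correct and follows essentially the same route as the paper: identify $\Ind\rho$ with $(\rho\otimes\id_{\K(L^2(G))})\circ(\Lambda_X,\Lambda_G)$ via the end of Example \ref{ex-regular}, observe that the factorization through $X\rtimes_\alpha^rG=\Lambda_X(X\rtimes_\alpha^uG)$ is then immediate, and get the isometry statement from the fact that $\rho\otimes\id_{\K(L^2(G))}$ and its multiplier extension are completely isometric. One caution: your opening sentence of the third step, claiming that $\Ind\rho$ is completely isometric \emph{on} $X\rtimes_\alpha^uG$ because $\tilde\rho$ is, is false as literally stated (it would force full $=$ reduced); but your ``more carefully'' clause replaces it with the correct argument, which is exactly the paper's.
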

\begin{proof}
By the discussion at the end of  Example \ref{ex-regular} we have the identity.
$\Ind\rho=(\rho\otimes\id_{\K(L^2(G))})\circ (\Lambda_A, \Lambda_X)$
as a representation from $(A,X)\rtimes_\alpha^uG$ to $\mathcal B(K\otimes L^2(G))$.
Therefore $\Ind\rho$ clearly factors through $(A,X)\rtimes_\alpha^rG=
\big(\Lambda_A(A\rtimes_\alpha^uG), \Lambda_X(X\rtimes_\alpha^uG)\big)$.
If $\rho:X\to \mathcal B(K)$ is completely isometric, the same holds for
 $\rho\otimes\id_{\K(L^2(G))} :(A\otimes \K(L^2(G)), X\otimes L^2(G))\to \mathcal B(K\otimes L^2(G))$ 
 and its extension to the multiplier system $\big(M(A\otimes \K(L^2(G)), M(X\otimes L^2(G))\big)$  (see Lemma \ref{lem-multipliers}).
Therefore $\Ind\rho$ factors through a completely isometric representation of $(A,X)\rtimes_\alpha^rG$ into $\B(K\otimes L^2(G))$
as claimed.
\end{proof}

\begin{remark}\label{rem-reduced} 
{\bf (a)}  It follows in particular from the above proposition that, up to
completely isomeric isomorphism, the reduced crossed product $(A,X)\rtimes_\alpha^rG$ does not 
depend on the particular representation of $(A,X)$ on a Hilbert space $H$.

{\bf (b)} Suppose  that $(C, j)$ is any $C^*$-hull of $(A,X)$ such that a given 
action $\alpha:G\to\Aut(A,X)$ extends to an action $\alpha:G\to \Aut(C)$. 
Let $\rho:C\to \B(K)$ be any faithful and non-degenerate representation on a Hilbert space $K$.
Then $\rho_X:=\rho\circ j:X\to \mathcal B(K)$ is a non-degenerate  completely isometric representation 
of $(A,X)$ into $\mathcal B(K)$. Then the regular representation 
$\Ind\rho: C\rtimes_{\alpha,u}G\to \mathcal B(K\otimes L^2(G))$
factors through a faithful representation of 
the $C^*$-reduced crossed product $C\rtimes_{\alpha,r}G$ whose composition 
with the canonical inclusion of $(C_c(G,A), C_c(G,X))$ into $C\rtimes_{\alpha,r}G$ 
coincides the completely isometric representation $\Ind\rho_X: (A,X)\rtimes_\alpha^rG\to \mathcal B(K\otimes L^2(G))$
on the dense subsystem $(C_c(G,A), C_c(G,X))$.
It follows that, up to a completely isometric isomorphism, the reduced crossed product 
$(A,X)\rtimes_{\alpha}^rG$ can be identified with the closure of the pair 
$(C_c(G,A), C_c(G,X))$  inside $C\rtimes_{\alpha,r}G$. 

This observation applies in  particular to the 
 universal $C^*$-hull $(C_u^*(A,X), j_u)$ and the enveloping $C^*$-algebra $(C_{\env}^*(A,X), j_{\env})$,
 where it easily follows from the respective universal properties that 
 every action on $(A,X)$ extends to actions on $C_u^*(A,X)$ and $C_{\env}^*(A,X)$, respectively.

\end{remark}

\section{Crossed products by coactions}\label{sec-coaction}

Recall (e.g. from \cite[Appendix A]{EKQR}) 
that if $G$ is a locally compact group, there is a canonical comultiplication 
$\delta_G:C^*(G)\to M(C^*(G) \check\otimes C^*(G))$ on $C^*(G)$  
which is given as the integrated form of the unitary representation $g\mapsto u_g\otimes u_g\in
UM(C^*(G)\check\otimes C^*(G))$, where $u:G\to UM(C^*(G))$ is the canonical representation of $G$ 
into $UM(C^*(G))$. A {\em coaction} of $G$ on a $C^*$-algebra $A$ is an injective 
non-degenerate $*$-homomorphism $\delta:A\to M(A\check\otimes C^*(G))$  such that the 
following conditions hold:
\begin{enumerate}
\item  $\delta(A)(1\check\otimes C^*(G))\subseteq  A\check\otimes C^*(G)$. 
\item The following diagram of maps commutes
$$\begin{CD}
A @>\delta>> M(A\check\otimes C^*(G))\\
@V\delta VV   @VV \id_A\otimes \delta_GV\\
M(A\check\otimes C^*(G))  @>> \delta\otimes \id_{G}> M(A\check\otimes C^*(G)\check\otimes C^*(G))
\end{CD}
$$
\end{enumerate}
where $\id_G$ denotes the identity on $C^*(G)$. If, in addition, we have the identity
\begin{itemize}
\item[(1$'$)] $\delta(A)(1\check\otimes C^*(G))= A\check\otimes C^*(G)$
\end{itemize}
then the coaction $\delta$ is called {\em non-degenerate}. Note that condition 
(1$'$) is automatic if $G$ is amenable or discrete (see \cite[Proposition 6]{Kat}, \cite[Lemma 3.8]{Land} and \cite{BS}).

%

We are now going to extend the definition of a coaction of $C^*(G)$  to the category of $C^*$-operator systems.

\begin{definition}\label{def coaction}
Let $G$ be a locally compact group. A coaction of $G$ on the $C^*$-operator system $(A,X)$ 
is an injective  non-degenerate generalized morphism  
$$\delta_X: (A,X) \to \big(M(A\check\otimes C^*(G)), M(X\check\otimes C^*(G))\big)$$
such that the following holds:
\begin{enumerate}
\item The map $\delta_A:=\delta_X|_A: A\to M(A\check\otimes C^*(G))$ is a coaction of $C^*(G)$ on $A$.
\item The following diagram of maps commutes
$$\begin{CD}
X@>\delta_X>> M(X\check\otimes C^*(G))\\
@V\delta_X VV   @VV \id_X\otimes \delta_GV\\
M(X\check\otimes C^*(G))  @>> \delta_X\otimes \id_{G}> M(X\check\otimes C^*(G)\check\otimes C^*(G))
\end{CD}
$$
\end{enumerate}
\end{definition}

\begin{remark}\label{rem nondeg}
{\bf (a)} Notice that condition (1) always implies that 
 \begin{align*}
 \delta_X(X)(1\check\otimes C^*(G))&=\delta_X(XA)(1\check\otimes C^*(G))= \delta_X(X)\big(\delta_X(A)(1\check\otimes C^*(G))\big)\\
 &\subseteq \delta_X(X)(A\check\otimes C^*(G))= X\check\otimes C^*(G),
 \end{align*}
 where the last equation follows from the nondegeneracy of $\delta_X\to M(X\check\otimes C^*(G))$.
    
  {\bf (b)} Let $1_G:C^*(G)\to \C$ denote the integrated form of the trivial representation of $G$. Then 
  it follows from the definition of a coaction $\delta_X:(A,X)\to M(X\check\otimes C^*(G))$ that 
  $(\id_X\otimes 1_G)\circ \delta_X$ is the identity on $X$. To see this observe that 
  it follows from (a), that for all $z\in C^*(G)$ and $x\in X$ we have $(\id_X\otimes 1_G)(\delta_X(x)(1\otimes z))\in X$.
Choosing $z$ such that $1_G(z)= 1$ then implies that $(\id_X\otimes 1_G)(\delta_X(x))=(\id_X\otimes 1_G)(\delta_X(x)(1\otimes z))\in X$
as well. Now, using condition (2) and the relation $(\id_G\otimes 1_G)\circ \delta_G=\id_G$ we get
\begin{align*}
\delta_X(x)&=(\id_X\otimes \id_G\otimes 1_G)\circ (\id_X\otimes \delta_G)\circ \delta_X(x)\\
&=
(\id_X\otimes \id_G\otimes 1_G)\circ (\delta_X\otimes \id_G)\circ \delta_X(x)\\
&=(\delta_X\otimes 1_G)\circ \delta_X(x)=\delta_X\big( (\id_X\otimes 1_G)(\delta_X(x))\big),
\end{align*}
 which implies $\delta_X\circ (\id_A\otimes 1_G)\circ \delta_X=\delta_X$. Since $\delta_X$ is injective, this implies 
 that  $(\id_X\otimes 1_G)\circ \delta_X=\id_X$. In particular, it follows that $\delta_X$ is completely isometric.

  \end{remark}

  \begin{example}\label{ex dual}
  Suppose that $\alpha:G\to \Aut(A,X)$ is an action of the locally compact group $G$ 
  on the $C^*$-operator system $(A,X)$. Then there is a {\em dual coaction}
  $$\widehat{\alpha}: (A\rtimes_\alpha^uG, X\rtimes_\alpha^uG)\to \big(M(A\rtimes_\alpha^uG\check\otimes C^*(G)), M(X\rtimes_\alpha^uG\check\otimes C^*(G))\big)$$
  given by the integrated form of the generalized covariant homomorphism
  $(i_X\otimes 1, i_G\otimes u)$ of $(A,X, G,\alpha)$ into $ M(X\rtimes_\alpha^uG\check\otimes C^*(G))$, where 
  $(i_X, i_G):(X, G)\to M(X\rtimes_\alpha^uG)$ denotes the canonical covariant homomorphism and 
  $u:G\to UM(C^*(G))$ the universal representation.
  
To see that this satisfies the conditions of Definition \ref{def coaction} we choose a faithful and non-degenerate representation 
 of $C_u^*(A,X)\rtimes_{\alpha,u}G$ into some $ \mathcal B(H)$. This restricts to a faithful representation of $(A\rtimes_\alpha^uG, X\rtimes_{\alpha}^uG)$,  into $\mathcal  B(H)$.
 Moreover, by choosing a faithful representation of $C^*(G)$ onto some Hilbert space $K$, say, we obtain a faithful and non-degenerate representation of $C_u^*(A,X)\rtimes_{\alpha^u,u}G\check\otimes C^*(G)$ on $H \otimes  K$, which restricts to 
 a completely isometric representation of $X \check\otimes C^*(G))$,  and hence of 
 $M(X\rtimes_\alpha^uG\check\otimes C^*(G))$, respectively (use Corollary \ref{cor-isometry}).
 Now, there is a dual coaction 
 $$\widehat{\alpha}_u:  C_u^*(A,X)\rtimes_{\alpha,u}G\to M(C_u^*(A,X)\rtimes_{\alpha,u}G\check\otimes C^*(G))$$
 given as the integrated form of the covariant homomorphism  $(i_{C_u^*(A,X)}\otimes 1, i_G\otimes u)$ of 
 $(C_u^*(A,X), G, \alpha)$ into $M(C_u^*(A,X)\rtimes_{\alpha,u}G\check\otimes C^*(G))$. 
 This representation clearly restricts to the representation  $(i_X\otimes 1, i_G\otimes u):(X, G)\to M(X\rtimes_\alpha^uG\check\otimes C^*(G))$ 
 and the conditions in Definition \ref{def coaction} can then easily be deduced from the properties of the 
 coaction $\widehat{\alpha}_u$ of $C^*(G)$ on $C_u^*(A,X)\rtimes_{\alpha,u}G$. 

 Similarly, the dual  coaction 
 $$\widehat{\alpha_r}: C_u^*(A,X)\rtimes_{\alpha, r}G\to M(C_u^*(A,X)\rtimes_{\alpha, r}G\check\otimes C^*(G))$$
  restricts 
 to a dual coaction 
 $$\widehat{\alpha}_r:=(i_X^r\otimes 1)\rtimes (i_G^r\otimes u): X\rtimes_\alpha^r G\to M(X\rtimes_\alpha^rG \check\otimes C^*(G))$$ of $C^*(G)$
  on the reduced 
 crossed product $(A,X)\rtimes_{\alpha}^rG=(A\rtimes_{\alpha}^rG, X\rtimes_{\alpha}^rG)$, where 
 $(i_X^r, i_G^r)$ denotes the canonical covariant homomorphism of $(A,X,G,\alpha)$ into $M(X\rtimes_{\alpha}^rG)$
 (i.e., the regular representation).
  \end{example}

We now want to relate coactions of $C^*(G)$ on $(A,X)$ with coactions of $C^*(G)$ on $C_u^*(A,X)$.
In order to formulate the result, observe that 
$\big(C_u^*(A,X)\check\otimes C^*(G), j_u\otimes \id_{C^*(G)}\big)$ is a $C^*$-hull of $(A\check\otimes C^*(G), X\check\otimes C^*(G))$ and therefore 
we get a canonical completely isometric embedding 
$$\overline{j_u\otimes \id_G}: M(A\check\otimes C^*(G), X\check\otimes C^*(G))\into M(C_u^*(A,X)\check\otimes C^*(G)).$$

\begin{proposition}\label{prop coaction}
Let $(A,X)$ be a $C^*$-operator system. Then there is a one-to-one correspondence 
between coactions $\delta_X$ of $G$ on $(A,X)$ and coactions 
$$\delta_u:C_u^*(A,X)\to M(C_u^*(A,X)\check\otimes C^*(G))$$ 
of $G$ on $C_u^*(A,X)$ which satisfy the conditions
\begin{equation}\label{eq-rest}
\delta_u(A)\subseteq M(A\check\otimes C^*(G))\quad{and}\quad 
\delta_u(X)\subseteq M(X\check\otimes C^*(G)),
\end{equation}
where we regard $A$ and $X$ as subspaces of $C_u^*(A,X)$ via the inclusion map $j_u$.
Given such coaction $\delta_u$ of $G$ on $C_u^*(A,X)$, the corresponding coaction of $G$ on 
$(A,X)$ is given by the restriction  $\delta_X:=\delta_u|_X$.
\end{proposition}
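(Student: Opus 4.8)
The plan is to establish the two directions of the correspondence separately: the passage from a coaction on $(A,X)$ to one on $C_u^*(A,X)$ will use the universal property of Proposition~\ref{prop-universal}, while the reverse passage is by restriction, after which bijectivity is immediate from the uniqueness clause of Proposition~\ref{prop-universal}. Two preliminary observations are used throughout. First, since $\big(C_u^*(A,X)\check\otimes C^*(G), j_u\otimes\id\big)$ and its triple‑tensor analogue are $C^*$-hulls of $(A\check\otimes C^*(G), X\check\otimes C^*(G))$ and of $(A\check\otimes C^*(G)\check\otimes C^*(G), X\check\otimes C^*(G)\check\otimes C^*(G))$, there are canonical completely isometric embeddings $\overline{j_u\otimes\id_G}\colon M\big(A\check\otimes C^*(G), X\check\otimes C^*(G)\big)\hookrightarrow M\big(C_u^*(A,X)\check\otimes C^*(G)\big)$ and likewise one level up. Second, by Lemma~\ref{lem-approx} an approximate unit of $A$ is an approximate unit of $X$, and since $C_u^*(A,X)$ is generated by $j_u(X)$ it is (as in the proof of Lemma~\ref{lem mult univ}) an approximate identity of $C_u^*(A,X)$ when viewed in $j_u(A)$; consequently $A\check\otimes C^*(G)$ generates $C_u^*(A,X)\check\otimes C^*(G)$ non-degenerately.

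For the first direction, given a coaction $\delta_u$ of $G$ on $C_u^*(A,X)$ satisfying (\ref{eq-rest}), I would set $\delta_X:=\delta_u|_X$ and $\delta_A:=\delta_u|_A$ and check that $\delta_X$ is a coaction of $G$ on $(A,X)$. By (\ref{eq-rest}) and since $\delta_u$ is a $*$-homomorphism, $\delta_X$ is a ccp morphism from $(A,X)$ to $\big(M(A\check\otimes C^*(G)), M(X\check\otimes C^*(G))\big)$; it is injective, and if $(e_i)$ is an approximate unit of $A$ then $\delta_u(e_i)\to 1$ strictly, so $\delta_X(A)(A\check\otimes C^*(G))$ is dense in $A\check\otimes C^*(G)$, i.e.\ $\delta_X$ is a non-degenerate generalized morphism. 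That $\delta_A$ is a coaction of $C^*(G)$ on $A$ needs, besides this, the inclusion $\delta_A(A)(1\check\otimes C^*(G))\subseteq A\check\otimes C^*(G)$: the left-hand side lies both in $C_u^*(A,X)\check\otimes C^*(G)$ (as $\delta_u$ is a coaction) and in $M(A\check\otimes C^*(G))$ (by (\ref{eq-rest})), and Lemma~\ref{lem mult univ}, applied to the $C^*$-hull $C_u^*(A,X)\check\otimes C^*(G)$ of $(A\check\otimes C^*(G), X\check\otimes C^*(G))$, gives $M(A\check\otimes C^*(G))\cap(C_u^*(A,X)\check\otimes C^*(G))=A\check\otimes C^*(G)$. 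The coaction identity for $\delta_A$ and condition (2) of Definition~\ref{def coaction} for $\delta_X$ then follow by restricting the corresponding identity for $\delta_u$, using that $\id\otimes\delta_G$ and $\delta_u\otimes\id$ restrict to $\id_X\otimes\delta_G$ and $\delta_X\otimes\id$ on $X\check\otimes C^*(G)$ (by the uniqueness of multiplier extensions in Lemmas~\ref{lem-non-deg-morphism} and~\ref{lem-tensormor}) and that the embedding of $M(X\check\otimes C^*(G)\check\otimes C^*(G))$ is injective.

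For the converse direction, given a coaction $\delta_X$ of $G$ on $(A,X)$, I would compose with $\overline{j_u\otimes\id_G}$ to view $\delta_X$ as a representation of $(A,X)$ into $M(C_u^*(A,X)\check\otimes C^*(G))$; this is non-degenerate in the sense of Definition~\ref{def-rep-multiplier} since $\delta_X(A)(A\check\otimes C^*(G))=A\check\otimes C^*(G)$ and $A\check\otimes C^*(G)$ generates $C_u^*(A,X)\check\otimes C^*(G)$ non-degenerately. By Proposition~\ref{prop-universal} it extends uniquely to a non-degenerate $*$-homomorphism $\delta_u\colon C_u^*(A,X)\to M(C_u^*(A,X)\check\otimes C^*(G))$ with $\delta_u\circ j_u=\delta_X$, and (\ref{eq-rest}) holds on $j_u(X)$ and $j_u(A)$ by construction. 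Injectivity of $\delta_u$ follows because $(\id\otimes 1_G)\circ\delta_u$ is a $*$-homomorphism $C_u^*(A,X)\to M(C_u^*(A,X))$ agreeing with the inclusion $j_u$ on $X$ (Remark~\ref{rem nondeg}(b)), hence equal to it. Condition (1) of a coaction, $\delta_u(C_u^*(A,X))(1\check\otimes C^*(G))\subseteq C_u^*(A,X)\check\otimes C^*(G)$, follows from Remark~\ref{rem nondeg}(a) for $x\in X$ together with an induction over products of elements of $j_u(X)$, using that each $\delta_X(x)$ is a multiplier of $C_u^*(A,X)\check\otimes C^*(G)$ and, by $X=AX$ and Cohen's theorem, carries $X\check\otimes C^*(G)$ into $C_u^*(A,X)\check\otimes C^*(G)$. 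Finally $(\id\otimes\delta_G)\circ\delta_u$ and $(\delta_u\otimes\id)\circ\delta_u$ are $*$-homomorphisms which on the generating set $j_u(X)$ restrict to the two sides of the commuting diagram of Definition~\ref{def coaction}(2) for $\delta_X$, hence coincide; so $\delta_u$ is a coaction. The two assignments are mutually inverse by uniqueness in Proposition~\ref{prop-universal}.

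I expect the main obstacle to be the bookkeeping with multiplier algebras: one must be sure that $M(X\check\otimes C^*(G))$ and $M(X\check\otimes C^*(G)\check\otimes C^*(G))$ embed compatibly into the multiplier algebras of the corresponding $C_u^*$-tensor products, that $\id\otimes\delta_G$ and $\delta_u\otimes\id$ restrict as claimed, and---most delicately in the first direction---that the intersection identity $M(A\check\otimes C^*(G))\cap(C_u^*(A,X)\check\otimes C^*(G))=A\check\otimes C^*(G)$ of Lemma~\ref{lem mult univ} can be invoked. With these compatibilities settled, everything else is a routine diagram chase.
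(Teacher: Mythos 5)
Your proposal is correct and follows essentially the same route as the paper: both directions rest on the extension/uniqueness clause of Proposition~\ref{prop-universal}, injectivity of $\delta_u$ via $(\id\otimes 1_G)\circ\delta_u$ extending the identity on $X$, the ``generated by $j_u(X)$'' argument for the coaction identity and for $\delta_u(C_u^*(A,X))(1\check\otimes C^*(G))\subseteq C_u^*(A,X)\check\otimes C^*(G)$, and the intersection identity $M(A\check\otimes C^*(G))\cap(C_u^*(A,X)\check\otimes C^*(G))=A\check\otimes C^*(G)$ from Lemma~\ref{lem mult univ} in the restriction direction. The only differences are cosmetic: you treat the two directions in the opposite order and spell out the multiplier-embedding compatibilities in slightly more detail (the appeal to $X=AX$ and Cohen in the inductive step is unnecessary, since $\delta_u(x)$ being a multiplier of $C_u^*(A,X)\check\otimes C^*(G)$ already suffices).
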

\begin{proof} 
Suppose first that $\delta_X:X\to M(X\check\otimes C^*(G))$ is a coaction of $C^*(G)$ on $(A,X)$.
It follows then from part (b) of Remark \ref{rem nondeg} and Lemma \ref{lem mult univ} that 
$$(j_u\otimes \id_G)\circ \delta_X: X\to M(C_u^*(A,X)\check\otimes C^*(G))$$
is a completely isometric representation of $(A,X)$ into $M(C_u^*(A,X)\check\otimes C^*(G))$. 
By Proposition \ref{prop-universal}, it extends to a non-degenerate $*$-homomorphism 
$$\delta_u:C_u^*(A,X)\to M(C_u^*(A,X)\check\otimes C^*(G)).$$
Since $C_u^*(A, X)$ is generated by $X$ and since
$$\delta_u(x)(1\otimes z)=\delta_X(x)(1\otimes z)\in X\check\otimes C^*(G)\subseteq C_u^*(A,X)\check\otimes C^*(G)$$
for all $x\in X$, it follows that 
$$\delta_u(C_u^*(A,X))(1\check\otimes C^*(G))\subseteq C_u^*(A,X)\check\otimes C^*(G).$$
Using this, it  follows that $(\id_{C_u^*(A,X)}\otimes 1_G)\circ \delta_u$ maps $C_u^*(A,X)$ into $C_u^*(A,X)$. 
Moreover, since $(\id_{C_u^*(A,X)}\otimes 1_G)\circ \delta_u$ restricts to $(\id_X\otimes 1_G)\circ \delta_X=\id_X$ on $X$,
it follows that $(\id_{C_u^*(A,X)}\otimes 1_G)\circ \delta_u$ extends the identity on $X$ and therefore must be equal to the 
identity on $C_u^*(A,X)$. Thus it follows that $\delta_u$ is injective. 

In order to check that
\begin{equation}\label{eq-coact}
(\delta_u\otimes \id_G)\circ \delta_u=(\id_{C_u^*(A,X)}\otimes \delta_G)\circ \delta_u
\end{equation}
as maps from $C_u^*(A,X)$ into $M(C_u^*(A,X)\check\otimes C^*(G)\check\otimes C^*(G))$ we simply observe that,
via the canonical embedding of $M(X\check\otimes C^*(G)\check\otimes C^*(G))$ into \linebreak
$M(C_u^*(A,X)\check\otimes C^*(G)\check\otimes C^*(G))$, 
the left hand  side restricts to $(\delta_X\otimes \id_G)\circ \delta_X$ and the right hand side restricts to $(\id_X\otimes \delta_G)\circ \delta_X$.
But by condition (ii) of Definition \ref{def coaction}, these restrictions to $X$ coincide and then (\ref{eq-coact}) 
follows  from the uniqueness assertion of Proposition \ref{prop-universal}.

Conversely, suppose  that $\delta:C_u^*(A,X)\to M(C_u^*(A,X)\check\otimes C^*(G))$ is a coaction of $G$ 
on $C_u^*(A,X)$ such that the equations (\ref{eq-rest}) hold. We need to check that 
$\delta_X:=\delta|_X$ is a coaction of $G$ on $(A,X)$, where we realize $M(X\check\otimes C^*(G))$ as a subspace of 
$M(C_u^*(A,X)\check\otimes C^*(G))$ as explained above. 
Since $\delta$ is injective, the same holds for $\delta_X$ and condition (ii) of Definition \ref{def coaction}
clearly follows from the similar condition for $\delta$. Thus, all we need to show is that 
$\delta(A)(1\check\otimes C^*(G))\subseteq A\check\otimes C^*(G)$. But since $\delta(A)(1\check\otimes C^*(G))\subseteq C_u^*(A,X)\check\otimes C^*(G)$
we observe that
$$\delta(A)(1\check\otimes C^*(G))\subseteq M(A\check\otimes C^*(G))\cap (C_u^*(A,X)\check\otimes C^*(G)),$$
where the intersection is taken inside $M(C_u^*(A,X)\check\otimes C^*(G))$. But  it follows from Lemma \ref{lem mult univ}
that this intersection equals $A\check\otimes C^*(G)$ and the result follows.
\end{proof}

\begin{definition}\label{def-nondeg}
A coaction $\delta_X: X\to M(X\check\otimes C^*(G))$ of $G$ on $(A,X)$ is called {\em non-degenerate} 
if the corresponding coaction $\delta_u$ of $G$ on $C_u^*(A,X)$ is non-degenerate, 
i.e., 
$$\delta_u(C_u^*(A,X))(1\check\otimes C^*(G))=C_u^*(A,X)\check\otimes C^*(G).$$
\end{definition}

\begin{remark}\label{rem nondeg-coaction}
Of course it would be more satisfactory to define nondegeneracy of a coaction of $G$ on $(A,X)$ 
via a condition like 
$$\delta_X(X)(1\check\otimes C^*(G))=X\check\otimes C^*(G).$$
However, we were not able to prove that this condition is equivalent to nondegeneracy of $\delta_u$,
and it is the latter  condition we shall need later when dealing with Imai-Takai duality. 
Note that nondegeneracy of a coaction on $(A,X)$ is automatic for amenable or discrete $G$, since,
as we remarked before,  this holds true for coactions on $C^*$-algebras. The same holds for all dual coactions:
\end{remark}

\begin{lemma}\label{lem-non-degenerate}
Suppose that $\alpha:G\to \Aut(A,X)$ is an action of $G$ on the $C^*$-operator system $(A,X)$.
Then the dual coactions $\widehat{\alpha}:X\rtimes_{\alpha}^uG\to M(X\rtimes_{\alpha}^uG\check\otimes C^*(G))$ 
and $\widehat{\alpha_r}:X\rtimes_{\alpha}^rG\to M(X\rtimes_{\alpha}^rG\check\otimes C^*(G))$
are non-degenerate.
\end{lemma}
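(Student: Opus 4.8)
The plan is to reduce everything to the corresponding statement for $C^*$-algebra crossed products, using the characterisation of nondegeneracy of a coaction on a $C^*$-operator system given in Definition \ref{def-nondeg}. So by definition I must show that the coactions $\widehat{\alpha}_u$ and $\widehat{\alpha_r}$ of $G$ on $C_u^*(A\rtimes_\alpha^uG,X\rtimes_\alpha^uG)$ and $C_u^*(A\rtimes_\alpha^rG,X\rtimes_\alpha^rG)$ are non-degenerate as coactions of $C^*(G)$ on $C^*$-algebras. The first key observation is Corollary \ref{cor-universal}, which identifies $C_u^*(A\rtimes_\alpha^uG,X\rtimes_\alpha^uG)$ with $C_u^*(A,X)\rtimes_{\alpha,u}G$; and, as noted in Example \ref{ex dual}, under this identification the coaction $\widehat{\alpha}_u$ on the operator-system crossed product is precisely the restriction of the $C^*$-algebra dual coaction $\widehat{\alpha}_u$ on $C_u^*(A,X)\rtimes_{\alpha,u}G$. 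Thus the required nondegeneracy follows from the well-known fact that the dual coaction on a full $C^*$-crossed product is non-degenerate (see, e.g., \cite[Appendix A]{EKQR}).

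For the reduced case I would argue similarly, but I first need the reduced analogue of Corollary \ref{cor-universal}, namely that $C_u^*(A\rtimes_\alpha^rG, X\rtimes_\alpha^rG)$ can be identified with $C_u^*(A,X)\rtimes_{\alpha,r}G$. This is exactly the content of part (b) of Remark \ref{rem-reduced}, applied to the $C^*$-hull $(C_u^*(A,X), j_u)$ together with the extension of $\alpha$ to $C_u^*(A,X)$: the reduced crossed product $(A,X)\rtimes_\alpha^rG$ sits inside $C_u^*(A,X)\rtimes_{\alpha,r}G$ as the closure of $(C_c(G,A),C_c(G,X))$, and since this generates $C_u^*(A,X)\rtimes_{\alpha,r}G$ as a $C^*$-algebra (the regular representation is faithful on $C_u^*(A,X)$ and $X$ generates $C_u^*(A,X)$), it is a $C^*$-hull, and one checks it satisfies the universal property using Lemma \ref{lem-universal} to lift representations of $X\rtimes_\alpha^rG$ to $C_u^*(A,X)$ and then integrate with the regular representation of $G$. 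Under this identification $\widehat{\alpha_r}$ on the operator-system side restricts the $C^*$-algebra dual coaction $\widehat{\alpha_r}$ on $C_u^*(A,X)\rtimes_{\alpha,r}G$, which is again known to be non-degenerate.

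The main obstacle I anticipate is the bookkeeping needed to verify cleanly that $C_u^*(A\rtimes_\alpha^rG, X\rtimes_\alpha^rG)\cong C_u^*(A,X)\rtimes_{\alpha,r}G$ intertwining the two versions of $\widehat{\alpha_r}$: one must be careful that the ``universal $C^*$-hull'' of the reduced operator-system crossed product really is the reduced $C^*$-crossed product of $C_u^*(A,X)$ rather than something smaller, and the argument genuinely uses that $C_u^*(A,X)$ is \emph{universal} among $C^*$-hulls of $(A,X)$. An alternative, perhaps cleaner, route avoiding the identification altogether would be to note that both $\widehat\alpha$ and $\widehat{\alpha_r}$ are \emph{dual} coactions, and that the defining nondegeneracy condition for the induced coaction on the universal $C^*$-hull can be checked directly: the canonical generators $i_G(g)\otimes u_g$ (resp.\ $i_G^r(g)\otimes u_g$) already witness $\delta_u(C_u^*(\cdots))(1\check\otimes C^*(G))=C_u^*(\cdots)\check\otimes C^*(G)$ by the usual slice-map / averaging argument, exactly as in the $C^*$-algebra case. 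Either way the substance is a reduction to the classical fact and the conditions of Definition \ref{def-nondeg} are satisfied.
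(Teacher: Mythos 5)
Your treatment of the universal crossed product is exactly the paper's: Corollary \ref{cor-universal} identifies $C_u^*(A\rtimes_\alpha^uG, X\rtimes_\alpha^uG)$ with $C_u^*(A,X)\rtimes_{\alpha,u}G$, the induced coaction is the dual coaction on that full $C^*$-crossed product, and nondegeneracy is the classical fact. The gap is in the reduced case. You assert that $C_u^*(A\rtimes_\alpha^rG, X\rtimes_\alpha^rG)\cong C_u^*(A,X)\rtimes_{\alpha,r}G$ and attribute this to part (b) of Remark \ref{rem-reduced}; but that remark only shows that $C_u^*(A,X)\rtimes_{\alpha,r}G$ is \emph{a} $C^*$-hull of $(A\rtimes_\alpha^rG, X\rtimes_\alpha^rG)$, not the universal one. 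Your sketch of the universal property does not close this: given a ccp representation of $X\rtimes_\alpha^rG$, Lemma \ref{lem-universal} (or Corollary \ref{cor-covariant}) produces a covariant representation $(\varphi,u)$ of $(A,X,G,\alpha)$ whose integrated form is a priori only a $*$-representation of the \emph{full} crossed product $C_u^*(A,X)\rtimes_{\alpha,u}G$; nothing forces it to be weakly contained in the regular representation, so it need not factor through $C_u^*(A,X)\rtimes_{\alpha,r}G$. This is precisely the full-versus-reduced distinction, and in general $C_u^*(A\rtimes_\alpha^rG, X\rtimes_\alpha^rG)$ only sits \emph{between} $C_u^*(A,X)\rtimes_{\alpha,u}G$ and $C_u^*(A,X)\rtimes_{\alpha,r}G$.

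The paper's proof sidesteps the identification entirely: the identity on $C_c(G,X)$ induces equivariant surjections
$$C_u^*(A,X)\rtimes_{\alpha,u}G\onto C_u^*(A\rtimes_\alpha^rG, X\rtimes_{\alpha}^rG)\onto C_u^*(A,X)\rtimes_{\alpha,r}G,$$
the first obtained from the universal property of $C_u^*(A\rtimes_\alpha^uG,X\rtimes_\alpha^uG)\cong C_u^*(A,X)\rtimes_{\alpha,u}G$ applied to the regular representation, and then one observes that nondegeneracy of a coaction passes to equivariant quotients: applying $q\otimes\id_{C^*(G)}$ to $\delta(B)(1\check\otimes C^*(G))=B\check\otimes C^*(G)$ gives the corresponding identity for the quotient, which is what Definition \ref{def-nondeg} asks of the coaction on $C_u^*(A\rtimes_\alpha^rG,X\rtimes_\alpha^rG)$ supplied by Proposition \ref{prop coaction}. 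Your closing ``alternative route'' via the generators $i_G^r(g)\otimes u_g$ points in a workable direction but is not carried out; as written, the main argument for $\widehat{\alpha_r}$ rests on an unproved (and in general doubtful) identification.
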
 
\begin{proof}
The first assertion follows from Corollary \ref{cor-universal} together with the fact that
dual coactions on $C^*$-algebra crossed products are non-degenerate (e.g., see the discussion at the end of 
\cite[Example A.26]{EKQR}). For the dual coaction on the reduced crossed $(A,X)\rtimes_{\alpha}^rG$ 
observe that the identity map on $C_c(G,X)$ induces surjective morphisms
$$C_u^*(A,X)\rtimes_{\alpha, u}G\onto C_u^*(A\rtimes_\alpha^rG, X\rtimes_{\alpha}^rG)\onto C_u^*(A,X)\rtimes_{\alpha,r}G,$$
where the first map exists by the universal property of $C_u^*(A\rtimes_\alpha^uG, X\rtimes_\alpha^uG)\cong 
C_u^*(A,X)\rtimes_{\alpha, u}G$ together with the obvious morphism of $(A\rtimes_\alpha^uG, X\rtimes_\alpha^uG)$ into 
$C_u^*(A\rtimes_\alpha^rG, X\rtimes_{\alpha}^rG)$. 
These maps are equivariant for the respective dual coactions, where the one on $C_u^*(A\rtimes_\alpha^rG, X\rtimes_{\alpha}^rG)$
is induced from the dual coaction of $C^*(G)$ on $(A\rtimes_\alpha^rG, X\rtimes_\alpha^rG)$ via 
Proposition \ref{prop coaction}. But then it is easy to see that nondegeneracy of the dual coaction $\widehat{\alpha}$ 
on $C_u^*(A,X)\rtimes_{\alpha, u}G$ implies nondegeneracy of the (dual) coaction on $C_u^*(A\rtimes_\alpha^rG, X\rtimes_{\alpha}^rG)$.
\end{proof}
We are now going to study covariant representations for coactions on $C^*$-operator systems $(A, X)$, extending the 
well known theory for coactions on $C^*$-algebras. In what follows let $w_G\in UM(C_0(G)\check\otimes C^*(G))$ denote 
the unitary given by the map $[g\mapsto u_g]\in C_b(G, C^*(G))\subseteq M(C_0(G)\check\otimes C^*(G))$.
Recall from  \cite[Definition A.32]{EKQR} the following definition

\begin{definition}\label{def-covariant-coact}
Let $\delta:D\to M(D\check\otimes C^*(G))$ be a coaction of $G$ on the $C^*$-algebra $D$ and let $B$ be a $C^*$-algebra.
Then a {\em covariant representation} of $(D,G,\delta)$ into $M(B)$ is a pair $(\pi,\mu)$, where
$\pi:D\to M(B), \mu:C_0(G)\to M(D)$ are  non-degenerate
$*$-homomorphism satisfying the covariance condition
$$(\pi\otimes\id_G)\circ \delta(d)=(\mu\otimes \id_G)(w_G)(\pi(d)\otimes 1)(\mu\otimes \id_G)(w_G)^*.$$
If $B=\mathcal K(H)$ for some Hilbert space $H$, then we say that $(\pi, \mu)$ is a covariant representation on $H$.
\end{definition}

If $(\pi, \mu)$ is a covariant representation of $(D,G,\delta)$ as above, then 
 $$\pi(D)\mu(C_0(G)):=\cspn\{\pi(d)\mu(f) : d\in D, f\in C_0(G)\}$$
  is a $C^*$-subalgebra of $M(B)$
(see \cite[Proposition A.36]{EKQR}). Moreover, it is shown in \cite[Proposition A.37]{EKQR} that 
the pair $(\Lambda_D, \Lambda_{\widehat{G}}):=\big((\id_D\otimes \lambda)\circ \delta, 1\otimes M)$
where $M:C_0(G)\to \mathcal B(L^2(G))=M(\mathcal K(L^2(G)))$ 
denotes the representation by multiplication operators, defines a covariant representation, called {\em regular representation}, of 
$(D, G, \delta)$ into $M(D\otimes \mathcal K(L^2(G)))$. 
The {\em crossed product} $D\rtimes_\delta\widehat{G}$ of the co-system $(D,G,\delta)$ is then defined 
as the $C^*$-algebra
$$D\rtimes_\delta\widehat{G}:=\Lambda_D(D)\Lambda_{\widehat{G}}(C_0(G))\subseteq M(D\otimes\mathcal K(L^2(G))).$$
We can then view $(\Lambda_D, \Lambda_{\widehat{G}})$ as a covariant representation into $M(D\rtimes_\delta\widehat{G})$ in a canonical way.
It is then shown in \cite[Theorem A.41]{EKQR} that the triple $(D\rtimes_\delta\widehat{G}, \Lambda_D, \Lambda_{\widehat{G}})$
satisfies the following universal property:
If $(\pi,\mu)$ is any covariant representation of $(D,G,\delta)$ into $M(B)$, then there exists a unique 
$*$-homomorphism
$\pi\rtimes\mu: D\rtimes_\delta \widehat{G}\to M(B)$ such that 
\begin{equation}\label{eq-covuniv}
(\pi\rtimes \mu)\circ \Lambda_D=\pi\quad\text{and}\quad (\pi\rtimes\mu)\circ \Lambda_{\widehat{G}}= \mu.
\end{equation}
Moreover, we get
$$\pi\rtimes\mu(D\rtimes_\delta \widehat{G})=\pi(D)\mu(C_0(G)).$$
\medskip

We are now going derive analogues of the above constructions and facts for coactions 
of $G$ on $C^*$-operator systems $(A,X)$. We start with

\begin{definition}\label{def-covariant$C^*$}
Suppose that $\delta_X:X\to M(X\check\otimes C^*(G))$ is a coaction of $G$ on $(A,X)$ and let $(B,Y)$ 
be a $C^*$-operator system. Then a covariant morphism of $(A,X, G,\delta_X)$ into $M(B,Y)=(M(B), M(Y))$
consists of a non-degenerate generalized morphism $\pi:X\to M(Y)$ of $(A,X)$ together with a non-degenerate 
$*$-homomorphism $\mu:C_0(G)\to M(B)$ such that the pair $(\pi, \mu)$ satisfies the 
covariance condition
$$(\pi\otimes\id_G)\circ \delta_X(x)=(\mu\otimes \id_G)(w_G)(\pi(x)\otimes 1)(\mu\otimes \id_G)(w_G)^*$$
for all $x\in X$. If $(B,Y)=(B,B)$ is a $C^*$-algebra, we say that $(\pi,\mu)$ is a covariant representation 
of $(A,X,G, \delta_X)$ into $M(B)$. If, in addition, $B=\mathcal K(H)$ for some Hilbert space $H$,
we say that $(\pi,\mu)$ is a covariant representation of $(A,X, G,\delta_X)$ on $H$.
\end{definition}

\begin{remark} Observe that the restricted pair $(\pi_A, \mu)$ with $\pi_A:=\pi|_A$ of a covariant representation 
$(\pi,\mu)$ of $(A,X,G,\delta_X)$  into $M(B,Y)$ is a non-degenerate covariant homomorphism of $(A, G, \delta_A)$ 
into $M(B)$.
\end{remark}


\begin{proposition}\label{prop-image}
Suppose that $(\pi, \mu)$ is a covariant morphism of \linebreak $(A,X, G, \delta_X)$ into $M(B, Y)$ for some $C^*$-operator system $(B, Y)$.
Then \linebreak $\big(\pi(A)\mu(C_0(G)), \pi(X)\mu(C_0(G))\big)$ (closed spans!) is a $C^*$-operator subsystem of $M(B,Y)$.
\end{proposition}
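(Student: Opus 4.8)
The plan is to reduce everything to the well-known $C^*$-algebra statement \cite[Proposition A.36]{EKQR}, which says that $\pi(A)\mu(C_0(G))$ (closed span) is a $C^*$-subalgebra of $M(B)$, and then verify the two extra axioms of a $C^*$-operator system for the pair $\big(\pi(A)\mu(C_0(G)), \pi(X)\mu(C_0(G))\big)$. Write $D:=\cspn\{\pi(A)\mu(C_0(G))\}$ and $Z:=\cspn\{\pi(X)\mu(C_0(G))\}$. By the remark preceding the statement, $(\pi_A, \mu)$ is a covariant representation of $(A, G, \delta_A)$ into $M(B)$, so $D$ is already a $C^*$-subalgebra of $M(B)$. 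Since $\pi$ is a generalized morphism into $(M(B), M(Y))$, we have $\pi(X)\subseteq M(Y)$, and so $Z\subseteq \cspn\{M(Y)\mu(C_0(G))\}\subseteq M(Y)$; in particular $Z$ is a norm-closed subspace of $M(Y)$. Selfadjointness of $Z$ follows from $Z^*=\cspn\{\mu(C_0(G))^*\pi(X)^*\}=\cspn\{\mu(C_0(G))\pi(X)\}$, which equals $Z$ once we know $\pi(X)$ and $\mu(C_0(G))$ almost commute up to $Z$; this commutation is exactly what the covariance relation buys us, just as in the $C^*$-algebra case. Selfadjointness of $Z$ as a subset of $\mathcal B(H)$ then gives the positivity requirement automatically, since positivity of matrices over $Z$ is inherited from $\mathcal B(H^n)$.

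Next I would verify axiom (1) of Definition \ref{def-cstaros}, that $D$ acts non-degenerately: this is immediate because $\pi_A$ and $\mu$ are non-degenerate $*$-homomorphisms into $M(B)$, and a standard argument (use an approximate unit of $C_0(G)$ together with one of $A$, and Cohen factorization) shows $DH=H$ when we represent $(B,Y)$ faithfully on $H$. For axiom (2), I need $DZ=Z$ and $Z\subseteq \cspn\{DZ\}$. The inclusion $\cspn\{DZ\}\subseteq Z$ follows by expanding $\pi(a)\mu(f)\cdot\pi(x)\mu(g)$ and using the covariance relation to move $\mu(f)$ past $\pi(x)$, landing inside $\cspn\{\pi(A)\pi(X)\mu(C_0(G))\mu(C_0(G))\}=\cspn\{\pi(AX)\mu(C_0(G))\}=Z$ (here $AX=X$ and $C_0(G)\cdot C_0(G)$ is dense in $C_0(G)$). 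The reverse inclusion $Z\subseteq \cspn\{DZ\}$ uses that $A$ contains an approximate unit $(u_i)$ for $X$, hence $\pi(u_i)\pi(x)\to\pi(x)$ and so $\pi(x)\mu(f)=\lim_i \pi(u_i)\pi(x)\mu(f)$ lies in $\cspn\{\pi(A)\pi(X)\mu(C_0(G))\}$; but $\pi(A)\subseteq D$ after absorbing an approximate unit of $C_0(G)$, since $\mu$ is non-degenerate, giving $\pi(A)\pi(X)\mu(C_0(G))\subseteq \cspn\{DZ\}$.

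The one genuinely load-bearing point, and the step I expect to require the most care, is handling the covariance relation in the $C^*$-operator system setting: in Definition \ref{def-covariant$C^*$} the covariance identity is stated for $\pi$ on $X$ with the term $(\pi(x)\otimes 1)$ appearing, and one must make sense of and manipulate this inside $M(Y\check\otimes\mathcal B(L^2(G)))$ rather than inside a $C^*$-algebra. The key move is to apply the slice maps $\id\otimes\omega$ against vector functionals $\omega$ on $\mathcal B(L^2(G))$ coming from the regular representation, or equivalently to pair with the matrix coefficients of $w_G$, to extract from the covariance identity the ``commutation up to $Z$'' statements $\mu(f)\pi(x)\in Z$ and $\pi(x)\mu(f)\in Z$ that the above calculations rely on. Once these are in hand the rest is routine bookkeeping with Cohen factorization and approximate units, exactly parallel to the proof of \cite[Proposition A.36]{EKQR}, and I would cite that proposition for the $C^*$-algebra part $D$ and only spell out the new $X$-level arguments.
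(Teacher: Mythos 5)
Your proposal is correct and follows essentially the same route as the paper's proof: reduce the algebra part to \cite[Proposition A.36]{EKQR}, extract the commutation relation $\pi(X)\mu(C_0(G))=\mu(C_0(G))\pi(X)$ from the covariance condition by the same slicing argument as in that proposition (which gives selfadjointness), and then verify the module axiom from $AX=X=XA$ together with that commutation. The extra details you supply on non-degeneracy and on slicing against matrix coefficients of $w_G$ are exactly what the paper leaves implicit.
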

\begin{proof} We first  observe that it follows directly from the above discussion that $\pi(A)\mu(C_0(G))$ is 
a non-degenerate $C^*$-subalgebra of $M(B)$. Note that this implies in particular $\mu(C_0(G))$ acts as 
multipliers on this $C^*$-algebra. On the other hand, precisely the same arguments as used in the proof 
of \cite[Proposition A.36]{EKQR} show that 
$\pi(X)\mu(C_0(G))=\mu(C_0(G))\pi(X)$, from which it follows that 
$\pi(X)\mu(C_0(G))$ is a selfadjoint  subspace of $M(Y)$. 
So in order to complete the proof, we only need to show that 
\begin{align*}
\big(\pi(A)\mu(C_0(G))\big)\big(\pi(X)\mu(C_0(G))\big)&=\pi(X)\mu(C_0(G))\\
&=\big(\pi(X)\mu(C_0(G))\big)\big(\pi(A)\mu(C_0(G))\big),
\end{align*}
but this follows $AX=X=XA$ and 
$\pi(X)\mu(C_0(G))=\mu(C_0(G))\pi(X)$.
\end{proof}

\begin{proposition}\label{prop-extend universal}
Let $\delta_X:X\to M(X\check\otimes C^*(G))$ be a coaction of $G$ on $(A,X)$ and let $B$ be a $C^*$-algebra. 
Let $\delta_u:C_u^*(A,X)\to M(C_u^*(A,X)\check\otimes C^*(G))$ denote the corresponding coaction of $G$ on the 
universal $C^*$-hull $C_u^*(A,X)$ of $(A,X)$ as in Proposition \ref{prop coaction}. 

Then there is 
a one-to-one correspondence between the non-degenerate covariant representations 
$(\pi, \mu)$ of $(A,X, G, \delta_X)$ into $M(B)$ 
and the non-degenerate covariant representations of $(C_u^*(A,X), G, \delta_u)$ into $M(B)$, given by  sending a covariant pair
$(\pi, \mu)$ of  
$(A,X, G, \delta_X)$  to the covariant pair $(\bar\pi, \mu)$, where  $\bar\pi:C_u^*(A,X)\to M(B)$
denotes the unique $*$-homomorphism which extends $\pi$.
\end{proposition}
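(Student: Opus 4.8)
The plan is to leverage Proposition~\ref{prop-universal}, which already gives a one-to-one correspondence between non-degenerate representations $\pi:X\to M(B)$ of $(A,X)$ and non-degenerate $*$-homomorphisms $\bar\pi:C_u^*(A,X)\to M(B)$, and to check that this correspondence is compatible with the covariance conditions. So the content of the statement is essentially that one covariance square holds for $\pi$ if and only if the corresponding square holds for $\bar\pi$, the second component $\mu:C_0(G)\to M(B)$ being unchanged. First I would fix a non-degenerate covariant representation $(\pi,\mu)$ of $(A,X,G,\delta_X)$ into $M(B)$. By Proposition~\ref{prop-universal} the generalized morphism $\pi$ extends uniquely to a non-degenerate $*$-homomorphism $\bar\pi:C_u^*(A,X)\to M(B)$ with $\bar\pi\circ j_u=\pi$ (identifying $X$ with $j_u(X)\subseteq C_u^*(A,X)$). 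The only thing to prove in this direction is that $(\bar\pi,\mu)$ satisfies the covariance identity of Definition~\ref{def-covariant-coact} for the coaction $\delta_u$ on all of $C_u^*(A,X)$, given that $(\pi,\mu)$ satisfies it on $X$.

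For this I would represent $C_u^*(A,X)\check\otimes C^*(G)$ faithfully and non-degenerately on a Hilbert space, realize $B$ faithfully, and regard both sides of the covariance equation
$$(\bar\pi\otimes\id_G)\circ\delta_u(c)=(\mu\otimes\id_G)(w_G)\big(\bar\pi(c)\otimes 1\big)(\mu\otimes\id_G)(w_G)^*$$
as $*$-homomorphisms (resp.\ the map $c\mapsto \mathrm{Ad}\big((\mu\otimes\id_G)(w_G)\big)(\bar\pi(c)\otimes1)$ is a $*$-homomorphism of $C_u^*(A,X)$ into $M(B\check\otimes C^*(G))$). Since $\delta_u$ restricts to $\delta_X$ on $X$ by Proposition~\ref{prop coaction}, and since $\bar\pi$ restricts to $\pi$ on $X$, both sides of the displayed identity restrict on $X$ to the two sides of the covariance condition for $(\pi,\mu)$, which agree by hypothesis. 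Two $*$-homomorphisms out of $C_u^*(A,X)$ that agree on the generating subspace $j_u(X)$ are equal; hence the identity holds on all of $C_u^*(A,X)$. (One must note here that the right-hand side is genuinely a $*$-homomorphism because $\mathrm{Ad}$ of a unitary is, and $c\mapsto\bar\pi(c)\otimes1$ is; for the left-hand side one uses that $\delta_u$ is a $*$-homomorphism and $\bar\pi\otimes\id_G$ is as well on the minimal tensor product.) Non-degeneracy of $(\bar\pi,\mu)$ is clear since $\bar\pi$ and $\pi$ have the same range-closure and $\mu$ is unchanged.

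For the converse direction, starting from a non-degenerate covariant representation $(\sigma,\mu)$ of $(C_u^*(A,X),G,\delta_u)$ into $M(B)$, I would set $\pi:=\sigma\circ j_u=\sigma|_X$. This is automatically a non-degenerate generalized morphism of $(A,X)$ into $M(B)$, and restricting the covariance identity for $(\sigma,\mu)$ to $X\subseteq C_u^*(A,X)$ (using again $\delta_u|_X=\delta_X$ and the fact that $(\sigma\otimes\id_G)|_{M(X\check\otimes C^*(G))}=(\pi\otimes\id_G)$ under the canonical embedding from Lemma~\ref{lem mult univ}) yields exactly the covariance condition for $(\pi,\mu)$ from Definition~\ref{def-covariant$C^*$}. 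The two assignments $(\pi,\mu)\mapsto(\bar\pi,\mu)$ and $(\sigma,\mu)\mapsto(\sigma|_X,\mu)$ are mutually inverse by the uniqueness clause in Proposition~\ref{prop-universal}. The step I expect to require the most care is the bookkeeping of the various multiplier embeddings: identifying $(\pi\otimes\id_G)\circ\delta_X$ with the restriction of $(\bar\pi\otimes\id_G)\circ\delta_u$ to $X$ requires knowing that $\bar\pi\otimes\id_G$ sends the copy of $M(X\check\otimes C^*(G))$ inside $M(C_u^*(A,X)\check\otimes C^*(G))$ correctly, which is where Lemma~\ref{lem mult univ} and the non-degeneracy of everything in sight are used; the rest is a routine ``agree on generators'' argument.
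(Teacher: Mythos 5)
Your proposal is correct and follows essentially the same route as the paper: extend $\pi$ to $\bar\pi$ via Proposition \ref{prop-universal}, observe that $(\bar\pi\otimes\id_G)\circ\delta_u$ and $\Ad\big((\mu\otimes\id_G)(w_G)\big)\circ(\bar\pi(\cdot)\otimes 1)$ are $*$-homomorphisms of $C_u^*(A,X)$ agreeing on the generating copy of $X$ (by the covariance of $(\pi,\mu)$ and $\delta_u|_X=\delta_X$), hence equal, and obtain the converse by restriction. Your extra care with the multiplier embeddings via Lemma \ref{lem mult univ} is a reasonable elaboration of a step the paper leaves implicit.
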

\begin{proof}
Since $(\pi\otimes\id_G)\circ \delta_X=(\mu\otimes \id_G)(w_G)(\pi(\cdot)\otimes 1)(\mu\otimes \id_G)(w_G)^*$
as maps from $X$ into $M(D\check\otimes C^*(G))$, it follows that both of the 
 $*$-homomorphisms  $(\bar\pi\otimes\id_G)\circ \delta_u$ and $(\mu\otimes \id_G)(w_G)(\pi(\cdot )\otimes 1)(\mu\otimes \id_G)(w_G)^*$
 from $C_u^*(A,X)$ to $M(B\check\otimes C^*(G))$ extend the same non-degenerate generalized morphism of $(A,X)$,
 and therefore they coincide by Proposition \ref{prop-universal}. This shows that any covariant representation of 
 $(A,X,G,\delta_X)$ has a unique extension to $(C_u^*(A,X), G, \delta_u)$. The converse direction follows by restriction.
\end{proof} 

\begin{lemma}\label{lem-regularrep}
Suppose that $(A,X, G,\delta)$ is a coaction of $G$ on $(A,X)$. Then the pair 
$(\Lambda_X, \Lambda_{\widehat{G}}):=((\id_X\otimes \lambda)\circ \delta_X, 1\otimes M)$ 
defines a covariant representation of $(A,X, G,\delta)$ into $M\big(A\otimes \mathcal K(L^2(G)), X\otimes \mathcal K(L^2(G))\big)$
which we call the {\em regular representation} 
of $(A,X, G,\delta)$ into $M(X\check\otimes C^*(G))$.

Moreover, via the completely isometric embedding of $(A,X)$ into $C_u^*(A, X)$ and the corresponding completely isometric embedding 
of $(A\otimes \K(L^2(G)), X\otimes \K(L^2(G)))$ into $C_u^*(A,X)\otimes\K(L^2(G))$, we may view $(\Lambda_X, \Lambda_{\widehat{G}})$
as a covariant representation into $M(C_u^*(A,X)\otimes\K(L^2(G)))$, which uniquely extends to the regular representation 
$(\Lambda_{C_u^*(A,X)}, \Lambda_{\widehat{G}})$ of $(C_u^*(A,X), G, \delta_u)$ in the sense of Proposition \ref{prop-extend universal}.
\end{lemma}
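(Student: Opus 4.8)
The plan is to verify directly that the pair $(\Lambda_X, \Lambda_{\widehat{G}})$ satisfies all the requirements of a covariant morphism in the sense of Definition \ref{def-covariant$C^*$}, and then to deduce the ``moreover'' part from Proposition \ref{prop-extend universal} together with the already-established compatibility between $\delta_X$ and $\delta_u$ from Proposition \ref{prop coaction}. First I would check that $\Lambda_X:=(\id_X\otimes\lambda)\circ\delta_X$ is a non-degenerate generalized morphism from $(A,X)$ to $\big(A\otimes\K(L^2(G)), X\otimes\K(L^2(G))\big)$: it is the composition of the generalized morphism $\delta_X:X\to M(X\check\otimes C^*(G))$ with the generalized morphism $\id_X\otimes\lambda$ induced by the regular representation $\lambda:C^*(G)\to M(\K(L^2(G)))$ via Lemma \ref{lem-tensormor}, so it lands in $M(X\otimes\K(L^2(G)))$ with the correct multiplicativity over $A$; nondegeneracy follows because $\delta_A$ is non-degenerate and $\lambda$ is non-degenerate, so $\Lambda_X(A)\big(A\otimes\K(L^2(G))\big)=A\otimes\K(L^2(G))$. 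That $\Lambda_{\widehat G}=1\otimes M:C_0(G)\to M(A\otimes\K(L^2(G)))$ is a non-degenerate $*$-homomorphism is classical.

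Next I would establish the covariance identity
$$(\Lambda_X\otimes\id_G)\circ\delta_X(x)=(\Lambda_{\widehat G}\otimes\id_G)(w_G)\,(\Lambda_X(x)\otimes 1)\,(\Lambda_{\widehat G}\otimes\id_G)(w_G)^*$$
for all $x\in X$. The cleanest route is to reduce to the $C^*$-algebra case: fix a faithful non-degenerate representation of $C_u^*(A,X)$ on a Hilbert space $H_u$, so that $\delta_X$ is the restriction of the coaction $\delta_u$ of $G$ on $C_u^*(A,X)$ from Proposition \ref{prop coaction}. For the $C^*$-dynamical system $(C_u^*(A,X), G, \delta_u)$ the pair $\big((\id\otimes\lambda)\circ\delta_u,\ 1\otimes M\big)$ is exactly the regular representation recalled from \cite[Proposition A.37]{EKQR}, hence satisfies the covariance condition of Definition \ref{def-covariant-coact}. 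Restricting that identity to $x\in X\subseteq C_u^*(A,X)$ and observing that $(\id\otimes\lambda)\circ\delta_u|_X=(\id_X\otimes\lambda)\circ\delta_X=\Lambda_X$ (using $\delta_u|_X=\delta_X$) gives precisely the covariance identity for $(\Lambda_X,\Lambda_{\widehat G})$. This simultaneously proves the ``moreover'' part: the extension of $\Lambda_X$ to $C_u^*(A,X)$ guaranteed by Proposition \ref{prop-extend universal} is by construction the $*$-homomorphism $(\id\otimes\lambda)\circ\delta_u=\Lambda_{C_u^*(A,X)}$, and the second component $\Lambda_{\widehat G}=1\otimes M$ is unchanged, so $(\Lambda_X,\Lambda_{\widehat G})$ extends to $(\Lambda_{C_u^*(A,X)},\Lambda_{\widehat G})$, which is the regular representation of $(C_u^*(A,X),G,\delta_u)$.

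I expect the main obstacle to be bookkeeping of the multiplier-algebra identifications: one must be careful that the completely isometric embedding $(A\otimes\K(L^2(G)), X\otimes\K(L^2(G)))\hookrightarrow C_u^*(A,X)\otimes\K(L^2(G))$ is compatible with passing to multiplier systems (so that $M(X\otimes\K(L^2(G)))$ sits inside $M(C_u^*(A,X)\otimes\K(L^2(G)))$ as expected), and that $\Lambda_X$ really does take values in $M(X\otimes\K(L^2(G)))$ and not merely in $M(C_u^*(A,X)\otimes\K(L^2(G)))$. The first point is handled by Lemma \ref{lem mult univ} applied to the $C^*$-hull $\big(C_u^*(A,X)\otimes\K(L^2(G)),\ j_u\otimes\id\big)$ of $(A\otimes\K(L^2(G)), X\otimes\K(L^2(G)))$, which identifies $M\big(A\otimes\K(L^2(G)), X\otimes\K(L^2(G))\big)$ completely isometrically with a subsystem of $M(C_u^*(A,X)\otimes\K(L^2(G)))$; the second point follows from $\delta_X(X)\subseteq M(X\check\otimes C^*(G))$ together with Lemma \ref{lem-tensormor}, which guarantees $(\id_X\otimes\lambda)$ maps $M(X\check\otimes C^*(G))$ into $M(X\otimes\K(L^2(G)))$. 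Once these identifications are in place the verification is routine, so the proof is essentially: assemble $\Lambda_X$ as a composition of generalized morphisms, reduce covariance to the known $C^*$-algebraic statement by restricting along $X\hookrightarrow C_u^*(A,X)$, and read off the extension claim from Proposition \ref{prop-extend universal}.
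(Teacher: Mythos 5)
Your proof is correct, but it verifies the covariance identity by a genuinely different route than the paper. The paper's proof is a direct computation at the level of $X$: it invokes the identity $(\lambda\otimes\id_G)\circ\delta_G=\Ad\big((M\otimes\id_G)(w_G)\big)\circ(\lambda\otimes 1)$ from the proof of \cite[Proposition A.37]{EKQR}, combines it with the coaction identity $(\delta_X\otimes\id_G)\circ\delta_X=(\id_X\otimes\delta_G)\circ\delta_X$ from Definition \ref{def coaction}, and reads off the covariance condition in four lines; the ``moreover'' part is then dismissed as obvious. You instead pass to the universal $C^*$-hull first: since $\delta_X=\delta_u|_X$ by Proposition \ref{prop coaction} and the regular representation of the $C^*$-coaction $(C_u^*(A,X),G,\delta_u)$ is already known to be covariant by \cite[Proposition A.37]{EKQR}, you obtain the covariance for $(\Lambda_X,\Lambda_{\widehat G})$ by restriction along $X\hookrightarrow C_u^*(A,X)$. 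There is no circularity, as both Proposition \ref{prop coaction} and Proposition \ref{prop-extend universal} precede the lemma. Your route buys two things: the ``moreover'' assertion comes out simultaneously and for free (the unique extension of $\Lambda_X$ must be $(\id\otimes\lambda)\circ\delta_u$ by the uniqueness in Proposition \ref{prop-universal}), and you are more explicit than the paper about why $\Lambda_X$ actually lands in $M(X\otimes\mathcal K(L^2(G)))$ and is a non-degenerate generalized morphism (via Lemma \ref{lem-tensormor} and Lemma \ref{lem mult univ}), points the paper's proof leaves tacit. The paper's direct computation, on the other hand, is self-contained at the level of $(A,X)$ and does not rely on the correspondence with $\delta_u$ at all, which keeps the first assertion independent of the machinery of Section \ref{sec-univ-env}. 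Either argument is acceptable.
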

\begin{proof} For the first assertion we follow the proof of \cite[Proposition A.37]{EKQR}. Using the identity
$$(\lambda\otimes \id_G)\circ \delta_G=\Ad(M\otimes \id_G)(w_G)\circ (\lambda\otimes 1),$$
which has been established in the proof of \cite[Proposition A.37]{EKQR}, we compute
\begin{align*}
&\big((\id_X\otimes\lambda)\circ \delta_X\otimes\id_G\big)\circ \delta_X(x)=(\id_X\otimes\lambda \otimes \id_G)\circ (\delta_X\otimes \id_G)\circ \delta_X(x)\\
&=(\id_X\otimes \lambda\otimes\id_G)\circ (\id_X\otimes \delta_G)\circ \delta_X(x)\\
&=(\id_X\otimes (\lambda\otimes \id_G)\circ \delta_G)\circ \delta_X(x)\\
&=(1\otimes M\otimes \id_G)(w_G)\big((\id_X\otimes\lambda)(\delta_X(x)\otimes 1)(1\otimes M\otimes \id_G)(w_G)^*.
\end{align*}
This proves the covariance condition for $((\id_X\otimes \lambda)\circ \delta_X, 1\otimes M)$. The second assertion is now obvious.
\end{proof}

\begin{definition}\label{def-dual-cross}
Suppose that $\delta_X:X\to M(X\check\otimes C^*(G))$ is a coaction of $G$ on the $C^*$-operator system $(A,X)$.
Then we define the crossed product $(A,X)\rtimes_{\delta_X}G$ as the $C^*$-operator system 
$$(A\rtimes_{\delta_A}\widehat{G}, X\rtimes_{\delta_X}\widehat{G}):=\left(\Lambda_X(A)\Lambda_{\widehat{G}}(C_0(G)), \Lambda_X(X)\Lambda_{\widehat{G}}(C_0(G))\right)$$
generated by the regular representation $(\Lambda_X, \Lambda_{\widehat{G}})$ of $(A,X,G, \delta_X)$ as in Proposition \ref{prop-image}.
\end{definition}

\begin{remark}\label{rem-dual-cross} Note that it follows directly from Lemma \ref{lem-regularrep} and the above definition 
that $C_u^*(A,X)\rtimes_{\delta_u}\widehat{G}=\Lambda_{C_u^*(A,X)}(C_u^*(A,X))\Lambda_{\widehat{G}}(C_0(G))$
is a $C^*$-hull of  $(A\rtimes_{\delta_A}\widehat{G}, X\rtimes_{\delta_X}\widehat{G})$. Indeed, we shall see below, that it the universal $C^*$-hull
of $(A\rtimes_{\delta_A}\widehat{G}, X\rtimes_{\delta_X}\widehat{G})$.
\end{remark}

We now show that the above defined crossed product does enjoy a universal property for covariant representations:

\begin{proposition}\label{prop-dualuniv}
Suppose that $(\pi, \mu)$ is a covariant morphism of $(A,X,G,\delta_X)$ into $M(B,Y)$ for some
$C^*$-operator system $(B,Y)$. Then there is a unique generalized morphism 
$$\pi\rtimes\mu: (A\rtimes_{\delta_A}\widehat{G}, X\rtimes_{\delta_X}\widehat{G})\to M(B,Y)$$
such that
\begin{equation}\label{eq-univ}
(\pi\rtimes\mu)\circ \Lambda_X=\pi\quad\text{and}\quad (\pi\rtimes\mu)\circ \Lambda_{\widehat{G}}=\mu.
\end{equation}
Conversely, if $\Phi:  (A\rtimes_{\delta_A}\widehat{G}, X\rtimes_{\delta_X}\widehat{G})\to M(B,Y)$ is any non-degenerate 
generalized morphism, then there is a unique covariant morphism $(\pi, \mu)$ of $(A,X,G,\delta_X)$ 
such that $\Phi=\pi\rtimes\mu$.
\end{proposition}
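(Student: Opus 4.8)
The plan is to transfer both directions to the level of the universal $C^*$-hull by means of Proposition \ref{prop-extend universal} and then invoke the $C^*$-algebraic theory of coaction crossed products recalled just before the statement (following \cite[Theorem A.41]{EKQR}), carrying along the extra operator-system structure via Proposition \ref{prop-image} and Remark \ref{rem-dual-cross}.

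\textbf{Forward direction.} Given a covariant morphism $(\pi,\mu)$ of $(A,X,G,\delta_X)$ into $M(B,Y)$, first represent $(B,Y)$ completely isometrically and non-degenerately on a Hilbert space $K$, so that $\pi:X\to M(Y)\subseteq\mathcal B(K)$ and $\mu:C_0(G)\to M(B)\subseteq\mathcal B(K)$, and $(\pi,\mu)$ becomes a covariant representation of $(A,X,G,\delta_X)$ on $K$. Applying Proposition \ref{prop-extend universal} with the $C^*$-algebra there taken to be $\mathcal K(K)$, the pair $(\pi,\mu)$ extends to a non-degenerate covariant representation $(\bar\pi,\mu)$ of $(C_u^*(A,X),G,\delta_u)$ on $K$, where $\bar\pi:C_u^*(A,X)\to\mathcal B(K)$ is the canonical $*$-extension of $\pi$. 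The $C^*$-algebraic universal property then produces the integrated form $\bar\pi\rtimes\mu:C_u^*(A,X)\rtimes_{\delta_u}\widehat G\to\mathcal B(K)$ with $(\bar\pi\rtimes\mu)\circ\Lambda_{C_u^*(A,X)}=\bar\pi$ and $(\bar\pi\rtimes\mu)\circ\Lambda_{\widehat G}=\mu$. By Remark \ref{rem-dual-cross} and Lemma \ref{lem-regularrep}, $X\rtimes_{\delta_X}\widehat G$ sits inside $C_u^*(A,X)\rtimes_{\delta_u}\widehat G$ with $\Lambda_X=\Lambda_{C_u^*(A,X)}|_X$, so we set $\pi\rtimes\mu:=(\bar\pi\rtimes\mu)|_{X\rtimes_{\delta_X}\widehat G}$. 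On the dense subspace $\Lambda_X(X)\Lambda_{\widehat G}(C_0(G))$ of $X\rtimes_{\delta_X}\widehat G$ this sends $\Lambda_X(x)\Lambda_{\widehat G}(f)$ to $\pi(x)\mu(f)$, and Proposition \ref{prop-image} says exactly that the closed spans of the products $\pi(x)\mu(f)$ and $\pi(a)\mu(f)$ land in $M(Y)$ and $M(B)$, respectively; hence $\pi\rtimes\mu$ takes values in $M(Y)$, restricts to a $*$-homomorphism on $A\rtimes_{\delta_A}\widehat G$ into $M(B)$, and is non-degenerate since $\pi(A)\mu(C_0(G))B=\pi(A)B=B$ by non-degeneracy of $\pi$ and $\mu$ together with Cohen factorisation. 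Thus $\pi\rtimes\mu$ is a generalized morphism satisfying (\ref{eq-univ}), and it is the unique such map because $\Lambda_X(X)\Lambda_{\widehat G}(C_0(G))$ is dense.

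\textbf{Converse direction.} Let $\Phi:(A\rtimes_{\delta_A}\widehat G, X\rtimes_{\delta_X}\widehat G)\to M(B,Y)$ be a non-degenerate generalized morphism. By Lemma \ref{lem-non-deg-morphism} it extends uniquely to a morphism $\bar\Phi$ from $\big(M(A\rtimes_{\delta_A}\widehat G), M(X\rtimes_{\delta_X}\widehat G)\big)$ to $\big(M(B),M(Y)\big)$. Using the canonical covariant morphism $(\Lambda_X,\Lambda_{\widehat G})$ of $(A,X,G,\delta_X)$ into $\big(M(A\rtimes_{\delta_A}\widehat G), M(X\rtimes_{\delta_X}\widehat G)\big)$ furnished by Lemma \ref{lem-regularrep} and Definition \ref{def-dual-cross}, we define $\pi:=\bar\Phi\circ\Lambda_X:X\to M(Y)$ and $\mu:=\bar\Phi\circ\Lambda_{\widehat G}:C_0(G)\to M(B)$. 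That $(\pi,\mu)$ is a covariant morphism of $(A,X,G,\delta_X)$ into $M(B,Y)$ follows by applying $\bar\Phi\otimes\id_G$ to the covariance identity for $(\Lambda_X,\Lambda_{\widehat G})$ established in Lemma \ref{lem-regularrep}, combined with the non-degeneracy of $\Phi$, $\delta_X$, $\Lambda_X$ and $\Lambda_{\widehat G}$. By the forward direction we then obtain $\pi\rtimes\mu$ with $(\pi\rtimes\mu)\circ\Lambda_X=\pi=\bar\Phi\circ\Lambda_X$ and $(\pi\rtimes\mu)\circ\Lambda_{\widehat G}=\mu=\bar\Phi\circ\Lambda_{\widehat G}$, so $\pi\rtimes\mu$ and $\Phi$ agree on the dense subspace $\Lambda_X(X)\Lambda_{\widehat G}(C_0(G))$ and therefore coincide. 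Uniqueness of $(\pi,\mu)$ is immediate from (\ref{eq-univ}).

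\textbf{Main obstacle.} The substantive point is not the $C^*$-algebraic input but the bookkeeping that, after passing through $C_u^*(A,X)$, the restriction of the $C^*$-algebra integrated form to $X\rtimes_{\delta_X}\widehat G$ still has image in $M(Y)$ and is a \emph{non-degenerate generalized} morphism, i.e.\ that no operator-system information is lost in the detour; this is precisely where Proposition \ref{prop-image} is essential. A secondary delicate step is the covariance verification in the converse direction, where one applies $\bar\Phi\otimes\id_G$ to products of multipliers and must check multiplicativity there; both are routine once the framework is set up, but warrant a little care.
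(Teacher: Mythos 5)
Your proposal is correct and follows essentially the same route as the paper: represent $(B,Y)$ on a Hilbert space, extend $(\pi,\mu)$ to a covariant representation of $(C_u^*(A,X),G,\delta_u)$ via Proposition \ref{prop-extend universal}, integrate using the $C^*$-algebraic universal property, restrict to $X\rtimes_{\delta_X}\widehat{G}$, and verify the generalized-morphism conditions on elements $\Lambda_X(x)\Lambda_{\widehat{G}}(f)b$; the converse is likewise obtained by composing the extension $\bar\Phi$ with $(\Lambda_X,\Lambda_{\widehat{G}})$. The only cosmetic difference is that you cite Proposition \ref{prop-image} for the range condition where the paper does the one-line computation $\pi(x)\mu(f)b\in Y$ directly, and you spell out the converse (including the use of $\bar\Phi$) which the paper leaves as an exercise.
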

\begin{proof}
As for the case of actions, we are going to use the correspondence between covariant representations of $(A,X, G, \delta_X)$ and covariant representations 
of $(C_u^*(A,X), G, \delta_u)$ as established in Proposition \ref{prop-extend universal}. 
For this we choose a non-degenerate completely isometric embedding of $(B,Y)$ into $\mathcal B(H)$ for some Hilbert space $H$.
Then, if we compose a covariant representation of $(\pi,\mu)$ of $(A,X, G, \delta_X)$ into $M(B,Y)$ with this inclusion, we may view
$(\pi, \mu)$ as a representation into $\mathcal B(H)=M(\K(H))$. By Proposition \ref{prop-extend universal}, this extends to a covariant representation,
say $(\bar\pi, \mu)$
of $(C_u^*(A,X), G, \delta_u)$ into $\mathcal B(H)$. By the universal property of the $C^*$-algebra crossed product 
$(C_u^*(A,X)\rtimes_{\delta_u} \widehat{G}, \Lambda_{C_u^*(A,X)}, \Lambda_{\widehat{G}})$ there exists a unique $*$-homomorphism 
$\bar\pi\rtimes\mu: C_u^*(A,X)\rtimes_{\delta_u} \widehat{G}\to \mathcal B(H)$ such that 
$$(\bar\pi\rtimes\mu)\circ \Lambda_{C_u^*(G,A)}=\bar\pi\quad\text{and}\quad (\bar\pi\rtimes\mu)\circ \Lambda_{\widehat{G}}=\mu.$$
Define $\pi\rtimes\mu$ as the restriction of $\bar\pi\rtimes \mu$ to $X\rtimes_{\delta_X}\widehat{G}\subseteq 
C_u^*(A,X)\rtimes_{\delta_u} \widehat{G}$.  Then the restriction
 restriction of $(\bar\pi\rtimes\mu)\circ \Lambda_{C_u^*(G,A)}$ to $X$ equals  $(\pi\rtimes\mu)\circ \Lambda_X$.
Since $\bar\pi$ extends $\pi$, we see that $\pi\rtimes\mu: (A\rtimes_{\delta_A}\widehat{G}, X\rtimes_{\delta_X}\widehat{G})\to \mathcal B(H)$ 
is a representation which satisfies (\ref{eq-univ}). 

We still need to check that $\pi\rtimes\mu$ can be viewed as a generalized morphism into $M(B,Y)$. For this it suffices  
to check that $\pi\rtimes \mu(X\rtimes_{\delta_X}\widehat{G})B\subseteq Y$ and $\pi\rtimes \mu(A\rtimes_{\delta_A}\widehat{G})B=B$.
But if we apply $\pi\rtimes\mu$ on a typical element of the form $\Lambda_X(x)\Lambda_{\widehat{G}}(f)$ of $X\rtimes_{\delta_X}\widehat{G}$
with $x\in X$, $f\in C_0(G)$, it follows from equation (\ref{eq-univ}) that  
$$\pi\rtimes \mu\big((\Lambda_X(x)\Lambda_{\widehat{G}}(f)\big)b=\pi(x)\mu(f)b\in Y$$
since, by definition of a covariant representation into $M(B,Y)$, we have $\mu(f)\in M(B)$ and $\pi(x)\in M(Y)$. 
Moreover, since $\pi|_A:A\to M(B)$ and $\mu:A\to M(B)$ are supposed to be non-degenerate, we get 
$$\pi\rtimes \mu(A\rtimes_{\delta_A}\widehat{G})B=(\pi(A)\mu(C_0(G)))B=\pi(A)(\mu(C_0(G))B)=\pi(A)B=B.$$

If, conversely, $\Phi:  (A\rtimes_{\delta_A}\widehat{G}, X\rtimes_{\delta_X}\widehat{G})\to M(B,Y)$ is any non-degenerate 
generalized morphism, then we leave it as a straightforward exercise to check that the pair $(\pi,\mu)$ 
with $\pi:=\Phi\circ \Lambda_X, \mu:=\Phi\circ \Lambda_{\widehat{G}}$ is a non-degenerate covariant morphism 
such that $\Phi=\pi\rtimes\mu$.
\end{proof}

\begin{corollary}\label{cor-univhull}
Suppose that $(A,X, G, \delta_X)$ is a coaction of $G$ on $(A,X)$. Then 
$$C_u^*(A\rtimes_{\delta_A}\widehat{G}, X\rtimes_{\delta_X}\widehat{G})=C_u^*(A,X)\rtimes_{\delta_u}\widehat{G}.$$
\end{corollary}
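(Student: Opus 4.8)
The plan is to identify $C_u^*(A,X)\rtimes_{\delta_u}\widehat G$ with the universal $C^*$-hull of $(A\rtimes_{\delta_A}\widehat G, X\rtimes_{\delta_X}\widehat G)$ by checking that it satisfies the defining universal property of the latter, after which equality follows from uniqueness of universal $C^*$-hulls up to equivalence. By Remark \ref{rem-dual-cross} we already know that $C_u^*(A,X)\rtimes_{\delta_u}\widehat G$, together with the canonical inclusion $\iota$ of $(A\rtimes_{\delta_A}\widehat G, X\rtimes_{\delta_X}\widehat G)$, is a $C^*$-hull: since $C_u^*(A,X)$ is generated by $X$, the algebra $C_u^*(A,X)\rtimes_{\delta_u}\widehat G=\Lambda_{C_u^*(A,X)}(C_u^*(A,X))\Lambda_{\widehat G}(C_0(G))$ is generated by $\Lambda_X(X)\Lambda_{\widehat G}(C_0(G))=X\rtimes_{\delta_X}\widehat G$. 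It therefore remains to show that every non-degenerate ccp representation $\rho$ of $(A\rtimes_{\delta_A}\widehat G, X\rtimes_{\delta_X}\widehat G)$ on a Hilbert space $K$ extends, via $\iota$, to a $*$-homomorphism of $C_u^*(A,X)\rtimes_{\delta_u}\widehat G$ into $\mathcal B(K)$ (as usual one first passes to the essential subspace to make $\rho$ non-degenerate).

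For this I would view $\rho$ as a non-degenerate generalized morphism of $(A\rtimes_{\delta_A}\widehat G, X\rtimes_{\delta_X}\widehat G)$ into $M(\mathcal K(K),\mathcal K(K))=(\mathcal B(K),\mathcal B(K))$ and apply the converse half of Proposition \ref{prop-dualuniv}: this yields a non-degenerate covariant morphism $(\pi,\mu)$ of $(A,X,G,\delta_X)$ into $\mathcal B(K)=M(\mathcal K(K))$ — explicitly $\pi=\bar\rho\circ\Lambda_X$ and $\mu=\bar\rho\circ\Lambda_{\widehat G}$, where $\bar\rho$ is the canonical extension of $\rho$ to multipliers — such that $\rho=\pi\rtimes\mu$. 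Next, Proposition \ref{prop-extend universal} promotes $(\pi,\mu)$ to a non-degenerate covariant representation $(\bar\pi,\mu)$ of the co-system $(C_u^*(A,X),G,\delta_u)$ on $K$, where $\bar\pi:C_u^*(A,X)\to\mathcal B(K)$ is the unique $*$-homomorphism extending $\pi$. Finally, the universal property of the ($C^*$-algebraic) crossed product $C_u^*(A,X)\rtimes_{\delta_u}\widehat G$ recorded in \cite[Theorem A.41]{EKQR} integrates $(\bar\pi,\mu)$ to a $*$-homomorphism $\bar\pi\rtimes\mu:C_u^*(A,X)\rtimes_{\delta_u}\widehat G\to\mathcal B(K)$, and tracing through the construction in the proof of Proposition \ref{prop-dualuniv} one sees that its restriction to $X\rtimes_{\delta_X}\widehat G$ equals $\pi\rtimes\mu=\rho$. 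This is the required extension.

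Granting this extension property, the universal property of $C_u^*(A\rtimes_{\delta_A}\widehat G, X\rtimes_{\delta_X}\widehat G)$ is obtained in the usual way: for any $C^*$-hull $(C,j)$ of $(A\rtimes_{\delta_A}\widehat G, X\rtimes_{\delta_X}\widehat G)$ realize $C$ faithfully and non-degenerately on some $\mathcal B(K)$, extend the ccp representation $j$ to a $*$-homomorphism $\psi:C_u^*(A,X)\rtimes_{\delta_u}\widehat G\to\mathcal B(K)$, and note that because $C_u^*(A,X)\rtimes_{\delta_u}\widehat G$ is generated by $\iota\big(X\rtimes_{\delta_X}\widehat G\big)$, whose $\psi$-image $j\big(X\rtimes_{\delta_X}\widehat G\big)$ generates $C$, the map $\psi$ is a (unique) surjection onto $C$ with $\psi\circ\iota=j$. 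Hence $C_u^*(A,X)\rtimes_{\delta_u}\widehat G$ satisfies the defining universal property of $C_u^*(A\rtimes_{\delta_A}\widehat G, X\rtimes_{\delta_X}\widehat G)$, and the two coincide.

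The only point requiring genuine care is the compatibility bookkeeping in the middle step: one must make sure that the three successive extension procedures — extension of $\rho$ to multipliers (Lemma \ref{lem-multipliers}), extension of $\pi$ to $C_u^*(A,X)$ (Proposition \ref{prop-universal}), and integration over the coaction (Proposition \ref{prop-extend universal} together with \cite[Theorem A.41]{EKQR}) — fit together so that $\bar\pi\rtimes\mu$ genuinely restricts to $\rho$ on all of $X\rtimes_{\delta_X}\widehat G$, not merely on a dense subspace. This is precisely what is verified inside the proof of Proposition \ref{prop-dualuniv}, so in the write-up I would simply quote that argument; everything else is formal.
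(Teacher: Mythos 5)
Your proposal is correct and follows essentially the same route as the paper's proof: both reduce to Remark \ref{rem-dual-cross} for the $C^*$-hull property, then extend a given representation by decomposing it via the converse half of Proposition \ref{prop-dualuniv} into a covariant pair $(\pi,\mu)$, promoting it to $(C_u^*(A,X),G,\delta_u)$ by Proposition \ref{prop-extend universal}, and integrating. The only cosmetic difference is that the paper arranges non-degeneracy by assuming the target is generated by the image, while you pass to the essential subspace.
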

\begin{proof} We already observed in Remark \ref{rem-dual-cross} that $C_u^*(A,X)\rtimes_{\delta_u}\widehat{G}$
is a $C^*$-hull of $(A\rtimes_{\delta_A}\widehat{G}, X\rtimes_{\delta_X}\widehat{G})$. So we only need to show that every  
representation $\Phi: (A\rtimes_{\delta_A}\widehat{G}, X\rtimes_{\delta_X}\widehat{G})\to B$ extends to a homomorphism 
$\bar\Phi: C_u^*(A,X)\rtimes_{\delta_u}\widehat{G}\to B$.
For this let us assume without loss of generality that $B$ is generated by the image $\Phi(X\rtimes_{\delta_X}\widehat{G})$. Then $\Phi$
is non-degenerate and there exists a unique non-degenerate covariant representation $(\pi,\mu)$ of $(A,X,G, \delta_X)$ such that 
$\Phi=\pi\rtimes\mu$. By Proposition \ref{prop-extend universal} $(\pi,\mu)$ extends uniquely to a covariant homomorphism of 
$(\bar\pi,\mu)$ of $(C_u^*(A,X), G,\delta_u)$ into $B$. The arguments given in the proof of Proposition \ref{prop-dualuniv} then show that
$\bar\pi\rtimes \mu$ is a $*$-homomorphism from $C_u^*(A,X)\rtimes_{\delta_u}\widehat{G}$ into $B$ which restricts to
$\pi\rtimes\mu$ on $X$.
\end{proof}

\begin{remark}\label{rem-coact}
We should note that, different from the situation for crossed products by actions, the definition of the 
crossed product $(A\rtimes_{\delta_A}\widehat{G}, X\rtimes_{\delta_X}\widehat{G})$ does not depend 
on the crossed product by the universal $C^*$-hull $C_u^*(A,X)$. This algebra is only used to reduce the 
proof of the universal properties to the well known case of coaction crossed products by $C^*$-algebras.
One should observe that the definition of the crossed product for coactions is more like the 
definition of the reduced crossed product in case of group actions. The fact that this constructions already enjoys
the universal property for covariant morphisms comes from the fact that the locally compact quantum group $C^*(G)$ 
is amenable for all $G$ (or, in other words, every group $G$ is coamenable (e.g., see \cite{BS1, KV} for a discussion 
of these notions).  Hence we only have one reasonable candidate for a coaction crossed product!
For this reason, it also follows that the algebra part $A\rtimes_{\delta_A}G$ of 
$(A\rtimes_{\delta_A}\widehat{G}, X\rtimes_{\delta_X}\widehat{G})$ is the (universal and reduced) crossed product of 
$A$ with respect to the coaction $\delta_A$.
\end{remark}

\section{Duality}\label{sec-dual}
We now want to deduce versions of the Imai-Takai and Katayama duality for crossed products by actions and coactions.
By Example \ref{ex dual} we know that the universal  and reduced crossed products $(A\rtimes_\alpha^uG, X\rtimes_\alpha^uG)$ 
and $(A\rtimes_\alpha^rG, X\rtimes_\alpha^rG)$ for an action $\alpha:G\to \Aut(A,X)$ carry canonical dual 
coactions $\widehat{\alpha}$ and $\widehat{\alpha_r}$ which are given as the integrated forms of the 
covariant morphisms $\widehat{\alpha}=(i_X\otimes 1)\rtimes (i_G\otimes u)$ and 
$\widehat{\alpha_r}=(i_X^r\otimes 1)\rtimes (i_G^r\otimes u)$ into $M(X\rtimes_{\alpha}^uG\check\otimes C^*(G))$ (resp. $M(X\otimes_\alpha^rG\check\otimes C^*(G))$),
where $(i_X, i_G)$ and $(i_X^r, i_G^r)$ are the canonical covariant morphisms from $(A,X, G,\alpha)$ into 
$M(X\rtimes_\alpha^uG)$ and $M(X\rtimes_\alpha^rG)$, respectively.
Note that it follows directly from the constructions in Example \ref{ex dual}, that these coactions extend to the dual coaction on 
$C_u^*(A,X)\rtimes_{\alpha, u}G=C_u^*(A\rtimes_\alpha^uG, X\rtimes_\alpha^uG)$ and  $C_u^*(A,X)\rtimes_{\alpha_u, r}G$,
respectively.

In a similar way, we have

\begin{proposition}\label{prop-dualaction}
Suppose that $(A,X,G, \delta_X)$ is a coaction of $G$ on $(A,X)$. Then there is a canonical dual action 
$$\widehat\delta:G\to \Aut(A\rtimes_{\delta_A}\widehat{G}, X\rtimes_{\delta_X}\widehat{G})$$
wich is given on a typical element $\Lambda_X(x)\Lambda_{\widehat{G}}(f)$ by
$$\widehat\delta_{g}\big(\Lambda_X(x)\Lambda_{\widehat{G}}(f)\big)=\Lambda_X(x)\Lambda_{\widehat{G}}(\sigma_g(f)),$$
where $\sigma: G\to \Aut(C_0(G))$ denotes the right translation action, i.e., $\sigma_g(f)(s)=f(sg)$ for all $g,s\in G, f\in C_0(G)$.
\end{proposition}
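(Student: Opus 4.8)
The plan is to produce $\widehat\delta$ by transporting the familiar dual action from the $C^*$-algebraic setting; concretely, I would obtain each $\widehat\delta_g$ from the universal property of Proposition~\ref{prop-dualuniv} applied to a right translate of the regular representation.

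\textbf{Construction.} First I would fix $g\in G$ and consider the pair $(\Lambda_X,\Lambda_{\widehat G}\circ\sigma_g)$, where $(\Lambda_X,\Lambda_{\widehat G})$ is the regular representation of $(A,X,G,\delta_X)$ from Lemma~\ref{lem-regularrep}. Since $\sigma_g\in\Aut(C_0(G))$, the map $\Lambda_{\widehat G}\circ\sigma_g$ is again a non-degenerate $*$-homomorphism $C_0(G)\to M(A\otimes\K(L^2(G)))$, and I would verify that $(\Lambda_X,\Lambda_{\widehat G}\circ\sigma_g)$ is a covariant morphism of $(A,X,G,\delta_X)$ into $M\big(A\otimes\K(L^2(G)),X\otimes\K(L^2(G))\big)$. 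The key computation is $(\sigma_g\otimes\id_G)(w_G)=w_G(1\otimes u_g)$ — read off from $w_G=[s\mapsto u_s]$ and $\sigma_g(f)(s)=f(sg)$ — so that $(\Lambda_{\widehat G}\circ\sigma_g\otimes\id_G)(w_G)=(\Lambda_{\widehat G}\otimes\id_G)(w_G)\,(1\otimes u_g)$; since $1\otimes u_g$ sits in the $C^*(G)$-leg it commutes with $\Lambda_X(x)\otimes 1$, and the covariance identity for $(\Lambda_X,\Lambda_{\widehat G})$ then passes verbatim to $(\Lambda_X,\Lambda_{\widehat G}\circ\sigma_g)$. By Proposition~\ref{prop-dualuniv} this pair integrates to a unique generalized morphism $\widehat\delta_g:=\Lambda_X\rtimes(\Lambda_{\widehat G}\circ\sigma_g)$ of $(A\rtimes_{\delta_A}\widehat G,X\rtimes_{\delta_X}\widehat G)$ with $\widehat\delta_g\circ\Lambda_X=\Lambda_X$ and $\widehat\delta_g\circ\Lambda_{\widehat G}=\Lambda_{\widehat G}\circ\sigma_g$, and by~(\ref{eq-univ}) this is exactly $\widehat\delta_g\big(\Lambda_X(x)\Lambda_{\widehat G}(f)\big)=\Lambda_X(x)\Lambda_{\widehat G}(\sigma_g(f))$. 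Since such elements span $X\rtimes_{\delta_X}\widehat G$ (and, for $x\in A$, span $A\rtimes_{\delta_A}\widehat G$), the image of $\widehat\delta_g$ actually lies in $(A\rtimes_{\delta_A}\widehat G,X\rtimes_{\delta_X}\widehat G)$, not merely in its multiplier system.

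\textbf{Group law and continuity.} Next I would read off from the formula on the spanning set, together with $\sigma_g\sigma_h=\sigma_{gh}$ and $\sigma_e=\id$, that $\widehat\delta_g\widehat\delta_h=\widehat\delta_{gh}$ and $\widehat\delta_e=\id$; hence each $\widehat\delta_g$ is invertible with inverse $\widehat\delta_{g^{-1}}$, so $\widehat\delta_g\in\Aut(A\rtimes_{\delta_A}\widehat G,X\rtimes_{\delta_X}\widehat G)$ and $g\mapsto\widehat\delta_g$ is a homomorphism. For strong continuity I would use that each $\widehat\delta_g$ is contractive, reduce to the spanning set, and invoke norm-continuity of $g\mapsto\sigma_g(f)$ in $C_0(G)$ and contractivity of $\Lambda_{\widehat G}$; a standard $\varepsilon/3$-argument then extends continuity to all of $X\rtimes_{\delta_X}\widehat G$.

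\textbf{Concrete picture and the main obstacle.} Finally, to connect with the usual theory, I would note that under the embedding $(A\rtimes_{\delta_A}\widehat G,X\rtimes_{\delta_X}\widehat G)\into C_u^*(A,X)\rtimes_{\delta_u}\widehat G$ of Remark~\ref{rem-dual-cross} the automorphism $\widehat\delta_g$ is simply the restriction of $\Ad(1\otimes\rho_g)$, where $\rho$ is the right regular representation of $G$ on $L^2(G)$: indeed $1\otimes\rho_g$ commutes with $\Lambda_{C_u^*(A,X)}(C_u^*(A,X))$ (which lies in the image of $\id_{C_u^*(A,X)}\otimes\lambda$) and satisfies $\rho_g M(f)\rho_g^*=M(\sigma_g(f))$, giving back the formula above; this is the standard dual action on a coaction crossed product (see \cite[Appendix A]{EKQR}), which already leaves $C_u^*(A,X)\rtimes_{\delta_u}\widehat G$ invariant. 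The only point requiring a little care — what I would regard as the main obstacle — is verifying that $\widehat\delta_g$ really preserves the operator-system part $X\rtimes_{\delta_X}\widehat G$, a closed span rather than a $C^*$-algebra; this, however, is exactly what the explicit formula on the spanning set yields once combined with density and boundedness, so there is no serious difficulty, and everything else is a matter of assembling Lemma~\ref{lem-regularrep}, Proposition~\ref{prop-dualuniv} and the classical $C^*$-algebraic facts about dual actions.
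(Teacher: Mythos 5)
Your proposal is correct and follows essentially the same route as the paper: both verify that $(\Lambda_X,\Lambda_{\widehat G}\circ\sigma_g)$ is again covariant via the identity $(\sigma_g\otimes\id_G)(w_G)=w_G(1\otimes u_g)$ and the fact that $1\otimes u_g$ commutes with $\pi(x)\otimes 1$, then integrate using the universal property of Proposition \ref{prop-dualuniv} and read off the group law, invertibility, and strong continuity from the formula on the spanning set. Your closing observation that $\widehat\delta_g$ is the restriction of $\Ad(1\otimes\rho_g)$ is a correct and harmless addition not present in the paper's argument.
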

\begin{proof}
For any covariant representation $(\pi, \mu)$ of $(A,X,G,\delta_X)$ the pair \linebreak $(\pi, \mu\circ \sigma_g)$ 
is a covariant representation as well. Indeed, for all $x\in X$ we have
\begin{align*}
(\mu\circ &\sigma_g\otimes\id_G)(w_G)(x\otimes 1)(\mu\circ \sigma_g\otimes\id_G)(w_G)^*\\
&=(\mu \otimes\id_G)(w_G)(1\otimes u_g)(x\otimes 1)(1\otimes u_g^*)(\mu \otimes\id_G)(w_G)^*\\
&=(\mu \otimes\id_G)(w_G)(x\otimes 1)(\mu \otimes\id_G)(w_G)^*\\
&=\pi(x).
\end{align*}
Applying this to the regular representation $(\Lambda_X, \Lambda_{\widehat{G}})$ we get a covariant representation
$(\Lambda_X, \Lambda_{\widehat{G}}\circ \sigma_g)$ of $(A,X,G, \delta_X)$ into $M(X\rtimes_{\delta_X} \widehat{G})$
whose integrated form $\widehat{\delta}_g$ maps $\Lambda_X(x)\Lambda_{\widehat{G}}(f)$ to $\Lambda_X(x)\Lambda_{\widehat{G}}(\sigma_g(f))$.
It is then clear that $\widehat{\delta}_{g^{-1}}$ inverts $\widehat{\delta}_g$ and that $g\mapsto \widehat\delta_g$ is a homomorphism
into $\Aut(X\rtimes_{\delta_X}\widehat{G})$. Since the action $\sigma:G\to \Aut(C_0(G))$ is strongly 
continuous, the same holds for $\widehat\delta$.
\end{proof}

We now formulate the analogue of the Imai-Takai duality theorem for crossed products of $C^*$-operator systems by actions.

\begin{theorem}\label{thm-ImaiTakai}
Suppose that $\alpha: G\to \Aut(A,X)$ is an action and let $\K:=\K(L^2(G))$. Then there are canonical isomorphisms 
$$\big(A\rtimes_{\alpha}^uG\rtimes_{\widehat{\alpha}}\widehat{G}, X\rtimes_{\alpha}^uG\rtimes_{\widehat{\alpha^u}}\widehat{G}\big)
\cong \big(A\otimes \K, X\otimes \K\big)$$
and 
$$\big(A\rtimes_{\alpha}^rG\rtimes_{\widehat{\alpha^r}}\widehat{G}, X\rtimes_{\alpha}^rG\rtimes_{\widehat{\alpha^r}}\widehat{G}\big)
\cong \big(A\otimes \K, X\otimes \K\big)$$
which transfer the double dual actions $\widehat{\widehat{\alpha}}$ and (resp.  $\widehat{\widehat{\alpha^r}}$) to the diagonal 
action $\alpha\otimes \Ad\rho$, respectively, where $\rho:G\to U(L^2(G))$ denotes the right regular representation of $G$.
\end{theorem}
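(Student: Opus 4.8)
The plan is to reduce the statement to the classical Imai--Takai duality theorem for $C^*$-algebra crossed products, applied to the universal $C^*$-hull $C_u^*(A,X)$, and then to track where the operator system $X\rtimes_\alpha^uG$ sits inside the double crossed product. Concretely, by Corollary \ref{cor-universal} we have $C_u^*(A\rtimes_\alpha^uG, X\rtimes_\alpha^uG)=C_u^*(A,X)\rtimes_{\alpha^u,u}G$, and by Example \ref{ex dual} the dual coaction $\widehat{\alpha}$ on $(A\rtimes_\alpha^uG,X\rtimes_\alpha^uG)$ is the restriction of the dual coaction $\widehat{\alpha^u}$ on the $C^*$-algebra $C_u^*(A,X)\rtimes_{\alpha^u,u}G$. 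Then, by Corollary \ref{cor-univhull}, $C_u^*\big(A\rtimes_\alpha^uG\rtimes_{\widehat\alpha}\widehat G, X\rtimes_\alpha^uG\rtimes_{\widehat\alpha}\widehat G\big)=\big(C_u^*(A,X)\rtimes_{\alpha^u,u}G\big)\rtimes_{\widehat{\alpha^u}}\widehat G$. The classical Imai--Takai theorem (e.g., as in \cite[Example A.26, Theorem A.41]{EKQR}) gives a canonical $\widehat{\widehat{\alpha^u}}$-equivariant $*$-isomorphism
$$\Theta: \big(C_u^*(A,X)\rtimes_{\alpha^u,u}G\big)\rtimes_{\widehat{\alpha^u}}\widehat G\;\xrightarrow{\ \sim\ }\;C_u^*(A,X)\otimes\K,$$
intertwining $\widehat{\widehat{\alpha^u}}$ with $\alpha^u\otimes\Ad\rho$, and under which $\Theta\circ\Lambda_{C_u^*(A,X)\rtimes G}\circ i_{C_u^*(A,X)}(c)=c\otimes 1$ and $\Theta\circ\Lambda_{\widehat G}(f)=1\otimes M(f)$ for $c\in C_u^*(A,X)$, $f\in C_0(G)$ — together with the standard relations describing the image of $i_G$.

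The heart of the argument is then to check that this isomorphism $\Theta$ carries the operator subsystem $X\rtimes_\alpha^uG\rtimes_{\widehat\alpha}\widehat G$ exactly onto $X\otimes\K\subseteq C_u^*(A,X)\otimes\K$, and correspondingly $A\rtimes_\alpha^uG\rtimes_{\widehat{\alpha^u}}\widehat G$ onto $A\otimes\K$. First I would identify a dense subspace: by definition (Definition \ref{def-dual-cross} and Proposition \ref{prop-image}) the operator system $X\rtimes_\alpha^uG\rtimes_{\widehat\alpha}\widehat G$ is the closed span of products $\Lambda_{X\rtimes G}(\xi)\Lambda_{\widehat G}(f)$ with $\xi\in X\rtimes_\alpha^uG$, $f\in C_0(G)$, and since $X\rtimes_\alpha^uG$ is the closure of $C_c(G,X)\subseteq C_u^*(A,X)\rtimes_{\alpha^u,u}G$, it suffices to work with $\xi\in C_c(G,X)$. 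Writing $\xi=\int_G i_X(\xi(s))i_G(s)\,ds$ and using $\xi(s)=\lim a_i\xi(s)$ with $a_i$ an approximate unit of $A$ together with $X=AX$, a further reduction lets me take $\xi$ of the elementary form $i_X(x)i_G(g_0)$-type combinations; what matters is that $i_X(X)\subseteq M(X\rtimes_\alpha^uG)$ multiplies $A\rtimes_\alpha^uG$ into $X\rtimes_\alpha^uG$ (Lemma \ref{lem-multiplier}). Pushing forward through the classical Imai--Takai formulas, $\Theta\big(\Lambda_{X\rtimes G}(i_X(x))\big)$ lands in $M(X\otimes\K)$ multiplying $A\otimes\K$ into $X\otimes\K$, while $\Theta(\Lambda_{\widehat G}(f))=1\otimes M(f)\in M(A\otimes\K)$. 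Combining these, $\Theta$ maps the generators of $X\rtimes_\alpha^uG\rtimes_{\widehat\alpha}\widehat G$ into $X\otimes\K$; the reverse inclusion follows because elements of the form $x\otimes k$ with $x\in X$, $k\in\K$ of rank one can be recovered as $\Theta$ of products $\Lambda_{X\rtimes G}(i_X(x)\cdot(\text{something in }A\rtimes G))\,\Lambda_{\widehat G}(f)$ using the surjectivity statements in \cite[Proposition A.36, Theorem A.41]{EKQR}. Equivariance for $\widehat{\widehat\alpha}=\alpha\otimes\Ad\rho$ on the nose then follows by restricting the $C^*$-algebraic equivariance of $\Theta$, noting $\widehat{\widehat\alpha}$ is by definition the restriction of $\widehat{\widehat{\alpha^u}}$.

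For the reduced case the same strategy applies verbatim after replacing $C_u^*(A,X)\rtimes_{\alpha^u,u}G$ by $C_u^*(A,X)\rtimes_{\alpha^u,r}G$ and invoking the reduced-to-universal comparison in Remark \ref{rem-reduced}(b): the reduced crossed product $(A,X)\rtimes_\alpha^rG$ is the closure of $(C_c(G,A),C_c(G,X))$ inside $C_u^*(A,X)\rtimes_{\alpha^u,r}G$, whose double (coaction) crossed product is again $C_u^*(A,X)\otimes\K$ by the classical Imai--Takai theorem, now in its reduced incarnation (the dual coaction of a reduced crossed product still crosses back to $A\otimes\K$); the equivariance for $\widehat{\widehat{\alpha^r}}=\alpha\otimes\Ad\rho$ comes from restricting the $C^*$-algebraic statement exactly as before. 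The main obstacle I anticipate is the bookkeeping in the previous paragraph: making precise that $\Theta$ sends $X\rtimes_\alpha^uG\rtimes_{\widehat\alpha}\widehat G$ \emph{onto} (not merely into) $X\otimes\K$, since one must simultaneously exploit (i) the multiplier relations $i_X(X)(A\rtimes_\alpha^uG)\subseteq X\rtimes_\alpha^uG$ from Lemma \ref{lem-multiplier}, (ii) the module identity $X=AX=XA$, and (iii) the precise form of the classical Imai--Takai isomorphism on the three families of generators $i_{C_u^*(A,X)}(c)$, $i_G(g)$, and $\Lambda_{\widehat G}(f)$; a clean way to organize it is to show both $C^*$-operator systems have the same universal $C^*$-hull (using Corollary \ref{cor-univhull} on the left and the fact from Example \ref{excstaros}(4) that $C_u^*(A\otimes\K, X\otimes\K)=C_u^*(A,X)\otimes\K$ on the right — the latter requiring the nuclearity of $\K$, cf.\ Proposition \ref{prop-nuclear}) and that the induced isomorphism of hulls carries one copy of $X$ onto the other, whence it restricts to the desired isomorphism of $C^*$-operator systems.
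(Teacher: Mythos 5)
Your proposal is correct and follows essentially the same route as the paper: both reduce to the classical Imai--Takai isomorphism $(\Lambda_{C_u^*(A,X)}\rtimes\Lambda_G)\rtimes\Lambda_{\widehat G}:C_u^*(A,X)\rtimes_{\alpha^u,u}G\rtimes_{\widehat{\alpha^u}}\widehat G\congto C_u^*(A,X)\otimes\K$ and then restrict it to the subsystem generated by $C_c(G,X)$ and $C_0(G)$, with the reduced case handled by the same factorization. The only difference is that you spell out the density/bookkeeping for why the restriction lands onto (not just into) $X\otimes\K$, a step the paper dismisses as ``clear,'' so your write-up is a more detailed version of the same argument.
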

\begin{proof}
Both assertions can be deduced easily from the well known Takai-Takesaki duality theorem for the corresponding action 
on the universal $C^*$-hull $C_u^*(A,X)$. Indeed, it is shown in \cite[Theorem 5.1]{Rae} (in the even more general situation of a dual 
coaction of a twisted action) that for any action $\beta:G\to \Aut(B)$ the Imai-Takai isomorphism
$$\Psi: B\rtimes_\beta G\rtimes_{\widehat\beta} \widehat{G}\stackrel{\cong}{\to} B\otimes \K$$
is given by the integrated form $(\Lambda_B\rtimes \Lambda_G)\rtimes \Lambda_{\widehat{G}}$
where $\Lambda_B\rtimes\Lambda_G: B\rtimes_\beta G\to M(B\otimes\K(L^2(G)))$ is the regular representation 
of $B\rtimes_\beta G$ and $\Lambda_{\widehat{G}}=1\otimes M$. It is then clear that this factors through a homomorphism 
of $B\rtimes_{\beta,r} G\rtimes_{\widehat\beta} \widehat{G}$, which explains that both crossed products are the same.

Now, if we apply this to the system $(C_u^*(A,X), G,\alpha)$ we obtain the isomorphism
$$(\Lambda_{C_u^*(A,X)}\rtimes\Lambda_G)\rtimes\Lambda_{\widehat{G}}:
C_u^*(A,X)\rtimes_{\alpha_u} G\rtimes_{\widehat{\alpha_u}} \widehat{G}\stackrel{\cong}{\to} C_u^*(A,X)\otimes \K$$
which then clearly restricts to an isomorphism
$$(\Lambda_X\rtimes\Lambda_G)\rtimes\Lambda_{\widehat{G}}:
X\rtimes_{\alpha}^uG\rtimes_{\widehat{\alpha}} \widehat{G}\stackrel{\cong}{\to} X\otimes \K$$
and similarly for $X\rtimes_{\alpha}^rG\rtimes_{\widehat{\alpha_r}}\widehat{G}$.
The statement on the double dual action $\widehat{\widehat{\alpha}}$ follows from the analoguous statement for 
the double dual action $\widehat{\widehat{\alpha}}$ on  the double crossed product of $C_u^*(A,X)$.
\end{proof}

We now proceed to a discussion of Katayama duality, where we want to study double crossed products 
$$(A,X)\rtimes_{\delta_X}\widehat{G}\rtimes_{\widehat{\delta}_X}G$$
by dual actions of coactions. Note that here it will usually matter whether we take the universal or the 
reduced crossed product (or any exotic crossed product in between) on the outside, so we need to clarify this point.
So let us first recall the situation if we start with a coaction $\delta: B\to M(B\check\otimes C^*(G))$ of $G$ on a $C^*$-algebra $B$.

It is shown by Nilsen in \cite{Nilsen} that there exists a surjective $*$-homomorphisms
\begin{equation}\label{eq-max}
\Phi_B:
B\rtimes_{\delta}\widehat{G}\rtimes_{\widehat{\delta}, u}G\onto B\otimes \K(L^2(G))
\end{equation}
given by the integrated form
$$\Phi_B=\big (\Lambda_B\rtimes \Lambda_{\widehat{G}}\big)\rtimes(1\otimes \rho)$$
of the covariant homomorphism $(\Lambda_B\rtimes\Lambda_{\widehat{G}}, 1\otimes \rho)$
of $(B\rtimes_{\delta}\widehat{G}, G, \widehat{\delta})$ where $(\Lambda_B,\Lambda_{\widehat{G}})=\big((\id_B\otimes \lambda)\circ \delta,  1\otimes M\big)$ 
is the regular representation of $(B,G,\delta)$ into $M(B\otimes \K(L^2(G)))$.
A coaction $\delta$ is called {\em maximal}, if $\Phi$ is an isomorphism, and it is called {\em normal}, if 
$\Phi$ factors through an isomorphism $B\rtimes_{\delta}\widehat{G}\rtimes_{\widehat{\delta},r}G\cong B\otimes \K(L^2(G))$.
In general, the isomorphism $\Phi$ will factor through an isomorphism 
$$B\rtimes_{\delta}\widehat{G}\rtimes_{\widehat{\delta}, \mu}G \cong B\otimes \K(L^2(G))$$
of some {\em exotic crossed product} $B\rtimes_{\delta}\widehat{G}\rtimes_{\widehat{\delta}, \mu}G$ which lies between 
the maximal and the reduced crossed product in the sense that it is a $C^*$-completion of $C_c(G, B\rtimes_\delta\widehat{G})$ such that 
the identity map on $C_c(G, B\rtimes_\delta\widehat{G})$ induces surjective $*$-homomorphisms
\begin{equation}\label{eq-surjection}
B\rtimes_{\delta}\widehat{G}\rtimes_{\widehat{\delta}, u}G\onto B\rtimes_{\delta}\widehat{G}\rtimes_{\widehat{\delta}, \mu}G\onto B\rtimes_{\delta}\widehat{G}\rtimes_{\widehat{\delta}, r}G.
\end{equation}
It has been shown by Quigg  \cite{Q} that  $(B, G, \delta)$ is normal if and only if $\Lambda_B=(\id_B\otimes\lambda)\circ \delta: B\to M(B\otimes \K(L^2(G)))$ is faithful. In general the coaction $\delta$ determines a normal coaction $\delta_n$ (called the {\em normalization} of $\delta$)
on  the quotient $B_n:=B/(\ker\Lambda_B)$ such that the $\delta-\delta_n$ equivariant quotient map
$\Psi_n:B\onto B_n$ descents to an isomorphism of the dual systems
$$(B\rtimes_{\delta}\widehat{G}, G, \widehat{\delta})\cong (B_n\rtimes_{\delta_n}\widehat{G}, G, \widehat{\delta_n}).$$
If $(B,\delta)=(A\rtimes_{\alpha} G, \widehat{\alpha})$ is the dual coaction on the full crossed product by an action $\alpha$ of $G$
on a $C^*$-algebra $A$, then  $(B,\delta)$ is maximal (see \cite{EKQ}) and the normalization of $(B,\delta)$ is  
given by the pair $(B_n,\delta_n)=(A\rtimes_rG, \widehat{\alpha}_r)$, the dual coaction of the reduced crossed product.

We now want to extend this picture to crossed products by $C^*$-operator systems. We start with the following obvious 
consequence to the above:

\begin{theorem}\label{thm-Katayama}
Suppose that $(A,X,G, \delta_X)$ is a coaction of the locally compact group $G$ on the $C^*$-operator system $(A,X)$.
Then there is a canonical surjective morphism 
$$\Phi_X: \big(A\rtimes_{\delta_A}\widehat{G}\rtimes_{\widehat{\delta_A}}^uG, X\rtimes_{\delta_X}\widehat{G}\rtimes_{\widehat{\delta_X}}^uG\big)\onto 
\big(A\otimes\K(L^2(G)), X\otimes \K(L^2(G)\big)$$
given by the integrated form 
$$\Phi=\big (\Lambda_X\rtimes \Lambda_{\widehat{G}}\big)\rtimes(1\otimes \rho)$$
of the covariant homomorphism $(\Lambda_X\rtimes\Lambda_{\widehat{G}}, 1\otimes \rho)$
of $(X\rtimes_{\delta_X}\widehat{G}, G, \widehat{\delta_X})$ where $(\Lambda_X,\Lambda_{\widehat{G}})=\big((\id_X\otimes \lambda)\circ \delta_X,  1\otimes M\big)$ 
is the regular representation of $(A,X,G,\delta_X)$ into $M(X\otimes \K(L^2(G)))$.
Moreover, there is an exotic completion 
$(A\rtimes_{\delta_A}\widehat{G}\rtimes_{\widehat{\delta_A}}^{\mu}G, X\rtimes_{\delta_X}\widehat{G}\rtimes_{\widehat{\delta_X}}^{\mu}G)$ of the pair $(C_c(G, A\rtimes_{\delta_A} \widehat{G}), (C_c(G, X\rtimes_{\delta_X} \widehat{G})$, lying between the maximal and the reduced crossed products,
such that $\Phi_X$ factors through a completely isometric 
 isomorphism
 $$\big(A\rtimes_{\delta_A}\widehat{G}\rtimes_{\widehat{\delta_A}}^\mu G, X\rtimes_{\delta_X}\widehat{G}\rtimes_{\widehat{\delta_X}}^\mu G\big)\cong 
 \big(A\otimes\K(L^2(G)), X\otimes \K(L^2(G)\big).$$
\end{theorem}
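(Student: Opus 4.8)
The plan is to deduce everything from the $C^*$-algebra Katayama theorem quoted as \eqref{eq-max}--\eqref{eq-surjection}, applied to the universal $C^*$-hull $D:=C_u^*(A,X)$. First I would assemble the dictionary. By Proposition~\ref{prop coaction} the coaction $\delta_X$ corresponds to a coaction $\delta_u$ of $G$ on $D$ restricting to $\delta_X$ on $X$; by Corollary~\ref{cor-univhull} we have $C_u^*(A\rtimes_{\delta_A}\widehat G,X\rtimes_{\delta_X}\widehat G)=D':=D\rtimes_{\delta_u}\widehat G$; and under this identification the dual action $\widehat{\delta_X}$ of Proposition~\ref{prop-dualaction} extends to the usual dual action $\widehat{\delta_u}$ of $G$ on $D'$ (both fix the $D$-entries and translate the $C_0(G)$-entries by $\sigma$, and $X$ generates $D$). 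Corollary~\ref{cor-universal} applied to $(A\rtimes_{\delta_A}\widehat G,X\rtimes_{\delta_X}\widehat G,G,\widehat{\delta_X})$ then gives
$$C_u^*\big(A\rtimes_{\delta_A}\widehat G\rtimes_{\widehat{\delta_A}}^uG,\ X\rtimes_{\delta_X}\widehat G\rtimes_{\widehat{\delta_X}}^uG\big)=D'\rtimes_{\widehat{\delta_u},u}G,$$
so the iterated operator-system crossed product sits completely isometrically inside $D'\rtimes_{\widehat{\delta_u},u}G$ as the closure of $C_c(G,X\rtimes_{\delta_X}\widehat G)$, and its $A$-entry as the closure of $C_c(G,A\rtimes_{\delta_A}\widehat G)$.

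Next I would apply \eqref{eq-max} to $(D,\delta_u)$, giving Nilsen's surjective $*$-homomorphism $\Phi_D=(\Lambda_D\rtimes\Lambda_{\widehat G})\rtimes(1\otimes\rho)\colon D'\rtimes_{\widehat{\delta_u},u}G\onto D\otimes\K$, which factors through a $*$-isomorphism $\theta\colon D'\rtimes_{\widehat{\delta_u},\mu}G\xrightarrow{\ \cong\ }D\otimes\K$ for an exotic completion lying between the maximal and reduced ones as in \eqref{eq-surjection}. The morphism $\Phi_X$ of the statement is the restriction of $\Phi_D$: indeed $(\Lambda_X,\Lambda_{\widehat G})$ is the regular representation of $(A,X,G,\delta_X)$ (Lemma~\ref{lem-regularrep}), so it integrates to the identity inclusion $\Lambda_X\rtimes\Lambda_{\widehat G}\colon X\rtimes_{\delta_X}\widehat G\into M(X\otimes\K)$; using $\rho_g M(f)\rho_g^{*}=M(\sigma_g(f))$ and the commutation of $\lambda$ and $\rho$ one checks that $(\Lambda_X\rtimes\Lambda_{\widehat G},1\otimes\rho)$ is covariant for $\widehat{\delta_X}$, so Proposition~\ref{prop-covariant} produces the integrated generalized morphism $\Phi_X=(\Lambda_X\rtimes\Lambda_{\widehat G})\rtimes(1\otimes\rho)$, and on $C_c(G,X\rtimes_{\delta_X}\widehat G)$ it agrees with $\Phi_D$.

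For the image I would compute $\Phi_X$ on the spanning elements $[g\mapsto\Lambda_X(x)\Lambda_{\widehat G}(h)\,\phi(g)]$ of $X\rtimes_{\delta_X}\widehat G\rtimes_{\widehat{\delta_X}}^uG$ ($x\in X$, $h\in C_0(G)$, $\phi\in C_c(G)$), obtaining $\Lambda_X(x)\big(1\otimes M(h)\rho(\phi)\big)$ with $\rho(\phi)=\int_G\phi(g)\rho_g\,dg$. Since $\cspn\{M(h)\rho(\phi):h\in C_0(G),\,\phi\in C_c(G)\}=\K$, the closure of the image of $\Phi_X$ equals $\cspn\{\Lambda_X(x)(1\otimes k):x\in X,\ k\in\K\}$; this is contained in $X\otimes\K$ because $\Lambda_X(x)(1\otimes\lambda(z))=(\id_X\otimes\lambda)\big(\delta_X(x)(1\otimes z)\big)$ with $\delta_X(x)(1\otimes z)\in X\check\otimes C^*(G)$ (Remark~\ref{rem nondeg}(a)) and $\cspn\{\lambda(z)k:z\in C^*(G),k\in\K\}=\K$, and it is all of $X\otimes\K$ once one knows $\cspn\{\delta_X(x)(1\otimes z):x\in X,z\in C^*(G)\}=X\check\otimes C^*(G)$. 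This last identity I would extract from the non-degeneracy of $\delta_X$ as a generalized morphism, which yields $\delta_X(A)(X\check\otimes C^*(G))=X\check\otimes C^*(G)$, together with the relation $X=AX$ and a Cohen-factorization argument. The same computation for the $A$-entry gives closure of image $=\cspn\{\Lambda_A(a)(1\otimes k):a\in A,k\in\K\}=A\otimes\K$, the last equality being Nilsen's theorem for $(A,\delta_A)$.

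Finally I would define the exotic completions $X\rtimes_{\delta_X}\widehat G\rtimes_{\widehat{\delta_X}}^\mu G$ and $A\rtimes_{\delta_A}\widehat G\rtimes_{\widehat{\delta_A}}^\mu G$ as the closures of $C_c(G,X\rtimes_{\delta_X}\widehat G)$ and $C_c(G,A\rtimes_{\delta_A}\widehat G)$ inside $D'\rtimes_{\widehat{\delta_u},\mu}G$; the surjections of \eqref{eq-surjection} restrict to surjective morphisms of the operator-system parts, so this completion lies between the maximal and reduced operator-system crossed products. Then $\Phi_D$ factors as $D'\rtimes_{\widehat{\delta_u},u}G\onto D'\rtimes_{\widehat{\delta_u},\mu}G\xrightarrow{\theta}D\otimes\K$, and restricting $\theta$ to the operator-system part is a complete isometry (being the restriction of a $*$-isomorphism) whose image is the closure of $\Phi_D\big(X\rtimes_{\delta_X}\widehat G\rtimes_{\widehat{\delta_X}}^uG\big)=X\otimes\K$ by the previous step, and similarly for the $A$-part; this is the asserted completely isometric isomorphism, and $\Phi_X$ is thus surjective. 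The step I expect to be the main obstacle is the one just flagged: pinning the image of $\Phi_X$ down to $X\otimes\K$ exactly rather than to a dense subspace, equivalently the identity $\cspn\{\delta_X(x)(1\otimes z):x\in X,z\in C^*(G)\}=X\check\otimes C^*(G)$ --- which, given the caveats in Remark~\ref{rem nondeg-coaction}, genuinely uses the operator-system relation $X=AX$ --- while the rest (matching the dual actions and identifying the pertinent universal $C^*$-hulls so that Nilsen's theorem transports verbatim) is routine but must be carried out with care.
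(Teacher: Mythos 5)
Your overall route is the paper's: transport Nilsen's theorem through the universal hull $D:=C_u^*(A,X)$ via Proposition \ref{prop coaction}, Corollary \ref{cor-univhull} and Corollary \ref{cor-universal}, and restrict the resulting $*$-homomorphism $\Phi_D$ to the $X$- and $A$-corners. That structural part (matching the dual actions, identifying the iterated crossed product inside $D\rtimes_{\delta_u}\widehat G\rtimes_{\widehat{\delta_u},u}G$, and defining the exotic completion as the closure of $C_c(G,X\rtimes_{\delta_X}\widehat G)$ inside $D\rtimes_{\delta_u}\widehat G\rtimes_{\widehat{\delta_u},\mu}G$) is sound and is exactly what the published proof does in compressed form.

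The gap is in the step you yourself flag: pinning the image down to $X\otimes\K$. You reduce surjectivity to the identity $\cspn\{\delta_X(x)(1\otimes z):x\in X,\ z\in C^*(G)\}=X\check\otimes C^*(G)$ and propose to derive it from $X=AX$, Cohen factorization, and $\delta_X(A)(X\check\otimes C^*(G))=X\check\otimes C^*(G)$. That derivation is circular: writing $x=ay$ only yields $\cspn\,\delta_X(X)(1\otimes C^*(G))=\cspn\,\delta_X(A)\cdot\big[\cspn\,\delta_X(X)(1\otimes C^*(G))\big]$, i.e.\ the span is a non-degenerate $\delta_X(A)$-submodule of $X\check\otimes C^*(G)$ --- it does not follow that it is all of $X\check\otimes C^*(G)$. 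Worse, the identity you are reducing to is essentially the non-degeneracy condition of Remark \ref{rem nondeg-coaction}, which the paper explicitly states it cannot establish even assuming non-degeneracy of $\delta_u$; and in the special case $X=A$ it is condition (1$'$), which genuinely fails for degenerate coactions of non-amenable groups, while Nilsen's surjectivity onto $A\otimes\K$ nevertheless holds. So you have reduced the theorem to a strictly stronger statement that is false in general. The point of Nilsen's theorem is precisely that $\cspn\{(\id\otimes\lambda)\delta(B)(1\otimes\K)\}=B\otimes\K$ without any such non-degeneracy: one must work at the level of $1\otimes\K$ (where multiplying by compacts on both sides localizes the spectral decomposition --- visible already for discrete $G$, where $(1\otimes e_{r,r})\Lambda_X(x)(1\otimes e_{s,t})=x_{rs^{-1}}\otimes e_{r,t}$ isolates the spectral subspaces), not at the level of $1\otimes C^*(G)$. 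To close the argument you should instead prove $\cspn\,\Lambda_X(X)(1\otimes\K)=X\otimes\K$ directly, either by adapting Nilsen's argument to the $C^*$-operator system setting or by a more careful use of her theorem for $D$ and for $A$ together with the relation $X=AXA$; note that the naive manipulation $\cspn\,\Lambda_X(X)(1\otimes\K)=\cspn\,\Lambda_X(X)(A\otimes\K)$ obtained from Nilsen's theorem for $(A,\delta_A)$ also does not close on its own, so this step requires genuine additional work.
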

\begin{proof} The result follows from the above cited results for coactions on $C^*$-algebras applied to the 
coaction $(B,G, \delta)=(C_u^*(A,X), G, \delta_u)$ and the restriction of the corresponding $*$-homomorphism $\Phi_B$ to $A\subseteq X\subseteq X$.
\end{proof}

\begin{corollary}\label{cor-katayama}
Suppose that $G$ is an amenable locally compact group. Then the morphism
$$\Phi_X: \big(A\rtimes_{\delta_A}\widehat{G}\rtimes_{\widehat{\delta_A}}^uG, X\rtimes_{\delta_X}\widehat{G}\rtimes_{\widehat{\delta_X}}^uG\big)\onto 
\big(A\otimes\K(L^2(G)), X\otimes \K(L^2(G)\big)$$
of Theorem \ref{thm-Katayama} is a completely isometric isomorphism.
\end{corollary}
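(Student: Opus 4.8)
The plan is to reduce the statement, exactly as in the proof of Theorem \ref{thm-Katayama}, to the corresponding fact about the $C^*$-algebra double crossed product of the universal $C^*$-hull, and then to use amenability of $G$ to collapse the exotic completion occurring there onto the universal one.

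First I would recall from Theorem \ref{thm-Katayama} that $\Phi_X$ factors as
$$\big(A\rtimes_{\delta_A}\widehat{G}\rtimes_{\widehat{\delta_A}}^uG,\, X\rtimes_{\delta_X}\widehat{G}\rtimes_{\widehat{\delta_X}}^uG\big)\onto \big(A\rtimes_{\delta_A}\widehat{G}\rtimes_{\widehat{\delta_A}}^\mu G,\, X\rtimes_{\delta_X}\widehat{G}\rtimes_{\widehat{\delta_X}}^\mu G\big)\cong \big(A\otimes\K(L^2(G)),\, X\otimes\K(L^2(G))\big),$$
where the first arrow is the canonical surjection induced by the identity map on $C_c$-functions and the second arrow is a completely isometric isomorphism. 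Hence it suffices to show that, for amenable $G$, the first arrow is an isomorphism, i.e.\ that the $\mu$-completion coincides with the universal one.

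Next I would unwind how the three completions in \eqref{eq-surjection} were produced in the proof of Theorem \ref{thm-Katayama}: writing $B:=C_u^*(A,X)\rtimes_{\delta_u}\widehat{G}$ (with $\delta_u$ the coaction on the universal $C^*$-hull furnished by Proposition \ref{prop coaction}), the pair $(C_c(G,A\rtimes_{\delta_A}\widehat{G}),\, C_c(G,X\rtimes_{\delta_X}\widehat{G}))$ sits densely inside the $C^*$-algebra $B\rtimes_{\widehat{\delta_u}}G$, and the universal, $\mu$- and reduced completions of the operator system are precisely its closures inside the universal, $\mu$- and reduced crossed products of $B$ by the dual \emph{action} $\widehat{\delta_u}$ of $G$. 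Since $G$ is amenable, the universal and reduced crossed products by any action of $G$ coincide, so all three $C^*$-completions $B\rtimes_{\widehat{\delta_u},u}G$, $B\rtimes_{\widehat{\delta_u},\mu}G$, $B\rtimes_{\widehat{\delta_u},r}G$ agree via the identity on $C_c(G,B)$. Passing to closures of the dense subsystem, the operator-system $u$-, $\mu$- and $r$-completions agree as well; in particular the first arrow above is a completely isometric isomorphism, and combining this with Theorem \ref{thm-Katayama} shows that $\Phi_X$ itself is a completely isometric isomorphism.

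The only point requiring care --- and essentially the only thing beyond bookkeeping --- is the compatibility of ``taking the closure of the dense $C_c$-subsystem'' with passage between the three $C^*$-completions, namely that the $\mu$-completion of the operator-system crossed product is literally the closure of $(C_c(G,A\rtimes_{\delta_A}\widehat{G}),\, C_c(G,X\rtimes_{\delta_X}\widehat{G}))$ inside the $\mu$-completion of $B\rtimes_{\widehat{\delta_u}}G$. But this is precisely how the exotic completion was defined in the proof of Theorem \ref{thm-Katayama}, so once that proof is in place the corollary follows at once.
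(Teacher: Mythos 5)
Your argument is correct and is exactly the (implicit) argument behind the paper's unproved corollary: since the $u$-, $\mu$- and $r$-completions are all closures of the same dense $C_c$-subsystem inside the corresponding crossed products of $C_u^*(A,X)\rtimes_{\delta_u}\widehat{G}$ by the dual action, amenability of $G$ collapses them all onto the universal one, so the surjection onto the $\mu$-completion in Theorem \ref{thm-Katayama} is completely isometric. Equivalently, $\Phi_X$ is the restriction of Nilsen's map $\Phi_{C_u^*(A,X)}$, which for amenable $G$ is already a $*$-isomorphism.
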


\section{$C^*$-operator bimodules}\label{sec-bimodules}
In this section we want to study $C^*$-operator systems which are related to $C^*$-operator bimodules.
This will later lead to an easy way to define crossed products by group actions on 
$C^*$-operator bimodules. Since every operator space can be regarded as a $C^*$-operator bimodule 
in a canonical way, this will also give a construction of crossed products by group actions on operator spaces.

\begin{definition}\label{def-C*-operatorbimodule}
Let $H$ and $K$ be Hilbert spaces. 
A concrete {\em $C^*$-operator bimodule} $(A,V, B)$  inside $\mathcal B(K,H)$  consists of a norm closed 
subset $V\subseteq \mathcal B(K,H)$ together with a $C^*$-subalgebra $A\subseteq \mathcal B(H)$ 
and a $C^*$-subalgebra $B\subseteq \mathcal B(K)$
satisfying   $AV=V=VB$,  $AH=H$,  and $BK=K$.

A {\em representation} of the $C^*$-operator bimodule $(A,V,B)$ on a pair of Hilbert spaces 
 is $(K', H')$ is a triple of maps $\rho=(\rho_A, \rho_V, \rho_B)$
such that $\rho_A:A\to \mathcal B(H')$ and $\rho_B:B\to \mathcal B(K')$ are $*$-homomorphisms and 
$\rho_V: V\to \mathcal B(K',H')$   is a completely bounded map such that
$$\rho_V(avb)=\rho_A(a)\rho_V(v)\rho_B(b)\quad\forall a\in A, v\in V, b\in B.$$
We say that $\rho$ is {\em non-degenerate} if $\rho_A$ and $\rho_B$ are non-degenerate. 
We  say that $\rho$ is {\em completely contractive} if $\rho_V$ is completely contractive
and $\rho$ is called {\em completely isometric} if $\rho_A$ and 
$\rho_B$ are faithful and $\rho_V$ is completely isometric.

A {\em morphism} from the $C^*$-operator bimodule $(A,V,B)$ to the $C^*$-operator bimodule 
$(C, W,D)$ inside $\mathcal B(K', H')$ is a representation $\varphi=(\varphi_A,\varphi_V,\varphi_B)$
of $(A,V,B)$ to $\mathcal B(K',H')$ such that $\varphi_A(A)\subseteq C, \varphi_V(V)\subseteq W$, and $\varphi_B(B)\subseteq D$.
The invertible morphisms (or  isomorphisms) are then precisely the surjective completely isometric morphisms.
We shall often identify isomorphic $C^*$-operator systems.
\end{definition}

\begin{remark}\label{rem-ospace}
{(1)} Every concrete operator space$V\subseteq \mathcal B(K,H)$ determines the concrete $C^*$-operator system 
$(\C1_H, V, \C 1_K)$. If $V'\subseteq \mathcal B(K',H')$ is another operator space and 
$\varphi_V: V\to V'$ is a completely bounded map, then $\varphi=(\varphi_{\C1_H}, \varphi_V, \varphi_{\C 1_K})$ with 
$\varphi_{1_H}(\lambda 1_H)=\lambda 1_{H'}$ and $\varphi_{1_K}(\lambda 1_K)=\lambda 1_{K'}$  
is an morphism from $(\C 1_H, V, \C1_K)\to (\C 1_{H'}, V', \C1_{K'})$. Hence $V$ and $V'$ are  isomorphic as operator spaces
if and only if $(\C 1_H, V, \C1_K)$ and $(\C 1_{H'}, V', \C1_{K'})$ are isomorphic as $C^*$-operator bimodules. 
In this way we may regard the category of (concrete) $C^*$-operator bimodules as an extension of the category of 
(concrete) operator spaces.
\\
{(2)} If $(A,V, B)$ is a triple of subsets of $\big(\mathcal B(H), \mathcal B(K,H) , \mathcal B(K)\big)$ 
which satisfies all requirements of a $C^*$-operator bimodule 
as in Definition \ref{def-C*-operatorbimodule} except the non-degeneracy requirements $AH=H$ and $BK=K$,  
 let $H':=AH\subseteq H$ and $K'=BK\subseteq K$. Then, via restriction, we obtain a completely isomeric and non-degenerate representation of 
 $(A,V,B)$ on $\mathcal B(K',H')$.
\\
{\bf (3)} If $\rho=(\rho_A, \rho_V, \rho_B)$ is a completely bounded representation (or morphism) of $(A,V,B)$ with $0<C:=\|\rho_V\|_{cb}$,
then $\frac{1}{C}\rho:=(\rho_A, \frac{1}{C}\rho_V, \rho_B)$ is a completely contractive representation (resp. morphism).
This easy observation shows that in most situations one may assume without loss of generality that $\rho$ is completely contractive.
\end{remark}

The following proposition extends the well-known construction which assigns to each operator space $V\subseteq \mathcal B(K,H)$ 
the Paulsen-operator system $ X(V):=\left(\begin{matrix} \C 1_H &V\\ V^* & \C 1_K\end{matrix}\right)\subseteq \mathcal B(H\oplus K)$.
For this 
let $(A,V, B)$ be a $C^*$-operator bimodule in $\mathcal B(H,K)$. Let
$$X(A,V,B):=\left\{\left(\begin{matrix} a&v\\ w^*&b\end{matrix}\right): a\in A, v,w\in V, b\in B\right\}\subseteq  \mathcal B(H\oplus K),$$
and let $A\oplus B$ be viewed as the set of diagonal operators 
$\left(\begin{matrix}a&0\\0&b\end{matrix}\right)\in \mathcal B(H\oplus K)$ with $a\in A, b\in B$.
Then it is easily checked that $(A\oplus B, X(A,V,B))$ is a $C^*$-operator system in $\mathcal B(H\oplus K)$ as defined in Definition \ref{def-cstaros}.

On the other hand, one easily checks that the set of operators
$$\Op(A,V,B):=\left\{\left(\begin{matrix} a&v\\ 0&b\end{matrix}\right): a\in A, v\in V, b\in B\right\}\subseteq  \mathcal B(H\oplus K),$$
is a concrete operator algebra in $\mathcal B(H\oplus K)$ such that each approximate unit of $A\oplus B$ serves as an approximate unit of 
$\Op(A,V,B)$.

\begin{definition}\label{def-Paulsen}
We call $(A\oplus B, X(A,V,B))$ the {\em Paulsen $C^*$-operator system} of $(A,V, B)$ and we call $\Op(A,V,B)$ the {\em Paulsen operator algebra} of $(A,V,B)$. 
\end{definition}

\begin{proposition}\label{prop-reps}
Let $(A,V,B)$ be a $C^*$-operator bimodule. Then there is a one-to-one correspondence between
\begin{enumerate}
\item non-degenerate completely contractive representations of $(A,V,B)$;
\item non-degenerate completely positive contractive representations of the $C^*$-operator system  $\big(A\oplus B, X(A,V,B)\big)$; and
\item non-degenerate completely contractive operator algebra representations of $\Op(A,V,B)$.
\end{enumerate}
Given a representation $\rho=(\rho_A, \rho_V,\rho_B):(A,V,B)\to \mathcal B(K, H)$ the 
corresponding representation $\pi$ of $X(A,V, B)$ into $\mathcal B(H\oplus K)$ 
 is given by
$$\pi\Big(\left(\begin{matrix} a&v\\ w^*& b\end{matrix}\right)\Big)= \left(\begin{matrix} \rho_A(a)& \rho_V(v)\\ \rho_V(w)^*& \rho_B(b)\end{matrix}\right)\quad  a\in A, v,w\in V, b\in B.$$
and given a representation $\pi: X(A,V,B)\to \mathcal B(L)$, the corresponding representation of $\Op(A,V,B)$ is given by the restriction 
of $\pi$ to $\Op(A,V,B)\subseteq X(A,V,B)$. 
%
%
%
\end{proposition}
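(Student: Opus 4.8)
The plan is to check directly that the two displayed formulas define mutually inverse bijections, first between the representations in (1) and those in (2), and then between (1) and (3); the equivalence of (2) and (3) then follows by composing the two. For the passage from (1) to (2), given a non-degenerate completely contractive $\rho=(\rho_A,\rho_V,\rho_B)$ of $(A,V,B)$ on some $\mathcal B(K',H')$, I would define $\pi$ on $X(A,V,B)$ by the stated matrix formula and observe that linearity, self-adjointness, and the identity $\pi\big((a\oplus b)z\big)=\pi(a\oplus b)\pi(z)$ for $a\oplus b\in A\oplus B$, $z\in X(A,V,B)$, are immediate block computations --- the last one after noting that the bimodule relation together with an approximate-unit argument (using non-degeneracy of $\rho_B$, resp.\ $\rho_A$) gives $\rho_V(av)=\rho_A(a)\rho_V(v)$ and $\rho_V(vb)=\rho_V(v)\rho_B(b)$. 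Non-degeneracy of $\pi$ is just non-degeneracy of $\rho_A$ and $\rho_B$.

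The substantial point is that $\pi$ is completely positive, and this is the Paulsen off-diagonal technique in its operator-bimodule form. Passing from $\rho$ to its unital extension $(\rho_A^1,\rho_V,\rho_B^1)$ of $(\tilde A,V,\tilde B)$ only enlarges $\pi$ to its analogue $\pi^1$ on $X(\tilde A,V,\tilde B)$, of which $\pi$ is the restriction to $X(A,V,B)$, so I may assume $A$ and $B$ unital. Then, for a positive $P=\left(\begin{matrix}a&v\\ v^*&b\end{matrix}\right)\in M_n(X(A,V,B))$ and $\varepsilon>0$, I would set $c_\varepsilon=(a+\varepsilon 1)^{-1/2}v(b+\varepsilon 1)^{-1/2}$: functional calculus puts $(a+\varepsilon 1)^{-1/2}\in M_n(A)$, $(b+\varepsilon 1)^{-1/2}\in M_n(B)$, whence $c_\varepsilon\in M_n(A)M_n(V)M_n(B)=M_n(V)$, while positivity of $P+\varepsilon 1$ with invertible corners forces $\|c_\varepsilon\|\le 1$. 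Applying the amplified bimodule relation to the exact identity $v=(a+\varepsilon 1)^{1/2}c_\varepsilon(b+\varepsilon 1)^{1/2}$ gives
$$\pi_n(P)=D_\varepsilon\left(\begin{matrix}1&\rho_V^{(n)}(c_\varepsilon)\\ \rho_V^{(n)}(c_\varepsilon)^*&1\end{matrix}\right)D_\varepsilon-\varepsilon 1,$$
with $D_\varepsilon$ the positive diagonal operator whose blocks are $\rho_A^{(n)}\big((a+\varepsilon 1)^{1/2}\big)$ and $\rho_B^{(n)}\big((b+\varepsilon 1)^{1/2}\big)$, and the middle factor is positive since $\|\rho_V^{(n)}(c_\varepsilon)\|\le\|c_\varepsilon\|\le 1$; hence $\pi_n(P)\ge-\varepsilon 1$ for all $\varepsilon$, so $\pi_n(P)\ge 0$. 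In the unital case $\pi$ is also unital, so complete positivity forces complete contractivity, and restricting back recovers complete contractivity in general. Thus $\pi$ is a non-degenerate ccp representation of $(A\oplus B,X(A,V,B))$.

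For the converse, from (2) to (1), let $\pi$ be a non-degenerate ccp representation of $(A\oplus B,X(A,V,B))$ on $L$. By Lemma \ref{lem-multipliers} it extends to a ccp representation $\bar\pi$ of the multiplier system, and the images $e_1,e_2$ under $\bar\pi$ of the two corner units $1_H\oplus 0$ and $0\oplus 1_K$ of $M(A\oplus B)$ are complementary projections summing to $1_L$; write $L=L_1\oplus L_2$ accordingly. Using $a=(1_H\oplus 0)a$, $b=b(0\oplus 1_K)$, and $\left(\begin{matrix}0&v\\ 0&0\end{matrix}\right)=(1_H\oplus 0)\left(\begin{matrix}0&v\\ 0&0\end{matrix}\right)(0\oplus 1_K)$ inside $M(X(A,V,B))$, together with the fact that $\bar\pi$ is a bimodule morphism over $M(A\oplus B)$ --- here one uses that every representation of a $C^*$-operator system is automatically a bimodule map, because $\pi(xa)=\pi\big((a^*x^*)^*\big)=\pi(x)\pi(a)$ --- I conclude that $\pi$ is block-diagonal on $A\oplus B$ and carries $V$ into $\mathcal B(L_2,L_1)$. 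Setting $\rho_A=\pi|_A$, $\rho_B=\pi|_B$ (honest $*$-representations on $L_1$ and $L_2$) and $\rho_V=\pi|_V$ (completely contractive), the bimodule relation for $\rho_V$ comes from multiplicativity of $\pi$ on $\left(\begin{matrix}0&avb\\ 0&0\end{matrix}\right)=(a\oplus 0)\left(\begin{matrix}0&v\\ 0&0\end{matrix}\right)(0\oplus b)$, and non-degeneracy of $\rho$ follows from that of $\pi$ since $A\oplus B$ contains an approximate unit of $X(A,V,B)$. Checking that $\rho$ recovers $\pi$ through the matrix formula shows the two assignments are inverse.

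Finally, the correspondence with (3) is obtained by restriction: given $\rho$ as in (1), the ccp map $\pi$ restricted to the operator subalgebra $\Op(A,V,B)\subseteq X(A,V,B)$ is completely contractive and, being the restriction of a bimodule morphism, multiplicative; conversely, a non-degenerate completely contractive homomorphism $\sigma$ of $\Op(A,V,B)$ restricts on the $C^*$-subalgebra $A\oplus B$ to a contractive homomorphism, hence a $*$-homomorphism, which after the decomposition $L=L_1\oplus L_2$ of the previous paragraph produces $\rho_A,\rho_B$; the restriction of $\sigma$ to the corner $V\subseteq\Op(A,V,B)$ (carrying the operator space structure of $V\subseteq\mathcal B(K,H)$) gives $\rho_V$; and the bimodule relation again follows from multiplicativity of $\sigma$ on $(a\oplus 0)\left(\begin{matrix}0&v\\ 0&0\end{matrix}\right)(0\oplus b)$. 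Verifying that all of these assignments are mutually inverse and match the stated formulas is routine two-by-two matrix bookkeeping; the one genuine obstacle in the whole argument is the complete positivity claim in the passage from (1) to (2), i.e.\ the operator-bimodule version of Paulsen's lemma outlined above.
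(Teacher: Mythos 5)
Your proof is correct and its essential content --- the Paulsen off-diagonal argument (regularizing by $\varepsilon$, extracting the contraction $c_\varepsilon=(a+\varepsilon1)^{-1/2}v(b+\varepsilon1)^{-1/2}$, and factoring $\pi_n(P+\varepsilon1)$ through a positive $2\times2$ matrix) for the complete positivity of $\pi$, together with the fact that a contractive homomorphism of a $C^*$-algebra is a $*$-homomorphism for the passage back from $\Op(A,V,B)$ --- coincides with the paper's. The only difference is organizational: the paper closes the correspondence as a cycle $(1)\to(2)\to(3)\to(1)$, whereas you prove $(1)\leftrightarrow(2)$ and $(1)\leftrightarrow(3)$ directly, which costs you an extra (correct, and not unwelcome) argument for $(2)\to(1)$ via the corner projections $e_1,e_2$ in the multiplier system that the paper avoids by routing through $\Op(A,V,B)$.
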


\begin{proof} Let $\rho=(\rho_A,\rho_V,\rho_B)$ be a completely contractive representation of $(A,V,B)$ into $\mathcal B(K',H')$.
We may further assume without loss of generality that $A$ and $B$ are unital -- otherwise we 
replace $A$ and $B$ by their unitisations $\tilde{A}=A+\C 1_H$ and $\tilde{B}=B+ \C1_K$ and $\rho_A$ and $\rho_B$ by their canonical 
unital extensions to $\tilde{A}$ and $\tilde{B}$, respectively. 

Suppose now that $T:=\left(\begin{matrix} a&v\\ w^*& b\end{matrix}\right)\in X(A,V,B)$ is positive. 
Then $w=v$ and $a$ and $b$ are 
positive elements in $A$ and $B$, respectively. Let $\pi$ be as in the proposition. In order to see that $\pi(T)$ is positive, it suffices to show that 
$\pi(T+\eps 1)=\pi(T)+\eps 1$ is positive for all $\eps>0$. Writing $a_\eps:=a+\eps1$ and $b_\eps:=b+\eps1$, we get
\begin{align*}
0&\leq \left(\begin{matrix} a_\eps^{-\frac{1}{2}}&0\\ 0& b_\eps^{-\frac{1}{2}}\end{matrix}\right)\left(\begin{matrix} a_\eps&v\\ v^*& b_\eps\end{matrix}\right)\left(\begin{matrix} a_\eps^{-\frac{1}{2}}&0\\ 0& b_\eps^{-\frac{1}{2}}\end{matrix}\right)\\
&=\left(\begin{matrix} 1&a_\eps^{-\frac{1}{2}}vb_\eps^{-\frac{1}{2}}\\ b_\eps^{-\frac{1}{2}}v^*a_\eps^{-\frac{1}{2}}& 1\end{matrix}\right),
\end{align*}
from which it follows that $\|a_\eps^{-\frac{1}{2}}vb_\eps^{-\frac{1}{2}}\|\leq 1$. Since $\rho_V$ is completely contractive, we 
also have $\|\rho_V(a_\eps^{-\frac{1}{2}}vb_\eps^{-\frac{1}{2}})\|\leq 1$. It then follows that 
\begin{align*}
&\pi(T+\eps 1)\\
 &=\left(\begin{matrix} \rho_A(a_\eps^{\frac{1}{2}})&0\\ 0& \rho_B(b_\eps^{\frac{1}{2}})\end{matrix}\right)
\left(\begin{matrix} 1&\rho_V(a_\eps^{-\frac{1}{2}}vb_\eps^{-\frac{1}{2}})\\ \rho_V(b_\eps^{-\frac{1}{2}}v^*a_\eps^{-\frac{1}{2}})& 1\end{matrix}\right)
\left(\begin{matrix} \rho_A(a_\eps^{\frac{1}{2}})&0\\ 0& \rho_B(b_\eps^{\frac{1}{2}})\end{matrix}\right)
\end{align*}
is positive as well. A similar computation performed an matrix algebras over $X(A,B,V)$ then shows that 
$\pi:X(A,V,B)\to \mathcal B(H\oplus K)$ is completely positive. Since $\pi$ is unital, it is also completely contractive.

It is clear that every non-degenerate completely contractive representation of $X(A,B,V)$ restricts to a nondenerate  completely contractive operator algebra 
 representation of $\Op(A,V,B)$. 
 
 So let us finally assume that we have a non-degenerate completely contractive operator algebra representation 
 $\pi: \Op(A,V,B)\to \mathcal B(L)$ for some Hilbert space $L$. Let us regard $A, V$ and $B$ as subspaces of $\Op(A,V,B)$ in the 
 canonical way. Then the restrictions $\pi_A, \pi_V, \pi_B$ of $\pi$ to $A,V$ and $B$ are completely contractive as well.
 Since $\pi_A:A\to \mathcal B(L)$ and $\pi_B:B\to \mathcal B(L)$ are contractive algebra homomorphisms, it follows from 
 \cite[Proposition A.5.8]{Blecher1} that they are $*$-homomorphisms. Writing $H:=\pi_A(A)L$ and $K=\pi_B(B)L$ we get $L=H\oplus K$ 
 and $(\pi_A,\pi_V, \pi_B)$ is a non-degenerate representation of $(A,V,B)$ into $\mathcal B(H,K)$ as in Definition \ref{def-C*-operatorbimodule}.
\end{proof}

\begin{remark}\label{rem-nondeg}
If we allow possibly degenerate representations of $(A,V,B)$, $X(A,V,B)$ or $\Op(A,V,B)$ in the statement of Proposition \ref{prop-reps} then 
we can always pass to appropriate subspaces of the representation  spaces to make these representations non-degenerate.
Then the one-to-one correspondence will still hold modulo the possible addition of direct sums on which all operators act trivially.
\end{remark}

As a direct consequence of Proposition \ref{prop-reps} we now get

\begin{corollary}\label{cor-morphisms}
Suppose that $(A,V,B)$ and $(C,W,D)$ are $C^*$-operator bimodules. Then there is a one-to-one correspondence between
\begin{enumerate}
\item completely contractive morphism  $\varphi:(A,V,B)\to (C,W,D)$,
\item completely positiv contractive morphism $\phi:X(A,V,B)\to X(C,W,D)$ preserving the corners, and
\item complete contractive homomorphisms $\psi:\Op(A,V,B)\to\Op(C,W,D)$ preserving the corners.
\end{enumerate}
If $\varphi=(\varphi_A,\varphi_V,\varphi_B)$ is as morphism from $(A,V,B)$ to $(C,W,D)$
then the corresponding morphism $\phi:X(A,V,B)\to X(C,W,D)$ is given by 
$$\phi\Big(\left(\begin{matrix} a&v\\ w^*& b\end{matrix}\right)\Big)= \left(\begin{matrix} \varphi_A(a)& \varphi_V(v)\\ \varphi_V(w)^*& \varphi_B(b)\end{matrix}\right)\quad  a\in A, v,w\in V, b\in B,$$
and if $\phi:X(A,V,B)\to X(C,W,D)$ is a morphism as in (2), then its restriction $\psi$ to $\Op(A,V,B)$ is the corresponding morphism 
from $\Op(A,V,B)$ to $\Op(C,W,D)$.

The correspondences are compatible with taking compositions of morphisms.
\end{corollary}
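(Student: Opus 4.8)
The plan is to deduce the corollary directly from Proposition \ref{prop-reps} by observing that a morphism is nothing but a representation whose image is constrained to lie in the target, and that the constraints on the three sides of the correspondence match up under the explicit block-matrix formulas. So first I would fix the data: choose a completely isometric non-degenerate representation of the target $C^*$-operator bimodule $(C,W,D)$ on a pair of Hilbert spaces $(K',H')$, so that $X(C,W,D)\subseteq \mathcal B(H'\oplus K')$ and $\Op(C,W,D)\subseteq \mathcal B(H'\oplus K')$ become concrete.

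A completely contractive morphism $\varphi=(\varphi_A,\varphi_V,\varphi_B)$ from $(A,V,B)$ to $(C,W,D)$ is then in particular a completely contractive representation of $(A,V,B)$ on $\mathcal B(K',H')$, so Proposition \ref{prop-reps} (together with Remark \ref{rem-nondeg} to absorb the possibly degenerate case by adding a trivial summand) produces the associated completely positive contractive representation $\pi$ of $X(A,V,B)$ and the associated completely contractive operator-algebra representation of $\Op(A,V,B)$, given by the block formulas stated in that proposition. Since $\varphi_A(A)\subseteq C$, $\varphi_V(V)\subseteq W$ and $\varphi_B(B)\subseteq D$, the formula
$$\pi\Big(\left(\begin{matrix} a&v\\ w^*& b\end{matrix}\right)\Big)= \left(\begin{matrix} \varphi_A(a)& \varphi_V(v)\\ \varphi_V(w)^*& \varphi_B(b)\end{matrix}\right)$$
shows that $\pi$ maps $X(A,V,B)$ into $X(C,W,D)$ and carries the $A$-, $V$- and $B$-corners into the $C$-, $W$- and $D$-corners respectively; hence $\pi$ is a corner-preserving completely positive contractive morphism $\phi: X(A,V,B)\to X(C,W,D)$, and its restriction $\psi$ to $\Op(A,V,B)$ is a corner-preserving completely contractive homomorphism into $\Op(C,W,D)$.

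For the converse I would start from a corner-preserving completely positive contractive morphism $\phi: X(A,V,B)\to X(C,W,D)$. Composing with the chosen faithful representation of the target, $\phi$ becomes a completely positive contractive representation of the $C^*$-operator system $X(A,V,B)$ on $\mathcal B(H'\oplus K')$, and Proposition \ref{prop-reps} (again with Remark \ref{rem-nondeg}) identifies it with $\pi$ for a unique representation $\rho=(\rho_A,\rho_V,\rho_B)$ of $(A,V,B)$. Reading off the diagonal entries and the upper-right corner of $\phi$ on the corresponding corners of $X(A,V,B)$, the corner-preserving hypothesis is exactly the statement that $\rho_A(A)\subseteq C$, $\rho_V(V)\subseteq W$ and $\rho_B(B)\subseteq D$; since $\rho_V$ is automatically completely contractive (being the restriction of a completely contractive map) and $\rho_A,\rho_B$ are $*$-homomorphisms (being restrictions of a $C^*$-operator system morphism to the subalgebra $A\oplus B$), $\rho$ is a completely contractive morphism $(A,V,B)\to(C,W,D)$. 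The case of $\Op(A,V,B)$ is handled the same way using the operator-algebra part of Proposition \ref{prop-reps}. That the three assignments are mutually inverse is immediate, since in each direction they are given by the same block formula and by restriction/extension along $\Op\subseteq X$.

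Finally, compatibility with compositions follows by inspecting the block formulas: if $\varphi:(A,V,B)\to(C,W,D)$ and $\varphi':(C,W,D)\to(E,Z,F)$ are composable morphisms, then $(\varphi'\circ\varphi)_A=\varphi'_A\circ\varphi_A$ and similarly for the $V$- and $B$-components, so the block matrix assigned to $\varphi'\circ\varphi$ is the composite of the block matrices assigned to $\varphi$ and $\varphi'$, and likewise for the restrictions to $\Op$. The only genuine point of care is the bookkeeping around non-degeneracy, i.e. making sure Proposition \ref{prop-reps} applies to morphisms that need not be non-degenerate; this is precisely what Remark \ref{rem-nondeg} takes care of, and is the mildest of obstacles rather than a real difficulty.
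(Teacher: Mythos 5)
Your proposal is correct and follows essentially the same route as the paper, which likewise disposes of the corollary by combining Proposition \ref{prop-reps} with Remark \ref{rem-nondeg} and noting that the block-matrix formulas preserve the corner conditions and compositions; you have merely written out the details that the paper leaves implicit.
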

\begin{proof} It is clear that the constructions given above preserve all required algebraic properties. The combination of 
Proposition \ref{prop-reps} with Remark \ref{rem-nondeg} shows that they also preserve the property of being completely contractive.
\end{proof} 

\begin{remark}\label{rem-C*-hull-bimodule}
In what follows it is useful to consider representations of $C^*$-operator bimodules into general $C^*$-algebras.
By such a representation we understand a triple $\rho=(\rho_A,\rho_V,\rho_B)$ of $(A,V,B)$ into a 
$C^*$-algebra $C$ satisfying
\begin{enumerate}
\item $\rho_A:A\to C$ and $\rho_B:B\to C$ are $*$-homomorphisms such that $\rho_A(A) \rho_B(B)=\{0\}$,
\item $\rho_V:V\to C$ is completely contractive and $\rho_V(bva)=\rho_B(b)\rho_V(v)\rho_A(a)$ for 
all $a\in A, v\in V$ and $b\in B$.
\end{enumerate}
Then we have a well-defined $*$-homomorphism $\rho_A\oplus \rho_B:A\oplus B\to C$ mapping $a\oplus b$ to $\rho_A(a)+\rho_B(b)$
and we say that $\rho$ is non-degenerate if $\rho_A\oplus \rho_B$ maps approximate units of $A\oplus B$ to approximate units of $C$
(this is equivalent to the fact that $\rho_A\oplus\rho_B(A\oplus B)C=C$). In general we may always pass to the subalgebra $C'$ of $C$ generated 
 by $\rho_A(A)\cup\rho_V(V)\cup\rho_B(B)$ to obtain a non-degenerate representation into this subalgebra. 
Then, representing $C$ (resp. $C'$) faithfully and non-degenerately on a Hilbert space $L$, we may regard $\rho$ as a non-degenerate
representation of $(A,V,B)$ in $\mathcal B(K,H)$ with $H=\rho_A(A)L, K=\rho_B(B)L$. 
This allows us  to use the results of Proposition \ref{prop-reps} also for representations into $C^*$-algebras.

Note that conversely any triple of subsets $(A,V,B)$ of a $C^*$-algebra $C$ such that $A$ and $B$ are $C^*$-subalgebras of $C$, 
$V$ is a closed subspace of $C$, $AB=\{0\}$ and  $AV=V=VB$ determines the  structure of a $C^*$-operator bimodule  on $(A,V,B)$ 
via a faithful representation of $C$ on Hilbert space. 
\end{remark}

\begin{definition}\label{def-C*-hull}
Let $(A,V,B)$ be a $C^*$-operator bimodule.
 If $j=(j_A,j_V, j_B)$ is a completely isometric representation of $(A,V,B)$ into a $C^*$-algebra $C$ such that 
$C$ is generated  by $j_A(A)\cup j_V(V)\cup j_B(B)$ as a $C^*$-algebra, then $\big(C, (j_A,j_V,j_B)\big)$
is called a {\em $C^*$-hull} of $(A,V,B)$. 

A $C^*$-hull  $\big(C_u^*(A,V,B), (i_A,i_V,i_B)\big)$ is called {\em universal}
if for any completely contractive representation $\rho=(\rho_A, \rho_V,\rho_B)$ of $(A,V,B)$ into some $C^*$-algebra $D$
there  exists a $*$-homomorphism 
$$\rho_C:C_u^*(A,V,B)\to D\; \text{such that} \; \rho_A=\rho_C\circ i_A, \;\rho_V=\rho_C\circ i_V,\;\text{and}\; \rho_B=\rho_C\circ i_B.$$
On the other hand, we say that a $C^*$-hull $\big(C_e^*(A,V,B), (k_A,k_V,k_B)\big)$ is  {\em enveloping}  if 
for any other $C^*$-hull $\big(C, (j_A, j_V, j_B)\big)$ there exists a $*$-homomor\-phism
$$k_C:C\to C_e^*(A,V,B)\;\text{such that}\; k_A=k_C\circ j_A, k_V=k_C\circ j_V,\;\text{and}\; k_B=k_C\circ j_B.$$
(The $*$-homomorphisms $\rho_C$ and $k_C$ are then uniquely determined by these properties.) 
\end{definition}

As a consequence, if the universal and the enveloping $C^*$-hulls exist, then for any $C^*$-hull $\big(C, (j_A, j_V, j_B)\big)$ of $(A,V,B)$
we obtain unique surjective $*$-homomorphisms
$$C_u^*(A,V,B)\onto C\onto C_e^*(A,V,B)$$
which commute with the embeddings of $(A, V,B)$ into these $C^*$-algebras. Moreover, it follows easily from the universal properties that
the universal and enveloping $C^*$-hulls are unique up to isomorphism which are compatible with the embeddings of $(A,V,B)$.

\begin{proposition}\label{prop-C*-hull-bimodule}
For each $C^*$-operator bimodule $(A,V,B)$ the universal and enveloping $C^*$-hulls exist.  To be more
precise: let 
$$\big(C_u^*(X(A,V,B)), i_{X(A,V,B)}\big)\quad\text{and}\quad \big(C_e^*(X(A,V,B)), k_{X(A,V,B)}\big)$$
 denote 
the universal and enveloping $C^*$-hulls of the $C^*$-operator system $X(A,V,B)$ and let
$(i_A,i_V,i_B)$ and $(k_A, k_V, k_B)$ be the compositions of $i_{X(A,V.B)}$ and $k_{X(A,V,B)}$ with the 
canonical inclusions of $(A,V,B)$ into $X(A,V,B)$. Then 
$$\big(C_u^*(X(A,V,B)),(i_A,i_V,i_B)\big)\quad\text{and}\quad \big(C_e^*(X(A,V,B)), (k_A, k_V, k_B)\big)$$
are the universal and enveloping $C^*$-hulls of $(A,V,B)$.

Alternatively, let 
$$\big(C_u^*(\Op(A,V,B)), i_{\Op(A,V,B)}\big)\quad\text{and}\quad\big(C_e^*(\Op(A,V,B)), k_{\Op(A,V,B)}\big)$$ denote 
the universal and enveloping $C^*$-hulls of the operator algebra \linebreak
$\Op(A,V,B)$ as in \cite[Propositions 2.4.2 and 4.3.5]{Blecher1} and let
$(i_A,i_V,i_B)$ and $(k_A, k_V, k_B)$ be the compositions of $i_{\Op(A,V.B)}$ and $k_{\Op(A,V,B)}$ with the 
canonical inclusions of $(A,V,B)$ into $\Op(A,V,B)$. Then 
$$\big(C_u^*(\Op(A,V,B)),(i_A,i_V,i_B)\big)\quad\text{and}\quad\big(C_e^*(\Op(A,V,B)), (k_A, k_V, k_B)\big)$$
are the universal and enveloping $C^*$-hulls of $(A,V,B)$.
\end{proposition}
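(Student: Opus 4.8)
The plan is to transport everything through the Paulsen $C^*$-operator system $X(A,V,B)$ (the case of $\Op(A,V,B)$ being entirely parallel, using the operator-algebra clauses of the relevant results). Throughout we identify $A$, $V$, $B$ with the corners of $X(A,V,B)$, so that the maps $(i_A,i_V,i_B)$ and $(k_A,k_V,k_B)$ in the statement are literally the restrictions of $i_{X(A,V,B)}$ and $k_{X(A,V,B)}$ to these corners. The first (routine) step is to check that $\big(C_u^*(X(A,V,B)),(i_A,i_V,i_B)\big)$ and $\big(C_e^*(X(A,V,B)),(k_A,k_V,k_B)\big)$ are themselves $C^*$-hulls of $(A,V,B)$: since $i_{X(A,V,B)}$ and $k_{X(A,V,B)}$ are completely isometric, their restrictions to the corners are faithful on $A$ and $B$ and completely isometric on $V$; the relations $i_V(avb)=i_A(a)i_V(v)i_B(b)$ follow from the multiplicativity of a $C^*$-hull embedding over the $C^*$-subalgebra $A\oplus B$ together with the adjoint identity $j(x^*)=j(x)^*$ for right multiplication; and both $C^*$-algebras are generated by the images of the corners because $X(A,V,B)$ is linearly spanned by its four corners.

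For the universal property, let $\rho=(\rho_A,\rho_V,\rho_B)$ be an arbitrary completely contractive representation of $(A,V,B)$ into a $C^*$-algebra $D$. Passing to the $C^*$-subalgebra $C'\subseteq D$ generated by the images, we may assume (as in Remark \ref{rem-C*-hull-bimodule}) that $\rho$ is non-degenerate, and then represent $C'$ faithfully and non-degenerately on a Hilbert space $L=H'\oplus K'$ with $H'=\rho_A(A)L$, $K'=\rho_B(B)L$. By Proposition \ref{prop-reps}, $\rho$ corresponds to a non-degenerate ccp representation $\pi_\rho$ of $X(A,V,B)$ on $L$, whose image lies in $C'$ and generates it as a $C^*$-algebra. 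By Lemma \ref{lem-universal} there is a unique $*$-homomorphism $C_u^*(X(A,V,B))\to C'$ extending $\pi_\rho$; restricting it to the corners and composing with the inclusion $C'\hookrightarrow D$ yields the required $\rho_C$ with $\rho_C\circ i_A=\rho_A$, $\rho_C\circ i_V=\rho_V$, $\rho_C\circ i_B=\rho_B$. This is exactly the universal property of Definition \ref{def-C*-hull}.

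For the enveloping property, let $\big(C,(j_A,j_V,j_B)\big)$ be any $C^*$-hull of $(A,V,B)$. Since $C$ is generated by the images of the corners, the approximate units of $A\oplus B$ map onto approximate units of $C$ (as in the proof of Lemma \ref{lem mult univ}), so this hull is non-degenerate in the sense of Remark \ref{rem-C*-hull-bimodule}; representing $C$ faithfully and non-degenerately on $L=H'\oplus K'$ we may regard $j$ as a completely isometric isomorphism of $(A,V,B)$ onto the $C^*$-operator bimodule $\big(j_A(A),j_V(V),j_B(B)\big)$. By Corollary \ref{cor-morphisms} the induced morphism $\pi_j\colon X(A,V,B)\to X\big(j_A(A),j_V(V),j_B(B)\big)$ is then an isomorphism of $C^*$-operator systems, in particular completely isometric; and since $X\big(j_A(A),j_V(V),j_B(B)\big)$ generates $C$, the pair $(C,\pi_j)$ is a $C^*$-hull of the operator system $X(A,V,B)$. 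By the universal property of $C_e^*(X(A,V,B))$ recalled in Section \ref{sec-univ-env} there is a surjective $*$-homomorphism $\varphi_{\env}\colon C\to C_e^*(X(A,V,B))$ with $\varphi_{\env}\circ\pi_j=k_{X(A,V,B)}$; restricting to the corners gives $\varphi_{\env}\circ j_A=k_A$, $\varphi_{\env}\circ j_V=k_V$, $\varphi_{\env}\circ j_B=k_B$, which is the enveloping property.

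The alternative description via $\Op(A,V,B)$ is obtained by running the same two arguments, replacing Proposition \ref{prop-reps} and Corollary \ref{cor-morphisms} by their operator-algebra clauses and replacing the universal/enveloping $C^*$-hulls of $X(A,V,B)$ by the universal $C^*$-algebra and the $C^*$-envelope of the operator algebra $\Op(A,V,B)$ from \cite[Propositions 2.4.2 and 4.3.5]{Blecher1} (this uses that $\Op(A,V,B)$ admits a contractive approximate identity). The one point that requires genuine care — and hence the main obstacle — is the transfer of \emph{complete isometry} under the Paulsen-system, resp.\ Paulsen-algebra, correspondence, i.e.\ the assertion that a $C^*$-hull of $(A,V,B)$ induces a $C^*$-hull of $X(A,V,B)$, resp.\ of $\Op(A,V,B)$; this is precisely what Corollary \ref{cor-morphisms} supplies, by viewing a completely isometric representation as an isomorphism onto its image. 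Finally, uniqueness of the universal and enveloping $C^*$-hulls up to the obvious isomorphisms follows formally from the two universal properties, exactly as recorded after Definition \ref{def-C*-hull}.
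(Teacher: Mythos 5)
Your proof is correct and follows exactly the route the paper indicates: the authors omit the details, stating only that the result is "an easy consequence of the definitions of the universal and enveloping $C^*$-hulls together with Proposition \ref{prop-reps} and Remark \ref{rem-C*-hull-bimodule}," and your argument is precisely a careful fleshing-out of that outline (using the representation correspondence to transfer the universal property and Corollary \ref{cor-morphisms} to transfer complete isometry for the enveloping property).
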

\begin{proof}
The proof is an easy consequence of the definitions of the universal and enveloping $C^*$-hulls together 
Proposition \ref{prop-reps} and Remark \ref{rem-C*-hull-bimodule}. So we omit further details.
\end{proof}

We are now turning our attention to multipliers:

\begin{definition}\label{def-multopbimodule}
Let $(A,V,B)$ be a $C^*$-operator bimodule which is non-degenerately and completely isometrically 
represented on $\mathcal B(K,H)$.
Then the {\em multiplier bimodule} of $(A,V,B)$ is   the triple $\big(M(A), M(V), M(B)\big)$ in which
$M(A)$ and $M(B)$ are the multiplier algebras of the $C^*$-algebras $A$ and $B$, respectively, and where
$$M(V)=\{T\in \mathcal B(K,H): AT\cup TB\subseteq V\}.$$
\end{definition}
\begin{remark} One easily checks that $\big(M(A), M(V), M(B)\big)$ is again a $C^*$-operator bimodule represented 
on $\mathcal B(K,H)$. If one of $A$ or $B$ is unital, we clearly have $M(V)=V$.

Notice that, similarly to the construction of the multiplier $C^*$-operator system $(M(A), M(X))$ for a given 
$C^*$-operator system $(A,X)$, the space $M(V)$ heavily depends on the algebras $A$ and $B$. 
We retain from using a notation like $_AM_B(V)$ to keep things simple. 
\end{remark}

Recall (e.g., from \cite{Blecher1}) that for any completely isometrically and faithfully represented operator algebra $\mathcal A\subseteq \mathcal B(L)$ 
the multiplier algebra $M(\mathcal A)$ can be defined (up to completely isometric isomorphism) as 
$$M(\mathcal A)=\{T\in \mathcal B(L): T\mathcal A\cup \mathcal AT\subseteq \mathcal A\}.$$
Recall also the definition of the multiplier system of a $C^*$-operator system as given in Lemma \ref{lem-multipliers}.

\begin{proposition}\label{prop-multiplier}
Let $\big(M(A), M(V), M(B)\big)$ be the muliplier bimodule of the $C^*$-operator bimodule $(A,V,B)$ in $\mathcal B(K,H)$.
Then
$$ \big(M(A\oplus B), M(X(A,V,B))\big) =\big(M(A)\oplus M(B), X\big(M(A), M(V), M(B)\big)\big)$$
is the multiplier system 
of the $C^*$-operator system
$\big(A\oplus B, X(A,V,B)\big)$ and $\Op\big(M(A), M(V), M(B)\big)=M\big(\Op(A,V,B)\big)$.
\end{proposition}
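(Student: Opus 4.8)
The plan is to verify both asserted identities by unwinding the definitions and using the $2\times 2$ matrix structure of everything in sight. Throughout I regard $(A,V,B)$ as concretely and nondegenerately represented in $\mathcal B(K,H)$, so that $A\oplus B$ and $X(A,V,B)$ act on $H\oplus K$, and all multiplier algebras/spaces/systems are the concrete realisations given by Lemma \ref{lem-multipliers} and Definitions \ref{def-multopbimodule} and the discussion preceding the proposition. The key observation is that an operator on $H\oplus K$ is a $2\times 2$ matrix $T=\left(\begin{smallmatrix} t_{11}&t_{12}\\ t_{21}&t_{22}\end{smallmatrix}\right)$ with $t_{11}\in\mathcal B(H)$, $t_{12}\in\mathcal B(K,H)$, $t_{21}\in\mathcal B(H,K)$, $t_{22}\in\mathcal B(K)$, and that multiplying such a matrix on the left or right by a diagonal operator $\left(\begin{smallmatrix} a&0\\0&b\end{smallmatrix}\right)\in A\oplus B$ is computed entrywise.

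\medskip
\textbf{Step 1: the multiplier system identity.} By definition, $M(X(A,V,B))=\{T\in\mathcal B(H\oplus K): T(A\oplus B)\cup (A\oplus B)T\subseteq X(A,V,B)\}$. Writing $T$ as above and computing $\left(\begin{smallmatrix} a&0\\0&b\end{smallmatrix}\right)T$ and $T\left(\begin{smallmatrix} a&0\\0&b\end{smallmatrix}\right)$ entrywise, the condition $T(A\oplus B)\cup(A\oplus B)T\subseteq X(A,V,B)$ becomes the four separate conditions: $a t_{11}, t_{11}a\in A$ for all $a\in A$ (i.e.\ $t_{11}\in M(A)$); $b t_{22}, t_{22}b\in B$ for all $b\in B$ (i.e.\ $t_{22}\in M(B)$); $a t_{12}, t_{12}b\in V$ for all $a\in A, b\in B$ (i.e.\ $t_{12}\in M(V)$); and $b t_{21}, t_{21}a\in V^*$ for all $a\in A, b\in B$, i.e.\ $t_{21}^*\in M(V)$. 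Hence $M(X(A,V,B))=\left\{\left(\begin{smallmatrix} m_{11}&m_{12}\\ m_{21}^*&m_{22}\end{smallmatrix}\right): m_{11}\in M(A), m_{22}\in M(B), m_{12},m_{21}\in M(V)\right\}=X(M(A),M(V),M(B))$, and the off-diagonal $C^*$-algebra of this system, namely the diagonal multipliers, is exactly $M(A)\oplus M(B)$. The only point requiring a word of care is self-adjointness of $M(V)$ under $T\mapsto T^*$, which is automatic since $A$ and $B$ are self-adjoint; this is what guarantees that $t_{12}\in M(V)\iff t_{21}^*\in M(V)$ matches the definition of $X(M(A),M(V),M(B))$. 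Uniqueness of the multiplier system (as the largest unitization) then identifies this with $(M(A\oplus B), M(X(A,V,B)))$.

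\medskip
\textbf{Step 2: the operator algebra identity.} Here $M(\Op(A,V,B))=\{T\in\mathcal B(H\oplus K): T\,\Op(A,V,B)\cup \Op(A,V,B)\,T\subseteq \Op(A,V,B)\}$, where $\Op(A,V,B)$ consists of the upper-triangular operators $\left(\begin{smallmatrix} a&v\\0&b\end{smallmatrix}\right)$. The main subtlety is that elements of $\Op(A,V,B)$ are not diagonal, so one must argue that a multiplier $T$ is itself upper-triangular. For this I would use that $A\oplus B$ provides an approximate unit $(e_i)$ for $\Op(A,V,B)$ (noted right before Definition \ref{def-Paulsen}): for $T\in M(\Op(A,V,B))$ and any $\xi$ in the $K$-summand, $Te_i\to T$ strictly, $e_i$ is diagonal with $B$-entry an approximate unit of $B$, so $T e_i\in\Op(A,V,B)$ has zero $(2,1)$-entry, forcing the $(2,1)$-entry of $T$ to be $0$ in the strict limit; thus $T$ is upper-triangular. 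Writing $T=\left(\begin{smallmatrix} t_{11}&t_{12}\\0&t_{22}\end{smallmatrix}\right)$ and imposing $T\left(\begin{smallmatrix} a&v\\0&b\end{smallmatrix}\right), \left(\begin{smallmatrix} a&v\\0&b\end{smallmatrix}\right)T\in\Op(A,V,B)$ entrywise (and using $AV=V=VB$ together with the approximate-unit trick to isolate each matrix entry) yields precisely $t_{11}\in M(A)$, $t_{22}\in M(B)$, $t_{12}\in M(V)$, i.e.\ $T\in\Op(M(A),M(V),M(B))$; the reverse inclusion is a direct entrywise check.

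\medskip
\textbf{Main obstacle.} The routine bookkeeping in Step 1 is genuinely mechanical. The one place that needs real care is Step 2: establishing that every multiplier of $\Op(A,V,B)$ is upper-triangular, and more generally separating the constraints entrywise when the test elements are not diagonal. The approximate-unit argument described above (using that $A\oplus B\subseteq\Op(A,V,B)$ is an approximate unit and multiplying by diagonal contractions $\left(\begin{smallmatrix}e_i&0\\0&0\end{smallmatrix}\right)$ and $\left(\begin{smallmatrix}0&0\\0&f_j\end{smallmatrix}\right)$ to project onto corners) is the technical heart, and I would present that step carefully while leaving the remaining entrywise verifications to the reader, as the proposition's proof in the paper already indicates.
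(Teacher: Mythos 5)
Your proposal is correct and follows essentially the same route as the paper's proof: decompose $T\in\mathcal B(H\oplus K)$ as a $2\times 2$ matrix, test against the diagonal elements $\diag(a,0)$ and $\diag(0,b)$, and read off the entrywise conditions $t_{11}\in M(A)$, $t_{22}\in M(B)$, $t_{12}, t_{21}^*\in M(V)$. The paper dispatches the $\Op$-identity with ``a similar argument''; your approximate-unit argument showing that a multiplier of $\Op(A,V,B)$ must itself be upper-triangular is a correct and welcome elaboration of exactly that omitted step.
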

\begin{proof} Recall from Lemma \ref{lem-multipliers} that $M(X(A,V,B))$ is defined as the set of all  elements 
$T\in \mathcal B(H\oplus K)$ such that $(A\oplus B)T\cup T(A\oplus B)\subseteq X(A,V,B)$.
Writing 
 $$T=\left(\begin{matrix}T_{11} & T_{12}\\ T_{21}& T_{22}\end{matrix}\right)\in \left(\begin{matrix}\mathcal B(H)& \mathcal B(K,H)\\
 \mathcal B(H,K)&  \mathcal B(K)\end{matrix}\right)$$
 and  computing  $\diag(a,0)T, T\diag(a,0), \diag(0,b)T, T\diag(0,b)\in X(A,V, B)$ easily shows that
$T\in \left(\begin{matrix} M(A)&M(V)\\ M(V)^*&M(B)\end{matrix}\right)=X\big(M(A), M(V), M(B)\big)$. 
Conversely one easily checks that $X\big(M(A), M(V), M(B)\big)\subseteq M(X(A,V,B))$.
A similar argument also shows $\Op\big(M(A), M(V), M(B)\big)=M\big(\Op(A,V,B)\big)$.
\end{proof}

\begin{definition}\label{def-genmor}
Let $(A,V,B)$ and $(C,W,D)$ be two $C^*$-operator bimodules. A {\em generalized morphism} from 
$(A,V,B)$ to $(C,W,D)$ is a completely contractive morphism 
$\varphi:(A,V,B)\to (M(C), M(W), M(D))$ such that 
$\varphi_A(A)C=C$ and $\varphi_B(B)D=D$.
\end{definition}

\begin{example}
Every non-degenerate representation $\rho$ of $(A,V,B)$ to some $\mathcal B(K,H)$ can be regarded as a 
generalized morphism from $(A,V,B)$ to the $C^*$-operator bimodule 
$\Big(M(\mathcal K(H)), M(\mathcal K(K,H)), M(\mathcal K(K))\Big)$.
\end{example}

The following proposition is now a direct combination of Proposition \ref{prop-reps}, Proposition \ref{prop-multiplier}
and Lemma \ref{lem-non-deg-morphism}, so we leave the details to the reader:

\begin{proposition}\label{prop-genmorextend}
Every generalized morphism 
$$\varphi:(A,V,B)\to (M(C), M(W), M(D))$$
 from $(A,V,B)$ to $(C,W,D)$ 
extends uniquely to a morphism 
$$\tilde\varphi:(M(A), M(V), M(B))\to (M(C), M(W), M(D)).$$
If $\varphi$ is completely isometric, then so is $\tilde\varphi$. 
In particular, every non-degenerate (completely isometric) representation $\rho$ of $(A,V,B)$ into $\mathcal B(K,H)$ uniquely extends 
to a (completely isometric) representation of $\big(M(A), M(V), M(B)\big)$ to $\mathcal B(K,H)$. 
\end{proposition}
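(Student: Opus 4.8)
The plan is to reduce the statement to its counterpart for $C^*$-operator systems, namely Lemma~\ref{lem-non-deg-morphism}, by passing through the Paulsen construction $(A,V,B)\mapsto(A\oplus B,X(A,V,B))$. So, given a generalized morphism $\varphi=(\varphi_A,\varphi_V,\varphi_B)$ from $(A,V,B)$ to $(C,W,D)$, I would first apply Corollary~\ref{cor-morphisms} to obtain the associated completely positive contractive corner-preserving morphism $\phi:X(A,V,B)\to X\big(M(C),M(V),M(D)\big)$, where by Proposition~\ref{prop-multiplier} the target equals the multiplier system $M\big(X(C,W,D)\big)$. The hypotheses $\varphi_A(A)C=C$ and $\varphi_B(B)D=D$ in the definition of a generalized morphism of bimodules translate, since the diagonal restriction of $\phi$ is $\varphi_A\oplus\varphi_B$, into $\phi(A\oplus B)(C\oplus D)=\varphi_A(A)C\oplus\varphi_B(B)D=C\oplus D$. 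Thus $\phi$ is a non-degenerate generalized morphism of $C^*$-operator systems from $(A\oplus B,X(A,V,B))$ to $(C\oplus D,X(C,W,D))$ in the sense of Definition~\ref{def-nondeg-morphism}.

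Next I would feed $\phi$ into Lemma~\ref{lem-non-deg-morphism}, which yields a unique extension $\bar\phi:M\big(X(A,V,B)\big)\to M\big(X(C,W,D)\big)$ as a morphism of the multiplier systems; invoking Proposition~\ref{prop-multiplier} again, this is precisely a morphism $\bar\phi:X\big(M(A),M(V),M(B)\big)\to X\big(M(C),M(W),M(D)\big)$. To turn $\bar\phi$ back into a morphism $\tilde\varphi=(\tilde\varphi_A,\tilde\varphi_V,\tilde\varphi_B)$ of the multiplier bimodules via Corollary~\ref{cor-morphisms}, one must check that $\bar\phi$ preserves the corners, and this is the one point that needs a genuine (though short) argument. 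Writing $p:=1_{M(A)}\oplus 0$ and $q:=0\oplus 1_{M(B)}$ for the diagonal projections in $M(A\oplus B)=M(A)\oplus M(B)$, the diagonal restriction of $\bar\phi$ is the unique non-degenerate extension of the $*$-homomorphism $\varphi_A\oplus\varphi_B$, so it sends $p$ and $q$ to the analogous projections over $(C,W,D)$; since a morphism $\psi$ of $C^*$-operator systems satisfies $\psi(ax)=\psi(a)\psi(x)$ and, by self-adjointness, $\psi(xa)=\psi(x)\psi(a)$, and since every element $k$ of the $(1,2)$-corner $M(V)$ satisfies $k=pkq$, it follows that $\bar\phi(k)$ lies in the $(1,2)$-corner $M(W)$, and similarly $\bar\phi$ maps $M(V)^*$ into $M(W)^*$. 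Because $\bar\phi$ restricts to $\phi$ on $X(A,V,B)$, the resulting $\tilde\varphi$ restricts to $\varphi$ on $(A,V,B)$.

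Uniqueness of $\tilde\varphi$ then follows from the uniqueness of $\bar\phi$ in Lemma~\ref{lem-non-deg-morphism} together with the bijectivity of the correspondence in Corollary~\ref{cor-morphisms}. For the complete-isometry assertion I would argue that if $\varphi$ is completely isometric, then $\varphi$ is an isomorphism of $(A,V,B)$ onto the $C^*$-operator bimodule $(\varphi_A(A),\varphi_V(V),\varphi_B(B))$, hence (by Corollary~\ref{cor-morphisms} and the fact that invertible morphisms are exactly the surjective completely isometric ones) $\phi$ is a completely positive completely isometric isomorphism of the associated Paulsen systems; Lemma~\ref{lem-non-deg-morphism} then makes $\bar\phi$ completely isometric, and hence so is $\tilde\varphi$. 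Finally, for the ``in particular'': a non-degenerate completely isometric representation $\rho$ of $(A,V,B)$ on $\mathcal B(K,H)$ is, by the Example preceding this proposition, a generalized morphism into $\big(\mathcal K(H),\mathcal K(K,H),\mathcal K(K)\big)$, and since $M(\mathcal K(H))=\mathcal B(H)$, $M(\mathcal K(K))=\mathcal B(K)$ and $M(\mathcal K(K,H))=\mathcal B(K,H)$, the first part applies verbatim to give the extension $\tilde\rho$, which is completely isometric when $\rho$ is. The main obstacle in the whole argument is genuinely modest: it is the verification that $\bar\phi$ is corner-preserving (the compatibility of the Paulsen correspondence with passing to multipliers being already supplied by Proposition~\ref{prop-multiplier}).
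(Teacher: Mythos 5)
Your proof is correct and follows essentially the same route as the paper, which simply states that the result is a direct combination of Proposition \ref{prop-reps} (equivalently Corollary \ref{cor-morphisms}), Proposition \ref{prop-multiplier}, and Lemma \ref{lem-non-deg-morphism} and leaves the details to the reader. You supply the one detail the paper omits, namely that the extension $\bar\phi$ is corner-preserving, and your argument for this (using the diagonal projections $p,q$ in $M(A\oplus B)$ and the module property of morphisms of multiplier systems) is sound; the only blemish is a typo where you write $X\big(M(C),M(V),M(D)\big)$ instead of $X\big(M(C),M(W),M(D)\big)$.
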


We close this section with the following analogue of Lemma \ref{lem mult univ}:

\begin{proposition}\label{prop-multuniv}
Let $\big(C, (j_A, j_V, j_B)\big)$ be a $C^*$-hull of $(A,V,B)$. Then 
the inclusions $j=(j_A, j_V, j_B): (A,V,B)\to C$ extend to a completely isometric morphism
$$\bar{j}:=(\bar{j}_{M(A)}, \bar{j}_{M(V)}, \bar{j}_{M(B)}): (M(A), M(V), M(B))\to M(C)$$
such that $\bar{j}_{M(A)}(M(A))\cap C=j_A(A)$ and $\bar{j}_{M(B)}(M(B))\cap C=j_B(B)$. 
\end{proposition}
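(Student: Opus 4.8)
The plan is to reduce everything to the already-established Lemma \ref{lem mult univ} for $C^*$-operator systems, using the Paulsen-system machinery of Proposition \ref{prop-reps}, Proposition \ref{prop-multiplier} and Corollary \ref{cor-morphisms}. First I would observe that, given a $C^*$-hull $\big(C, (j_A, j_V, j_B)\big)$ of $(A,V,B)$, the corresponding Paulsen-system map
$$\pi:X(A,V,B)\to C,\qquad \pi\Big(\left(\begin{matrix} a&v\\ w^*& b\end{matrix}\right)\Big)= \left(\begin{matrix} j_A(a)& j_V(v)\\ j_V(w)^*& j_B(b)\end{matrix}\right)$$
exhibits $C$ as a $C^*$-hull of the $C^*$-operator system $\big(A\oplus B, X(A,V,B)\big)$: it is completely isometric by Proposition \ref{prop-reps} (in its version for representations into $C^*$-algebras, Remark \ref{rem-C*-hull-bimodule}), it is multiplicative in the appropriate sense, and $C$ is generated by its image since $C$ is generated by $j_A(A)\cup j_V(V)\cup j_B(B)$. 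Strictly speaking $\pi$ lands in $M_2(C)$; here I would use the standard identification of $C$ with $M_2(C)$-as-a-$C^*$-hull (the corners $A\oplus B$ sitting diagonally), exactly as in the proof of Proposition \ref{prop-C*-hull-bimodule}, so that ``$C$'' in Lemma \ref{lem mult univ} really means the relevant generated $C^*$-algebra.

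Next I would apply Lemma \ref{lem mult univ} to this $C^*$-hull: the inclusion $\pi:\big(A\oplus B, X(A,V,B)\big)\to C$ extends uniquely to a completely isometric inclusion
$$\bar\pi:\big(M(A\oplus B), M(X(A,V,B))\big)\to M(C),$$
and moreover $\bar\pi\big(M(A\oplus B)\big)\cap C=\pi(A\oplus B)$. By Proposition \ref{prop-multiplier} the left-hand domain is precisely $\big(M(A)\oplus M(B),\, X(M(A),M(V),M(B))\big)$, so by Corollary \ref{cor-morphisms} (the corner-preserving correspondence between morphisms of $C^*$-operator bimodules and morphisms of their Paulsen systems) $\bar\pi$ corresponds to a completely isometric morphism
$$\bar{j}=(\bar j_{M(A)}, \bar j_{M(V)}, \bar j_{M(B)}):\big(M(A),M(V),M(B)\big)\to M(C)$$
— here I read off $\bar j_{M(A)}$, $\bar j_{M(B)}$ from the diagonal corners of $\bar\pi$ and $\bar j_{M(V)}$ from the $(1,2)$-corner, and $M(C)$ appears because $M_2(M(C))=M(M_2(C))$. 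By construction $\bar j$ restricts to $j=(j_A,j_V,j_B)$ on $(A,V,B)$, and uniqueness of $\bar j$ follows from the uniqueness clause of Lemma \ref{lem mult univ} (any extension as a morphism of bimodules would, via Corollary \ref{cor-morphisms}, give an extension of $\pi$ as a morphism of the Paulsen systems).

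Finally, the intersection statements $\bar j_{M(A)}(M(A))\cap C=j_A(A)$ and $\bar j_{M(B)}(M(B))\cap C=j_B(B)$ are extracted from $\bar\pi\big(M(A\oplus B)\big)\cap C=\pi(A\oplus B)$ by looking separately at the two diagonal corners: an element of $M(A)$ whose image lies in $C$ gives a diagonal element of $M(A\oplus B)=M(A)\oplus M(B)$ landing in $C$, hence in $\pi(A\oplus B)=j_A(A)\oplus j_B(B)$, and projecting to the first corner gives membership in $j_A(A)$; the reverse inclusion is trivial, and symmetrically for $B$. I do not expect any serious obstacle here — the only point requiring care is the bookkeeping of the $2\times 2$ matrix identifications (so that $M_2(M(C))$ really is $M(M_2(C))$ and the ``$C^*$-algebra generated by'' passes correctly between $(A,V,B)$ and $X(A,V,B)$), which is why I would spell out the passage through Proposition \ref{prop-reps}/Proposition \ref{prop-multiplier}/Corollary \ref{cor-morphisms} rather than invoke it silently.
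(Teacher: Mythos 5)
Your proposal follows essentially the same route as the paper: view $C$ as a $C^*$-hull of the Paulsen system $X(A,V,B)$, apply Lemma \ref{lem mult univ} to get the completely isometric extension to $\big(M(A\oplus B), M(X(A,V,B))\big)$, and then read off the corners via Proposition \ref{prop-multiplier}. The only cosmetic difference is your detour through $M_2(C)$ — in the paper's setup (Remark \ref{rem-C*-hull-bimodule}) the Paulsen-system map already lands in $C\subseteq \mathcal B(H\oplus K)$ itself, so no such identification is needed — but this does not affect the correctness of the argument.
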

\begin{proof}
Let $j_{X(A,V,B)}: X(A,V,B)\to C$ denote the corresponding completely isometric representation of 
$X(A,V,B)$ into $C$. Then $\big(C, j_{X(A,V,B)}\big)$ is a $C^*$-hull of $X(A,V,B)$ and by Lemma 
 \ref{lem mult univ} there exists a unique extension 
 $$\big(\bar{j}_{M(A\oplus B)}, \bar{j}_{M(X(A,V,B))}\big): \big(M(A\oplus B), M(X(A,V,B)\big)\to M(C)$$
 such that $(\bar{j}_{M(A\oplus B)}(M(A)\oplus B))\cap C=A\oplus B$. 
 The result now easily follows from an application of Proposition \ref{prop-multiplier}.
 \end{proof}

\section{Crossed products by  $C^*$-operator bimodules}

Let $(A,V,B)$ be a $C^*$-operator bimodule and let $\Aut(A,V,B)$ denote the group of completely isometric isomorphisms of $(A,V,B)$ to $(A,V,B)$.
A {\em continuous action}  of $G$ on $(A,V,B)$ is a homomorphism $\alpha:G\to\Aut(A,V,B)$ 
with $\alpha_g=(\alpha_g^A, \alpha_g^V,\alpha_g^B)$ such that all components 
 $g\mapsto \alpha_g^A(a), \alpha_g^V(v), \alpha_g^B(b)$ are continuos for all $a\in A, v\in V, b\in B$.
 We then call $\big((A,V,B),G,\alpha\big)$  a {\em $C^*$-operator bimodule dynamical system}.
 
 A {\em covariant morphism} of $\big((A,V,B),G,\alpha\big)$  to the $C^*$-operator bimodule $(C,W,D)$ is 
 a quintuple $(\rho_A, \rho_V, \rho_B, u, v)$ such that $(\rho_A, \rho_V,\rho_B)$ is 
 a morphism from $(A,V,B)$ into $(C,W,D)$, $u:G\to UM(C)$ and $v:G\to UM(D)$ are 
 strictly continuous unitary representations of $G$ such that 
  $(\rho_A, u)$ and $(\rho_B,v)$ satisfy the usual cavariance conditions for the actions $\alpha^A$ and $\alpha^B$,
 respectively, and  such that for all $v\in V$ and $g$ in $G$ we have
  $$\rho_V(\alpha_g(v))=u_g\rho_V(v)v_{g^{-1}}.$$
A {\em generalized covariant morphism} of $\big((A,V,B),G,\alpha\big)$  to $(C,W,B)$ is a 
covariant morphism $(\rho_A, \rho_V, \rho_B, u, v)$ into $\big(M(C), M(W), M(D)\big)$ such that 
$(\rho_A, \rho_V, \rho_B):(A,V,B)\to \big(M(C), M(W), M(D)\big)$ is a generalized morphism in the sense
of Definition \ref{def-genmor}. 

A {\em covariant representation} of $(A,V,B)$ is a covariant morphism into\linebreak
$\big(\mathcal B(H), \mathcal B(K,H), \mathcal B(K)\big)$ for some pair of Hilbert spaces $(H,K)$.

If $(\rho_A, \rho_V, \rho_B, u, v)$ is covariant morphism of $(A,V,B)$ into $(C,W,D)$, we 
have {\em integrated forms} 
$$(\rho_A\rtimes u,   u\ltimes\rho_V\rtimes v , \rho_B\rtimes v): \big(C_c(G,A), C_c(G,V), C_c(G,B)\big)\to (C,W,D)$$
in which $\rho_A\rtimes u$ and $\rho_B\rtimes v$ are the usual integrated forms of the covariant homomorphisms 
 $(\rho_A, u)$ and $(\rho_B,v)$  of the systems $(A,G,\alpha^A)$ and $(B,G,\alpha^B)$, respectively, and 
where   $u\ltimes \rho_V\rtimes v :C_c(G,V)\to W$ is given by 
$$u\ltimes \rho_V\rtimes v(f)=\int_G \rho_V(f(s))v_s\, ds =\int_G u_s\rho_V\big(\alpha_{s^{-1}}^V(f(s))\big)\, ds,$$
where the right equation follows from the covariance condition (this should explain the notation $v\ltimes \rho_V\rtimes u$).
We have the usual convolution products on $C_c(G,A)$ and $C_c(G,B)$ and obvious convolution formulas for pairings 
$$C_c(G,A)\times  C_c(G,V)\to C_c(G,V) \quad \text{and}\quad C_c(G,V)\times C_c(G,B)\to C_c(G,V)$$
which are preserved by the integrated form $(\rho_A\rtimes u, v\ltimes \rho_V\rtimes u, \rho_B\rtimes v)$ 
of $(\rho_A, \rho_V, \rho_B, u, v)$. 

 The following is a direct consequence of Corollary \ref{cor-morphisms} and the universal properties 
 of the universal and the enveloping $C^*$-hulls of $(A,V,B)$. 
 
 \begin{proposition}\label{prop-actions}
 Let $(A,V,B)$ be a $C^*$-operator bimodule and let \linebreak
 $\big(C_u^*(A,V,B), (i_A, i_V, i_B)\big)$ and $\big(C_e^*(A,V,B), (k_A, k_V, k_B)\big)$  denote 
 the universal and enveloping $C^*$-hulls of $(A,V,B)$, respectively. Then there is a canonical 
 one-to-one correspondence between
 \begin{enumerate}
 \item continuous actions $\alpha:G\to \Aut(A,V,B)$,
 \item continuous actions $\alpha^X: G\to \Aut\Big(\big(A\oplus B, X(A,V,B)\big)\Big)$ which preserve the corners, 
 \item continuous operator algebra actions $\alpha^{\Op}:G\to \Aut\big(\Op(A,V,B)\big)$ which preserves the corners,
 \item continuous actions $\alpha^u:G\to \Aut(C_u^*(A,V,B))$ by automorphisms which preserve the subspaces $i_A(A), i_V(V)$ and $i_B(B)$. 
 \item continuous actions $\alpha^e:G\to \Aut(C_e^*(A,V,B))$ by automorphisms which preserve the subspaces $k_A(A), k_V(V)$ and $k_B(B)$.  \end{enumerate}
 Moreover, there are one-to-one correspondences between the covariant representations (morphisms) 
 $(\rho_A, \rho_V, \rho_B, u, v)$ of  $\big((A,V,B),G,\alpha\big)$ and covariant representations (morphisms) 
 of the actions in (2), (3), and (4) above  via the known correspondence for representations (morphisms)
 of $(A,V,B)$ and $X(A,V,B), \Op(A,V,B)$ and $C_u^*(A,V,B)$ with unitary parts given by the direct sum $u\oplus v$.
 \end{proposition}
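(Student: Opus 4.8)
The plan is to reduce everything to the corresponding correspondences for the Paulsen $C^*$-operator system $X(A,V,B)$ and then invoke the results already established for $C^*$-operator systems. The starting point is Corollary \ref{cor-morphisms}, which gives a one-to-one correspondence between completely contractive morphisms $\varphi=(\varphi_A,\varphi_V,\varphi_B):(A,V,B)\to (A,V,B)$ and corner-preserving ccp morphisms $\phi:X(A,V,B)\to X(A,V,B)$, compatible with compositions. Restricting to invertible ones (which are exactly the completely isometric surjective morphisms in either picture) identifies $\Aut(A,V,B)$ with the subgroup of $\Aut(A\oplus B, X(A,V,B))$ consisting of corner-preserving automorphisms. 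Since the correspondence respects composition, a homomorphism $\alpha:G\to\Aut(A,V,B)$ is the same datum as a homomorphism $\alpha^X:G\to\Aut(A\oplus B, X(A,V,B))$ landing in the corner-preserving subgroup, and the continuity of the components $g\mapsto\alpha_g^A(a),\alpha_g^V(v),\alpha_g^B(b)$ is literally the continuity of $g\mapsto \alpha_g^X(T)$ for $T\in X(A,V,B)$ since $X(A,V,B)$ is spanned by such matrix entries. This gives the equivalence of (1) and (2). The equivalence of (2) and (3) is the same argument applied to the restriction map from $X(A,V,B)$ to $\Op(A,V,B)$ in Corollary \ref{cor-morphisms}, using that each automorphism of $\Op(A,V,B)$ preserving the corners extends uniquely to $X(A,V,B)$ (the off-diagonal corner $V^*$ is determined by $V$ via the involution and the diagonal corners).

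Next I would treat (4) and (5). An action $\alpha:G\to\Aut(A,V,B)$ corresponds via (1)$\Leftrightarrow$(2) to an action $\alpha^X$ on $(A\oplus B, X(A,V,B))$, and by the universal property of $C_u^*(A\oplus B, X(A,V,B))=C_u^*(A,V,B)$ (Theorem \ref{thm-universal} and Proposition \ref{prop-C*-hull-bimodule}) this extends uniquely to a continuous action $\alpha^u$ on $C_u^*(A,V,B)$; the discussion preceding Definition \ref{def-crossed-product} guarantees uniqueness and strong continuity of this extension. Conversely an action $\alpha^u$ on $C_u^*(A,V,B)$ that preserves the subspaces $i_A(A), i_V(V), i_B(B)$ restricts to an action on $X(A,V,B)$ (hence on $(A,V,B)$) by the completely isometric inclusion; the two constructions are mutually inverse because $C_u^*(A,V,B)$ is generated by the image of $X(A,V,B)$, so an automorphism of it is determined by its restriction. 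The case (5) is identical with $C_e^*$ in place of $C_u^*$, using the universal property of the enveloping $C^*$-hull (Proposition \ref{prop-envelope} and Proposition \ref{prop-C*-hull-bimodule}) and Proposition \ref{prop-unique} (the rigidity of the injective envelope) for uniqueness of the extension.

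For the final assertion about covariant representations and morphisms, I would again transport along Proposition \ref{prop-reps} and Corollary \ref{cor-morphisms}. Given a covariant morphism $(\rho_A,\rho_V,\rho_B,u,v)$ of $\big((A,V,B),G,\alpha\big)$ into $(C,W,D)$, Proposition \ref{prop-reps} packages $(\rho_A,\rho_V,\rho_B)$ as a ccp morphism $\pi$ of $X(A,V,B)$, and one checks directly that $w:=u\oplus v:G\to UM(C\oplus D)$ together with $\pi$ satisfies the covariance condition $\pi(\alpha^X_g(T))=w_g\pi(T)w_g^*$ — this is a $2\times 2$-matrix computation, using the covariance of $(\rho_A,u)$, $(\rho_B,v)$ in the diagonal corners and the relation $\rho_V(\alpha_g^V(v))=u_g\rho_V(v)v_{g^{-1}}$ in the off-diagonal corners (note $v_{g^{-1}}=v_g^*$ since $v$ is unitary). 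The converse passes a covariant representation $(\pi,w)$ with $w$ diagonal back through the corner decomposition. Composing further with the extension-to-$C_u^*$ correspondence of Proposition \ref{prop-covariant} and Proposition \ref{prop-covariant1} yields the correspondence with covariant representations of $(C_u^*(A,V,B),G,\alpha^u)$. The one point requiring a little care — and the main potential obstacle — is checking that the non-degeneracy and complete-isometry hypotheses match up exactly across all five pictures: for this I would invoke Remark \ref{rem-nondeg}, which lets one always pass to reducing subspaces without changing the correspondence up to trivial summands, together with Remark \ref{rem-ospace}(3) on rescaling a completely bounded $\rho_V$ to a completely contractive one. Given the machinery already assembled, the proof is essentially bookkeeping, so I would simply state that the verifications are routine and omit the matrix computations.
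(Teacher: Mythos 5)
Your proposal is correct and follows exactly the route the paper intends: the paper gives no written proof, stating only that the proposition ``is a direct consequence of Corollary \ref{cor-morphisms} and the universal properties of the universal and the enveloping $C^*$-hulls of $(A,V,B)$,'' and your argument is precisely a fleshed-out version of that one-line justification (transport along the Paulsen system via Corollary \ref{cor-morphisms}, extend to the hulls by their universal properties, and match covariant pairs with unitary part $u\oplus v$). The details you supply, including the rigidity argument for $C_e^*$ and the $2\times 2$ covariance check, are the correct ones.
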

 
 We now give the definition of the universal crossed product by an action of a locally compact group $G$ on a 
 $C^*$-operator bimodule:
 
 \begin{definition}\label{def-full-crossed-C*-bimodule}
 Let $\alpha:G\to \Aut(A,V,B)$ be a strongly continuous action of the locally compact group $G$.
 We define the universal crossed 
 product 
 $$(A,V,B)\rtimes_{\alpha}^uG:= \big(A\rtimes_\alpha^uG, V\rtimes_\alpha^uG, B\rtimes_\alpha^uG\big)$$ 
 of $(A,V,B)$ by $G$ as the  respective closures 
 of $\big(C_c(G, A), C_c(G,V), C_c(G,B)\big)$ inside $C_u^*(A,V,B)\rtimes_{\alpha, u} G$ (here we identify $A,V$ and $B$  with $i_A(A), i_V(V)$ and $i_B(B)$   inside $C_u^*(A,V,B)$, respectively).
 \end{definition}
 
 To see that $(A,V,B)\rtimes_{\alpha}^uG$ has a canonical structure of a $C^*$-operator bimodule, let 
 $(i_{C_u^*(A,V,B)}, i_G)$ denote the universal representation of  
 the crossed product $C_u^*(A,V,B)\rtimes_{\alpha,u}G$ an the Hilbert space $L_u$, i.e., 
 the direct some of all GNS representations associated to the states of $C_u^*(A,V,B)\rtimes_{\alpha,u}G$.
 Then there  exists a decomposition $L_u=H_u\oplus K_u$ such that $i_{C_u^*(A,V,B)}$ restricts to  
a representation of $(A,V,B)$ on $\mathcal B(K_u, H_u)$ . It is then easy to check that the covariant 
representation $(i_{C_u^*(A,V,B)},  i_G)$ restricts to the covariant representation 
$(i_A, i_V, i_B, i_G^A, i_G^B)$ of $\big((A,V,B),G,\alpha\big)$ where $i_A, i_V, i_B$  denote the
 restrictions  of
$i_{C_u^*(A,V,B)}$ to $A,V$ and $B$, respectively (viewed as operators in the respective corners of $\mathcal B(H_u\oplus K_u)$),
and $i_G^A:=i_G|_{H_u}$,  $i_G^B:=i_G|_{K_u}$. 

The integrated form $i_{C_u^*(A,V,B)}\rtimes  i_G$ restricts to the integrated forms
$i_A\rtimes i_G^A: C_c(G,A)\to \mathcal B(H_u)$, 
$i_G^B\ltimes i_V\rtimes i_G^A: C_c(G,V) \to\mathcal B(K_u, H_u)$ and 
$i_B\rtimes i_G^B: C_c(G,B)\to \mathcal B(K_u)$ and similarly for the respective 
completions.  They therefore extend to a completely isometric representation of 
$\big(A\rtimes_\alpha^uG, V\rtimes_\alpha^uG, B\rtimes_\alpha^uG\big)$ as a concrete $C^*$-operator bimodule 
in $\mathcal B(K_u, H_u)$.
Moreover, it is easy to check that $(i_A, i_V, i_B, i_G^A, i_G^B)$ take  their values in 
$\big(M(A\rtimes_\alpha^uG), M(V\rtimes_\alpha^uG), M(B\rtimes_\alpha^uG)\big)$.
Hence $(i_A, i_V, i_G, i_B, i_G^A, i_G^B)$ is a generalized covariant morphism 
of $\big((A,V,B),G,\alpha\big)$ into $\big(M(A\rtimes_\alpha^uG), M(V\rtimes_\alpha^uG), M(B\rtimes_\alpha^uG)\big)$ 
which integrates to the identity on $\big(A\rtimes_\alpha^uG, V\rtimes_\alpha^uG, B\rtimes_\alpha^uG\big)$.
We call $(i_A, i_V, i_G, i_B, i_G^A, i_G^B)$ the universal morphism of $\big((A,V,B),G,\alpha\big)$. 

The following proposition shows that $(A,V,B)\rtimes_\alpha^uG$ has the right universal properties for
covariant morphisms (representations) of $\big((A,V,B),G,\alpha\big)$. 
\begin{proposition}\label{prop-universalAVB}
Let $\alpha:G\to \Aut(A,V,B)$ be a continuous action.
For every generalized covariant morphism $(\rho_A,\rho_V,\rho_B, u,v)$ of $\big((A,V,B),G,\alpha\big)$ into 
$\big(M(C), M(W), M(D)\big)$ the integrated form 
$(\rho_A\rtimes u, u\ltimes\rho_V\rtimes v, \rho_B\rtimes v)$  from  $(C_c(G,A), C_c(G,V), C_c(G,D))$ into $(M(C),M(W),M(D))$
extends uniquely to a morphism
$$(\rho_A\rtimes u, u\ltimes\rho_V\rtimes v, \rho_B\rtimes v): (A\rtimes_\alpha^uG, V\rtimes_\alpha^uG, B\rtimes_\alpha^uG)\to (M(C),M(W),M(D)).$$
(which takes values in $(C,W,D)$  if $(\rho_A, \rho_V,\rho_B)$ does). If  $(\rho_A,\rho_V,\rho_B, u,v)$ is non-degenerate, then 
so is $(\rho_A\rtimes u, u\ltimes\rho_V\rtimes v, \rho_B\rtimes v)$. 

Conversely, for every generalized morphism
$(\pi_{A\rtimes_\alpha^uG}, \pi_{V\rtimes_{\alpha}^uG}, \pi_{B\rtimes_\alpha^uG})$ of 
$(A\rtimes_\alpha^uG, V\rtimes_\alpha^uG, B\rtimes_\alpha^uG)$ into $(M(C),M(W),M(D))$ there is a unique  
generalized covariant morphism
$(\rho_A,\rho_V\rho_B,u,v)$  of $\big((A,V,B),G,\alpha\big)$ into  \linebreak 
$\big(M(C), M(W), M(D)\big)$ such that
$$(\pi_{A\rtimes_\alpha^uG}, \pi_{V\rtimes_{\alpha}^uG}, \pi_{B\rtimes_\alpha^uG})=(\rho_A\rtimes u, u\ltimes\rho_V\rtimes v, \rho_B\rtimes v)$$
given by the composition of $(\pi_{A\rtimes_\alpha^uG}, \pi_{V\rtimes_{\alpha}^uG}, \pi_{B\rtimes_\alpha^uG})$ (extended to multipliers 
as in  Proposition \ref{prop-genmorextend}) with the universal representation $(i_A, i_V, i_G, i_B, i_G^A, i_G^B)$.
%
%
%
%
\end{proposition}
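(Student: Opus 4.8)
The plan is to reduce Proposition \ref{prop-universalAVB} to the already-established universal property of the universal crossed product by the $C^*$-operator system $\big(A\oplus B, X(A,V,B)\big)$, using the dictionary between $C^*$-operator bimodules and their Paulsen $C^*$-operator systems established in Corollary \ref{cor-morphisms} and Proposition \ref{prop-actions}. First I would pass from the action $\alpha:G\to\Aut(A,V,B)$ to the corner-preserving action $\alpha^X:G\to\Aut\big(A\oplus B, X(A,V,B)\big)$ via Proposition \ref{prop-actions}, and observe that by construction of $C_c(G,X(A,V,B))$ as the $2\times 2$ matrix of the pieces $C_c(G,A), C_c(G,V), C_c(G,V^*), C_c(G,B)$ sitting inside $C_u^*(A,V,B)\rtimes_{\alpha,u}G = C_u^*(X(A,V,B))\rtimes_{\alpha^X,u}G$, the crossed product $X(A,V,B)\rtimes_{\alpha^X}^uG$ is precisely the Paulsen $C^*$-operator system $X\big(A\rtimes_\alpha^uG, V\rtimes_\alpha^uG, B\rtimes_\alpha^uG\big)$. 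This identification also matches the multiplier systems by Proposition \ref{prop-multiplier}, so $M\big(X(A,V,B)\rtimes_{\alpha^X}^uG\big)=X\big(M(A\rtimes_\alpha^uG), M(V\rtimes_\alpha^uG), M(B\rtimes_\alpha^uG)\big)$.

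Next, given a generalized covariant morphism $(\rho_A,\rho_V,\rho_B,u,v)$ of $\big((A,V,B),G,\alpha\big)$ into $\big(M(C),M(W),M(D)\big)$, I would assemble it into a single non-degenerate covariant morphism $(\pi, u\oplus v)$ of the $C^*$-operator system $\big(A\oplus B, X(A,V,B)\big)$ with its action $\alpha^X$ into the multiplier system $\big(M(C\oplus D), M(X(C,W,D))\big)$: here $\pi$ is the $2\times 2$ amplification of $(\rho_A,\rho_V,\rho_B)$ as in Proposition \ref{prop-reps} (which is ccp and a generalized morphism by Corollary \ref{cor-morphisms}, after the usual reduction to the completely contractive case via part (3) of Remark \ref{rem-ospace}), and $u\oplus v$ is a strictly continuous unitary representation into $UM(C\oplus D)=UM(C)\oplus UM(D)$. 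The covariance of $(\pi,u\oplus v)$ for $\alpha^X$ is a direct matrix computation from the three covariance relations for $(\rho_A,u)$, $(\rho_B,v)$ and the mixed relation $\rho_V(\alpha_g(v))=u_g\rho_V(v)v_{g^{-1}}$. By Proposition \ref{prop-covariant} (applied to the $C^*$-operator system crossed product) this integrates to a generalized morphism $\pi\rtimes(u\oplus v)$ of $X(A,V,B)\rtimes_{\alpha^X}^uG$ into $M(X(C,W,D))$, and on the dense subsystem $C_c(G,X(A,V,B))$ the integration formula gives exactly the $2\times 2$ matrix whose entries are $\rho_A\rtimes u$, $u\ltimes\rho_V\rtimes v$, $\rho_V^*$-version, and $\rho_B\rtimes v$; restricting to the corners via Corollary \ref{cor-morphisms} yields the desired bimodule morphism $(\rho_A\rtimes u, u\ltimes\rho_V\rtimes v, \rho_B\rtimes v)$, and uniqueness follows from density of $C_c$ together with the uniqueness in Proposition \ref{prop-covariant}. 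That the morphism takes values in $(C,W,D)$ when $(\rho_A,\rho_V,\rho_B)$ does, and that non-degeneracy is preserved, follows verbatim from the corresponding statements in Proposition \ref{prop-covariant} and the integration formula as in equation (\ref{eq-integral}).

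For the converse, given a generalized morphism $(\pi_{A\rtimes_\alpha^uG},\pi_{V\rtimes_\alpha^uG},\pi_{B\rtimes_\alpha^uG})$ of the crossed-product bimodule into $\big(M(C),M(W),M(D)\big)$, I would apply Corollary \ref{cor-morphisms} again to view it as a generalized morphism of the Paulsen system $X\big(A\rtimes_\alpha^uG,V\rtimes_\alpha^uG,B\rtimes_\alpha^uG\big)=X(A,V,B)\rtimes_{\alpha^X}^uG$, invoke Proposition \ref{prop-covariant1} to obtain a unique non-degenerate covariant morphism $(\varphi,w)$ of $\big(A\oplus B,X(A,V,B),G,\alpha^X\big)$ with $\varphi=\bar\Phi\circ i_{X(A,V,B)}$ and $w=\bar\Phi\circ i_G$, and then decompose $\varphi$ into its corner pieces $(\rho_A,\rho_V,\rho_B)$ and $w$ into $u\oplus v$ using that $\varphi$ and $w$ preserve the corner decomposition (which is where the universal morphism $(i_A,i_V,i_G,i_B,i_G^A,i_G^B)$ of $\big((A,V,B),G,\alpha\big)$ enters, as its image lies in the corners of $M\big(C_u^*(A,V,B)\rtimes_{\alpha,u}G\big)$). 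Extension to multipliers is handled by Proposition \ref{prop-genmorextend}. The one genuinely delicate point — the main obstacle — is bookkeeping the non-degeneracy and multiplier conditions consistently across the three levels (bimodule, Paulsen system, universal $C^*$-hull crossed product): one must check that the block-diagonal unitary $u\oplus v$ really is a strictly continuous homomorphism into $UM(C\oplus D)$ and interacts correctly with the off-diagonal corner $M(W)$, and that the identification $M\big(X(A,V,B)\rtimes_{\alpha^X}^uG\big)\cong X\big(M(\cdot),M(\cdot),M(\cdot)\big)$ from Proposition \ref{prop-multiplier} is equivariant; once these compatibilities are recorded, everything else is a translation of the already-proven Propositions \ref{prop-covariant} and \ref{prop-covariant1}, so I would simply say that the details are left to the reader, as the statement already announces.
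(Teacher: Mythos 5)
Your argument is correct, and it reaches the same destination as the paper's proof but by a slightly different route. The paper passes directly from the quintuple $(\rho_A,\rho_V,\rho_B,u,v)$ to a covariant homomorphism $(\rho_{C_u^*(A,V,B)},u\oplus v)$ of the $C^*$-algebra dynamical system $(C_u^*(A,V,B),G,\alpha^u)$ into $M(C_u^*(C,W,D))$ (via Propositions \ref{prop-multuniv} and \ref{prop-actions}) and then invokes the universal property of the maximal $C^*$-algebra crossed product, restricting the integrated form back to the three corners at the end; for the converse it likewise composes with the universal morphism and checks agreement on $C_c$-functions. You instead insert the Paulsen system as an intermediate layer: you assemble the data into a covariant morphism $(\pi,u\oplus v)$ of $\big(A\oplus B,X(A,V,B),G,\alpha^X\big)$ and invoke Propositions \ref{prop-covariant} and \ref{prop-covariant1} for $C^*$-operator systems. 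Since those propositions are themselves proved by extending to $C_u^*$, the two arguments ultimately coincide at the $C^*$-hull level; what your version buys is that it reuses the already-proved universal property from Section \ref{crossedproducts} rather than re-running the integration argument, and it forces you to record explicitly the identification $X(A,V,B)\rtimes_{\alpha^X}^uG=X\big(A\rtimes_\alpha^uG,V\rtimes_\alpha^uG,B\rtimes_\alpha^uG\big)$ and its compatibility with multipliers (Proposition \ref{prop-multiplier}) — facts the paper only spells out after the proposition, though they follow at once from $C_u^*(A,V,B)\cong C_u^*(X(A,V,B))$ (Proposition \ref{prop-C*-hull-bimodule}) and the definitions of both crossed products as closures of $C_c$-functions in the same ambient algebra, so there is no circularity. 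The corner bookkeeping you flag as the delicate point (block-diagonality of $i_G$, hence of $w=\bar\Phi\circ i_G$, and corner-preservation of $\bar\Phi\circ i_{X(A,V,B)}$) is exactly the "straightforward computation" the paper omits, so your level of detail matches the original.
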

\begin{proof} Starting with $(\rho_A,\rho_V,\rho_B, u,v)$ we obtain a corresponding covariant homomorphism 
$(\rho_{C_u^*(A,V,B)}, u\oplus v)$ of $(C_u^*(A,V,B), G, \alpha^u)$ into $M(C_u^*(C,W,D))$ 
(use Propositions \ref{prop-multuniv} and \ref{prop-actions}). By the universal property of 
the maximal crossed product $C_u^*(A,V,B)\rtimes_{\alpha,u}G$ we obtain the integrated form
$$\rho_{C_u^*(A,V,B)}\rtimes  u\oplus v: C_u^*(A,V,B)\rtimes_{\alpha,u}G\to M(C_u^*(C,W,D))$$
whose restriction to $(C_c(G,A), C_c(G,V), C_c(G,B))$ coincides with the integrated form 
$(\rho_A\rtimes u, u\ltimes\rho_V\rtimes v, \rho_B\rtimes v)$ with values in 
$(M(C), M(W), M(D))\subseteq M(C_u^*(C,W,D))$. They therefore extend to 
$(A\rtimes_\alpha^uG, V\rtimes_\alpha^uG, B\rtimes_\alpha^uG)$ as desired. 

For the converse we need to show  that the integrated form of the covariant morphism 
$(\rho_A,\rho_V\rho_B,u,v)$ obtained by composing
 $(\pi_{A\rtimes_\alpha^uG}, \pi_{V\rtimes_{\alpha}^uG}, \pi_{B\rtimes_\alpha^uG})$ with 
 $(i_A, i_V, i_G, i_B, i_G^A, i_G^B)$ agrees with $(\pi_{A\rtimes_\alpha^uG}, \pi_{V\rtimes_{\alpha}^uG}, \pi_{B\rtimes_\alpha^uG})$
on  \linebreak
$\big(C_c(G,A), C_c(G,V), C_c(G,B)\big)$. But this follows from a straightforward computation which we omit.
\end{proof}

Recall from Proposition \ref{prop-C*-hull-bimodule} that for a $C^*$-operator bimodule $(A,V,B)$ we have the 
identities 
$$C_u^*(A,V,B)\cong C_u^*(\Op(A,V,B))\cong C_u^*(X(A,V,B))$$
where the first isomorphism is given by the universal property of $C_u^*(A,V,B)$ applied 
to the canonical (corner) inclusions of $(A,V,B)$  into $\Op(A,V,B)\subseteq C_u^*(\Op(A,V,B))$ 
and the second isomorphism is given by the universal property of $C_u^*(\Op(A,V,B))$
applied  to the canonical inclusion of $\Op(A,V,B)$ into $X(A,V,B)$. 
If $\alpha:G\to \Aut(A,V,B)$ is an action, then these isomorphism are $G$-equivariant,
where $ C_u^*(\Op(A,V,B))$ is equipped with the action extending $\alpha^{\Op}$ 
and $C_u^*(X(A,V,B))$ is equipped with the action extending $\alpha^X$,
where $\alpha^{\Op}$ and $\alpha^X$ are as in Proposition \ref{prop-actions}.

In \cite{KR} Katsoulis and Ramsay defined the universal crossed product 
of the operator algebra system $(\mathcal A,G,\alpha)$ as the closure of $C_c(G,\mathcal A)$ 
inside $C_u^*(\mathcal A)\rtimes_{\alpha,u}G$ and in Definition \ref{def-crossed-product} we defined the crossed product 
$X\rtimes_\alpha^u G$ for an action $\alpha$ on a $C^*$-operator system $X$ 
as the closure $X\rtimes_{\alpha}^uG$ of  $C_c(G,X)$ 
inside $C_u^*(X)\rtimes_{\alpha,u}G$ (surpressing the $C^*$-part of the $C^*$-operator system  in our notation). 
Thus, identifying $(C_c(G,A), C_c(G,V), C_c(G,B))$ with the three non-zero corners of 
$C_c(G, \Op(A,V,B))$ and the latter as a subspace of $C_c(G, X(A,V,B))$ we see that 
these inclusions extend to completely isometric inclusions
\begin{align*}
&\Op(A\rtimes_{\alpha}^uG, V\rtimes_\alpha^uG, B\rtimes_\alpha^uG)=\Op(A,V,B)\rtimes_\alpha^uG\\
&\subseteq  
X(A,V,B)\rtimes_{\alpha}^uG=X(A\rtimes_{\alpha}^uG, V\rtimes_\alpha^uG, B\rtimes_\alpha^uG)
\end{align*}
Together with Corollary \ref{cor-universal} and  Proposition \ref{prop-C*-hull-bimodule} we obtain   isomorphisms
\begin{align*}
&C_u^*(A,V, B)\rtimes_{\alpha,u}G \cong C_u^*(\Op(A,V,B))\rtimes_{\alpha,u}G \cong C_u^*(X(A,V,B))\rtimes_{\alpha,u}G\\
&\stackrel{\text{Corollary \ref{cor-universal}}}{\cong} C_u^*\big(X(A,V,B)\rtimes_{\alpha}^uG\big)\cong 
C_u^*\big(X(A\rtimes_{\alpha}^uG, V\rtimes_\alpha^uG, B\rtimes_\alpha^uG)\big)\\
&\quad\quad \cong C_u^*\big(A\rtimes_{\alpha}^uG, V\rtimes_\alpha^uG, B\rtimes_\alpha^uG\big)\\
&\quad\quad  \cong C_u^*\big(\Op(A\rtimes_{\alpha}^uG, V\rtimes_\alpha^uG, B\rtimes_\alpha^uG)\big)
\end{align*}
In particular we see that the theories for universal crossed product by corresponding actions of $G$ on 
$(A,V,B)$, $\Op(A,V,B)$ and  $X(A,V,B)$ are completely equivalent!

We close this section with a brief discussion of the reduced crossed product for an action 
$\alpha:G\to \Aut(A,V,B)$. The easiest way to do this at this point
is to form the reduced crossed product 
$$X(A,V,B)\rtimes_{\alpha}^rG \subseteq M\big(X(A,V,B)\otimes \mathcal K(L^2(G))\big)$$
as the image of the regular representation 
$\Lambda_{X(A,V,B)}: X(A,V,B)\rtimes_{\alpha}^u G\to M\big(X(A,V,B)\otimes \mathcal K(L^2(G))\big)$
as in Definition \ref{defn educed crossed product} and define 
$(A,V,B)\rtimes_\alpha^rG=\big(A\rtimes_{\alpha}^rG, V\rtimes_{\alpha}^rG, B\rtimes_{\alpha}^rG\big)$ 
via the images of the corners 
$(A\rtimes_{\alpha}^uG, V\rtimes_{\alpha}^uG, B\rtimes_{\alpha}^uG)$ inside $X(A,V,B)\rtimes_{\alpha}^rG$
(which is the same as taking closures of $(C_c(G,A), C_c(G,V), C_c(G,B))$ inside $X(A,V,B)\rtimes_{\alpha}^rG$).
We leave it as an exercise to the reader to formulate this in terms of a regular covariant representation 
of $\big((A,V,B),G,\alpha\big)$  into $\big(M(A\otimes \K), M(V\otimes \K), M(B\otimes \K)\big)$ for $\K=\K(L^2(G))$ and 
where, as usual,  ``$\otimes$'' denotes the spacial tensor product.

It follows from our construction and part (b) of Remark \ref{rem-reduced}  that 
$\big(A\rtimes_{\alpha}^rG, V\rtimes_{\alpha}^rG, B\rtimes_{\alpha}^rG\big)$ is completely isometrically 
isomorphic to the closures of $(C_c(G,  A), C_c(G,V), C_c(G,B))$ inside 
$C\rtimes_{\alpha^C,r}G$ for any $C^*$-hull  \linebreak $\big(C, (j_A, j_V, j_B)\big)$ of $(A,V,B)$ (where we identify
$(A,V,B)$ with the triple $(j_A(A), j_V(V), j_B(B))$ inside $C$) which carries an 
action $\alpha^C$ which is  compatible with the given action on $(A,V,B)$. In particular, we may 
take the closures inside $C_u^*(A,V,B)\rtimes_{\alpha^u,r}G$ or $C_e^*(A,V,B)\rtimes_{\alpha^e,r}G$.
From this we get

\begin{proposition}\label{prop-max=red}
Let $\alpha:G\to \Aut(A,V,B)$ be an action by an {\em amenable} group $G$. Then 
$$(A\rtimes_{\alpha}^uG, V\rtimes_{\alpha}^uG, B\rtimes_{\alpha}^uG)= \big(A\rtimes_{\alpha}^rG, V\rtimes_{\alpha}^rG, B\rtimes_{\alpha}^rG\big)$$
via the regular representation.
\end{proposition}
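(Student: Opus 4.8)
The plan is to reduce the statement to the classical fact that for an amenable group $G$ and any action $\beta:G\to\Aut(C)$ on a $C^*$-algebra $C$, the regular representation induces a $*$-isomorphism $C\rtimes_{\beta,u}G\cong C\rtimes_{\beta,r}G$.

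First I would fix the $C^*$-hull $C:=C_u^*(A,V,B)$ equipped with the action $\alpha^u$ extending $\alpha$ from Proposition \ref{prop-actions}. By Definition \ref{def-full-crossed-C*-bimodule}, the triple $(A\rtimes_\alpha^uG, V\rtimes_\alpha^uG, B\rtimes_\alpha^uG)$ consists of the closures of $(C_c(G,A),C_c(G,V),C_c(G,B))$ inside $C_u^*(A,V,B)\rtimes_{\alpha^u,u}G$. On the other hand, by the discussion preceding the proposition, which rests on part (b) of Remark \ref{rem-reduced}, the reduced crossed product $(A\rtimes_\alpha^rG,V\rtimes_\alpha^rG,B\rtimes_\alpha^rG)$ is completely isometrically isomorphic to the triple of closures of the \emph{same} dense corner subspaces $(C_c(G,A),C_c(G,V),C_c(G,B))$, but now taken inside $C_u^*(A,V,B)\rtimes_{\alpha^u,r}G$.

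Next I would invoke amenability of $G$: the canonical surjection (regular representation) $\Lambda:C_u^*(A,V,B)\rtimes_{\alpha^u,u}G\onto C_u^*(A,V,B)\rtimes_{\alpha^u,r}G$ is then a $*$-isomorphism, and it restricts to the identity on the dense $*$-subalgebra $C_c(G,C_u^*(A,V,B))$. In particular $\Lambda$ is the identity on each of the corner subspaces $C_c(G,A)$, $C_c(G,V)$, $C_c(G,B)$, so it carries the closures of these subspaces computed inside the universal crossed product isometrically onto the closures computed inside the reduced crossed product. Unwinding the identifications coming from part (b) of Remark \ref{rem-reduced}, this restricted map is precisely the regular representation of $((A,V,B),G,\alpha)$ as described in the statement; here one uses that the $C^*$-algebraic regular representations are compatible with passing to $C^*$-hulls, which is exactly the content of that remark.

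I expect the only mildly delicate point to be the bookkeeping: one must check that the three descriptions of the reduced crossed product — via $X(A,V,B)\rtimes_\alpha^rG$ and its corners, via closures inside $C_u^*(A,V,B)\rtimes_{\alpha^u,r}G$, and via the restriction of $\Lambda$ — all agree. Each of these identifications has been established in the preceding paragraphs of this section and in Remark \ref{rem-reduced}, so no genuinely new argument is needed; amenability of $G$ does all the real work by collapsing the distinction between the two ambient $C^*$-algebra crossed products, and the compatibility with corners is purely formal.
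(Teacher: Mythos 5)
Your proposal is correct and follows essentially the same route as the paper: the authors likewise observe that both crossed products arise as closures of $(C_c(G,A),C_c(G,V),C_c(G,B))$ inside $C_u^*(A,V,B)\rtimes_{\alpha^u,u}G$ and $C_u^*(A,V,B)\rtimes_{\alpha^u,r}G$ respectively, and then invoke amenability to identify these two ambient $C^*$-algebra crossed products via the regular representation. Your extra bookkeeping about the compatibility of the various descriptions of the reduced crossed product is a sound elaboration of what the paper leaves implicit in Remark \ref{rem-reduced}(b) and the preceding discussion.
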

\begin{proof} This follows from the  above discussion and the fact that 
$$C_u^*(A,V,B)\rtimes_{\alpha^u,u}G\cong C_u^*(A,V,B)\rtimes_{\alpha^u,r}G$$ if $G$ is amenable.
\end{proof}

\section{Coactions and duality}
In this section we want to discuss  the duality theorems  for crossed products for $C^*$-operator bimodules.
The theory is  more or  less a direct consequence of the theory for $C^*$-operator systems vie the 
functor $(A,V,B)\mapsto X(A,V,B)$, so we'll try to be brief. Note that if $(A,V,B)$ is a $C^*$-operator system 
represented completely isometrically on the pair of Hilbert spaces $(H,K)$ and if $C$ is any $C^*$-algebra which 
is   represented faithfully on a Hilbert space $L$, then we can define the spatial tensor product 
$(A\otimes C, V\otimes C, B\otimes C)$ as the closures of the canonical inclusions of the algebraic tensor products 
$\big(A\odot C, V\odot C, B\odot C\big)$ 
inside
$\big(\mathcal B(H\otimes L), \mathcal B(K\otimes L, H\otimes L), \mathcal B(K\otimes L)\big)$.
One then checks that
$$X(A,V,B)\otimes C=X(A\otimes C, V\otimes C, B\otimes C)$$
(and, similarly $\Op(A,V,B)\otimes C=\Op(A\otimes C, V\otimes C, B\otimes C)$).
In what follows we often write $(A,V,B)\otimes C$ for the $C^*$-operator bimodule $(A\otimes C, V\otimes C, B\otimes C)$
and we write $M\big((A,V,B)\otimes C\big)$ for the multiplier bimodule 
$\big(M(A\otimes C), M(V\otimes  C), M(B\otimes C))\big)$.

\begin{definition}\label{def-coactionC*-bimodule}
Let $(A,V,B)$ be a $C^*$-operator bimodule.  A  {\em coaction} of the locally compact group $G$ 
on $(A,V,B)$ is a generalized morphism
$$\delta_{(A,V,B)}=(\delta_A,\delta_V, \delta_B): (A,V,B)\to M\big((A,V,B)\check\otimes C^*(G)\big)$$
such that the following hold:
\begin{enumerate}
\item the maps $\delta_A:A\to M(A\check\otimes C^*(G))$ and $\delta_B:B\to M(B\check\otimes C^*(G))$ are coactions of 
$G$ on the $C^*$-algebras $A$ and $B$, respectively.
\item The following diagram of generalized morphism commutes:
$$\begin{CD}
(A,V,B)@>\delta_{(A,V,B)} >> M\big((A,V,B)\check\otimes C^*(G)\big)\\
@V\delta_{(A,V,B)} VV   @VV \id_{(A,V,B)}\otimes \delta_GV\\
M\big((A,V,B)\check\otimes C^*(G)\big)  @>> \delta_{(A,V,B)}\otimes \id_{G}> M\big((A,V,B)\check\otimes C^*(G)\check\otimes C^*(G)\big)
\end{CD}
$$
\end{enumerate}
\end{definition}

Using the correspondence of generalized morphisms from $(A,V,B)$ to $M\big((A,V,B)\check\otimes C^*(G)\big)$ 
with generalized morphism from $X(A,V,B)$ to 
\linebreak
$X(M(A,V,B))$ of Corollary \ref{cor-morphisms}  
and the isomorphism 
$X(M(A,V,B))\cong M(X(A,V,B))$ of  Proposition \ref{prop-multiplier} we see that every coaction $\delta_{(A,V,B)}$ 
of $G$ on $(A,V,B)$ as in the definition above determines a coaction $\delta_{X(A,V,B)}$  of $G$  on $X(A,V,B)$
and vice versa. 
We then  call $\delta_{(A,V,B)}$ {\em non-degenerate} iff $\delta_{X(A,V,B)}$ is non-degenerate in the sense of 
Definition \ref{def-nondeg}. 

\begin{example}\label{ex-dual-coaction-bimodule}
Recall that for  each  action $\alpha:G\to \Aut(A,V,B)$ there corresponds a unique action (which here we also denote by $\alpha$)
of $G$ on $X(A,V,B)$ such that $X(A,V,B)\rtimes_\alpha^uG=X\big((A,V,B)\rtimes_\alpha^uG\big)$ (and similarly for the reduced crossed products).
Recall from Example \ref{ex dual} that there exist  canonical dual coactions 
$\widehat\alpha_u$ and $\widehat\alpha_r$ of $G$ on
$X(A,V,B)\rtimes_\alpha^uG$ and $X(A,V,B)\rtimes_\alpha^rG$, respectively. Identifying 
$X(A,V,B)\rtimes_\alpha^uG$ with $X\big( (A,V,B)\rtimes_\alpha^uG\big)$ 
and using the correspondence between coactions on  \linebreak $(A,V,B)\rtimes_\alpha^uG$ and  coactions on 
$X\big( (A,V,B)\rtimes_\alpha^uG\big)$ (and similarly for the reduced crossed products), we obtain 
dual coactions $\alpha_u$ and $\widehat\alpha_r$ on  the full and reduced crossed  products 
of $(A,V,B)$ by $G$, respectively. We leave it to the reader to spell out  direct formulas for these coactions. 
\end{example}

\begin{definition}\label{def-coact-covariant-bimodule}
Let $\delta_{(A,V,B)}$ be a coaction of $G$ on the $C^*$-operator bimodule $(A,V,B)$. Then a 
(generalized)
{\em covariant morphism} of the co-system $\big((A,V,B), G, \delta_{(A,V,B)}\big)$ into 
the muliplier bimodule $M(C,W,D)=\big(M(C), M(W), M(B)\big)$ of a $C^*$-operator  bimodule $(C,W,D)$
consists of a quintuple $\big(\rho_A, \rho_V, \rho_B, \mu, \nu\big)$ such that
\begin{enumerate}
\item $\rho=(\rho_A, \rho_V, \rho_B):(A,V,B)\to \big(M(C), M(W), M(B)\big)$ is a generalized morphism of $(A,V,B)$;
\item $\mu:C_0(G)\to M(C), \nu:C_0(G)\to M(D)$ are non-degenerate $*$-homomorphisms;
\item $(\rho_A, \mu)$  and  $(\rho_B,\nu)$ are covariant for $(A,G,\delta_A)$ and $(B, G, \delta_B)$, respectively; and
\item $(\rho_V\otimes \id_G)\circ \delta_V(v)=\big(\mu\otimes \id_G(w_G)\big)(\rho_V(v)\otimes 1)\big(\nu\otimes\id_G(w_G)\big)^*$
for all $v\in V$.
\end{enumerate}
where $w_G\in C^b_{st}(G,M(C^*(G))\cong M(C_0(G)\check\otimes  C^*(G))$ is  the strictly continuous 
function $w_G(g)= i_G(g)$.
A {\em covariant representation} of $\big((A,V,B), G, \delta_{(A,V,B)}\big)$ on the pair of Hilbert spaces $(H,K)$ 
is a morphism into $\big(\mathcal B(H), \mathcal B(K,H),\mathcal B(K)\big)$.
\end{definition}

It is now an easy exercise to see that there is a one-to-one correspondence between covariant morphism of 
$(A,V,B)$ into $M(C,V,D)$ and covariant morphisms 
of $\big(X(A,V,B), G, \delta_{X(A,V,B)}\big)$ into $M(X(C,V,D))$ given by assigning to 
$\big(\rho_A, \rho_V, \rho_B, \mu, \nu\big)$ the covariant pair $\big(\rho_{X(A,V,B)}, \mu\oplus \nu\big)$ with 
$$\rho_{X(A,V,B)}=\left(\begin{matrix} \rho_A &\rho_V\\ \rho_V^*& \rho_B\end{matrix}\right)$$
as in Corollary \ref{cor-morphisms}.

Moreover, using the identity $C_u^*(A,V,B)\cong C_u^*(X(A,V,B))$ and Proposition \ref{prop coaction}, we deduce 
easily that there  is a one-to-one correspondence between coactions $\delta_{(A,V,B)}$ of $G$ on $(A,V,B)$ and 
coactions $\delta_u$ of $G$ on $C_u^*(A,V,B)$ which satisfy the conditions
$$\delta_u(A)\subseteq M(A\check\otimes C^*(G)), \; \delta_u(V)\subseteq M(V\check\otimes C^*(G)), \;\text{and}\;
\delta_u(B)\subseteq M(B\check\otimes C^*(G)),$$
where we understand these inclusions with respect to the canonical inclusions of $(A,V,B)$ into $C_u^*(A,V,B)$ and of 
$M((A,V,B)\check\otimes C^*(G))$ into \linebreak
$M(C_u^*(A,V,B)\check\otimes C^*(G))$ which can  be deduced 
from Lemma \ref{lem mult univ}).

\begin{example}\label{ex-coact-bimodule-regular}
The {\em regular representation} of the co-system \linebreak 
$\big((A,V,B), G, \delta_{(A,V,B)}\big)$  is the 
covariant morphism from $\big((A,V,B), G, \delta_{(A,V,B)}\big)$  into $M\big( (A,V,B)\otimes \K(L^2(G))\big)$ 
defined as the quintuple
\begin{align*}
\big(\Lambda_A, \Lambda_V, &\Lambda_B, \Lambda_{\widehat{G}}^A, \Lambda_{\widehat{G}}^B\big)\\
&=\big((\id_A\otimes\lambda)\circ\delta_A,  (\id_A\otimes\lambda)\circ\delta_A, (\id_A\otimes\lambda)\circ\delta_A, 1_A\otimes M, 1_B\otimes M\big)
\end{align*}
where $\lambda=\lambda_G$ denotes the regular representation of $G$ on $L^2(G)$ and $M:C_0(G)\to \mathcal B(L^2(G))$ is the representation by multiplication operators.
One easily checks that this representation corresponds to the regular representation of the 
co-system $\big( X(A,V,B), G, \delta_{X(A,V,B)}\big)$ via  the above described correspondence. 
In particular, it is a  covariant morphism of $\big((A,V,B), G, \delta_{(A,V,B)}\big)$.
\end{example}

We are now ready to define the crossed products

\begin{proposition}\label{def-coact-crossed-bimodule}
Let $\delta_{(A,V,B)}$ be a coaction of $G$ on the $C^*$-operator bimodule $(A,V,B)$. 
We then define the crossed product 
$$(A,V,B)\rtimes_{\delta(A,V,B)}\widehat{G}=\big(A\rtimes_{\delta_A}\widehat{G}, V\rtimes_{\delta_V}\widehat{G}, B\rtimes_{\delta_B}\widehat{G}\big)$$
as 
$$A\rtimes_{\delta_A}\widehat{G}:=\cspn\{\Lambda_A(A)\Lambda_{\widehat{G}}^A(C_0(G))\},\quad B\rtimes_{\delta_B}\widehat{G}:=\cspn\{\Lambda_B(B)\Lambda_{\widehat{G}}^B(C_0(G))\},
$$
$$\text{and} \quad V\rtimes_{\delta_V}\widehat{G}=\cspn\{\Lambda_V(V)\Lambda_{\widehat{G}}^A(C_0(G))\}=\cspn\{\Lambda_{\widehat{G}}^B(C_0(G)\Lambda_V(V)\}$$
inside $M\big((A,V,B)\otimes \K(L^2(G))\big)$.
\end{proposition}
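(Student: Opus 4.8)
The plan is to reduce the statement entirely to the coaction theory for $C^*$-operator systems already developed, via the Paulsen functor $(A,V,B)\mapsto X(A,V,B)$. What must be shown is that the three closed spans in the statement assemble into a $C^*$-operator bimodule inside $M\big((A,V,B)\otimes\K(L^2(G))\big)$ and that the two descriptions of $V\rtimes_{\delta_V}\widehat{G}$ coincide.

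First I would record the matrix form of the regular representation. By Example~\ref{ex-coact-bimodule-regular}, the quintuple $\big(\Lambda_A,\Lambda_V,\Lambda_B,\Lambda_{\widehat G}^A,\Lambda_{\widehat G}^B\big)$ corresponds, under the one-to-one correspondence between covariant morphisms of $\big((A,V,B),G,\delta_{(A,V,B)}\big)$ and of $\big(X(A,V,B),G,\delta_{X(A,V,B)}\big)$ recorded just before that example, to the regular representation $(\Lambda_{X(A,V,B)},\Lambda_{\widehat G})$ of $\big(X(A,V,B),G,\delta_{X(A,V,B)}\big)$ into $M\big(X(A,V,B)\otimes\K\big)$. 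Under the identification $X(A,V,B)\otimes\K=X\big((A,V,B)\otimes\K\big)$ noted at the start of this section, together with $M\big(X(A,V,B)\otimes\K\big)=X\big(M((A,V,B)\otimes\K)\big)$ from Proposition~\ref{prop-multiplier}, the operator $\Lambda_{X(A,V,B)}$ attached to the element $\left(\begin{smallmatrix} a & v\\ w^{*} & b\end{smallmatrix}\right)\in X(A,V,B)$ has matrix entries $\Lambda_A(a),\Lambda_V(v),\Lambda_V(w)^{*},\Lambda_B(b)$, while $\Lambda_{\widehat G}(f)=\diag\big(\Lambda_{\widehat G}^A(f),\Lambda_{\widehat G}^B(f)\big)$ is block--diagonal.

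Next I would apply Proposition~\ref{prop-image} to the coaction $\delta_{X(A,V,B)}$ and the covariant morphism $(\Lambda_{X(A,V,B)},\Lambda_{\widehat G})$: it gives that
$$\Big(\Lambda_{X(A,V,B)}(A\oplus B)\,\Lambda_{\widehat G}(C_0(G)),\ \Lambda_{X(A,V,B)}(X(A,V,B))\,\Lambda_{\widehat G}(C_0(G))\Big)$$
is a $C^*$-operator subsystem of $M\big(X(A,V,B)\otimes\K\big)=X\big(M((A,V,B)\otimes\K)\big)$, hence of the form $\big(C\oplus D,X(C,W,D)\big)$ for a $C^*$-operator bimodule $(C,W,D)$ inside $M\big((A,V,B)\otimes\K\big)$. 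Reading off the matrix corners via the formulas above identifies $C=\cspn\{\Lambda_A(A)\Lambda_{\widehat G}^A(C_0(G))\}=A\rtimes_{\delta_A}\widehat G$, $D=\cspn\{\Lambda_B(B)\Lambda_{\widehat G}^B(C_0(G))\}=B\rtimes_{\delta_B}\widehat G$, and $W=\cspn\{\Lambda_V(V)\Lambda_{\widehat G}(C_0(G))\}=V\rtimes_{\delta_V}\widehat G$; so $\big(A\rtimes_{\delta_A}\widehat G,V\rtimes_{\delta_V}\widehat G,B\rtimes_{\delta_B}\widehat G\big)$ is a $C^*$-operator bimodule inside $M\big((A,V,B)\otimes\K(L^2(G))\big)$ and the definition is meaningful. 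The two one-sided descriptions of the off-diagonal corner $W=V\rtimes_{\delta_V}\widehat G$ coincide because of the identity $\pi(X)\mu(C_0(G))=\mu(C_0(G))\pi(X)$ established (after \cite[Proposition~A.36]{EKQR}) in the proof of Proposition~\ref{prop-image}, applied with $\pi=\Lambda_{X(A,V,B)}$ and $\mu=\Lambda_{\widehat G}$ and then restricted to the $(1,2)$-corner. Finally, I would remark that by Lemma~\ref{lem-regularrep} applied to $X(A,V,B)$, together with Remark~\ref{rem-dual-cross}, Proposition~\ref{prop-C*-hull-bimodule} and Corollary~\ref{cor-univhull}, $C_u^*(A,V,B)\rtimes_{\delta_u}\widehat G$ is the universal $C^*$-hull of $(A,V,B)\rtimes_{\delta_{(A,V,B)}}\widehat G$.

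The only genuine difficulty is bookkeeping: one must check that the identifications $X(A,V,B)\otimes\K=X\big((A,V,B)\otimes\K\big)$ and $M\big(X(A,V,B)\otimes\K\big)=X\big(M((A,V,B)\otimes\K)\big)$ are mutually compatible, and that under them $\Lambda_{\widehat G}$ really is block--diagonal with the two multiplication representations on the diagonal corners while $\Lambda_{X(A,V,B)}$ really carries the off--diagonal entry $\Lambda_V$. Once this is in place, the matrix corners of the $C^*$-operator system produced by Proposition~\ref{prop-image} are exactly the three closed spans in the statement, and nothing further is needed.
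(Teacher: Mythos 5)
Your reduction via the Paulsen system $X(A,V,B)$, Proposition \ref{prop-image} applied to $(\Lambda_{X(A,V,B)},\Lambda_{\widehat G})$, and the commutation relation $\pi(X)\mu(C_0(G))=\mu(C_0(G))\pi(X)$ is exactly the argument the paper intends (the proposition is stated without proof, its substance being deferred to Theorem \ref{thm-crossed-coact-bimodule}), so your proposal is correct and takes the same route. One small point your corner bookkeeping actually uncovers: since $\Lambda_V(v)\in\mathcal B\big(K\otimes L^2(G),H\otimes L^2(G)\big)$ while $\Lambda_{\widehat G}^A$ acts on $H\otimes L^2(G)$ and $\Lambda_{\widehat G}^B$ on $K\otimes L^2(G)$, reading off the $(1,2)$-corner gives $V\rtimes_{\delta_V}\widehat G=\cspn\{\Lambda_V(V)\Lambda_{\widehat G}^{B}(C_0(G))\}=\cspn\{\Lambda_{\widehat G}^{A}(C_0(G))\Lambda_V(V)\}$, i.e.\ the superscripts $A$ and $B$ in the displayed formula of the statement should be interchanged.
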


Note that it follows directly from the definitions that $(\Lambda_A, \Lambda_{\widehat{G}}^A)$ and $(\Lambda_B, \Lambda_{\widehat{G}}^B)$
are the regular representations of $(A,G, \delta_A)$ and $(B,G,\delta_B)$, respectively. 
We therefore  see that the $C^*$-algebras $A\rtimes_{\delta_A}\widehat{G}$ and $B\rtimes_{\delta_B}\widehat{G}$ coincide 
with the crossed  products of  these $C^*$-co-systems as described in  Section \ref{sec-coaction}. 
Using the above described correspondence between covariant morphisms of  $\big((A,V,B), G, \delta_{(A,V,B)}\big)$  and
covariant morphisms of $\big(X(A,V,B), G,\delta_{X(A,V,B)}\big)$ we now get from Proposition \ref{prop-dualuniv}:

\begin{theorem}\label{thm-crossed-coact-bimodule}
The crossed  product $(A,V,B)\rtimes_{\delta(A,V,B)}\widehat{G}$ is a well defined $C^*$-operator  bimodule 
such that 
$$X\big((A,V,B)\rtimes_{\delta(A,V,B)}\widehat{G}\big)=X(A,V,B)\rtimes_{\delta_{X(A,V,B)}}\widehat{G}.$$
and 
$$C_u^*\big((A,V,B)\rtimes_{\delta(A,V,B)}\widehat{G}\big)=C_u^*(A,V,B)\rtimes_{\delta_u}\widehat{G}.$$
Moreover, the pair
$$\Big((A,V,B)\rtimes_{\delta(A,V,B)}\widehat{G}, \big(\Lambda_A, \Lambda_V, \Lambda_B, \Lambda_{\widehat{G}}^A, \Lambda_{\widehat{G}}^B\big)
\Big)$$ satisfies the following universal property   for covariant morphisms:

If $\big(\rho_A, \rho_V, \rho_B, \mu, \nu\big)$ is any covariant morphism of $\big((A,V,B), G, \delta_{(A,V,B)}\big)$ into
$M(C,W,D)$ then there  exists a unique covariant morphism
$$(\rho_A\rtimes \mu, \mu\ltimes \rho_V\rtimes \nu, \rho_B\rtimes \nu): (A,V,B)\rtimes_{\delta(A,V,B)}\widehat{G}\to M(C,W,D)$$
such that 
$$\rho_A=(\rho_A\rtimes \mu)\circ \Lambda_A,\; \rho_V=(\mu\ltimes \rho_V\rtimes \nu)\circ \Lambda_V, \; \rho_B=(\rho_B\rtimes \nu)\circ \Lambda_B,
$$
$$ \mu=(\rho_A\rtimes \mu)\circ \Lambda_{\widehat{G}}^A\quad\text{and}\quad \nu=(\rho_B\rtimes \nu)\circ \Lambda_{\widehat{G}}^B.$$
Conversely, if $\Phi=\big(\Phi_{A\rtimes_{\delta_A}\widehat{G}}, \Phi_{V\rtimes_{\delta_V}\widehat{G}}, \Phi_{B\rtimes_{\delta_B}\widehat{G}}\big)$
is any generalized morphism from $(A,V,B)\rtimes_{\delta(A,V,B)}\widehat{G}$ into $M(C,W,D)$ then there exists a unique 
covariant morphism $\big(\rho_A, \rho_V, \rho_B, \mu, \nu\big)$ of $\big((A,V,B), G, \delta_{(A,V,B)}\big)$ such that 
$$\Phi=(\rho_A\rtimes \mu, \mu\ltimes \rho_V\rtimes \nu, \rho_B\rtimes \nu)$$
\end{theorem}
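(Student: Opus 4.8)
The plan is to reduce everything to the corresponding statement for $C^*$-operator systems, namely Proposition \ref{prop-dualuniv}, using the dictionary between $C^*$-operator bimodules and their Paulsen $C^*$-operator systems provided by Corollary \ref{cor-morphisms} and Proposition \ref{prop-multiplier}. The three displayed identities at the start of the theorem statement come for free: the identity $X\big((A,V,B)\rtimes_{\delta(A,V,B)}\widehat{G}\big)=X(A,V,B)\rtimes_{\delta_{X(A,V,B)}}\widehat{G}$ follows by inspecting the regular representation of the Paulsen co-system and comparing it entrywise with the quintuple $\big(\Lambda_A,\Lambda_V,\Lambda_B,\Lambda_{\widehat G}^A,\Lambda_{\widehat G}^B\big)$ of Example \ref{ex-coact-bimodule-regular}, using that $\big((\id_A\otimes\lambda)\circ\delta_A,1\otimes M\big)$ etc. are the standard regular representations and that $X$ of a multiplier bimodule is the multiplier system; the identity $C_u^*\big((A,V,B)\rtimes_{\delta(A,V,B)}\widehat{G}\big)=C_u^*(A,V,B)\rtimes_{\delta_u}\widehat{G}$ is then the combination of Corollary \ref{cor-univhull} with $C_u^*(A,V,B)\cong C_u^*(X(A,V,B))$ from Proposition \ref{prop-C*-hull-bimodule}.

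For the universal property itself, first I would take a covariant morphism $\big(\rho_A,\rho_V,\rho_B,\mu,\nu\big)$ of $\big((A,V,B),G,\delta_{(A,V,B)}\big)$ into $M(C,W,D)$ and pass, via the one-to-one correspondence described just before the theorem, to the covariant pair $\big(\rho_{X(A,V,B)},\mu\oplus\nu\big)$ of the Paulsen co-system $\big(X(A,V,B),G,\delta_{X(A,V,B)}\big)$ into the multiplier system $M(X(C,W,D))$. Applying Proposition \ref{prop-dualuniv} yields a unique generalized morphism $\rho_{X(A,V,B)}\rtimes(\mu\oplus\nu)$ from $X(A,V,B)\rtimes_{\delta_{X(A,V,B)}}\widehat G$ into $M(X(C,W,D))$ satisfying the intertwining relations. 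Using the first displayed identity to rewrite the source as $X\big((A,V,B)\rtimes_{\delta(A,V,B)}\widehat G\big)$ and Proposition \ref{prop-multiplier} to rewrite the target as $X(M(C),M(W),M(D))$, Corollary \ref{cor-morphisms} translates this back into a morphism of $C^*$-operator bimodules; its three corner components are precisely $\rho_A\rtimes\mu$, $\mu\ltimes\rho_V\rtimes\nu$ and $\rho_B\rtimes\nu$. That this morphism takes values in the multiplier bimodule (rather than a larger one) and that the intertwining identities with $\Lambda_A,\Lambda_V,\Lambda_B,\Lambda_{\widehat G}^A,\Lambda_{\widehat G}^B$ hold follows by chasing the analogous assertions through Proposition \ref{prop-dualuniv} and the non-degeneracy built into the notion of generalized morphism (Definition \ref{def-genmor}); uniqueness is inherited from uniqueness in Proposition \ref{prop-dualuniv} together with injectivity of the correspondence in Corollary \ref{cor-morphisms}. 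The converse direction is entirely symmetric: given a generalized morphism $\Phi$ from $(A,V,B)\rtimes_{\delta(A,V,B)}\widehat G$ to $M(C,W,D)$, pass to $\Phi_{X}$ on the Paulsen system, invoke the converse half of Proposition \ref{prop-dualuniv} to get a covariant pair $(\pi,\mu\oplus\nu)$, and decompose into corners.

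The one genuine point requiring care—and I expect this to be the main obstacle—is verifying that the corner components of the bimodule morphism obtained from $\rho_{X(A,V,B)}\rtimes(\mu\oplus\nu)$ really are the claimed integrated forms $\rho_A\rtimes\mu$, $\mu\ltimes\rho_V\rtimes\nu$, $\rho_B\rtimes\nu$, i.e. that the Paulsen machinery is compatible with the crossed-product integration on the level of the dense subspaces $C_c(G,A)$, $C_c(G,V)$, $C_c(G,B)$. This amounts to checking, on a typical element $\Lambda_X(x)\Lambda_{\widehat G}(f)$ with $x$ in a corner of $X(A,V,B)$ and $f\in C_0(G)$, that applying $\rho_{X(A,V,B)}\rtimes(\mu\oplus\nu)$ reproduces $\rho_V(v)\nu(f)$ (or the $A$- resp.\ $B$-analogue) when $x=\left(\begin{smallmatrix}0&v\\0&0\end{smallmatrix}\right)$; since the off-diagonal corner of the Paulsen system is a $V$-valued piece and $\Lambda_{\widehat G}$ is the diagonal $(\mu\oplus\nu)$ of multiplication operators, the mixed term picks out $\mu(f)$ on one side and $\nu(f)$ on the other—exactly the asymmetry encoded in the notation $\mu\ltimes\rho_V\rtimes\nu$. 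Once this bookkeeping is done the rest is routine, so I would state it as a short computation "which we omit" in the style already used elsewhere in the paper, after spelling out the key identity.
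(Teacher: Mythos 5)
Your proposal is correct and follows exactly the route the paper intends: the paper gives no separate proof of this theorem but derives it directly from Proposition \ref{prop-dualuniv} via the correspondence between covariant morphisms of $\big((A,V,B),G,\delta_{(A,V,B)}\big)$ and of the Paulsen co-system $\big(X(A,V,B),G,\delta_{X(A,V,B)}\big)$, together with Corollary \ref{cor-morphisms}, Proposition \ref{prop-multiplier}, Proposition \ref{prop-C*-hull-bimodule} and Corollary \ref{cor-univhull}. Your extra care about matching the corner components with the integrated forms $\rho_A\rtimes\mu$, $\mu\ltimes\rho_V\rtimes\nu$, $\rho_B\rtimes\nu$ is exactly the bookkeeping the paper leaves implicit.
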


\begin{remark}\label{rem-dual action bimodule}
Let $\sigma:G\to \Aut(C_0(G))$ denote  action given by right translation. Then there is a dual action $\widehat\delta:G\to \Aut\big((A,V,B)\rtimes_{\delta}\widehat{G}\big)$ such that for each $s\in G$ the automorphism $\widehat\delta_s$ is given by the integrated form of the covariant morphism
$\big(\Lambda_A, \Lambda_V, \Lambda_B, \Lambda_{\widehat{G}}^A\circ \sigma(s), \Lambda_{\widehat{G}}^B\circ \sigma(s)\big)
\Big)$. Of course, it corresponds to the  dual action on $X(A,V,B)\rtimes_{\delta_X}\widehat{G}$.
\end{remark}

We now come to the duality theorems. We start with the $C^*$-operator bimodule version of the Imai-Takai duality theorem.
Using the version of the  Imai-Takai theorem for actions on $C^*$-operator systems, Theorem \ref{thm-ImaiTakai}, we
now get

\begin{theorem}\label{thm-Imai-Takai-bimodule}
Let $\alpha:G\to \Aut(A,V,B)$ be an action. Then there exist  canonical dual coactions 
$\widehat\alpha_u$ (resp. $\widehat\alpha_r$) of $G$ on the universal and reduced crossed products
$(A,V,B)\rtimes_{\alpha}^uG$ and  $(A,V,B)\rtimes_\alpha^rG$, respectively, such that
$$(A,V,B)\rtimes_{\alpha}^uG\rtimes_{\widehat\alpha_u}\widehat{G}\cong (A,V,B)\otimes \K(L^2(G))$$
and 
$$(A,V,B)\rtimes_{\alpha}^rG\rtimes_{\widehat\alpha_r}\widehat{G}\cong (A,V,B)\otimes \K(L^2(G)),$$
and  the isomorphism transforms the  double dual actions $\widehat{\widehat\alpha_u}$ and \
$\widehat{\widehat\alpha_r}$ to the action $\alpha\otimes \Ad\rho$ on $(A,V,B)\otimes \K(L^2(G))$,
where $\rho:G\to U(L^2(G))$ denotes the  right regular representation of $G$.
\end{theorem}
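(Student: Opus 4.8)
The plan is to reduce everything to Theorem~\ref{thm-ImaiTakai} via the Paulsen functor $(A,V,B)\mapsto X(A,V,B)$, exactly as the discussion preceding the statement suggests. First I would assemble the compatibilities already in hand. By Proposition~\ref{prop-actions} together with the identity $X(A,V,B)\rtimes_\alpha^uG=X(A\rtimes_\alpha^uG,V\rtimes_\alpha^uG,B\rtimes_\alpha^uG)$ established in Section~\ref{sec-bimodules}, an action $\alpha:G\to\Aut(A,V,B)$ induces a corner-preserving action on $X(A,V,B)$ with $X(A,V,B)\rtimes_\alpha^uG=X\big((A,V,B)\rtimes_\alpha^uG\big)$ and $X(A,V,B)\rtimes_\alpha^rG=X\big((A,V,B)\rtimes_\alpha^rG\big)$. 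The dual coactions $\widehat\alpha_u$, $\widehat\alpha_r$ on the bimodule crossed products are, by Example~\ref{ex-dual-coaction-bimodule}, defined precisely to correspond to the dual coactions $\widehat\alpha$, $\widehat{\alpha_r}$ on the Paulsen systems of those crossed products. By Theorem~\ref{thm-crossed-coact-bimodule} the coaction crossed product again commutes with $X(\cdot)$, so
$$X\big((A,V,B)\rtimes_\alpha^uG\big)\rtimes_{\widehat\alpha}\widehat{G}=X\big((A,V,B)\rtimes_\alpha^uG\rtimes_{\widehat\alpha_u}\widehat{G}\big),$$
and likewise on the reduced side. Finally, from the identity $X(A,V,B)\otimes C=X(A\otimes C,V\otimes C,B\otimes C)$ of the present section, with $C=\K:=\K(L^2(G))$, one gets $X(A,V,B)\otimes\K=X\big((A,V,B)\otimes\K\big)$.

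Next I would apply Theorem~\ref{thm-ImaiTakai} to the $C^*$-operator system $X(A,V,B)$ with the induced action $\alpha$, obtaining a $\widehat{\widehat\alpha}$-equivariant isomorphism
$$X(A,V,B)\rtimes_\alpha^uG\rtimes_{\widehat\alpha}\widehat{G}\cong X(A,V,B)\otimes\K$$
carrying the double dual action to $\alpha\otimes\Ad\rho$, and similarly on the reduced side. Combining this with the four identities above rewrites the left-hand side as $X\big((A,V,B)\rtimes_\alpha^uG\rtimes_{\widehat\alpha_u}\widehat{G}\big)$ and the right-hand side as $X\big((A,V,B)\otimes\K\big)$. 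To conclude I would check that this isomorphism of Paulsen systems is corner-preserving: since the Imai--Takai isomorphism of Theorem~\ref{thm-ImaiTakai} is the integrated form $(\Lambda_X\rtimes\Lambda_G)\rtimes\Lambda_{\widehat{G}}$ built spatially out of the regular representation, it carries the diagonal subalgebra into the diagonal and the off-diagonal corner into the off-diagonal corner, hence preserves corners. Then Corollary~\ref{cor-morphisms} (applied to the isomorphism and its inverse) upgrades it to a completely isometric isomorphism of $C^*$-operator bimodules
$$(A,V,B)\rtimes_\alpha^uG\rtimes_{\widehat\alpha_u}\widehat{G}\cong (A,V,B)\otimes\K,$$
and the reduced version is obtained in the same way from the reduced part of Theorem~\ref{thm-ImaiTakai}. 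The equivariance transfers along Corollary~\ref{cor-morphisms} as well: under the Paulsen correspondence the double dual action $\widehat{\widehat\alpha_u}$ on the bimodule crossed product is exactly the one inducing $\widehat{\widehat\alpha}$ on its Paulsen system, and on the target side the action $\alpha\otimes\Ad\rho$ on $X\big((A,V,B)\otimes\K\big)$ corresponds to $\alpha\otimes\Ad\rho$ on $(A,V,B)\otimes\K$.

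The main obstacle is purely bookkeeping: one must verify that the three correspondences being composed --- action $\leftrightarrow$ corner-preserving action on $X(\cdot)$, coaction $\leftrightarrow$ corner-preserving coaction on $X(\cdot)$, and the respective crossed-product identities --- are mutually compatible, i.e.\ that the dual coaction $\widehat\alpha$ on $X\big((A,V,B)\rtimes_\alpha^uG\big)$ appearing in Theorem~\ref{thm-ImaiTakai} is genuinely the one corresponding to the bimodule dual coaction $\widehat\alpha_u$ of Example~\ref{ex-dual-coaction-bimodule}, and that the double dual action on the Paulsen system is corner-preserving so that the whole chain of identifications is equivariant. Each of these is immediate from the explicit formulas for the regular and dual representations, but they must be lined up carefully; no genuinely new argument is required beyond Theorem~\ref{thm-ImaiTakai}, Theorem~\ref{thm-crossed-coact-bimodule}, Corollary~\ref{cor-morphisms} and the tensor identity $X(A,V,B)\otimes C=X(A\otimes C,V\otimes C,B\otimes C)$.
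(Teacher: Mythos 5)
Your proposal is correct and follows exactly the route the paper intends: the theorem is deduced from Theorem~\ref{thm-ImaiTakai} applied to the Paulsen system $X(A,V,B)$, using the compatibility of the functor $X(\cdot)$ with action and coaction crossed products, the tensor identity $X(A,V,B)\otimes\K=X\big((A,V,B)\otimes\K\big)$, and the corner-preservation of the spatially defined Imai--Takai isomorphism. Your write-up in fact supplies more of the bookkeeping than the paper does, which leaves the verification implicit.
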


Dually, as an application of Theorem \ref{thm-Katayama} 
we get the following version of Katayama's duality for coactions on $C^*$-operator bimodules:

\begin{theorem}\label{thm-Katayama-bimodule}
Let $\delta=\delta_{(A,V,B)}$ be a coaction of $G$ on the $C^*$-operator bimodule $(A,V,B)$. 
 Then there exist  is a surjective morphism
 $$\Theta: (A,V,B)\rtimes_\delta\widehat{G}\rtimes_{\widehat{\delta}}^uG\onto (A,V,B)\otimes\K(L^2(G))$$
 which factors through an isomorphism 
  $$(A,V,B)\rtimes_\delta\widehat{G}\rtimes_{\widehat{\delta}}^\mu G\cong (A,V,B)\otimes\K(L^2(G)),$$
  where $(A,V,B)\rtimes_\delta\widehat{G}\rtimes_{\widehat{\delta}}^\mu G$ is a completion 
  of $C_c\big(G, (A,V,B)\rtimes_\delta\widehat{G}\big)$ with respect to a  norm which lies between the 
 universal and reduced crossed product norms. If $G$ is amenable, then $\Theta$ is an isomorphism.
\end{theorem}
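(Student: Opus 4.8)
The plan is to deduce Theorem~\ref{thm-Katayama-bimodule} from Theorem~\ref{thm-Katayama} applied to the Paulsen $C^*$-operator system $X(A,V,B)$ equipped with the coaction $\delta_{X(A,V,B)}$ corresponding to $\delta=\delta_{(A,V,B)}$. First I would invoke the correspondence between covariant morphisms of $\big((A,V,B),G,\delta_{(A,V,B)}\big)$ and those of $\big(X(A,V,B),G,\delta_{X(A,V,B)}\big)$, together with the identity $X\big((A,V,B)\rtimes_\delta\widehat{G}\big)=X(A,V,B)\rtimes_{\delta_{X(A,V,B)}}\widehat{G}$ from Theorem~\ref{thm-crossed-coact-bimodule}, to see that $\widehat\delta$ on $(A,V,B)\rtimes_\delta\widehat{G}$ corresponds to $\widehat{\delta_{X(A,V,B)}}$ on $X(A,V,B)\rtimes_{\delta_{X(A,V,B)}}\widehat{G}$ (as in Remark~\ref{rem-dual action bimodule}). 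Hence the iterated crossed product $X\big((A,V,B)\rtimes_\delta\widehat{G}\rtimes_{\widehat\delta}^uG\big)$ is identified with $X(A,V,B)\rtimes_{\delta_{X(A,V,B)}}\widehat{G}\rtimes_{\widehat{\delta_{X(A,V,B)}}}^uG$ via Proposition~\ref{prop-actions} applied to $\big(X(A,V,B)\rtimes_{\delta_{X(A,V,B)}}\widehat{G}, G, \widehat{\delta_{X(A,V,B)}}\big)$, so the three corners of the operator-system version of Katayama's theorem give exactly the three components of $\Theta$.

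The next step is to extract the map $\Theta$. Theorem~\ref{thm-Katayama} gives the surjective morphism
$$\Phi_{X(A,V,B)}\colon X(A,V,B)\rtimes_{\delta_{X(A,V,B)}}\widehat{G}\rtimes_{\widehat{\delta_{X(A,V,B)}}}^uG\onto X(A,V,B)\otimes\K(L^2(G))$$
as the integrated form $\big(\Lambda_{X(A,V,B)}\rtimes\Lambda_{\widehat{G}}\big)\rtimes(1\otimes\rho)$. Using the identity $X(A,V,B)\otimes\K(L^2(G))=X\big((A,V,B)\otimes\K(L^2(G))\big)$ recorded at the start of the section, I would restrict $\Phi_{X(A,V,B)}$ to the three nonzero corners $\big(C_c(G,A\rtimes_{\delta_A}\widehat{G}),\, C_c(G,V\rtimes_{\delta_V}\widehat{G}),\, C_c(G,B\rtimes_{\delta_B}\widehat{G})\big)$, check that it is corner-preserving (immediate since $\Lambda_{X(A,V,B)}$, $\Lambda_{\widehat{G}}$ and $1\otimes\rho$ all respect the block decomposition), and invoke Corollary~\ref{cor-morphisms} to obtain $\Theta=\big(\Phi_{A\rtimes_{\delta_A}\widehat{G}}, \mu\ltimes\Phi_{V\rtimes_{\delta_V}\widehat{G}}\rtimes\nu, \Phi_{B\rtimes_{\delta_B}\widehat{G}}\big)$ as a surjective morphism onto $(A,V,B)\otimes\K(L^2(G))$; surjectivity of each corner follows from surjectivity of $\Phi_{X(A,V,B)}$.

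For the exotic completion, Theorem~\ref{thm-Katayama} also provides an exotic completion of the pair $\big(C_c(G, A\rtimes_{\delta_A}\widehat{G}), C_c(G, X(A,V,B)\rtimes_{\delta_{X(A,V,B)}}\widehat{G})\big)$, lying between the maximal and reduced norms, through which $\Phi_{X(A,V,B)}$ factors as a completely isometric isomorphism onto $X(A,V,B)\otimes\K(L^2(G))$. I would define $(A,V,B)\rtimes_\delta\widehat{G}\rtimes_{\widehat\delta}^\mu G$ to be the triple of corners of this exotic system; since the completely isometric isomorphism is corner-preserving, Corollary~\ref{cor-morphisms} again delivers the completely isometric isomorphism $(A,V,B)\rtimes_\delta\widehat{G}\rtimes_{\widehat\delta}^\mu G\cong (A,V,B)\otimes\K(L^2(G))$. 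One must verify that the quotient norms on the corners indeed lie between the universal and reduced crossed product norms on $C_c\big(G,(A,V,B)\rtimes_\delta\widehat{G}\big)$; this follows from the compatibility of the $C^*$-hull functor $C_u^*$ with passage to $X(A,V,B)$ (Proposition~\ref{prop-C*-hull-bimodule}) and the surjections \eqref{eq-surjection} at the operator-system level, restricted to corners. Finally, the amenable case is immediate: if $G$ is amenable then $\Phi_{X(A,V,B)}$ is already an isomorphism by Corollary~\ref{cor-katayama}, hence so is $\Theta$. The main obstacle I anticipate is purely bookkeeping — being careful that ``taking corners'' genuinely commutes with forming the universal crossed product and the exotic completion, i.e.\ that the norm on $C_c\big(G,(A,V,B)\rtimes_\delta\widehat{G}\big)$ inherited as a corner of the operator-system exotic completion agrees with the intrinsically defined crossed-product norms; this is exactly the kind of compatibility handled by Proposition~\ref{prop-C*-hull-bimodule} and Corollary~\ref{cor-morphisms}, so no genuinely new idea is required.
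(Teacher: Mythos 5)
Your proposal is correct and follows exactly the route the paper intends: the paper states this theorem as a direct application of Theorem~\ref{thm-Katayama} to the Paulsen system $X(A,V,B)$ with the coaction $\delta_{X(A,V,B)}$, transferred back to the bimodule via the corner-preserving correspondences of Corollary~\ref{cor-morphisms}, Theorem~\ref{thm-crossed-coact-bimodule} and Remark~\ref{rem-dual action bimodule}, which is precisely what you carry out. Your write-up is in fact more detailed than the paper's (which offers no explicit proof), and the compatibility checks you flag at the end are exactly the bookkeeping the authors leave implicit.
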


\bibliographystyle{amsplain}


\end{document}